\documentclass[11pt]{article}
\usepackage[margin=0.75in]{geometry}
\usepackage[T1]{fontenc}
\usepackage{amssymb,amsmath,mathtools,amsthm,thmtools,amsfonts}
\usepackage{dsfont} 
\usepackage{comment}
\usepackage{thm-restate}

\usepackage{url} 
\usepackage{hyperref}
\hypersetup{
colorlinks,
linkcolor={red}, 
citecolor={black},
urlcolor={blue!60!black},
pdftitle={Erd\H{o}s meet Nash-Williams},
pdfauthor={Michelle Delcourt, Cicely Henderson, Thomas Lesgourgues, Luke Postle}}

\usepackage[utf8]{inputenc}
\usepackage{color}


\usepackage{enumitem}
\setlist[enumerate,1]{label={(\roman*)}}
\setlist[enumerate,2]{label={(\alph*)}}
\setlist[enumerate,3]{label={(\arabic*)}}

\setlist[itemize]{nolistsep,noitemsep, topsep=.1in}
\setlist[enumerate]{nolistsep,noitemsep, topsep=.1in}




\newcommand{\mc}[1]{\mathcal{#1}}%


\newcommand{\cB}{\ensuremath{\mathcal B}}
\newcommand{\cF}{\ensuremath{\mathcal F}}
\newcommand{\cH}{\ensuremath{\mathcal H}}
\newcommand{\cQ}{\ensuremath{\mathcal Q}}
\newcommand{\cS}{\ensuremath{\mathcal S}}

\newcommand{\Expect}[1]{\ensuremath{\mathbb E\left[#1\right]}}
\newcommand{\Prob}[1]{\ensuremath{\mathbb P\left[#1\right]}}

\DeclarePairedDelimiter{\parens}{(}{)}
\DeclarePairedDelimiter{\set}{\{}{\}}
\DeclarePairedDelimiter{\brackets}{[}{]}
\DeclarePairedDelimiter{\floor}{\lfloor}{\rfloor}


\newcommand{\cBoff}{\ensuremath{\cB_{{\rm off}}}}
\newcommand{\cBon}{\ensuremath{\cB_{{\rm on}}}}

\usepackage[noabbrev,capitalise]{cleveref}

\theoremstyle{plain}
\newtheorem{thm}{Theorem}[section]
\newtheorem{lem}[thm]{Lemma}
\newtheorem{proposition}[thm]{Proposition}

\newtheorem{cor}[thm]{Corollary}

\newtheorem{conj}[thm]{Conjecture}



\newenvironment{proofclaim}[1][\proofname]
{\proof[#1]}
{\endproof}

\declaretheorem[
  style=plain,
  name=Claim,
  within=theorem,
]{claim}


\newenvironment{lateproof}[1]
 {%
  \begin{proof}[Proof of~\cref{#1}]%
 }
 {\end{proof}}

\usepackage{etoolbox}
 \AtEndEnvironment{proof}{\setcounter{claim}{0}}


\theoremstyle{definition}
\newtheorem{definition}[thm]{Definition}

\newtheorem{remark}[thm]{Remark}

\crefname{equation}{}{}
\crefname{lem}{Lemma}{Lemmas}
\crefname{claim}{Claim}{Claims}
\crefname{thm}{Theorem}{Theorems}
\crefname{enumi}{}{}

\usepackage{thm-restate}


\usepackage[dvipsnames, table]{xcolor}



\newcommand{\Erdos}{Erd\H{o}s}
\newcommand{\Kuhn}{K\"{u}hn}


\usepackage{ifthen}

\usepackage{tikz}
\usepackage{calc}
\usetikzlibrary{calc,patterns,decorations.pathreplacing,backgrounds,shapes}

\tikzstyle{vertex}=[circle, draw, fill=black, inner sep=0pt, minimum size=4pt] 
\newcommand{\vertex}{\node[vertex]} 

\newcommand{\TriangleBooster}[1]
{
\begin{tikzpicture}[x=#1 cm, y=#1 cm]


    \draw [draw = red!40,fill=red!40] (10,0) -- (5,5) -- (5,3) -- cycle; 
    \draw [draw = red!40,fill=red!40] (10,0) -- (5,-3) -- (5,-5) -- cycle;
    
    \draw [draw=black, fill=red!40] (0,0) -- (5,3) -- (5,1) -- cycle;
    \draw [draw=black, fill=red!40] (10,0) -- (5,1) -- (5,-1) -- cycle;
    \draw [draw=black, fill=red!40] (0,0) -- (5,-1) -- (5,-3) -- cycle;

    \draw [draw=black, fill=yellow!40] (0,0) -- (5,5) -- (5,3) -- cycle;
    \draw [draw=black, fill=yellow!40] (10,0) -- (5,3) -- (5,1) -- cycle;
    \draw [draw=black, fill=yellow!40] (0,0) -- (5,1) -- (5,-1) -- cycle;
    \draw [draw=black, fill=yellow!40] (10,0) -- (5,-1) -- (5,-3) -- cycle;
    \draw [draw=black, fill=yellow!40] (0,0) -- (5,-3) -- (5,-5) -- cycle;

    \vertex (u) at (0,0) [label=180:$u$]{};
    \vertex (v) at (10,0) [label=0: $v$]{};
    
    \foreach \i in {1, 2, 3, 4, 5, 6}{
        \pgfmathsetmacro\y{7-2*\i}
        \vertex (b\i) at (5,\y) [label=0:$b_{\i}$]{};
    }

    \begin{scope}[on background layer]
        \path (b1) edge[line width= 1pt, dashed, fill =red!40, out=180, in=180, looseness=2] (b6);

    \end{scope} 
        \path (b1) edge[line width= 1pt, dashed] (v);
        \path (b6) edge[line width= 1pt, dashed] (v);

    \draw [draw = black, fill = red!40] (0,-6) rectangle (1,-6.5);
    \node at (1,-6.25) [label=0:$\cBon$]{};

    \draw [draw = black, fill = yellow!40] (3,-6) rectangle (4,-6.5);
    \node at (4,-6.25) [label=0:$\cBoff$]{};

    \draw [draw = black] (6,-6.25) -- (7,-6.25);
    \node at (7,-6.25) [label=0:$B$]{};

    \draw [line width= 1pt, dashed, draw = black] (9,-6.25) -- (10,-6.25);
    \node at (10,-6.25) [label=0:$R$]{};

\end{tikzpicture}}





\title{\Erdos{} meets Nash-Williams}

\author{
Michelle Delcourt
\thanks{Department of Mathematics, Toronto Metropolitan University (formerly named Ryerson University),
Toronto, Ontario M5B 2K3, Canada {\tt mdelcourt@torontomu.ca}. Research supported by NSERC under Discovery Grant No. 2019-04269 and a Sloan Research Fellowship.} \and 
Cicely (Cece) Henderson
\thanks{Combinatorics and Optimization Department,
University of Waterloo, Waterloo, Ontario N2L 3G1, Canada 
{\tt \{c3hender,tlesgourgues,lpostle\}@uwaterloo.ca}. Partially supported by NSERC
under Discovery Grant No. 2019-04304.} \and
Thomas Lesgourgues\footnotemark[2]
\and
Luke Postle \footnotemark[2] }
\date{\today}


\begin{document}

\maketitle

\begin{abstract}
In 1847, Kirkman proved that there exists a Steiner triple system on $n$ vertices (equivalently a triangle decomposition of the edges of $K_n$) whenever $n$ satisfies the necessary divisibility conditions (namely $n\equiv 1,3 \mod 6$). In 1970, Nash-Williams conjectured that every graph $G$ on $n$ vertices with minimum degree at least $3n/4$ (for $n$ large enough and satisfying the necessary divisibility conditions) has a triangle decomposition. In 1973, \Erdos{} conjectured that for each integer $g$, there exists a Steiner triple system on $n$ vertices with girth at least $g$ (provided that $n\equiv 1,3 \mod 6$ is large enough compared to the fixed $g$). In 2021, Glock, K\"uhn, and Osthus conjectured the common generalization of these two conjectures, dubbing it the ``\Erdos{} meets Nash-Williams' Conjecture''.

In this paper, we reduce the combined conjecture to the fractional relaxation of the Nash-Williams' Conjecture. Combined with the best known fractional bound of Delcourt and Postle, this proves the combined conjecture above when $G$ has minimum degree at least $0.82733n$. We note that our result generalizes the seminal work of Barber, K\"uhn, Lo, and Osthus on Nash-Williams' Conjecture and the resolution of \Erdos{}' Conjecture by Kwan, Sah, Sawhney, and Simkin. Both previous proofs of those results used the method of iterative absorption. Our proof instead proceeds via the newly developed method of refined absorption (and hence provides new independent proofs of both results).
\end{abstract}


\section{Introduction}

The study of combinatorial designs has a rich history spanning nearly two centuries. One of the most classical theorems in all of design theory is a result of Kirkman~\cite{K47} which classifies for which $n$ there exists a set of triples of an $n$-set $X$ such that every pair in $X$ is in exactly one triple; such a set is called a \emph{Steiner Triple System}. From the graph theoretic perspective, this is equivalent to a decomposition of the edges of the complete graph $K_n$ into edge-disjoint triangles. Kirkman proved that the necessary divisibility conditions for this, namely that $3~|~\binom{n}{2}$ and $2~|~(n-1)$ which equates to $n\equiv 1,3 \mod 6$, are also sufficient. 

Arguably the most studied object in design theory is a natural generalization of this object as follows. Given integers $n > q > r \geq 1$, an $(n,q,r)$\textit{-Steiner System} is a set $S$ of $q$-subsets of an $n$-set $X$ such that each $r$-subset of $X$ is contained in exactly one element of $S$. From the hypergraph theoretic perspective, this is equivalent to a decomposition of the edges of $K_n^r$ (the complete $r$-uniform hypergraph on $n$ vertices) into edge-disjoint copies of $K_q^r$ (referred to as cliques). Once again, there are necessary divisibility conditions for the existence of an $(n, q, r)$-Steiner system: namely, for each $0 \leq i \leq r-1,$ we require $\binom{q-i}{r-i}~|~\binom{n-i}{r-i}$. Whether these divisibility conditions suffice for all large enough $n$ became a notorious folklore conjecture from the 1800s called the Existence of Combinatorial Designs Conjecture; here we require $n$ to be large enough because, for example, $n$ must satisfy Fisher's inequality. This conjecture, which for brevity we call the Existence Conjecture, was the central conjecture in design theory and was only resolved in the affirmative by Keevash~\cite{K14} nearly two centuries later. 

\begin{conj}[Existence Conjecture - proved by Keevash~\cite{K14} in 2014]
Let $q > r \ge 2$ be integers.  If $n$ is sufficiently large and $\binom{q-i}{r-i}~|~\binom{n-i}{r-i}$ for all $0 \leq i \leq r-1,$  then there exists a $(n, q, r)$-Steiner system.   
\end{conj}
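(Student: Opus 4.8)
The plan is to prove the Existence Conjecture by the now-standard paradigm of \emph{approximate decomposition plus absorption}. Write $H=K_n^r$; the divisibility hypotheses $\binom{q-i}{r-i}\mid\binom{n-i}{r-i}$ say precisely that $H$ is ``$K_q^r$-divisible'', i.e.\ the codegree of every $i$-set ($0\le i\le r-1$) is divisible by $\binom{q-i}{r-i}$, which is the obvious necessary condition for a $K_q^r$-decomposition. Note that, unlike in the Nash--Williams setting, the \emph{fractional} relaxation is not the obstacle here: by the vertex-transitivity of $K_n^r$, assigning each copy of $K_q^r$ the same weight $\binom{n}{r}^{-1}\binom{q}{r}$ already gives a fractional $K_q^r$-decomposition. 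The first step is to reserve a sparse random ``absorbing'' subhypergraph and, on a vertex set equipped with extra algebraic structure, build a family $\mathcal A$ of edge-disjoint copies of $K_q^r$ with the \emph{absorption property}: for every sufficiently sparse, $K_q^r$-divisible hypergraph $L$ on $V(H)$ that is edge-disjoint from the edges used by $\mathcal A$, the hypergraph (union of cliques in $\mathcal A$) $\cup\, L$ has a $K_q^r$-decomposition.

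With $\mathcal A$ set aside, I would next apply the Rödl nibble / semi-random method — a Pippenger--Spencer-type theorem applied to the auxiliary ``design hypergraph'' whose vertices are the edges of $H$ not used by $\mathcal A$ and whose hyperedges are the copies of $K_q^r$ in that remainder — to decompose all but an $o(n^r)$-fraction of the edges into cliques, leaving some sparse leftover $L_0$ with $|L_0|\le\varepsilon n^r$. A short cleaning step then covers $L_0$ by a bounded number of further cliques so as to correct all codegrees modulo the relevant binomial coefficients; what remains is a leftover $L$ that is simultaneously sparse and $K_q^r$-divisible. Applying the absorption property of $\mathcal A$ to $L$ completes the decomposition of all of $K_n^r$. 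These nibble-and-clean steps are comparatively routine: the concentration estimates driving the nibble, and the counting needed for the cleaning, are by now well understood.

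The genuinely hard part is constructing the absorber $\mathcal A$. For matchings ($q=r+1$ gives $K_{r+1}^r$, and for $r=1$ this is a perfect matching) a single ``swap one edge'' move suffices, but for general $K_q^r$ a clique decomposition is a global constraint with no such local move, so one must engineer the absorbing structure so that an \emph{arbitrary} sparse divisible residue can be re-routed into it and re-decomposed. Keevash's resolution~\cite{K14} imposes an algebraic template on most of $V(H)$ — identifying vertices with elements of a vector space over a small field, or with points of an affine/projective geometry — so that the clique structure inherits many symmetries and supplies a rich supply of ``octahedral'' clique-exchange configurations through which leftover edges can be absorbed; the combinatorial route of Glock--Kühn--Lo--Osthus instead builds $\mathcal A$ by iterated (``vortex'') absorption along a nested sequence $V(H)=V_0\supseteq V_1\supseteq\cdots\supseteq V_t$ with $|V_t|$ bounded, cliques meeting $V_{i+1}$ absorbing the residue at level $i$. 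Either way, essentially all of the difficulty — and the reason $n$ must be taken large, beyond Fisher-type obstructions — lies in making this absorber simultaneously small, explicitly decomposable, and robust against every admissible leftover; the remaining ingredients are standard.
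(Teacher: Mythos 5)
The paper does not itself prove the Existence Conjecture; it records it as Keevash's theorem (with later proofs by Glock--K\"uhn--Lo--Osthus via iterative absorption and by Delcourt--Postle via refined absorption), and the paper's own machinery rests on the refined absorption framework of~\cite{DPI}. Your sketch correctly identifies the nibble-plus-absorption paradigm and correctly isolates the absorber construction as the crux, but what you describe is essentially the Keevash/GKLO template and not the refined absorption pipeline on which the present paper is built. In refined absorption one reserves a random sparse $X$ first, builds a $C$-refined omni-absorber $A$ \emph{for $X$} via \cref{thm:RefinedEfficientOmniAbsorber}, regularity-boosts the remainder, and runs nibble-with-reserves so that the leftover lands entirely inside $X$, to which the omni-absorber then applies by definition. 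Your ``nibble, then clean, then absorb an arbitrary sparse divisible $L$'' is therefore a genuinely different (and older) route, though the two proofs do share the same broad shape.

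Beyond the fact that the entire absorber construction is deferred to the literature, two concrete points need fixing. First, the uniform fractional weight you write is wrong: for each $r$-edge to receive total weight $1$, each copy of $K_q^r$ must be weighted $1/\binom{n-r}{q-r}$, not $\binom{q}{r}/\binom{n}{r}$ (for $(n,q,r)=(7,3,2)$ your value yields total edge-weight $5/7$). Second, your ``cleaning step \ldots to correct all codegrees modulo the relevant binomial coefficients'' is solving a non-problem: deleting edge-disjoint copies of $K_q^r$ from a $K_q^r$-divisible hypergraph automatically leaves a $K_q^r$-divisible leftover, so divisibility is preserved by the nibble for free. What one must actually arrange after the nibble is that the leftover has \emph{small maximum degree} (or the finer structure the absorber demands), and that is where the cover-down/iterative-absorption argument of GKLO, or the reserve-and-boost argument of refined absorption, does its real work. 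As written, the proposal would not assemble into a proof without filling in both the absorber construction and a correct version of the post-nibble step.
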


The Existence Conjecture has a storied history (see Keevash~\cite{K14} or~\cite{K18survey} for example). Of particular note, in the 1970s Wilson \cite{W72-EC1, W72-EC2, W75-EC3} revolutionized design theory when he proved the `graph' case of the Existence Conjecture, namely for all values of $q$ when $r = 2$. In 1963, \Erdos{} and Hanani~\cite{EH63} conjectured an {\em approximate} version of the Existence Conjecture, positing that it is possible to find a packing of edge-disjoint copies of $K_q^r$ covering $(1-o(1))$ proportion of the edges of $K_n^r$. This problem was solved in 1985 by R\"{o}dl~\cite{R85} using his seminal ``nibble'' method.

Yet, it was not until 2014, over a century after the conjecture first appeared in the literature, that Keevash~\cite{K14} proved the Existence Conjecture, a major breakthrough in design theory. While Keevash used a combination of algebraic and combinatorial methods, additional proofs using alternate methods appeared in subsequent years. In 2016, Glock, K\"{u}hn, Lo, and Osthus~\cite{GKLO16} provided a fully combinatorial proof of the conjecture using the method of iterative absorption. In 2024, Delcourt and Postle~\cite{DPI} developed the methodology of {\em refined absorption}, giving a one-step, combinatorial proof of the Existence Conjecture. We note that using the method of refined absorption is key to proving the results in this article. Finally, later in 2024, using the framework from refined absorption, Keevash~\cite{K24} presented a fourth (shorter) proof of the Existence Conjecture.

Despite these pivotal breakthroughs, there are many important questions in design theory that remain unsolved - in particular various generalizations of the Existence Conjecture, some of which are open even for the case of Steiner triple systems. This paper concerns the combination of two such fundamental questions in design theory: the Nash-Williams' Conjecture of 1970 \cite{nash1970unsolved} (the central question of what might be termed \emph{extremal design theory}) and \Erdos{}' Existence Conjecture for High Girth Steiner Triple Systems of 1973 \cite{E73} (an important question of what might be termed \emph{structural design theory}). 

Nash-Williams' Conjecture concerns triangle decompositions of graphs with large minimum degree. A \emph{triangle decomposition} of a graph $G$ is a decomposition of the edges of $G$ into triangles. The necessary divisibility conditions are as follows: a graph $G$ is \emph{$K_3$-divisible} if $3~|~e(G)$ and for each $v\in G$, we have $2~|~d_G(v)$, where $d_G(v)$ denotes the degree of $v$. Nash-Williams conjectured that every large enough $K_3$-divisible graph on $n$ vertices with minimum degree at least $3n/4$ has a triangle decomposition. The conjecture is nearly resolved due to a combination of the work of Barber, K\"{u}hn, Lo, and Osthus~\cite{BKLO16} from 2016 (who reduced a version of the conjecture to its fractional relaxation) as well as Delcourt and Postle~\cite{DP2021progress} from 2021 (who proved the best known fractional upper bound of $\frac{7+\sqrt{21}}{14}\approx0.82733$). We defer a more detailed history of this conjecture until later in Section~\ref{subsec:NW} of the introduction.

\Erdos{}' High Girth Steiner Triple System Conjecture concerns finding Steiner triple systems that are locally sparse (equivalently they avoid dense local configurations). Namely, one defines the \textit{girth} of a $K_3$-decomposition as the smallest integer $g$ such that there are $g$ triangles in the decomposition spanning at most $g+2$ vertices. \Erdos{} conjectured that for each integer $g$ and for all large enough $n\equiv 1,3 \pmod 6$, there exists an $(n,3,2)$-Steiner system of girth at least $g \geq 3$. \Erdos{}' Conjecture was completely resolved by Kwan, Sah, Sawhney, and Simkin \cite{KSSS2024STS} (published in 2024). We defer a more detailed history of this conjecture until later in Section~\ref{ss:Erdos} of the introduction.

The common generalization of these two conjectures was dubbed the 
``\Erdos{} meets Nash-Williams' Conjecture'' by Glock, \Kuhn, and Osthus~\cite{GKO20Survey} in 2021. Our main result is the following:
\begin{thm}\label{thm:Main_ErdosNashWilliams}
For every integer $g \geq 3$ and real $\varepsilon > 0$, any sufficiently large  $K_3$-divisible graph $G$ on $n$ vertices with minimum degree
\[\delta(G) \geq \Big(\max\Big\{\delta^*_{K_{3}}, \frac{3}{4}\Big\}+\varepsilon\Big)\cdot n\]
admits a $K_3$-decomposition with girth at least $g$.
\end{thm}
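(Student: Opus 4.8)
The plan is to prove the theorem by the method of \emph{refined absorption}, carried out in a girth-aware fashion; this is the reduction advertised in the abstract --- reducing high-girth triangle decomposition to the fractional relaxation of Nash-Williams' conjecture --- so that inserting the Delcourt--Postle fractional bound~\cite{DP2021progress} gives the corollary, noted in the abstract, that minimum degree at least $0.82733n$ suffices. Write $\delta^*_{K_3}$ for the fractional triangle decomposition threshold; I take as given --- this is precisely the fractional statement being reduced to --- that every $n$-vertex graph with minimum degree at least $(\delta^*_{K_3}+o(1))n$ has a fractional $K_3$-decomposition. First I would reserve inside $G$ a sparse random \emph{absorbing structure}, together with the cliques and boosters that operate it, following the refined absorption framework of~\cite{DPI} but with two changes: every reserved gadget is chosen to have internal girth at least $g$, and all gadgets are placed by a sufficiently spread random process, so that with high probability no fewer than $g$ of their triangles --- together with triangles created in later stages --- ever span too few vertices. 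The hypothesis $\delta(G)\ge(3/4+\varepsilon)n$ is what makes these gadgets available: it supplies the common neighbourhoods and the switching configurations needed to build and operate the absorbers, and securing these is exactly the role played by the threshold $3/4$ in the Barber--\Kuhn--Lo--Osthus reduction~\cite{BKLO16}. The advantage of refined over iterative absorption is that this auxiliary threshold is pushed all the way down to the conjecturally optimal $3/4$. After the reservation, the remaining graph $G'$ still has minimum degree at least $(\max\{\delta^*_{K_3},3/4\}+\varepsilon/2)n$ and remains $K_3$-divisible.

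Next I would run a high-girth version of the semi-random (nibble) process on $G'$ to obtain a partial $K_3$-decomposition $\mathcal N$ covering all but an $o(1)$-fraction of the edges, arranged so that $\mathcal N$ together with the reserved gadgets has girth at least $g$ and so that the uncovered leftover $L$ has small maximum degree and is suitably quasirandom and well spread. This succeeds because, for each fixed $g$, there are only finitely many forbidden configurations (collections of $j<g$ triangles spanning at most $j+2$ vertices), and forbidding all of them removes only a negligible proportion of the admissible extensions at every step, so the standard nibble analysis still runs --- the same mechanism exploited by Kwan--Sah--Sawhney--Simkin~\cite{KSSS2024STS} in resolving \Erdos{}' conjecture.

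Then --- the technical core --- I would absorb $L$. Because $\delta(G)\ge(\delta^*_{K_3}+\varepsilon)n$, every subgraph encountered during the cover-down has a fractional $K_3$-decomposition; one converts such a fractional decomposition into an integral family of triangles covering $L$, at the price of a new, still sparse and well-spread leftover $L'$, which is then eliminated by the pre-reserved absorbers through a sequence of local switches. Every time a triangle is added --- in the nibble, in the cover-down, or in operating the absorbers --- one checks that no $j$ triangles now span at most $j+2$ vertices for any $j<g$; this is where the spread built into the reserved gadgets and into the cover-down is used, via a union bound over the bounded-size forbidden configurations.

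The step I expect to be the main obstacle is exactly this girth-compatibility. The booster gadgets and cover-down structures of ordinary refined absorption deliberately pack many triangles onto few vertices, so they are, naively, the worst offenders against high girth; they have to be redesigned --- or their random placement made sufficiently spread --- so that their own internal girth is at least $g$ and so that, jointly with one another and with the nibble output and the cover-down, they never realise a forbidden configuration. The hard part will be calibrating a notion of ``well spread'' strong enough to survive the union bound over all forbidden configurations, yet weak enough to be genuinely attainable when the minimum degree is only $(\max\{\delta^*_{K_3},3/4\}+\varepsilon)n$; the remaining ingredients --- tracking $K_3$-divisibility through the reservation, maintaining the minimum degree, and iterating the absorbers --- should be comparatively routine bookkeeping.
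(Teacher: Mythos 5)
Your high-level outline is recognisably in the spirit of the paper --- reserve a sparse random set, build girth-aware absorbers, run a girth-aware nibble, finish with a fractional-decomposition step --- and you correctly identify the role of the $3/4$ threshold (low rooted degeneracy of the gadgets) and the observation that forbidding finitely many small configurations should survive a nibble analysis. However, there are two concrete gaps that the paper explicitly calls out as the reasons neither the Barber--K\"uhn--Lo--Osthus nor the Kwan--Sah--Sawhney--Simkin proof can be ported directly, and your proposal falls into exactly these traps.

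First, your ``technical core'' converts a fractional $K_3$-decomposition of (the neighbourhood of) the leftover $L$ into an integral covering family. The only known general mechanism for such a conversion at the $\delta^*_{K_3}$ threshold is Haxell--R\"odl (or Yuster's variant), and the paper points out in its proof-overview discussion of Theorem~\ref{thm:NW_fractional} that the output of Haxell--R\"odl carries no girth guarantee whatsoever. So the integral family covering $L$ may itself realise forbidden $(j+2,j)$-configurations, and your later union bound has nothing to act on. The paper avoids a cover-down entirely: the leftover of the Forbidden Submatchings with Reserves step is arranged to lie \emph{inside the reserve $X$}, and the omni-absorber (\cref{def:OmniAbsorbers}) is built so that it decomposes $A\cup L$ with collective girth $\ge g$ for \emph{every} $K_3$-divisible $L\subseteq X$ at once; there is no ``new leftover $L'$'' and no iterative switching.

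Second, you assert that ``the standard nibble analysis still runs,'' but the nibble (more precisely, Forbidden Submatchings with Reserves, \cref{thm:ForbiddenSubmatchingReserves}) requires a set of triangles in which every edge lies in $(c\pm n^{-1/3})n$ of them --- the irregularity must be \emph{polynomially} small. At minimum degree $(3/4+\varepsilon)n$ the raw triangle counts are wildly irregular, and the classical Boosting Lemma of~\cite{GKLO16} only applies above $(1-\varepsilon)n$ for a tiny $\varepsilon$. Closing this gap is the paper's main technical contribution: it combines Lang's Inheritance Lemma (\cref{lem:LangInheritance}) with the new ``seeded'' fractional decompositions (\cref{lem:Seeded}, giving every triangle weight at least $\sigma/\binom{n-2}{1}$) and a fixing trick (\cref{lem:SeededFixed}) to obtain a \emph{balanced} fractional decomposition (\cref{lem:BalancedDecomposition}), from which the polynomially small irregularity of \cref{lem:NWRegBoostTreasury} follows --- and it is the seeding that lets one also avoid the \emph{dangerous} cliques shrunk by the high-girth omni-absorber, which your sketch does not address. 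Without some substitute for this machinery your nibble step has no regularity to work with and the argument does not close.
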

Here $\delta^*_{K_{3}}$ denotes the \textit{fractional $K_3$-decomposition threshold}, that is, the infimum of all real numbers $c$ such that every graph $G$ with minimum degree at least $c\cdot v(G)$ has a \emph{fractional $K_3$-decomposition} (an assignment of non-negative weights to the triangles of $G$ such that for each edge, the sum of the weights of triangles containing that edge is exactly $1$). \cref{thm:Main_ErdosNashWilliams} reduces the ``\Erdos{} meets Nash-Williams' Conjecture'' (or rather the version with minimum degree $cn$ for any constant $c > 3/4$) to the fractional relaxation of Nash-Williams' Conjecture, which is also the current state of Nash-Williams' Conjecture. Combining \cref{thm:Main_ErdosNashWilliams} with the best known bound for this fractional decomposition threshold leads to the following corollary. 

\begin{cor}[Corollary to~\cref{thm:Main_ErdosNashWilliams}]\label{cor:main}
    For every integer $g \geq 3$ and real $\varepsilon > 0$, every sufficiently large $K_3$-divisible graph $G$ on $n$ vertices with minimum degree 
    \[\delta(G) \ge \left( \frac{7+\sqrt{21}}{14} + \varepsilon \right) \cdot n\]
    admits a $K_3$-decomposition with girth at least $g$. 
\end{cor}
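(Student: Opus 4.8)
The plan is to deduce \cref{cor:main} directly from \cref{thm:Main_ErdosNashWilliams} by substituting in the current best upper bound on the fractional $K_3$-decomposition threshold. The only external input needed is the fractional relaxation result of Delcourt and Postle~\cite{DP2021progress}: every graph $G$ with $\delta(G) \ge \frac{7+\sqrt{21}}{14}\cdot v(G)$ admits a fractional $K_3$-decomposition, and hence, by the definition of $\delta^*_{K_{3}}$ as an infimum, $\delta^*_{K_{3}} \le \frac{7+\sqrt{21}}{14}$.

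The first step is to observe that $\frac{7+\sqrt{21}}{14} \approx 0.82733 > \frac{3}{4}$, so that
\[
\max\Big\{\delta^*_{K_{3}}, \tfrac{3}{4}\Big\} \;=\; \delta^*_{K_{3}} \;\le\; \frac{7+\sqrt{21}}{14}.
\]
Consequently, any graph $G$ on $n$ vertices with $\delta(G) \ge \big(\frac{7+\sqrt{21}}{14} + \varepsilon\big)n$ also satisfies $\delta(G) \ge \big(\max\{\delta^*_{K_{3}}, 3/4\} + \varepsilon\big)n$. Applying \cref{thm:Main_ErdosNashWilliams} with this same $\varepsilon$ (and $n$ sufficiently large in terms of $g$ and $\varepsilon$) then produces a $K_3$-decomposition of $G$ with girth at least $g$, which is exactly the conclusion of \cref{cor:main}.

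There is no genuine obstacle at the level of the corollary itself: all of the difficulty is already encapsulated in \cref{thm:Main_ErdosNashWilliams}, whose proof via refined absorption is the technical heart of the paper, together with the Delcourt--Postle fractional bound, which is invoked as a black box. It is worth noting that any future improvement to the fractional $K_3$-decomposition threshold $\delta^*_{K_{3}}$ would feed straight through \cref{thm:Main_ErdosNashWilliams} and sharpen the constant in \cref{cor:main} accordingly; in the optimal case $\delta^*_{K_{3}} \le \frac{3}{4}$ this would bring the bound down to $\frac{3}{4} + \varepsilon$, matching the ``\Erdos{} meets Nash-Williams'' Conjecture up to the $\varepsilon$ slack.
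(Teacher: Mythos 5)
Your proof is correct and takes the same route the paper intends: substitute the Delcourt--Postle bound $\delta^*_{K_3}\le\frac{7+\sqrt{21}}{14}$ into \cref{thm:Main_ErdosNashWilliams}, using $\frac{7+\sqrt{21}}{14}>\frac34$ to drop the $\max$. (One tiny nit: the middle equality $\max\{\delta^*_{K_3},3/4\}=\delta^*_{K_3}$ actually needs the separate lower bound $\delta^*_{K_3}\ge 3/4$ from Graham's $C_4$-blowup example rather than the comparison $\frac{7+\sqrt{21}}{14}>\frac34$; however, the weaker inequality $\max\{\delta^*_{K_3},3/4\}\le\frac{7+\sqrt{21}}{14}$, which is all you use, follows directly and correctly from the two facts you cite.)
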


Both of these conjectures and their generalizations to general $q$ and $r$ have an extensive history. Thus, in the remaining sections of the introduction, we describe the history of each of the two conjectures. Then we discuss the history of the combined conjectured. Finally, we finish with a discussion of the novelties in our proof.

\subsection{History of Nash-Williams' Conjecture}\label{subsec:NW}

It is natural to wonder about $K_3$-decompositions of graphs other than the complete graph. In contrast to Kirkman's theorem, $K_3$-divisibility is not sufficient to guarantee a $K_3$-decomposition: for example,~$C_6$ is $K_3$-divisible but not $K_3$-decomposable, and more generally, any $K_3$-divisible triangle-free graph does not even contain one triangle. 

As the Existence Conjecture concerns the decompositions of complete graphs, it is natural in the spirit of that conjecture to consider decompositions of nearly complete graphs. That said, even finding one triangle or clique or vertex-disjoint sets of triangles or cliques has a storied history. Indeed, the threshold for the number of edges to guarantee even one triangle is a famous classical result widely attributed to Mantel~\cite{mantel} from 1907 who showed that every graph on $n$ vertices with at least $\lfloor \frac{n^2}{4} \rfloor +1$ edges contains a triangle (which is tight). In his seminal result of 1942, Tur{\'a}n~\cite{turan1941extremal} generalized this by determining the exact number of edges needed to guarantee the existence of a $K_q$ in a graph - roughly around $\big(1-\frac{1}{q-1}\big)n$. This became known as Tur\'an's theorem, the numbers as Tur\'an numbers, and launched an entire subfield of extremal combinatorics known as Tur\'an theory.

Still, instead of finding one triangle, one might ask to find a collection of vertex-disjoint triangles spanning all vertices (called a \emph{triangle factor}). For that problem, it is natural to impose a minimum degree condition on the dense graph since an isolated vertex would not be in any triangle. The question of what minimum degree forces a triangle factor was famously resolved by Corr{\'a}di and Hajnal~\cite{CH63} in 1963 who showed that any graph $G$ on $n$ vertices with $3 ~|~n$ (a necessary divisibility condition) and $\delta(G) \ge \frac{2}{3}\cdot n$ contains a triangle factor.  As for the minimum degree necessary to force the existence of a \emph{$K_q$-factor} (a collection of vertex-disjoint copies of $K_q$ spanning all vertices), the Hajnal-Szemer\'edi Theorem~\cite{HS70} from 1970 states that any graph $G$ on $n$ vertices with $q ~|~n$ and and $\delta(G) \ge \frac{q-1}{q}\cdot n$ contains a $K_q$-factor.

Thus, it was only natural to wonder what minimum degree would force a triangle decomposition (provided the necessary divisibility conditions are satisfied). In 1973, Nash-Williams conjectured the following. 

\begin{conj}[Nash-Williams~\cite{nash1970unsolved}]
    Every sufficiently large $K_3$-divisible graph $G$ on $n$ vertices with $\delta(G)\geq \frac34n$ admits a $K_3$-decomposition.
\end{conj}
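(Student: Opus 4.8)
This is Nash-Williams' Conjecture; it remains open, and what follows is the strategy behind all partial results towards it (including \cref{thm:Main_ErdosNashWilliams} and \cref{cor:main}), together with an honest account of the one missing step. The plan has two ingredients. \textbf{(i)} An \emph{absorption} reduction: for any fixed constant $c>3/4$, if every large graph $G$ with $\delta(G)\geq c\cdot v(G)$ admits a \emph{fractional} $K_3$-decomposition, then every sufficiently large $K_3$-divisible graph $G$ with $\delta(G)\geq c\,n$ admits an (integral) $K_3$-decomposition. \textbf{(ii)} A proof that $c=3/4$ is admissible in~(i), i.e.\ that the fractional $K_3$-decomposition threshold satisfies $\delta^*_{K_3}\leq 3/4$. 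Granting (ii), ingredient~(i) applies with $c=3/4+\varepsilon$ for every $\varepsilon>0$, and a finer analysis at the extremal degree (discussed below) removes the $\varepsilon$; the matching lower bound is the standard construction of a $K_3$-divisible, non-$K_3$-decomposable graph with minimum degree essentially $\tfrac34 n$, so the constant $3/4$ cannot be lowered.

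For ingredient~(i) I would run the refined-absorption machinery of Delcourt--Postle~\cite{DPI} (equivalently, the iterative-absorption framework of Barber--K\"uhn--Lo--Osthus~\cite{BKLO16}): reserve a sparse, suitably random ``absorbing'' subgraph $A\subseteq G$; apply a R\"odl-nibble packing to $G-A$, guided by the hypothesised fractional $K_3$-decomposition of $G$ so that the packing covers all of $G-A$ except a very sparse leftover $L$ by edge-disjoint triangles; and then use that $A$ was built so that, for every sparse $K_3$-divisible $L$, the graph $A\cup L$ is $K_3$-decomposable --- which applies here once one checks the parities so that $A\cup L$ is indeed $K_3$-divisible. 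The hypothesis $\delta(G)\geq(3/4+o(1))n$ is exactly what supplies the local abundance of triangles, and of the triangle gadgets used to construct $A$, that the absorber demands. Up to lower-order terms this part is by now routine and robust to the precise value of the threshold; obtaining the \emph{exact} bound $\delta(G)\geq\tfrac34 n$ rather than $\tfrac34 n+\varepsilon n$ needs a careful treatment of graphs whose minimum degree sits right at the threshold, in line with the expectation that the integral $K_3$-decomposition threshold equals the fractional one.

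Ingredient~(ii) is the genuine obstacle, and the reason the conjecture is still open. One must show that $\delta(G)\geq\tfrac34 n$ forces a nonnegative weighting of the triangles of $G$ whose values sum to $1$ across every edge; by LP duality this is equivalent to ruling out an edge weighting of $G$ that is nonpositive on every triangle yet has positive total sum. The natural constructive route starts from a base weighting on triangles --- roughly uniform, then corrected for the variation in the number $t(e)$ of triangles through each edge $e$ --- and iteratively fixes the per-edge surplus, using high minimum degree to make the correction step contractive. This is exactly where current methods stall: Dross obtained $\delta^*_{K_3}\leq 0.9$, and Delcourt--Postle~\cite{DP2021progress} the present record $\tfrac{7+\sqrt{21}}{14}\approx 0.82733$, but closing the remaining gap to $3/4$ appears to require a genuinely new idea --- for instance exploiting the global structure of graphs whose minimum degree is near $\tfrac34 n$ rather than only local triangle counts. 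I expect this fractional bound to be the entire difficulty: the moment $\delta^*_{K_3}\leq 3/4$ is established, ingredient~(i) delivers the conjecture, and via \cref{thm:Main_ErdosNashWilliams} even its high-girth strengthening.
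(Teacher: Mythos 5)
The statement you were asked about is a \emph{conjecture}: the paper does not prove it, and neither can you --- it is Nash-Williams' Conjecture, still open. You correctly recognize this, and your outline of the two-ingredient strategy is an accurate summary of the state of the art as the paper itself presents it: the absorption reduction to the fractional threshold is \cref{thm:NW_fractional} (Barber--K\"uhn--Lo--Osthus, reproved here via refined absorption), and the best known fractional bound is $\delta^*_{K_3}\le\frac{7+\sqrt{21}}{14}$ from~\cite{DP2021progress}. So as an honest assessment your proposal is fine; as a proof it is not one, and you say so.

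Two points deserve sharpening, though. First, your ingredient~(ii) is indeed the acknowledged open problem, but note also that the lower-bound construction (Graham's $C_4$-blowup, described in Section~\ref{subsec:NW}) shows $\delta^*_{K_3}\ge 3/4$, so proving $\delta^*_{K_3}=3/4$ is exactly what is needed --- your phrasing ``$\delta^*_{K_3}\le 3/4$'' is the right target. Second, and more seriously, your parenthetical claim that ``a finer analysis at the extremal degree removes the $\varepsilon$'' is itself an unproven assertion, not a routine step. Every known reduction (including \cref{thm:NW_fractional} and \cref{thm:Main_ErdosNashWilliams}) loses an additive $\varepsilon n$ in the minimum degree, coming from the slack needed to embed absorbers, run the nibble with reserves, and apply the boosting/inheritance machinery. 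Even if $\delta^*_{K_3}=3/4$ were established tomorrow, the conjecture at the exact threshold $\delta(G)\ge\frac34 n$ would \emph{not} follow from the existing machinery; one would only get the asymptotic version with $(\frac34+\varepsilon)n$. Removing the $\varepsilon$ would require a separate stability or extremal analysis near the threshold that no one has carried out, so you should present that as a second open ingredient rather than as a finishing touch.
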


We note that this conjecture of Nash-Williams also has a connection to another famous result: Dirac~\cite{dirac1952some} showed in 1952 that if $G$ is a graph on $n\ge 3$ vertices and $\delta(G) \ge n/2$, then $G$ is Hamiltonian (i.e.~it has a cycle spanning all vertices). Dirac's result implies the much easier result that if $G$ satisfies the necessary divisibility conditions for the existence of a perfect matching (namely that $n$ is even) and $\delta(G) \ge n/2$, then $G$ has a perfect matching. Since a perfect matching is equivalent to an $(n, 2, 1)$-Steiner system, Dirac's Theorem provides a minimum degree threshold for when the obvious necessary divisibility condition for the existence of an $(n, 2, 1)$-Steiner system becomes sufficient. Nash-Williams' Conjecture can then be seen as one approach to generalize this to $(n, 3, 2)$-Steiner systems. Indeed, one could view these results of Dirac~\cite{dirac1952some}, Corr\'adi-Hajnal~\cite{CH63}, Hajnal-Szemer\'edi~\cite{HS70} and their ilk, but also results such as the famous result of R\"odl-Ruci\'nski-Szemer\'edi~\cite{RRS06} on the minimum degree threshold for hypergraph matchings, as the fundamental questions of $1$-uniform extremal design theory (covering all the vertices); from that perspective, one could also view all of Tur\'an theory as $0$-uniform extremal design theory (since covering the empty set with cliques is equivalent to finding one clique). 

In this way, {\bf Nash-Williams' Conjecture is the central question in extremal design theory} as borne out by its generalizations (to $K_q$ or general uniformities) and reductions to it (such as our main result). On the other hand, Nash-Williams' Conjecture has relevance to classical design theory. Notably, Nash-William's Conjecture implies a maximum degree bound for the related \textit{completion problem for designs:} assuming Nash-Williams' Conjecture is true, any $K_3$-packing of $K_n$ with maximum degree at most $n/4$ can be extended to a full $K_3$-decomposition. In other words, the Nash-Williams' Conjecture relates to the threshold for which a partial $(n,3,2)$-Steiner system can always be completed to a full system. We note that Nash-Williams' Conjecture is stronger than this completion problem since the complement of the graph in Nash-Williams' Conjecture may not admit a $K_3$-decomposition (e.g. it could be $C_6$ or more generally triangle-free).

We should also note that if true, Nash-Williams' Conjecture is tight. In an addendum to Nash-Williams' article~\cite{nash1970unsolved}, Graham remarked that the fraction $3/4$ would be best possible, as seen by the following construction: Let $G$ be a blowup of $C_4$, replacing every vertex by a complete graph on $6m+3$ vertices for some $m\geq 1$ and every edge by a complete bipartite graph, and call an edge between two parts {\em crossing}. The graph $G$ is a $K_3$-divisible graph on $n=24m+12$ vertices and with minimum degree $\frac{3n}{4}-1$. Every triangle in $G$ contains at most $2$ crossing edges. However, as $4\binom{6m+3}{2}<\frac12 4(6m+3)^2$, there are not enough non-crossing edges available to decompose all crossing edges into triangles. 

We note that a necessary condition for the existence of a $K_3$-decomposition is the existence of a fractional $K_3$-decomposition (defined earlier). The same counting argument shows that the above construction does not even admit a fractional $K_3$-decomposition, and hence for the fractional threshold we have that $\delta_{K_3}^*\ge 3/4$. Improvement to the upper bound on $\delta_{K_3}^*$ also has a long history. In 2014, Garaschuk~\cite{garaschuk2014linear} showed $\delta_{K_3}^*\le 0.956$; in 2016, Dross~\cite{dross2016fractional} improved this $0.9$; in 2020, Dukes and Horsley~\cite{dukes_minimum_2020} improved this further to $0.852$. Currently the best known upper bound is due to Delcourt and Postle~\cite{DP2021progress} from 2021 who proved that $\delta_{K_3}^*\leq \frac{7+\sqrt{21}}{14}\approx0.82733$. 

In a seminal work from 2001, Haxell and R\"{o}dl~\cite{HR01} combined Szemer\'{e}di’s Regularity Lemma and R\"odl's nibble method with other ingenious arguments to show that the maximum number of edges of a graph $G$ that can be fractionally covered with copies of a graph $F$ is approximately equal to the number that can be covered by edge-disjoint copies of $F$. This implies that if $G$ admits a fractional $F$-decomposition, then $G$ admits an approximate $F$-decomposition (see the work of Yuster~\cite{Yuster05} from 2005 for a shorter proof and generalization to decompositions into families of graphs).

Returning to the status of Nash-Williams' Conjecture, an enormous breakthrough was the work of Barber, K\"{u}hn, Lo, and Osthus~\cite{BKLO16} who used iterative absorption to show how to complete an approximate $K_3$-decomposition into a full one, hence tying the existence of $K_3$-decompositions to the existence of fractional $K_3$-decompositions as follows.

\begin{thm}[Barber, K\"{u}hn, Lo, and Osthus~\cite{BKLO16}]\label{thm:NW_fractional}
    For every real $\varepsilon>0$, every sufficiently large $K_3$-divisible graph~$G$ on~$n$ vertices with minimum degree at least
    $(\max\{\delta^*_{K_3},\frac{3}{4}\}+\varepsilon)n$
    admits a $K_3$-decomposition.
\end{thm}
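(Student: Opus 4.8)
The plan is to prove this by \emph{iterative absorption}, following the scheme of Barber, K\"uhn, Lo, and Osthus. Set $\delta := \max\{\delta^*_{K_3},\tfrac34\}+\varepsilon$, so $\delta(G)\ge\delta n$. First I would fix a \emph{vortex}: a nested sequence $V(G)=U_0\supseteq U_1\supseteq\dots\supseteq U_\ell$ obtained by repeated uniform random sparsification, with $|U_i|\approx\mu|U_{i-1}|$ for a tiny constant $\mu=\mu(\varepsilon)>0$ and $|U_\ell|$ a large constant. A Chernoff bound together with a union bound over vertices shows that, with positive probability, for every $i$ and every $v\in U_{i-1}$ one has $|N_G(v)\cap U_i|\ge(\delta-o(1))|U_i|$; in particular each $G[U_i]$ again satisfies essentially the same relative minimum-degree condition, and common neighbourhoods behave as expected inside each $U_i$.

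Before touching the bulk of $G$, I would reserve an \emph{absorbing graph} $A\subseteq G$ with the following property: for every $K_3$-divisible graph $L$ with $V(L)\subseteq U_\ell$ and $E(L)\cap E(A)=\emptyset$, the graph $A\cup L$ has a $K_3$-decomposition. Such an $A$ is assembled from many edge-disjoint local gadgets --- triangles through prescribed edges, ``diamonds'', and \emph{transformers} that rewrite one configuration of leftover edges into another --- chosen at random and in such abundance that, whichever leftover $L$ materialises, enough gadgets survive to carry out the rewriting and then decompose what remains. The construction uses only $\delta(G)\ge(\tfrac34+\varepsilon)n$: this bound guarantees that every edge lies in $\Omega(n)$ triangles, robustly enough that the required disjoint collection of gadgets exists by a probabilistic argument. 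One arranges that $e(A)=o(n^2)$ and that deleting $A$ from $G$ leaves all later degree and common-neighbourhood hypotheses intact.

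The engine of the iteration is a \emph{cover-down lemma}: given a graph $G'$ with $\delta(G')\ge(\tfrac34+\varepsilon)|V(G')|$ and a suitably small subset $U'\subseteq V(G')$ into which every vertex has good degree, there is a set of edge-disjoint triangles of $G'$ covering exactly the edges of $G'$ not contained in $U'$, whose leftover inside $U'$ is (sparse and) $K_3$-divisible. I would prove it by first setting aside a random reservoir of edges near $U'$, then applying the Haxell--R\"odl/Yuster nibble to $G'$ --- this is where the hypothesis $\delta^*_{K_3}+\varepsilon$ enters, since via the existence of a fractional $K_3$-decomposition it yields an approximate $K_3$-decomposition whose uncovered edges have small maximum degree --- and finally \emph{cleaning up}: each of the few remaining edges not contained in $U'$ is placed into a triangle reaching into $U'$ through reservoir edges, which can be done edge-disjointly because every such edge has $\Omega(|U'|)$ common neighbours in $U'$; the threshold $\tfrac34$ is what makes this cleanup, together with a final parity adjustment forcing the leftover to be genuinely $K_3$-divisible, go through. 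Applying the cover-down lemma successively to $G[U_0]-A,\ G[U_1]-A,\dots,\ G[U_{\ell-1}]-A$ with $U'=U_1,U_2,\dots,U_\ell$ decomposes every edge of $G-A$ outside $U_\ell$ and accumulates a $K_3$-divisible leftover $L\subseteq G[U_\ell]$ that is edge-disjoint from $A$. By the defining property of $A$, the graph $A\cup L$ has a $K_3$-decomposition; together with all triangles produced along the way, this gives a $K_3$-decomposition of $G$.

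The main obstacle is the absorber: constructing one structure $A$ on only $o(n^2)$ edges that absorbs \emph{every} $K_3$-divisible leftover on the fixed constant-size set $U_\ell$, using nothing beyond $\delta(G)\ge(\tfrac34+\varepsilon)n$. This is where the transformer gadgets and the probabilistic packing argument do the real work, and it is also the step where the $\tfrac34$ bound is genuinely needed; it is tight, since Graham's $C_4$-blowup already shows $\delta^*_{K_3}\ge\tfrac34$ and that no smaller minimum-degree hypothesis can suffice. A secondary, more bookkeeping, difficulty is to arrange the vortex, the reservoirs, the absorber, and the successive nibbles so that the minimum-degree, common-neighbourhood, sparsity, and $K_3$-divisibility conditions are all simultaneously preserved from each stage of the iteration to the next.
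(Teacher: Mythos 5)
Your outline is, in essence, the original iterative-absorption proof of Barber, K\"uhn, Lo, and Osthus, and as such it is a viable route to \cref{thm:NW_fractional}; but it is genuinely different from the proof given in this paper. Here the theorem is reproved by \emph{refined absorption}: one reserves a single random sparse subgraph $X\subseteq G$ (\cref{lem:RandomXHighMinDeg}); builds in one step a $C$-refined omni-absorber $A$ for $X$ \emph{inside} $G$ (\cref{thm:NWRefinedEfficientOmniAbsorber}), obtained by taking an omni-absorber in $K_n$ and re-embedding it into $G$ via fake edges and private absorbers of rooted degeneracy at most $2q-2$; regularity-boosts $G\setminus(A\cup X)$ using the Inheritance Lemma (\cref{lem:LangInheritance}) to get a low-weight fractional decomposition and hence a near-regular family of triangles (\cref{lem:NWRegBoostConstant}); and finishes with nibble-with-reserves plus the Lov\'asz Local Lemma and the defining property of the omni-absorber. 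Your route instead needs the vortex, a cover-down lemma applied at every scale, and a bespoke absorber handling every $K_3$-divisible leftover on the constant-size final set; the refined-absorption route replaces the whole iteration by one absorption step, and it is precisely this one-step structure that the paper then upgrades to the high-girth setting (girth boosters cannot be threaded through the iterative scheme, as discussed in the introduction). Both approaches use the same two inputs ($\delta^*_{K_3}$ for the approximate/fractional part, $3/4+\varepsilon$ for the absorbing gadgets), so neither buys a better constant; yours reproduces the known proof, the paper's buys independence from iterative absorption and extendability to girth.

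One step of your sketch has a real soft spot as written: Haxell--R\"odl (or Yuster's proof) gives a $K_3$-packing covering all but $o(n^2)$ edges, but it does \emph{not} guarantee that the uncovered graph has small maximum degree, and your clean-up step --- putting each leftover edge into a triangle reaching into $U'$ through reservoir edges --- needs exactly that: a vertex carrying $\Theta(n)$ leftover edges cannot be treated edge-disjointly inside a set of size $\mu n$. This is fixable (BKLO do genuine extra work at this point, and the remark following \cref{lem:NWRegBoostConstant} in this paper sketches one remedy: discard the triangles at vertices of large uncovered degree and re-cover their neighbourhoods with a low-weight fractional Hajnal--Szemer\'edi-type argument), but the claim that the nibble output automatically has small leftover maximum degree is not justified and should be repaired before the cover-down step can be run.
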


As noted before, $\delta_{K_3}^* \ge 3/4$ and so this value is redundant in the theorem above but we include it for completeness to denote the value required from the absorber part of the proof (we use the same formulation in our main results for clarity). Combined with the result of Delcourt and Postle~\cite{DP2021progress}, this shows that every $K_3$-divisible graph $G$ with $\delta(G)\ge 0.82733n$ (and $n$ large enough) admits a $K_3$-decomposition. Given Theorem~\ref{thm:NW_fractional}, improving the fractional bound is now the barrier to improving the bound in Nash-Williams' Conjecture. 

One might wonder about generalizing Nash-Williams' Conjecture to all graph cliques (such a result would generalize Wilson's theorem). To that end, we have to recall the correct notion of divisibility. For a graph $F$, a graph $G$ is \emph{$F$-divisible} if $e(F)~|~e(G)$, and the greatest common divisor of the degrees of the vertices in~$F$, denoted by~$\gcd(F)$, divides $d_G(v)$ for every vertex $v$ of $G$. A folklore generalization of Nash-Williams' Conjecture 
asserts that for each integer $q\ge 3$, every sufficiently large $K_q$-divisible graph $G$ on $n$ vertices with $\delta(G)\geq \big(1-\frac{1}{q+1}\big)n$ admits a $K_q$-decomposition. In 2005, Yuster~\cite{Yuster2005asymptotically} provided examples showing that this threshold would be best possible for every $q\geq 3$. 

Once again, study of the fractional relaxation is paramount. For a graph $F$, a {\em fractional $F$-decomposition} of a graph $G$ is an assignment of non-negative weights to the copies of $F$ of $G$ such that for each edge $e$ of $G$, the sum of the weights of copies of $F$ containing $e$ is exactly $1$. The {\em fractional $F$-decomposition threshold $\delta^*_F$} is defined as the infimum of all $\delta\in [0,1]$  such that there exists an integer $n_0$  such that for all integers $n\ge n_0$ and graphs~$G$ on~$n$ vertices with minimum degree $\delta(G)\ge \delta\cdot n$ has a fractional $F$-decomposition. A simple blow-up argument shows that the $n_0$ restriction is unnecessary when $F$ is a clique (hence the earlier definition for triangles). 

Yuster's constructions actually provide obstructions in the fractional setting as well, proving that for every $q\geq 3$ we have $\delta_{K_q}^*\geq 1-\frac{1}{q+1}$. As for an upper bound on $\delta_{K_q}^*$, there was a series of works by Yuster~\cite{Yuster2005asymptotically}, Dukes~\cite{dukes_minimum_2020}, and Barber, K\"{u}hn, Lo, Montgomery, and Osthus~\cite{BKLMO17}; the current best known bound is by
Montgomery~\cite{montgomery_fractional_2019}  from 2019 who showed that for every $q\geq 4$ we have $\delta_{K_q}^*\leq 1-\frac{1}{100q}$ (notably the first and only upper bound proof to yield a bound whose denominator is linear in $q$). 

As to finding $K_q$-decompositions, Glock, K\"{u}hn, Lo, Montgomery, and Osthus~\cite{GKLMO19} used iterative absorption to prove that every sufficiently large $K_q$-divisible graph~$G$ on~$n$ vertices with minimum degree at least $(\max\{\delta_{K_q}^*,1-\frac{1}{q+1}\}+\varepsilon)n$ admits a $K_q$-decomposition. This improved the earlier work of Barber, K\"{u}hn, Lo, and Osthus~\cite{BKLO16} (who showed the same but with $1-\frac{1}{3q-3}$). 

As for general graphs $F$, Wilson~\cite{W75} in 1975 extended his proof to show that for any graph~$F$, every sufficiently large $F$-divisible complete graph admits an $F$-decomposition. In 2012, Yuster~\cite{Yuster12} showed for any graph $F$ that $\delta_F^* \le \delta_{K_{\chi(F)}}$, with $\chi(F)$ the chromatic number of $F$. In 2019 using Yuster's work and iterative absorption, Glock, K\"{u}hn, Lo, Montgomery, and Osthus~\cite{GKLMO19} generalized Wilson's theorem and their $K_q$ decomposition result mentioned above by showing that for any graph $F$ where we let $q:= \chi(F)$, every sufficiently large $F$-divisible graph~$G$ on~$n$ vertices with minimum degree at least $(\max\{\delta^*_{K_q},1-\frac{1}{q+1}\}+\varepsilon)n$ admits a $K_q$-decomposition. Note this is analogous to how the \Erdos{}-Stone–Simonovits theorem (first explicitly proven and linked to Tur\'an numbers by Erd\H{o}s and Simonovits~\cite{ES66} in 1966, but following easily from the earlier work of Erd\H{o}s and Stone \cite{ES46} from 1946) generalizes Tur\'an's theorem from $K_q$ to general $F$; this demonstrates the importance of determining $\delta_{K_q}^*$. We refer the reader to the survey by Glock, K\"{u}hn, and Osthus~\cite{GKO20Survey} for further information on what is known for general $F$.

\subsection{History of Erd\H{o}s' Conjecture}\label{ss:Erdos}

Another stream of research in design theory concerns finding designs that avoid certain substructures (and so might be termed \emph{structural design theory}); for example, much research has gone in to finding and studying Latin squares that avoid 2 by 2 subsquares (called \emph{intercalates}) or more generally any Latin subsquare; avoiding the latter is the subject of a notorious conjecture of Hilton (e.g., see~\cite{DK76} or~\cite{AW23} for the history) whose proof was only recently concluded by Allsop and Wanless~\cite{AW23}. As for Steiner systems, it was of great interest to prove the existence of Steiner triple systems that are \emph{Pasch-free}, that is, they avoid the \emph{Pasch configuration}, the unique $K_3$-packing of $4$ triangles on $6$ vertices. In 1977, Brouwer~\cite{brouwer1977steiner} proved that Pasch-free Steiner triple systems exist for all $n\equiv 3\mod 6$;
while only in 2000 the conjecture was resolved by Grannell, Griggs, and Whitehead~\cite{GGW00AntiPasch} who showed the existence for every $n\equiv 1\mod 6$, $n\neq 7,13$, (after important progress by Ling, Colbourn, Grannell, and Griggs~\cite{LCGG00construction} who showed that exceptions must lie in the residue classes $13, 31, 67 \mod 72$). It is natural then to consider more generally what substructures can be avoided in Steiner systems.

To that end, we say a {\em $(j,i)$-configuration} in a set $\mc{P}$ of sets on a ground set $X$ is a set of $i$ elements of $\mc{P}$ spanning at most $j$ elements of $X$. Thus a $(j,i)$-configuration in a $K_3$-packing is a set of $i$ triangles spanning at most $j$ vertices. Observe that one triangle is a $(4, 1)$-configuration and any two triangles sharing a vertex is a $(5,2)$-configuration. Indeed, every $(n,3,2)$-Steiner system contains an $(i+3,i)$-configuration for every $1 \le i \le n-3$; this observation prompted \Erdos{} to study $(i+2,i)$-configurations in the 1970s. The \textit{girth} of a triangle packing is the smallest integer $i\ge 2$ such that the packing contains an $(i+2, i)$-configuration. Note that a family of triangles having the property of containing no $(4,2)$-configuration is equivalent to being a triangle packing; similarly having no $(6,4)$-configuration is equivalent to being Pasch-free. 

In 1973, Brown, Erd\H{o}s, and S\'{o}s~\cite{BES73} proved that for every $i\geq 2$, there exist constants $C_i>c_i> 0$ such
that every triangle-packing on $n$ vertices of size at least $C_i\cdot n^2$ has an $(i+2,i)$-configuration, while for every integer~$n$ there exists a triangle-packing of size $c_i\cdot n^2$ with no $(i+2,i)$-configuration. We refer the reader to the recent work of Delcourt and Postle~\cite{DP22BES} from 2022 for further information on this problem. 

Stemming from this work, \Erdos~\cite{E73} in 1973 conjectured the existence of $K_3$-decompositions of $K_n$ with arbitrarily large girth (provided of course that $n$ is sufficiently large compared to the girth). In 1993, Lefmann, Phelps, and R{\"o}dl \cite{LPR93} showed that for every $g \ge 2$, there exists a constant $c_g$ depending on $g$ such that for every $n \ge 3$, there exists a partial Steiner triple system $S$ with $|S| \ge c_g \cdot n^2$ and girth at least $g$ (with $c_g \rightarrow 0$ as $g\rightarrow \infty$). More recently, in 2019 Glock, K{\"u}hn, Lo, and Osthus~\cite{GKLO20} and independently Bohman and Warnke~\cite{BW19} settled the approximate version of \Erdos{}' Conjecture. In 2022, Kwan, Sah, Sawhney, and Simkin \cite{KSSS2024STS} impressively proved the conjecture in full using the method of iterative absorption.

\begin{conj}[\Erdos{}' Conjecture - proved by Kwan, Sah, Sawhney, and Simkin~\cite{KSSS2024STS}]\label{thm:KSSS}
    For every integer $g\geq 3$, every sufficiently large $K_3$-divisible complete graph admits a $K_3$-decomposition with girth at least $g$. 
\end{conj}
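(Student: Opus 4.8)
The plan is to deduce \cref{thm:KSSS} from the paper's main result \cref{thm:Main_ErdosNashWilliams} by specialising to the complete graph, and then to indicate where the actual work in \cref{thm:Main_ErdosNashWilliams} goes so that the deduction is genuinely self-contained. First I would verify the hypotheses of \cref{thm:Main_ErdosNashWilliams} for $G=K_n$. The divisibility hypothesis is immediate: $K_n$ is $K_3$-divisible exactly when $3\mid\binom n2$ and $n-1$ is even, which is precisely the congruence $n\equiv 1,3\pmod 6$ in the statement. For the degree hypothesis, fix a constant $\varepsilon_0>0$ small enough that $\max\{\delta^*_{K_3},\tfrac34\}+\varepsilon_0<1$; this is possible since $\delta^*_{K_3}\le\frac{7+\sqrt{21}}{14}<1$ by Delcourt and Postle (indeed any bound $\delta^*_{K_3}\le 1-c$ with $c>0$ fixed, such as Garaschuk's $0.956$, would suffice). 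Then for all sufficiently large $n$ we have $\delta(K_n)=n-1\ge(\max\{\delta^*_{K_3},\tfrac34\}+\varepsilon_0)\,n$, so \cref{thm:Main_ErdosNashWilliams} applied with this $\varepsilon_0$ and the given $g$ produces a $K_3$-decomposition of $K_n$ with girth at least $g$, which is exactly \cref{thm:KSSS}.

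Since this has pushed all the content into \cref{thm:Main_ErdosNashWilliams}, the substantive task is to prove that theorem, and for $K_n$ I would follow the refined-absorption strategy the paper advertises. There are three ingredients: (i) reserve a small random subgraph and assemble it into an absorber, built from gadgets each of which is a graph carrying two $K_3$-decompositions whose symmetric difference is a single prescribed triangle (the ``booster''/``transformer'' gadgets suggested by the figures), chosen so that every gadget individually has girth at least $g$ and so that distinct gadgets sit on vertex sets generic enough that no bounded union of them is dense; (ii) on the remaining graph, run a R\"odl-nibble or semi-random greedy process to obtain an approximate $K_3$-decomposition of girth at least $g$, using the high-girth nibble analysis of Glock, \Kuhn, Lo, and Osthus and of Bohman and Warnke to stop dense local configurations from accumulating in the leftover; (iii) note that the uncovered leftover is sparse, clean it up with a girth-respecting cover-down step, and then rewrite it using the absorber's flexibility so that the final decomposition still has girth at least $g$. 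For $K_n$ one can additionally lean on Wilson's theorem to decompose bounded ``correction'' gadgets exactly, which trivialises the divisibility bookkeeping.

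The main obstacle is that every standard decomposition tool---the nibble, the absorbing gadgets, and the cover-down lemma---is blind to girth, so each must be re-engineered to certify that no $i$ triangles ever span at most $i+2$ vertices for $i<g$. Concretely: the nibble's error edges must be shown not to concentrate into a forbidden $(i+2,i)$-configuration (via a second-moment or container-type estimate, as in Glock, \Kuhn, Lo, and Osthus); the absorber must be assembled from girth-$\ge g$ gadgets placed randomly enough that no bounded union of gadgets together with residual edges is dense; and each boosting move that reroutes triangles must be checked not to create a short configuration. Showing that these girth guarantees survive the final gluing of the absorber to the residual graph is where I expect the real difficulty to lie---whereas the quantitative minimum-degree conditions everywhere else are trivially met since $\delta(K_n)=n-1$ is as large as possible.
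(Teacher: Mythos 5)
Your reduction of \cref{thm:KSSS} to \cref{thm:Main_ErdosNashWilliams} via $G=K_n$ is exactly the paper's (implicit) argument: $K_n$ is $K_3$-divisible iff $n\equiv 1,3\pmod 6$, and $\delta(K_n)=n-1$ comfortably exceeds $(\max\{\delta^*_{K_3},3/4\}+\varepsilon_0)n$ for any fixed $\varepsilon_0>0$ with $\max\{\delta^*_{K_3},3/4\}+\varepsilon_0<1$, which exists since $\delta^*_{K_3}\le \frac{7+\sqrt{21}}{14}<1$. That first paragraph is complete and correct, and it is precisely how the paper regards \cref{thm:KSSS} as a consequence of its main theorem.

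Where your proposal departs from the paper is in the subsequent sketch of how \cref{thm:Main_ErdosNashWilliams} itself is established. You describe a girth-aware nibble, a cover-down step, and randomly placed ``booster''/``transformer'' gadgets in the style of iterative absorption---essentially the Kwan--Sah--Sawhney--Simkin blueprint adapted to avoid short configurations. The paper deliberately avoids this route. It instead uses the single-step refined absorption framework: one builds a $C$-refined $K_q$-omni-absorber inside $G$ (\cref{thm:NWRefinedEfficientOmniAbsorber}), boosts its collective girth by attaching $g$-spheres/rooted boosters chosen via the quantum omni-booster machinery (\cref{thm:HighGirthAbsorber} and \cref{cor:ExistenceOmniAbsorber}), replaces the nibble-plus-cover-down combination with the Forbidden Submatchings with Reserves theorem (\cref{thm:ForbiddenSubmatchingReserves}), and supplies the required regular family of cliques through the seeded/balanced fractional decomposition and Inheritance Lemma route culminating in \cref{lem:NWRegBoostTreasury}. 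There is no iterative cover-down at all in the paper's proof; the absorber is constructed once, and the forbidden-submatchings theorem absorbs the role you assign to ``cleaning up the nibble leftover.'' So your deduction of \cref{thm:KSSS} is valid and matches the paper, but your account of the underlying proof machinery is closer to the original iterative-absorption proof than to the refined-absorption proof this paper actually gives.
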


We note that in a follow-up paper, Kwan, Sah, Sawhney, and Simkin~\cite{KSSS2023substructures} also proved the existence of high girth $K_3$-decompositions of $K_{n,n,n}$, the complete tripartite graph with parts of size $n$, answering positively a question by Linial~\cite{linial2018challenges} on the existence of Latin squares with arbitrarily high girth.

It is natural to consider generalizing the notion of girth to $(n,q,r)$-Steiner systems.  In \cite{GKLO20}, Glock, K{\"u}hn, Lo, and Osthus noted that for every fixed $i \ge 2$, every $(n, q, r)$-Steiner system contains a $((q -r)i+r+1, i)$-configuration. Motivated by this fact, one defines the \emph{girth} of an $(n, q, r)$-Steiner system as the smallest integer $g \ge 2$ for which it contains a $((q - r)g + r, g)$-configuration.  Glock, K{\"u}hn, Lo, and Osthus~\cite{GKLO20}, as well as Keevash and Long~\cite{KL20}, conjectured the existence of $K_q^r$-decompositions of $K_n^r$ for arbitrarily large girth (for $r = 2$ and $q \ge 3$, this was previously asked by F{\"u}redi and Ruszink{\'o} \cite{FR13} in 2013).

In 2022, the approximate version of this High Girth Existence Conjecture was settled by Delcourt and Postle \cite{DP22} and, independently, Glock, Joos, Kim, K{\"u}hn, and Lichev \cite{GJKKL24}; that is, they proved the existence of approximate $(n, q, r)$-Steiner systems of high girth with almost full size. In fact, both papers developed a general methodology that finds almost perfect matchings in hypergraphs that avoid a set of forbidden submatchings provided certain degree and codegree conditions are met. In particular, the general results then imply the approximate version of the High Girth Existence Conjecture as a corollary. Recently, in 2024 Delcourt and Postle \cite{DPII} proved the High Girth Existence Conjecture in full using their newly developed refined absorption methodology.

\subsection{History of ``Erd\H{o}s meets Nash-Williams''}

Given these two fundamental conjectures, namely Nash-Williams' Conjecture and Erd\H{o}s' Conjecture, as well as the progress on them in recent years, it is natural to wonder about the minimum degree threshold to guarantee the existence of a high girth $K_3$-decomposition. Indeed in 2020, Glock, \Kuhn, and Osthus~\cite[Conjecture 7.7]{GKO20Survey} proposed the following combination of these two conjectures.

\begin{conj}[``\Erdos{} meets Nash-Williams''~\cite{GKO20Survey}]\label{conj:E-NW}
    For every integer $g$, any sufficiently large $K_3$-divisible graph $G$ on $n$ vertices with minimum degree $\delta(G)\geq 3n/4$ admits a $K_3$-decomposition with girth at least $g$.
\end{conj}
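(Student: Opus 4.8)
\section*{Proof proposal}

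\textbf{Important caveat.} \cref{conj:E-NW} as worded asks for the \emph{exact} threshold $\delta(G)\ge 3n/4$, with no $\varepsilon$ of slack and no appeal to the (currently unresolved) fractional threshold $\delta^*_{K_3}$. This is strictly stronger than anything implied by \cref{thm:Main_ErdosNashWilliams}: even the girth-$3$ case (ordinary $K_3$-decomposition) at the exact bound $3n/4$ is precisely Nash-Williams' Conjecture, which is \emph{open}. So a genuine proof of \cref{conj:E-NW} as stated would in particular resolve Nash-Williams' Conjecture, and no such proof is available. What one can honestly prove is the ``$cn$ for any constant $c>3/4$'' version, i.e.\ \cref{thm:Main_ErdosNashWilliams} / \cref{cor:main}; the discussion below describes the plan for that version, which is the realistic target, and flags exactly where the extra $\varepsilon$ and the fractional threshold enter.

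\textbf{Overall strategy: refined absorption with a girth-preserving absorber.} The plan is to mimic the structure of the proof of \cref{thm:NW_fractional} but replace iterative absorption by the refined absorption machinery of Delcourt--Postle, and crucially to run the whole argument inside the ``high-girth'' sparse-hypergraph framework developed for \cref{thm:KSSS}. Concretely, one phrases a girth-$g$ $K_3$-decomposition of $G$ as a perfect matching in the $3$-uniform hypergraph $\mathcal{H}$ whose vertices are $E(G)$ and whose edges are the triangles of $G$, subject to avoiding a family $\mathcal{F}$ of forbidden configurations (the $(i+2,i)$-configurations for $2\le i<g$). First I would establish, for graphs $G$ with $\delta(G)\ge(\max\{\delta^*_{K_3},3/4\}+\varepsilon)n$, the three pillars: (i) an \emph{approximate} high-girth decomposition covering all but $o(n^2)$ edges — this is available from the nibble-with-forbidden-submatchings results of Delcourt--Postle and Glock--Joos--Kim--K\"uhn--Lichev once the fractional $K_3$-decomposition hypothesis is in force (the fractional threshold and the $\varepsilon$ slack are used exactly here and in the cover-down steps); (ii) a supply of \emph{high-girth absorbers}: for any suitable sparse ``leftover'' graph $L$ (of the type produced by the nibble plus a cover-down), a gadget $A_L\subseteq G$, edge-disjoint from a prescribed reservoir, such that both $A_L$ and $A_L\cup L$ admit $K_3$-decompositions of girth $\ge g$; (iii) a booster/finisher that handles the final constant-sized leftover.

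\textbf{Key steps in order.} Step~1: Fix a random reservoir $R\subseteq E(G)$ of density $\rho=\rho(\varepsilon,g)$, verify (via Chernoff/Azuma) that $G[R]$ inherits the minimum-degree ratio up to $o(1)$ and that every edge lies in $\Omega(n\rho^2)$ triangles of $G-R$. Step~2: Build the refined absorber. Following the Delcourt--Postle template for the Existence Conjecture and its high-girth refinement (\cite{DPII}), construct inside $G[R]$ an ``absorbing structure'' $\mathcal{A}$ that is a union of many small edge-disjoint gadgets, each capable of absorbing one ``defect'' while preserving girth; the girth bound is maintained because each gadget is itself dense only on a bounded vertex set and one chooses the gadgets (by a probabilistic deletion/alteration argument) so that no forbidden $(i+2,i)$-configuration with $i<g$ is created, either internally or across gadgets or with the eventual main decomposition. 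Step~3: Apply the high-girth nibble to $G-R$ (minus the edges reserved for $\mathcal{A}$) to get a girth-$\ge g$ partial decomposition leaving a leftover $L_0$ with $e(L_0)=o(n^2)$ and, after a short cover-down sequence exploiting that $\delta^*_{K_3}$ beats $3/4$, improve this to a leftover $L$ that is ``$\mathcal{A}$-absorbable'' (bounded degree, small, the right divisibility). Step~4: Absorb $L$ using $\mathcal{A}$, then decompose the residual part of $\mathcal{A}$; union everything. Step~5: Check that the union has girth $\ge g$: this is where one needs that the nibble part, the cover-down part, and the absorber part were all built ``jointly high-girth'' — i.e.\ one must track the forbidden configurations globally, not just within each component.

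\textbf{Main obstacle.} The hard part is Step~2 together with the global girth check in Step~5: constructing absorbers that are simultaneously (a) flexible enough to absorb arbitrary small leftovers, (b) guaranteed to exist under only a minimum-degree (not completeness) hypothesis, and (c) \emph{girth-preserving} in the strong sense that gluing the absorber's internal decomposition to the main nibble decomposition never produces an $(i+2,i)$-configuration for any $i<g$. In the complete-graph setting of \cite{DPII} one has enormous freedom to route absorbers; under a bare minimum-degree condition the neighbourhoods are only $(\delta^*_{K_3}+\varepsilon)n$-dense, so the usual transversal/booster arguments must be redone with this slack, and the forbidden-configuration bookkeeping (a sparse-hypergraph regularity or ``spread''-type argument) has to be carried out uniformly across the reservoir. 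I expect that reconciling the refined-absorption absorber construction with the Delcourt--Postle / Glock--Joos--Kim--K\"uhn--Lichev high-girth nibble — ensuring the two are ``compatible'' so that their union is still a legitimate girth-$g$ object — is the crux, and that it forces the $\varepsilon$ and the reliance on $\delta^*_{K_3}$ rather than the bare $3/4$; removing those would require independently proving Nash-Williams' Conjecture and is beyond this approach.
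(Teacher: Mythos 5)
The statement you were asked about is stated in the paper only as a conjecture (Glock, K\"uhn, and Osthus's \cref{conj:E-NW}); the paper does not prove it, and your caveat is exactly right: at the exact threshold $3n/4$ even the girth-$3$ case is Nash-Williams' Conjecture, which is open, so the realistic target is the $\varepsilon$-version, i.e.\ \cref{thm:Main_ErdosNashWilliams} and \cref{cor:main}. Your high-level plan for that version (random reserve, high-girth nibble avoiding $(i+2,i)$-configurations, girth-preserving absorption built inside $G$) is in the same spirit as the paper's proof, but if read as an outline of a proof of \cref{thm:Main_ErdosNashWilliams} it has two substantive divergences/gaps. First, the paper's refined-absorption argument is one-step: there is no cover-down sequence and no absorber for ``arbitrary small leftovers''. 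Instead one builds a single $K_3$-\emph{omni}-absorber $A$ for the reserve $X$ inside $G$ (via fake edges, absorbers of rooted degeneracy $4$, and randomly chosen $g$-sphere girth boosters, all embedded by the General Embedding Lemma), with a fixed decomposition function of collective girth at least $g$; the nibble (Forbidden Submatchings with Reserves) then covers all of $G\setminus(A\cup X)$ with leftover contained in $X$, and the omni-absorber finishes by definition. Your ``global girth check'' in Step~5 is not done by ad hoc bookkeeping but is encoded structurally in the girth-$g$ projection treasury ${\rm Proj}_g(A,G,X)$: configurations the absorber could shrink become forbidden configurations for the nibble, and one must show this projected treasury remains regular.

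Second, and more seriously, your Step~3 (``apply the high-girth nibble ... exploiting that $\delta^*_{K_3}$ beats $3/4$'') skips the step the paper identifies as the main new difficulty: regularity boosting. A fractional $K_3$-decomposition at minimum degree $(\delta^*_{K_3}+\varepsilon)n$ only yields an approximate decomposition through Haxell--R\"odl-type arguments, which neither controls girth nor supplies what the forbidden-submatching machinery actually needs, namely a family of triangles in which every edge lies in $(c\pm n^{-1/3})n$ members while avoiding the \emph{dangerous} triangles created by the high-girth omni-absorber. The original Boosting Lemma of Glock--K\"uhn--Lo--Osthus does not apply at this degree level, so the paper proves balanced (seeded, low-weight) fractional decompositions via Lang's Inheritance Lemma, a target-weight ``fixing'' trick, and only then the General Boosting Lemma to reach polynomially small irregularity restricted to non-dangerous cliques (\cref{lem:NWRegBoostTreasury}). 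Without something playing this role, the nibble step of your plan does not go through as stated.
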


As for progress on the above conjecture, Glock, Joos, Kim, K{\"u}hn, and Lichev \cite{GJKKL24} showed an approximate high girth decomposition exists when $\delta(G) \ge (1-\varepsilon)n$ for $\varepsilon = \frac{1}{12^4} = \frac{1}{20,736}$ (more generally they showed a similar version for general $q$ and $r$). Essentially, no progress had been made on even this approximate version until very recently, when M. K{\"{u}}hn~\cite{kuhnPhD2025} independently (in his Ph.D. thesis) showed that $\delta(G)\ge (\delta_{K_3}^*+\varepsilon)n$ yields a $(1-\varepsilon)$-approximate high girth version. 

Still, no progress had been made on proving the existence of a high girth $K_3$-decomposition whenever $\delta(G)\ge (1-\varepsilon)n$ for any $\varepsilon > 0$. Our main result,~\cref{thm:Main_ErdosNashWilliams}, pushes past this barrier and indeed even ties the existence of high girth $K_3$-decompositions to the ``standard'' fractional threshold $\delta^*_{K_3}$ (recall that the current best known bound is $\delta^*_{K_3} \le \frac{7+\sqrt{21}}{14}$, which thus provides \cref{cor:main}).

\subsection{On a Generalization to High Girth \texorpdfstring{$(n,q,2)$-Steiner systems}{(n,q,2)-Steiner systems}}\label{sec:Generalq}

We now turn to the common generalization of \Erdos{}' Conjecture and the generalization of Nash-Williams' Conjecture to $K_q$-decompositions. Namely, it is natural to conjecture the following.

\begin{conj}\label{conj:ENWLargeCliquesNoEpsilon}
    For all integers $g\geq 3$ and $q\geq 3$, every sufficiently large $K_q$-divisible graph~$G$ on~$n$ vertices with minimum degree at least $\big(1-\frac{1}{q+1}\big)n$ admits a $K_q$-decomposition with girth at least $g$.
\end{conj}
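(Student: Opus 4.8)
The plan is to follow the two-part architecture that yields \cref{thm:Main_ErdosNashWilliams} --- a refined-absorption \emph{reduction} to the fractional relaxation, together with a sharp \emph{fractional input} --- but now at the exact extremal threshold and for general $q$. I would split the work into three stages, and I expect the second to be the genuine obstacle; indeed it is the reason the unconditional statement of \cref{conj:ENWLargeCliquesNoEpsilon} is currently open.

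\textbf{Stage 1: a $K_q$ version of the reduction.} First I would adapt the proof of \cref{thm:Main_ErdosNashWilliams} from triangles to $K_q$, targeting the statement: for all $g,q\ge 3$ and $\varepsilon>0$, every sufficiently large $K_q$-divisible graph $G$ on $n$ vertices with $\delta(G)\ge\big(\max\{\delta^*_{K_q},\,1-\tfrac{1}{q+1}\}+\varepsilon\big)n$ has a $K_q$-decomposition of girth at least $g$. Nothing in the refined-absorption argument should be specific to $q=3$ once the gadgets are supplied: in place of the triangle ``boosters'' pictured in the introduction one uses the high-girth $K_q$-absorbers from the refined-absorption literature~\cite{DPII}, adapted to host graphs of the requisite minimum degree, so that the only global input ever needed is a fractional $K_q$-decomposition of such a host. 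The girth bookkeeping --- forbidding $((q-2)i+2,i)$-configurations for every $i<g$ --- transfers verbatim, since at each step one only adds copies of $K_q$ drawn from a fixed absorbing structure already known to create no short configuration. I expect this stage to be laborious but to pose no essential new difficulty.

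\textbf{Stage 2: the sharp fractional input.} The crux is to prove that every sufficiently large graph $G$ with $\delta(G)\ge\big(1-\tfrac{1}{q+1}\big)n$ admits a fractional $K_q$-decomposition. This is strictly stronger than merely pinning down $\delta^*_{K_q}=1-\tfrac{1}{q+1}$: Yuster's constructions~\cite{Yuster2005asymptotically} give the matching lower bound but sit essentially on the threshold, so one also needs the threshold to be \emph{attained} (blow-ups preserve the degree ratio and so do not help here). The current record, Montgomery's $\delta^*_{K_q}\le 1-\tfrac{1}{100q}$~\cite{montgomery_fractional_2019}, is still bounded away from the conjectured value. My approach would be to push the local-weighting method of Delcourt--Postle~\cite{DP2021progress} --- which produced the near-optimal $\tfrac{7+\sqrt{21}}{14}$ bound when $q=3$ --- to general $q$: exploit the fact that, just above the $K_q$-factor threshold, $G$ is densely covered by vertex-disjoint copies of $K_q$ (by Hajnal--Szemer\'edi~\cite{HS70}) to write down enough edge-balanced families of copies of $K_q$, and solve the resulting linear program. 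I do not expect this to be easy: closing the last constant factor here is a notorious open problem and may well require a genuinely new idea.

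\textbf{Stage 3: removing the $\varepsilon$.} Even granted Stage 2, the reduction of Stage 1 loses an $\varepsilon$ (both the R\"odl nibble and the absorber reservoir consume multiplicative slack), so a separate boundary argument is needed --- exactly as for Nash-Williams' Conjecture, where the on-the-nose case $\delta(G)=\tfrac34 n$ remains open. I would run a stability dichotomy: if $G$ is $\varepsilon'$-far in edit distance from Yuster's extremal configurations, then $G$ in fact satisfies the fractional relaxation with multiplicative slack, so Stage 1 applies directly; if $G$ is $\varepsilon'$-close, one argues by hand, using that the extremal examples have minimum degree strictly below $\big(1-\tfrac{1}{q+1}\big)n$, so that a $K_q$-divisible graph meeting the threshold and close to one of them carries a small surplus of edges over it, and one leverages this surplus to complete the decomposition while respecting the girth constraint. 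Carrying the girth requirement through the close case is the delicate point, but it parallels how boundary cases are handled at the Corr\'adi--Hajnal and Hajnal--Szemer\'edi thresholds, so I would expect it to go through once Stage 2 is available. Combining the three stages yields precisely \cref{conj:ENWLargeCliquesNoEpsilon}.
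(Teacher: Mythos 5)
There is a fundamental problem: the statement you are proving is a \emph{conjecture} in this paper (\cref{conj:ENWLargeCliquesNoEpsilon}), and the paper does not prove it --- its strongest result in that direction is \cref{thm:GenralisationMain2q}, with the weaker threshold $\big(\max\{\delta^*_{K_q},1-\tfrac{1}{2q-2}\}+\varepsilon\big)n$. Your proposal is a research program, not a proof: you yourself concede that Stage~2 (showing the fractional threshold $\delta^*_{K_q}=1-\tfrac{1}{q+1}$ is attained) is a notorious open problem, and Stage~3 (removing the $\varepsilon$) is open even for $q=3$, i.e.\ even Nash-Williams' Conjecture itself. A proof that conditions on two unsolved problems establishes nothing.

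Stage~1 is also incorrect as stated. The refined-absorption reduction does \emph{not} transfer ``verbatim'' to minimum degree $\big(1-\tfrac{1}{q+1}+\varepsilon\big)n$ for general $q$: the bottleneck is embedding the gadgets \emph{inside} $G$, and the known $K_q$-absorbers (\cref{thm:AbsorbersSmallDegeneracy}) and high-rooted-girth $K_q$-boosters (\cref{lem:DegeneracyBoosterGeneralQ}) have rooted degeneracy $2q-2$, a bound the paper notes is tight for both gadgets. Via \cref{lemma:DegeneracyEmbedding} this forces minimum degree $\big(1-\tfrac{1}{2q-2}+\varepsilon\big)n$, which strictly exceeds $\big(1-\tfrac{1}{q+1}\big)n$ for every $q\ge 4$, so even granting your Stage~2 in full, the existing machinery (\cref{thm:NWRefinedEfficientOmniAbsorber}, \cref{thm:HighGirthOmniAbsorberGeneralQ}) cannot reach the claimed degree level; the paper explicitly identifies using the better absorber constructions of~\cite{GKLMO19} at lower degree, and --- described as much harder --- embedding girth boosters at that degree, as the two places requiring genuinely new ideas. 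This is precisely why the paper leaves even the $\varepsilon$-version for general $q$ (\cref{conj:GenralisationMain}) as an open conjecture. So the gap is not merely that Stages~2 and~3 are hard; Stage~1, the part you expect to be routine, already fails with the tools this paper provides.
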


Our results in this article can be generalized from triangles to $K_q$ to prove the following.

\begin{thm}\label{thm:GenralisationMain2q}
    For all integers $g\geq 3$ and $q\geq 3$ and real $\varepsilon>0$, every sufficiently large $K_q$-divisible graph~$G$ on~$n$ vertices with minimum degree at least
    $(\max\{\delta^*_{K_q},1-\frac{1}{2q-2}\}+\varepsilon)n$
    admits a $K_q$-decomposition with girth at least $g$. 
\end{thm}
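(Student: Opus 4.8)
The plan is to mirror the structure of the triangle case (\cref{thm:Main_ErdosNashWilliams}) verbatim, replacing $K_3$ by $K_q$ throughout, and to track where the clique size $q$ enters the two ingredients: the \emph{absorber} part and the \emph{nibble} part. First I would set up the high-girth framework: working in the hypergraph $\mathcal H$ whose vertices are the copies of $K_q$ in $G$ and whose ``edges'' encode the forbidden $((q-r)g+r,g)$-configurations (here $r=2$), one wants a perfect matching in $\mathcal H$ avoiding all forbidden submatchings. The refined absorption methodology of Delcourt--Postle (\cite{DPI}, \cite{DPII}) is designed exactly for this: it reduces the problem to (a) building an \emph{absorbing structure} inside $G$ that can swallow any small high-girth-compatible leftover, and (b) an \emph{approximate} high-girth $K_q$-decomposition of the bulk, which is supplied by the Haxell--R\"odl/nibble-type results together with the fractional hypothesis — concretely, by the approximate high-girth result of Glock--Joos--Kim--K\"uhn--Lichev \cite{GJKKL24} or, in the quantitatively correct form, by the statement that minimum degree $(\delta^*_{K_q}+\varepsilon)n$ already yields a $(1-\varepsilon)$-approximate high-girth $K_q$-decomposition.

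The key steps, in order, would be: \emph{(1)} Fix $g,q,\varepsilon$ and introduce the usual hierarchy of constants. \emph{(2)} Using that $\delta(G)\ge(\max\{\delta^*_{K_q},1-\tfrac1{2q-2}\}+\varepsilon)n$, invoke the fractional $K_q$-decomposition (which exists since $\delta(G)\ge(\delta^*_{K_q}+\varepsilon)n$) and a nibble argument to obtain a high-girth $K_q$-packing of $G$ covering all but an $\varepsilon'$-fraction of the edges, with a well-spread leftover graph. \emph{(3)} Build the absorber: here the term $1-\tfrac1{2q-2}$ is the threshold needed so that $G$ contains the local gadgets (the analogue of the ``triangle booster''/$K_q$-booster pictured in the excerpt's \texttt{TriangleBooster} figure) that re-route a small set of edges through copies of $K_q$ while preserving girth; one needs every constant-sized edge set to be ``reachable'' via short high-girth transformers, and a minimum degree of roughly $2 \cdot \tfrac{q-1}{q} - \tfrac{1}{q} = 1-\tfrac1{2q-2}$ on each side of a bipartite-type booster is exactly what guarantees the required common neighbourhoods. \emph{(4)} Run the refined-absorption cover-down: absorb the leftover from step (2) into the absorbing structure, producing a full $K_q$-decomposition, and verify that every replacement step only ever introduces configurations of size $<g$ or $>g$, never a $((q-2)g+2,g)$-configuration, so the girth bound is maintained globally. \emph{(5)} Conclude \cref{thm:GenralisationMain2q}; \cref{thm:Main_ErdosNashWilliams} is then the case $q=3$ (where $1-\tfrac1{2q-2}=\tfrac34$), and the corollaries follow from the known bounds on $\delta^*_{K_q}$.

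I expect the main obstacle to be step \emph{(3)}, the construction of the high-girth $K_q$-absorber at minimum degree $(1-\tfrac1{2q-2}+\varepsilon)n$ rather than at $(1-\tfrac1{q+1}+\varepsilon)n$. For $q=3$ these coincide ($\tfrac34$), which is why the main theorem is clean; for $q\ge 4$ the gadget-based absorber seems to genuinely need the stronger degree condition, because each ``switch'' must simultaneously find $q$-cliques on prescribed pairs of vertices \emph{and} keep the newly created cliques pairwise far apart in the girth sense, which roughly doubles the neighbourhood demands compared to the non-girth setting. Making the booster gadgets high-girth-safe — i.e. ensuring that the $K_q$'s one swaps in are vertex-disjoint enough and that no two of them, together with the existing decomposition, close up a short configuration — is the delicate part, and is presumably why the authors state \cref{thm:GenralisationMain2q} with $1-\tfrac1{2q-2}$ rather than the conjecturally optimal $1-\tfrac1{q+1}$ of \cref{conj:ENWLargeCliquesNoEpsilon}. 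The rest (the nibble and the absorption bookkeeping) should be routine adaptations of \cite{BKLO16,GKLMO19,DPI,DPII} with girth-tracking added as in \cite{KSSS2024STS,DPII}.
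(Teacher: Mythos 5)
Your high-level outline (reserve/absorb/nibble/absorb-back with girth tracking) does match the paper's strategy, and you correctly identified step (3) — building the high-girth absorbing structure inside $G$ — as the place where $1-\tfrac{1}{2q-2}$ enters. However, there are two genuine problems with the justification you give.

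First, your explanation of where the threshold $1-\tfrac{1}{2q-2}$ comes from is wrong: the arithmetic $2\cdot\tfrac{q-1}{q}-\tfrac1q$ equals $\tfrac{2q-3}{q}$, not $\tfrac{2q-3}{2q-2}=1-\tfrac1{2q-2}$ (already for $q=3$ the former is $1$, not $\tfrac34$), and the ``bipartite-booster common-neighbourhood'' heuristic is not the right mechanism. The threshold actually comes from \emph{rooted degeneracy}: the $K_q$-absorbers of Delcourt--Kelly--Postle have rooted degeneracy at most $2q-2$ (\cref{thm:AbsorbersSmallDegeneracy}), the fake-edge gadgets have rooted degeneracy at most $q-1$, and the $K_q$-girth boosters constructed in \cref{lem:DegeneracyBoosterGeneralQ} also have rooted degeneracy at most $2q-2$. \cref{lemma:DegeneracyEmbedding} then says a gadget of rooted degeneracy $d$ embeds in constant proportion whenever $\delta(G)\ge(1-\tfrac1d+\varepsilon)n$; taking $d=2q-2$ is exactly what forces the stated bound. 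Without the degeneracy-$2q-2$ absorber and booster constructions (both nontrivial black boxes), the argument does not go through, so this is a real gap, not a cosmetic slip.

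Second, your step (2) glosses over a point the paper treats as a main technical contribution: simply running a nibble on a $(1-\varepsilon')$-approximate high-girth packing is not enough. The ``Forbidden Submatchings with Reserves'' theorem requires a family of cliques whose per-edge degrees are \emph{polynomially} regular (irregularity $n^{-1/3}$, not a small constant), and moreover this family must avoid the \emph{dangerous} cliques that would complete low-girth configurations with the absorber. Achieving this is exactly the purpose of the seeded/balanced fractional decomposition machinery and the Nash-Williams Restricted Boosting Lemma (\cref{lem:BalancedDecomposition}, \cref{lem:NWRegBoostTreasury}), which your sketch does not reproduce or replace. Also, a small point in your step (4): girth at least $g$ means avoiding $((q-2)i+2,i)$-configurations for \emph{all} $3\le i\le g$, not only the one of index exactly $g$, so ``only introduces configurations of size $<g$ or $>g$'' is not the right invariant.
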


We explain in~\cref{sec:Finishing}, after the proof of~\cref{thm:Main_ErdosNashWilliams}, how~\cref{thm:GenralisationMain2q} follows from a simple generalization of our results. Recall that the best known upper bound for $q \geq 4$ on $\delta^*_{K_q}$ is $1-\frac{1}{100q}$ by Montgomery~\cite{montgomery_fractional_2019} and hence the fractional bound is the bottleneck in the above theorem. While most of the machinery of our proof would work for the better bound of Conjecture~\ref{conj:ENWLargeCliquesNoEpsilon} (at least with an additional $\varepsilon$ in the minimum degree), there are two areas which would require new ideas. One is utilizing the better absorber constructions from Glock, K\"{u}hn, Lo, Montgomery, and Osthus~\cite{GKLMO19} to embed the absorbers in a graph with the lower minimum degree condition; the other much harder issue is proving that girth boosters can also be embedded for such a lower minimum degree condition. We discuss these issues further in the conclusion. 

\subsection{On the Novelty of Our Proof}

It must be noted that our proof of~\cref{thm:Main_ErdosNashWilliams} is not a mere extension of the proof of~\cref{thm:KSSS} by Kwan, Sah, Sawhney, and Simkin~\cite{KSSS2024STS} to the minimum-degree world, or vice versa, of the proof of~\cref{thm:NW_fractional} by Barber, K\"{u}hn, Lo, and Osthus~\cite{BKLO16} to the high girth setting. These proofs are based on iterative absorption approaches, and both contain parts that do not work in the ``\Erdos{} meets Nash-Williams'' setting. 
For example, the proof of~\cref{thm:NW_fractional} relies on a result by Haxell and R\"{o}dl to transform a fractional decomposition into an approximate one; however, the resulting decomposition is not guaranteed to have large girth. On the other hand, modifying the proof of~\cref{thm:KSSS} would require redoing the whole argument to work in the high minimum degree setting while still needing to resolve issues we tackle here, such as regularity boosting and embedding girth boosters.

Rather, our proof utilizes the new method of refined absorption (developed in~\cite{DPI}) to overcome these issues. We also utilize the forbidden submatching method (developed in~\cite{DP22}) along with the `girth booster' machinery developed in~\cite{DPII}. Nevertheless, this paper is not a mere combination of these previous techniques. 

Indeed, this is the first paper showing how to adapt the proof framework of refined absorption to yield results in the setting of Nash-Williams' Conjecture; thus, we provide a new proof of Theorem~\ref{thm:NW_fractional} based on refined absorption (see proof overview) which is of independent interest. 

Still more work is required for the combination of the refined-absorption-based high girth and high minimum degree proofs. In particular, before applying nibble, for the regularity boosting step of our proof (or rather the forbidden submatchings generalization), we need to find a very regular set of cliques. We accomplish this via an `Inheritance Lemma' similar to that found in Lang~\cite{lang2023tiling} that bootstraps the fractional version to yield a low-weight fractional decomposition (as was also done independently by K\"uhn~\cite{K24}). However, while K\"uhn only needed constant irregularity, we need our irregularity much smaller (i.e.~polynomially small) to finish the decomposition. We accomplish this by a novel idea of `seeding' the fractional decomposition so that every clique has some amount of weight that we might then invoke the a general form of the Boosting Lemma of Glock, K\"{u}hn, Lo, and Osthus~\cite{GKLO16} to achieve such small irregularity. 

We also have to adapt the girth booster machinery from~\cite{DPII} to work in the high minimum degree setting. Thankfully for $q=3$, the girth boosters have rooted degeneracy at most $4$ and so they will embed inside $G$. Nevertheless, we have to explain how to adapt said machinery, which requires some care as the machinery is highly technical.

We discuss these intricacies further (and provide relevant definitions) in the next subsection which provides a proof overview. We first review the refined absorption proof from~\cite{DPI}, then how to adapt it to Nash-Williams' Conjecture; we then recall (at a very high level) how the refined absorption proof was adapted for high girth in~\cite{DPII} before proceeding to discuss our proof and the intricacies alluded to above.

\section{Proof Overview}

\subsection{A Refined Absorption Primer}

The new proof of the Existence Conjecture by Delcourt and Postle in~\cite{DPI} introduced the novel method of refined absorption whose definitions we now recall in the setting of graphs specifically. 

\begin{definition}[Absorber] Let $L$ be a $K_q$-divisible graph.  A graph $A$ is a \emph{$K_q$-absorber} for $L$ if $V(L)$ is independent in $A$ and both $A$ and $A\cup L$ admit $K_q$-decompositions.    
\end{definition}

\begin{definition}[Omni-Absorber]\label{def:OmniAbsorbers}
A graph $A$ is a \emph{$K_q$-omni-absorber} for a graph $X$ with \emph{decomposition family} $\cF_A$ and \emph{decomposition function} $\cQ_A$  if $X$ and $A$ are edge-disjoint and for every $K_q$-divisible subgraph $L$ of $X$, $\cQ_A(L)\subseteq \cF_A$ is a $K_q$-decomposition of $A\cup L$.  We say that an omni-absorber $A$ has \emph{collective girth} $g$ if $\cQ_A(L)$ has girth at least $g$ for every $K_q$-divisible subgraph $L$ of $X$.
\end{definition}

The key to the method of refined absorption is a black-box theorem about the existence of efficient omni-absorbers that also have a `refinedness' property as follows. A $K_q$-omni-absorber $A$ is \emph{$C$-refined} if every edge of $X\cup A$ is in at most $C$ elements of $\cF_A$. Here is the black-box theorem from~\cite{DPI} restricted to the graph case.

\begin{thm}[Refined Efficient Omni-Absorber~\cite{DPI}]\label{thm:RefinedEfficientOmniAbsorber}
For every integer $q\geq 3$, there exists an integer $C\ge 1$ such that the following holds: If $X\subseteq  K_n$ and $\Delta(X) \le \frac{n}{C}$, then there exists a $C$-refined $K_q$-omni-absorber $A$ for $X$ with $\Delta(A) \le C\cdot \max\left\{\Delta(X),~\sqrt{n}\cdot \log n\right\}$.    
\end{thm}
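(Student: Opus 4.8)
The plan is to assemble $A$ from a bounded-size collection of gadgets so that the decomposition function $\cQ_A$, applied to any leftover, invokes only a sparse reusable subfamily of cliques. The basic building block is the transformer machinery of~\cite{GKLO16, GKLMO19} and its predecessors: for two vertex-disjoint $K_q$-divisible graphs $H_1,H_2\subseteq K_n$ with $e(H_1)\equiv e(H_2)\pmod{\binom q2}$ and bounded maximum degree, there is a transformer $T$, edge-disjoint from $H_1\cup H_2$, with $\Delta(T)=O_q(\Delta(H_1)+\Delta(H_2))$, such that $T\cup H_1$ and $T\cup H_2$ both admit $K_q$-decompositions; $T$ is glued from $O_q(e(H_1))$ constant-size elementary gadgets (which exist by Wilson's theorem in the finite case), arranged so that each vertex meets $O_q(\Delta(H_i))$ of them. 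Taking $H_2$ to be a disjoint union of $e(H_1)/\binom q2$ copies of $K_q$ gives, for every $K_q$-divisible $L$, an absorber $A_L:=T\cup H_2$ with $V(L)$ independent in $A_L$ and $\Delta(A_L)=O_q(\Delta(L))$, since $A_L$ decomposes and $A_L\cup L=(T\cup L)\cup H_2$ is an edge-disjoint union of decomposable graphs. The naive omni-absorber $\bigcup_L A_L$ over all (exponentially many) $K_q$-divisible $L\subseteq X$ is hopelessly large in both maximum degree and refinedness, so the gadgets must be shared.

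To share them, I would use a one-shot ``cover-down''. Include in $A$ a \emph{cover gadget} $A_{\mathrm{cov}}$ together with a bounded-degree \emph{divisibility-correction gadget}, chosen (via a random greedy embedding into $K_n$, edge-disjoint from $X$) so that $X\cup A_{\mathrm{cov}}$ is a disjoint union of cliques and each edge of $A_{\mathrm{cov}}$ lies in at most $C$ of them; since $\Delta(X)\le n/C$, a near-optimal $K_q$-packing of $X$ (via the nibble) covers all but a sparse remainder of $X$, and only the clique-completions of that sparse remainder need to be added, so $A_{\mathrm{cov}}$ is strictly sparser than $X$ and supported on a strictly smaller set. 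For a $K_q$-divisible $L\subseteq X$, $\cQ_A(L)$ first takes the cliques of this fixed clique-partition that contain an edge of $L$; these cover $L$ together with a residual portion $A_1(L)\subseteq A_{\mathrm{cov}}$, and a bounded local adjustment using the correction gadget (compensating for the fact that $d_X(v)$ need not be divisible) arranges that $A_1(L)$ is genuinely $K_q$-divisible. Now $A_1(L)$ is a $K_q$-divisible subgraph of the \emph{fixed} graph $A_{\mathrm{cov}}$, so it suffices that $A\setminus A_{\mathrm{cov}}$ be a $C$-refined omni-absorber for $A_{\mathrm{cov}}$ — a strictly smaller and sparser instance of the same problem. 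One iterates this until reaching a base case at scale $\Theta(\sqrt n\log n)$, which is dispatched by a comparatively crude construction built directly from the Step-1 absorbers (affordable because the degree budget $\Theta(\sqrt n\log n)$ is now within a polylogarithmic factor of the base-case vertex count).

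The quantitative bounds follow by tracking the iteration. The maximum-degree contributions of successive rounds form a rapidly decreasing sequence dominated by the first, giving $\Delta(A)\le C\cdot\max\{\Delta(X),\sqrt n\log n\}$; refinedness holds because $\cF_A$ consists only of the boundedly-overlapping cliques of the clique-partitions at each round together with the finitely many base gadgets. The threshold $\sqrt n\log n$ arises as the smallest scale at which the random greedy embeddings of the cover cliques can be carried out with room to spare while the relevant degree statistics concentrate — the $\log n$ powering a union bound over the $n^{O(1)}$ degree and extension events via Chernoff/Azuma-type inequalities; that all these embeddings can be performed simultaneously, edge-disjoint from $X$ and from one another, again uses only that $\Delta(X)\le n/C$ leaves ample room inside $K_n$.

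I expect the main obstacle to be designing the cover gadget so that three requirements hold at once and uniformly across all rounds of the iteration: that the residual $A_1(L)$ is $K_q$-divisible for \emph{every} leftover $L$ (the divisibility bookkeeping, where a bounded local correction must be paid because $X$ itself need not be divisible), that each edge of $A$ lies in only $O_q(1)$ members of $\cF_A$ (refinedness — exactly what forbids the naive one-absorber-per-leftover construction and forces the sharing), and that the maximum degree stays at the prescribed scale at every level. Reconciling these — reducing an arbitrary $K_q$-divisible leftover to a bounded-complexity canonical one without proliferating either cliques or degree — is the heart of refined absorption; by comparison, once the deterministic gadget structure is pinned down, the probabilistic embedding into $K_n$ is routine.
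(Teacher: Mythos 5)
This theorem is quoted from~\cite{DPI} and is used here strictly as a black box (the present paper only re-embeds its output into $G$ in~\cref{thm:NWRefinedEfficientOmniAbsorber}); the paper describes the~\cite{DPI} proof as a \emph{one-step} construction built around a \emph{refiner} (a carefully chosen system of fake-edge gadgets) to which private absorbers are then attached, and explicitly contrasts this with the iterative-absorption methodology. Your proposal is instead an iterative cover-down in the spirit of~\cite{BKLO16,GKLMO19}, and it has a genuine gap in the step that is supposed to make the iteration progress.

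The problem is the cover gadget $A_{\mathrm{cov}}$. You want a fixed $K_q$-decomposition of $X\cup A_{\mathrm{cov}}$ such that, for \emph{every} $K_q$-divisible $L\subseteq X$, taking the cliques in that decomposition which meet $L$ covers $L$ and leaves a residual $A_1(L)$ contained in $A_{\mathrm{cov}}$. For the residual to avoid spilling into $X\setminus L$, every clique of the decomposition that meets $L$ must contain no edge of $X\setminus L$; since $L$ is arbitrary, this forces each clique to contain at most one edge of $X$. That is incompatible with your plan to first nibble inside $X$: nibble cliques lie entirely in $X$, so a single nibble clique can contain edges of $L$ and of $X\setminus L$ simultaneously and cannot be used or withheld as a function of $L$. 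If instead you give each edge of $X$ its own completion clique on fresh vertices, then $\Delta(A_{\mathrm{cov}})=\Theta_q(\Delta(X))$ and the support of $A_{\mathrm{cov}}$ is at least that of $X$, so the recursion makes no progress on degree or on support, and never reaches a base case. And even if one could somehow reach a leftover graph with maximum degree $\Theta(\sqrt n\log n)$, that graph still has $\Theta(n)$ vertices and $\tilde\Theta(n^{3/2})$ edges, hence exponentially many $K_q$-divisible subgraphs, so the ``crude one-absorber-per-leftover'' base case is still unaffordable. The missing ingredient is exactly what the refiner of~\cite{DPI} supplies: a bounded, $C$-refined system of gadgets that gives \emph{per-edge} switching — the ability to toggle, for each $e\in X$ independently, whether $e$ is absorbed — so that all $2^{\Theta(e(X))}$ divisible leftovers are handled in a single pass by a family whose degree and refinedness are controlled from the start, rather than by descent. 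Your sketch correctly identifies that reconciling refinedness, degree, and divisibility is the crux, but the cover-down/iterate mechanism you propose to resolve it does not achieve the required per-edge toggling and does not terminate.
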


We now recall the outline of the new proof of the Existence Conjecture~\cite{DPI} restricted to the graph case:

\begin{enumerate}\itemsep.05in
    \item[(1)] `Reserve' a random subset $X$ of $E(K_n)$ uniformly with some small probability $p$.
    \item[(2)] Construct an omni-absorber $A$ of $X$ via Theorem~\ref{thm:RefinedEfficientOmniAbsorber}.
    \item[(3)] ``Regularity boost'' $K_n\setminus (A\cup X)$ to find a very regular set of cliques.
    \item[(4)] Apply ``nibble with reserves'' theorem to find a $K_q$-packing of $K_n\setminus A$ covering $K_n\setminus (A\cup X)$ and then extend this to a $K_q$-decomposition of $K_n$ by definition of omni-absorbers.  
\end{enumerate}

For (1), the authors of~\cite{DPI} proved via Chernoff bounds that there is a choice of $X$ with $\Delta(X)$ small and where for every edge $e\in G\setminus X$, $e$ is in many copies of $K_q$ in $X\cup\{e\}$. Thus from this and Theorem~\ref{thm:RefinedEfficientOmniAbsorber}, they find that $\Delta(X\cup A)$ is small. For (3), they then used a special case of the Boosting Lemma of Glock, K\"{u}hn, Lo, and Osthus~\cite{GKLO16} to show that a graph $G$ with minimum degree $\delta(G)\ge (1-\varepsilon)n$ for some small enough $\varepsilon$ has a very regular set of cliques - meaning every edge is in roughly the same amount of cliques, say $\left(\frac{1}{2}\pm n^{-1/3}\right)\cdot\binom{n-2}{q-2}$, where the irregularity is polynomially small (as required by ``nibble with reserves'').

\subsection{A Refined Absorption Proof of Nash-Williams}

We now discuss how to use refined absorption to prove Theorem~\ref{thm:NW_fractional}. The adaptations actually work to prove the more general version where $G$ is a $K_q$-divisible graph with $\delta(G)\ge \left(\max\{\delta_{K_q}^*, 1-\frac{1}{2q-2}+\varepsilon\}\right)n$ and we desire a $K_q$-decomposition of $G$. One modifies the above proof outline as follows. For (1), we `reserve' a random subset $X$ of $E(G)$. Namely in Section~\ref{s:NW}, we prove the following reserves lemma via Chernoff bounds. Notice that lemma below actually works for the better bound of $1-\frac{1}{q+1}+\varepsilon$ in the minimum degree.

\begin{lem}[Reserves]\label{lem:RandomXHighMinDeg}
For every integer $q \ge 3$ and real $\varepsilon\in (0,1)$, there exists a real $\gamma \in (0,1)$ such that the following holds for all large enough $n$. Let $G$ be a spanning subgraph of $K_n$ with $\delta(G)\ge (1-\frac{1}{q+1}+\varepsilon)n$ and $p\in [n^{-\gamma},1)$. There exists $X\subseteq G$ with $\Delta(X)\le 2pn$ such that every $e\in K_n\setminus X$ is in at least $\frac{1}{(q+1)^q}\cdot p^{\binom{q}{2}-1}\cdot \binom{n}{q-2}$ copies of $K_q$ in $X\cup \{e\}$.
\end{lem}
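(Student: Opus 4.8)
The plan is to prove \cref{lem:RandomXHighMinDeg} by a standard ``pick a random sparse subgraph and apply Chernoff + union bound'' argument. Choose $X$ by including each edge of $G$ independently with probability $p$. Then $\Delta(X)$ is tightly concentrated: for each vertex $v$, $\Expect{d_X(v)} = p\cdot d_G(v) \le pn$, so by a Chernoff bound $\Prob{d_X(v) > 2pn} \le e^{-\Omega(pn)}$, and since $p \ge n^{-\gamma}$ this is $o(n^{-2})$ provided $\gamma$ is chosen small enough in terms of $\varepsilon$ (indeed any $\gamma < 1$ works for this part). A union bound over the $n$ vertices gives $\Delta(X) \le 2pn$ with probability $1-o(1)$.

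The heart of the matter is the second property. Fix an edge $e = xy \in K_n \setminus X$. I want to count copies of $K_q$ in $X \cup \{e\}$ through $e$; equivalently, I must choose a $(q-2)$-set $S$ of common neighbours of $x$ and $y$ in $G$ such that all $\binom{q}{2}-1$ edges of the clique on $S \cup \{x,y\}$ other than $e$ itself lie in $X$. First I would lower-bound the number of candidate sets $S$: since $\delta(G) \ge (1-\frac{1}{q+1}+\varepsilon)n$, the common neighbourhood $N_G(x)\cap N_G(y)$ has size at least $(1-\frac{2}{q+1}+2\varepsilon)n \ge \frac{q-1}{q+1}n$, and iterating this (each new vertex of $S$ loses another $\frac{n}{q+1}$ from the available common neighbourhood, roughly), one finds that the number of $(q-2)$-subsets $S$ with $S\cup\{x,y\}$ a clique in $G$ is at least some constant times $\binom{n}{q-2}$ — here is where the factor $\frac{1}{(q+1)^{q}}$-type constant comes from, since one can greedily extend and the product of the fractions $\frac{q-1-i}{q+1}$ over $i$ contributes the constant. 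For each such $S$, the probability that all the required $\binom{q}{2}-1$ edges land in $X$ is exactly $p^{\binom{q}{2}-1}$ by independence. Hence the expected number of such copies of $K_q$ is at least (a constant)$\cdot p^{\binom{q}{2}-1}\cdot\binom{n}{q-2}$, and the lemma's target $\frac{1}{(q+1)^q}\cdot p^{\binom{q}{2}-1}\cdot\binom{n}{q-2}$ should be a comfortable lower bound on this expectation once the combinatorial constant is tracked carefully.

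To turn the expectation bound into a high-probability statement I would again use concentration. The number $Z_e$ of such copies of $K_q$ is a polynomial of bounded degree in the independent indicator variables $\mathds{1}[f \in X]$, so a suitable concentration inequality — either a Janson-type bound, a bounded-differences/Azuma argument, or (cleanest here) an application of a Chernoff-type bound after noting that $Z_e$ decomposes into a sum of many ``almost independent'' indicators — shows $\Prob{Z_e < \tfrac12\Expect{Z_e}} \le e^{-n^{\Omega(1)}}$, which is much smaller than $n^{-2}$ when $p \ge n^{-\gamma}$ and $\gamma$ is small enough (this is precisely the constraint that pins down $\gamma = \gamma(q,\varepsilon)$: we need $p^{\binom{q}{2}-1}\binom{n}{q-2}$ to be polynomially large, i.e.\ $\gamma(\binom q2 -1) < q-2$ roughly, though a weaker bound also suffices for concentration alone). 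A union bound over the at most $n^2$ edges $e$ then gives that every $e \in K_n\setminus X$ has $Z_e \ge \frac12 \Expect{Z_e} \ge \frac{1}{(q+1)^q}p^{\binom q2 - 1}\binom{n}{q-2}$ with probability $1-o(1)$, and combining with the degree bound, a valid $X$ exists.

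The main obstacle is the concentration step for $Z_e$: because $Z_e$ is a low-degree polynomial in the edge-indicators rather than a sum of independent variables, naive Chernoff does not directly apply, and one must be a little careful choosing the right inequality and checking its hypotheses (in particular that no single edge is in too many of the counted cliques, which is what controls the Lipschitz constant / the ``max influence'' term). The rest is routine: the degree concentration is immediate, and the expectation lower bound is just bookkeeping with the minimum-degree condition, taking care that the greedy common-neighbourhood estimate genuinely yields $\Omega(\binom{n}{q-2})$ candidate cliques with the stated constant.
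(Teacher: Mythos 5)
Your construction of $X$ and the Chernoff-plus-union-bound argument for $\Delta(X)\le 2pn$ are the same as in the paper, but the way you propose to handle the second property is a genuinely different route, and it is where your difficulty lies. The paper never attempts to concentrate $Z_e$ directly. Instead, it applies Chernoff to the far simpler random variables $|N_X(T)|$ for every set $T$ of size $1\le |T|\le q-1$: for distinct $v\in N_G(T)$ the events $\{v\in N_X(T)\}$ are determined by disjoint collections of edges, so $|N_X(T)|$ is a sum of genuinely independent Bernoulli variables and concentration is immediate. A union bound over the $O(n^{q-1})$ such sets $T$ (this is where the constraint $\gamma(q-1)<1$ comes from) gives that with high probability every $|N_X(T)|\ge p^{|T|}|N_G(T)|/2$ simultaneously. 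Given this, the copies of $K_q$ through $e$ are then built deterministically one vertex at a time, with the final constant $(q+1)^{-q}$ arising from the greedy product $\prod_{t=2}^{q-1}\frac{p^t}{2}\cdot\frac{2n}{q+1}\cdot\frac{1}{t+1}$ together with $\frac{n^{q-2}}{q!}\ge\frac{1}{q(q-1)}\binom{n}{q-2}$. No polynomial-degree concentration inequality is needed at all.

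Your direct approach on $Z_e$ can be pushed through, but only if you commit to Janson (or Kim--Vu). The dependency parameter $\Delta_e$ is dominated by pairs of counted cliques sharing $j\in\{1,\ldots,q-3\}$ extra vertices; one finds $\Delta_e/\mu^2 \lesssim \sum_j n^{-j}p^{1-\binom{j+2}{2}}$, which is $o(1)$ for $\gamma$ small enough, and $\mu$ itself is polynomial in $n$ provided $\gamma<2/(q+1)$ — so the route works. However, the second tool you float, Azuma/bounded-differences, does \emph{not} work here and your stated sanity check (``no single edge is in too many of the counted cliques'') would fail: an edge $f$ sharing a vertex with $e$ can lie in $\Theta(n^{q-3})$ of the candidate cliques, so the Lipschitz constants are of order $n^{q-3}$, and $\sum_f c_f^2 = \Theta(n^{2q-4})$ is comparable to $\Expect{Z_e}^2$, so McDiarmid/Azuma yields nothing. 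Your third option, ``Chernoff on almost-independent indicators,'' is vague but is essentially a gesture toward the paper's common-neighborhood decomposition, which is the clean way to make the independence explicit. So: the plan is sound, the Janson route closes it, but you should drop the Azuma suggestion, and you should be aware that the paper's reorganization around $|N_X(T)|$ is what lets it avoid any polynomial concentration machinery.
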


For (2), we need to find an omni-absorber $A$ of $X$ \emph{inside $G\setminus X$}. To that end, we find an omni-absorber $A_0$ of $X$ in $K_n$ via Theorem~\ref{thm:RefinedEfficientOmniAbsorber}. Then we \emph{re-embed} $A$ \emph{into $G\setminus X$} using the methodology of fake-edges and private absorbers developed in~\cite{DPI}. Thus we prove the following in Section~\ref{s:NW}. 

\begin{thm}[Nash-Williams Refined Efficient Omni-Absorber]\label{thm:NWRefinedEfficientOmniAbsorber}
For every integer $q\ge 3$ and real $\varepsilon\in (0,1)$, there exists an integer $C\ge 1$ such that the following holds:
\vskip.1in
\noindent If $X\subseteq  G\subseteq K_n$ with $\delta(G)\ge (1-\frac{1}{2q-2} + \varepsilon)n$ and $\Delta(X) \le \frac{n}{C}$, then there exists a $C$-refined $K_q$-omni-absorber $A$ for $X$ in $G$ with $\Delta(A) \le C\cdot \max\left\{\Delta(X),~\sqrt{n}\cdot \log n\right\}$.    
\end{thm}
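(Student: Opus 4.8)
The plan is to first invoke the black-box Theorem~\ref{thm:RefinedEfficientOmniAbsorber} to obtain a $C_0$-refined $K_q$-omni-absorber $A_0$ for $X$ inside the complete graph $K_n$, with $\Delta(A_0)\le C_0\cdot\max\{\Delta(X),\sqrt{n}\log n\}$; then the real work is to \emph{re-embed} $A_0$ into $G\setminus X$ so that it still functions as an omni-absorber, using the fake-edge / private-absorber technology from~\cite{DPI}. More precisely, the edges of $A_0$ that already lie in $G$ we keep; for each edge $e\in E(A_0)\setminus E(G)$ (a \emph{fake edge}) we must find, inside $G$, a small gadget $P_e$ — a ``private absorber'' for $e$ in the sense that $P_e$ is edge-disjoint from everything built so far, $P_e$ admits a $K_q$-decomposition, and $P_e\cup\{e\}$ admits a $K_q$-decomposition. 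Replacing $e$ by $P_e$ everywhere (and updating the decomposition function accordingly) yields a genuine omni-absorber $A\subseteq G$ for $X$: for a $K_q$-divisible $L\subseteq X$, one takes $\cQ_{A_0}(L)$, and wherever it uses a copy of $K_q$ through a fake edge $e$, one swaps in the corresponding decomposition of $P_e\cup\{e\}$ minus that copy — since each $P_e$ is decomposable on its own, and $A_0\cup L$ is decomposable, the book-keeping closes up. The refinedness is maintained because each fake edge is in at most $C_0$ cliques of $\cF_{A_0}$, each private absorber has bounded size, and the private absorbers for distinct fake edges are edge-disjoint, so every edge of $G\cup A$ ends up in at most $C=C(q)$ members of the new decomposition family; similarly $\Delta(A)\le C\cdot\max\{\Delta(X),\sqrt n\log n\}$ since we only add $O(1)$ edges near each fake edge and the fake edges themselves are spread out by $\Delta(A_0)$.

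The heart of the argument — and the step I expect to be the main obstacle — is producing these private absorbers greedily \emph{inside $G$}, where $G$ is not complete but merely has $\delta(G)\ge(1-\tfrac{1}{2q-2}+\varepsilon)n$. The strategy is: process the fake edges one at a time; when handling $e=xy$, we have already committed only $O(\Delta(X)+\sqrt n\log n)$ edges at any given vertex (from $A_0$, from $X$, and from previously-built private absorbers), so after deleting those committed edges the graph $G'$ still has $\delta(G')\ge(1-\tfrac{1}{2q-2}+\varepsilon/2)n$, say. Inside such a $G'$ we must exhibit a bounded-size $K_q$-absorber for the single edge $e$. This is exactly where the threshold $1-\tfrac{1}{2q-2}$ comes from: the standard absorber gadget for a single edge (as in~\cite{DPI}, or going back to the ``booster''/``transformer'' constructions) has \emph{rooted degeneracy} (equivalently, rooted maximum-degree-ordering) governed by this fraction, so that the minimum-degree condition guarantees each successive vertex of the gadget can be embedded with all required adjacencies present. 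I would phrase this as a lemma: if $H$ is a fixed graph that is an absorber for a fixed edge and has rooted degeneracy $d$, and $\delta(G')\ge(1-\tfrac{1}{d}+\varepsilon)|G'|$, then every edge $e$ of $G'$ extends to a copy of $H$ rooted at $e$, and moreover one can find such copies greedily avoiding a sparse set of forbidden edges; for $q=3$ the relevant gadget has $d\le 4$, matching $1-\tfrac{1}{2q-2}=\tfrac34$. Verifying that the explicit absorber construction from~\cite{DPI} really does have rooted degeneracy $\le 2q-2$ (and checking this uniformly over the bounded family of gadget-types that appear) is the technical crux.

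Finally I would assemble the pieces: choose $C$ large enough (as a function of $q$ and the fixed gadget family) that $\tfrac nC$ bounds $\Delta(X)$ forces the committed-degree at every stage to stay below $\tfrac{\varepsilon}{2}n$, so the greedy embedding never gets stuck; set $A$ to be the union of $A_0\cap G$ with all the private absorbers $\{P_e\}$; define $\cF_A$ and $\cQ_A$ by the swap described above; and verify the three conclusions — $A\subseteq G$ edge-disjoint from $X$, $C$-refined, and $\Delta(A)\le C\max\{\Delta(X),\sqrt n\log n\}$ — each of which is now immediate from the construction. (The high-girth refinement, i.e.\ controlling the collective girth of $A$, is not claimed in this statement and so can be deferred; here we only need that each $\cQ_A(L)$ is a $K_q$-decomposition.) I would remark that the only genuinely new ingredient beyond~\cite{DPI} is the degeneracy bookkeeping that lets the re-embedding survive a minimum-degree hypothesis rather than completeness; everything else is a transcription of the fake-edge machinery.
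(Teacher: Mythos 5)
There is a genuine gap in the middle step, where you propose finding, for each edge $e\in E(A_0)\setminus E(G)$, a private absorber $P_e\subseteq G$ such that \emph{both} $P_e$ and $P_e\cup\{e\}$ admit $K_q$-decompositions. This is impossible for divisibility reasons: $e(P_e)\equiv 0\pmod{\binom{q}{2}}$ would force $e(P_e\cup\{e\})\equiv 1\pmod{\binom{q}{2}}$, so the second decomposition cannot exist for $q\ge 3$. (Equivalently, the graph $\{e\}$ is not $K_q$-divisible, so it has no $K_q$-absorber.) Because the swap you describe — replacing a clique through $e$ by the $e$-using decomposition of $P_e\cup\{e\}$ — rests on the existence of these two decompositions, your assembly step does not close up.

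The fix, which is what the paper's proof implements, requires a \emph{two-layer} gadget. First, each edge $e\in E(A_0)$ is replaced not by an absorber but by a \emph{fake edge} $W_e$: a gadget inside $G$ with the same divisibility signature as an edge (one edge modulo $\binom{q}{2}$; degree $1$ at the two roots and $0$ elsewhere modulo $q-1$). A fake edge has rooted degeneracy $q-1$, so embedding it only needs $\delta(G)\gtrsim(1-\frac{1}{q-1})n$. Because fake edges preserve divisibility, each clique $F\in\mc F_0$ turns into a ``fake clique'' $F'$ (real $X$-edges plus the fake-edge embeddings of the rest of $F$), and this $F'$ \emph{is} $K_q$-divisible. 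Only now does one attach a genuine $K_q$-absorber $A_{F'}$ for $F'$ — this is the object that has rooted degeneracy $\le 2q-2$ via~\cref{thm:AbsorbersSmallDegeneracy}, giving the $1-\frac{1}{2q-2}$ threshold you correctly identified. The decomposition function then toggles between decomposing $\phi'(A_{F'})$ alone versus $F'\cup\phi'(A_{F'})$, depending on whether $F\in\cQ_{A_0}(L)$. Note also that the paper replaces \emph{every} edge of $A_0$ by a fake edge, not just those missing from $G$; this avoids a mixed-case analysis when a clique $F$ is partly in $G$ and partly not. Your degeneracy bookkeeping, greedy (or Local-Lemma-style) embedding plan, and refinedness/maximum-degree estimates are otherwise on the right track, but without the fake-edge layer the absorber you ask for does not exist.
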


The key to proving the above theorem is first that ``fake-edges '' (formally defined in Section~\ref{subsec:omni}) have \emph{rooted degeneracy} at most $q-1$ (meaning an ordering of non-root vertices with at most $q-1$ earlier neighbors - see~\cref{def:RootedDegeneracy} for a formal definition); the second key is that there exist $K_q$-absorbers of rooted degeneracy at most $2q-2$ (as shown in~\cite{DKPIII}). Note that using such an absorber construction is the bottleneck here (the rooted degeneracy bound of $2q-2$ is tight for $K_q$-absorbers as we discuss later) but it may be possible to utilize the more sophisticated absorber constructions of~\cite{GKLMO19} to improve the bound above. 

For (3) and (4), a different approach than was originally used in the refined absorption proof of the Existence Conjecture is needed. Indeed, we cannot directly apply the Boosting Lemma from Glock, K\"uhn, Lo, and Osthus~\cite{GKLO16} since that only works if the minimum degree is much closer to $n$, namely there is some $\varepsilon$ where it works for minimum degree $(1-\varepsilon)n$ (but unfortunately this $\varepsilon$ is much smaller than $1/4$). 

The cleanest approach (and by far the easiest) is to use the ``Inheritance Lemma for Minimum Degree'', a concept that has echoes in many earlier works in the literature but has started to gain prominence in works on Hamilton cycles, tilings and now (with this current work) decompositions. We will use a version of the lemma by Lang~\cite{lang2023tiling}, see also the work of Ferber and Kwan~\cite{FB22} on matchings in random hypergraphs for a non-rooted version of the Inheritance Lemma. We note that Lang's lemma itself has a remarkably simple proof from standard concentration inequalities that bypasses any use of Szemer\'edi's Regularity Lemma.  Lang's version of the Inheritance Lemma says that for any $m$-subset $M$ of an $r$-uniform hypergraph $G$, almost all $s$-subsets $S$ of $G$ that contain $M$ have approximately the same minimum $d$-degree (that is of $G[S]$) as $G$. We require only a very special case of this lemma (see Lemma~\ref{lem:LangInheritance} below) where we restrict to the case of graphs (as opposed to $r$-uniform hypergraphs) and vertex degree (instead of $d$-degree for some $d\in [r-1]_0$).

From the Inheritance Lemma, we can prove a regularity boosting lemma that works in the setting of Nash-Williams' Conjecture as follows. 

\begin{lem}[Nash-Williams Boosting Lemma - Constant Irregularity Version]\label{lem:NWRegBoostConstant}
For every integer $q\geq 3$ and real $\varepsilon \in (0,1)$, there exists $c\in(0,1)$ such that the following holds for every $\gamma\in (0,1)$ (provided $n$ is large enough). If $J\subseteq K_n$ with minimum degree at least $(\max\{\delta_{K_q}^*,1-\frac{1}{q+1}\}+\varepsilon)n$, then there exists a family $\mc{H}$ of copies of $K_q$ in $J$ such that every $e\in J$ is in $\parens*{c\pm \gamma}\cdot \binom{n-2}{q-2}$ copies of $K_q$ in $\mc{H}$.
\end{lem}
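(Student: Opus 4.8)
The plan is to obtain $\mathcal H$ by randomly rounding a fractional $K_q$-decomposition of $J$. For the rounded object to be an honest family (a \emph{set}) of cliques rather than a multiset, the fractional decomposition must have \emph{uniformly small weights}, and the Inheritance Lemma (\cref{lem:LangInheritance}) is what will supply such a decomposition. First, since $\delta(J)\ge(\delta^*_{K_q}+\varepsilon)n$, the definition of the fractional $K_q$-decomposition threshold $\delta^*_{K_q}$ — together with the blow-up argument noted earlier that removes the ``$n\ge n_0$'' caveat for cliques — guarantees that every graph $H$ of any order with $\delta(H)\ge(\delta^*_{K_q}+\tfrac{\varepsilon}{2})v(H)$, in particular $J$ itself and many of its induced subgraphs, admits a fractional $K_q$-decomposition; note trivially that every weight in any such decomposition is at most $1$ (it is bounded by the total weight at any edge of its clique).

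\textbf{Step 1 (low-weight fractional decomposition).} Fix a large constant $s=s(q,\varepsilon)$. I would apply \cref{lem:LangInheritance} with $M=\{u,v\}$ for each edge $uv$ of $J$: all but a $\rho(s)$-fraction of the $s$-subsets $S\subseteq V(J)$ containing $\{u,v\}$ satisfy $\delta(J[S])\ge(\delta^*_{K_q}+\tfrac{\varepsilon}{2})|S|$, where $\rho(s)\to 0$ as $s\to\infty$ (for fixed $q,\varepsilon$); call such $S$ \emph{good}. Fixing a fractional $K_q$-decomposition $\omega_S$ of $J[S]$ for each good $S$ and averaging,
\[
\omega(Q)\ :=\ \frac{1}{\binom{n-2}{s-2}}\sum_{\substack{S\text{ good}\\ Q\subseteq S}}\omega_S(Q),
\]
a short computation shows every edge of $J$ receives total weight between $1-\rho(s)$ and $1$, while every copy $Q$ of $K_q$ receives weight at most $C_1(q)\,s^{q-2}\cdot\binom{n-2}{q-2}^{-1}$. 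Correcting the resulting (uniformly small) coverage deficiency — here one uses the slack $+\varepsilon$ in the hypothesis so that the correction itself stays low-weight — upgrades $\omega$ to an \emph{exact} fractional $K_q$-decomposition of $J$ all of whose weights are at most $C_0\cdot\binom{n-2}{q-2}^{-1}$ for some $C_0=C_0(q,\varepsilon)$ depending only on $q$ and $\varepsilon$.

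\textbf{Step 2 (rounding).} Set $c:=1/C_0\in(0,1)$ and build $\mathcal H$ by including each copy $Q$ of $K_q$ in $J$ independently with probability $p_Q:=c\binom{n-2}{q-2}\,\omega(Q)$, which is at most $cC_0=1$ by the weight bound. Since $\omega$ is an exact fractional $K_q$-decomposition, $\mathbb E\big[\,|\{Q\in\mathcal H:e\in Q\}|\,\big]=c\binom{n-2}{q-2}$ for \emph{every} edge $e$. This count is a sum of at most $\binom{n-2}{q-2}$ independent $0/1$ variables, so Chernoff's inequality bounds the probability that it differs from $c\binom{n-2}{q-2}$ by more than $\gamma\binom{n-2}{q-2}$ by $\exp\!\big(-\Omega(\gamma^2\binom{n-2}{q-2})\big)$, which is $o(n^{-2})$ once $n$ is large in terms of $\gamma$; a union bound over the at most $\binom n2$ edges of $J$ then yields a choice of $\mathcal H$ with the required near-regularity. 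Observe that $c$ is determined by the weight bound $C_0$ of Step 1 — a function of $q,\varepsilon$ only — whereas $\gamma$ enters only through the final concentration estimate, so ``$n$ large enough'' absorbs it; this ordering is exactly why Step 1 must produce a weight bound free of $\gamma$ (equivalently, why the constant $s$ there may be chosen depending on $q,\varepsilon$ alone).

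\textbf{Main obstacle.} The crux is Step 1: extracting a genuine fractional $K_q$-decomposition of $J$ with all weights $O_{q,\varepsilon}(1)\cdot\binom{n-2}{q-2}^{-1}$. The threshold alone gives no weight control; subset-averaging via the Inheritance Lemma spreads the weight out but pays for it with a small, non-uniform coverage deficiency, and the delicate part is to annihilate this deficiency \emph{without} destroying the low-weight property — so that the rounding probabilities stay at most $1$ and, crucially, the expected degrees come out exactly uniform. (If multisets of cliques were allowed in the conclusion, the lemma would follow at once by rounding any fractional decomposition of $J$; it is precisely the demand that $\mathcal H$ be a set that necessitates the low-weight machinery above.)
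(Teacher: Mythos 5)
Your proposal reproduces the paper's own proof-overview sketch of this lemma---average fractional decompositions over inheritance-lemma subsets to get a low-weight almost-decomposition, then round with Chernoff---and you correctly locate the crux: converting the almost-decomposition into an exact one without losing the low-weight bound. But that is precisely the step you do not carry out; ``one uses the slack $+\varepsilon$'' is not an argument. The deficiency $e\mapsto 1-\partial\omega(e)$ is an arbitrary edge-weighting with values in $[0,e^{-\sqrt{s}}]$, and a fractional $K_q$-decomposition of the corresponding weighted graph, supposing one exists, has no reason to inherit the $O_{q,\varepsilon}\big(\binom{n-2}{q-2}^{-1}\big)$ bound on clique weights. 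And because the lemma quantifies $c=c(q,\varepsilon)$ \emph{before} $\gamma$, a residual systematic bias of $e^{-\sqrt{s}}$ with $s=s(q,\varepsilon)$ in $\partial\omega(e)$ would be fatal to Step~2: once $\gamma<c\cdot e^{-\sqrt{s}}$, no amount of concentration recovers $(c\pm\gamma)\binom{n-2}{q-2}$. So the correction is mandatory, not a tidy-up.

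The paper dissolves this by reversing your order of operations. Rather than averaging first and correcting afterwards, it prescribes target weights $\varphi(e)=(1-e^{-\sqrt{s}})\binom{n-2}{s-2}/|\{S\in\mathcal{S}: e\in G[S]\}|$ \emph{before} averaging, chosen so that when the subgraph-level packings are summed over good $s$-subsets $S$ and normalized, every edge receives total weight exactly $1$. These targets lie in $[1-e^{-\sqrt{s}},1]\subseteq[1-1/s^2,1]$ once $s$ is large, which is exactly the window in which the Montgomery-style averaging argument of Lemma~\ref{lem:SeededFixed} produces, inside each good $S$, a low-weight fractional $K_q$-packing hitting those targets --- that lemma mixes a seeded decomposition of $G[S]$ with seeded decompositions of the graphs $G[S]-e$, and the mixing is what keeps every clique weight within a constant factor of $\binom{s-2}{q-2}^{-1}$. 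Averaging over good $S$ then yields the $(\sigma,C)$-balanced fractional decomposition of Theorem~\ref{lem:BalancedDecomposition}, at which point your Step~2 is correct. In short: you have the right skeleton and have identified where the work lives, but the proposal leaves that work undone; the pre-compensated targets and Lemma~\ref{lem:SeededFixed} are the missing content.
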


The idea is that any $s$-subset $S$ of our graph $J$ that satisfies $\delta(J[S]) \ge (\delta_{K_q}^*+\frac{\varepsilon}{2})m$ will admit a fractional $K_q$-decomposition by definition of the fractional decomposition threshold if $s$ is large enough. Thus if we average over these decompositions for all such $S$ and appropriately scale, we will find an ``almost'' fractional $K_q$-decomposition of $J$ (almost meaning each edge has roughly weight 1) but where the weight of any copy of $K_q$ is at most $\frac{C}{n^{q-2}}$ for some constant $C$ only depending on $q$ and $\varepsilon$ (since any fixed copy of $K_q$ is not in too many $S$). Then Lemma~\ref{lem:NWRegBoostConstant} follows by randomly sampling from this fractional weighting and applying Chernoff bounds. We note that K\"uhn~\cite{K24} essentially (and independently) used the same idea in his proof. Note importantly here that this only gives irregularity (the $\gamma$) for any small but fixed constant $\gamma$ as opposed to a version where $\gamma$ can be made polynomially small in $n$ which we will need for our main result as we discuss below. 

For step (4) then, one invokes the theorem of Alon and Yuster~\cite[Theorem~3.1]{AY05} to show that nibble produces a leftover $L'$ with maximum degree at most $\gamma'n$ where $\gamma'$ goes to zero as $\gamma$ does. Hence if we choose the reserves (and omni-absorber) with probability $p$ much larger than this, we can use the L\'ovasz Local Lemma to find a $K_q$-packing of $L'\cup X$ covering all of $L'$ and finish via the definition of omni-absorber as before.

\begin{remark}
A reader intimately familiar with the iterative absorption proofs or Haxell-R\"odl may wonder about alternative approaches for (3) and (4) here. For these readers, we include this remark. First, we note that the Inheritance Lemma approach is not the only avenue to prove Lemma~\ref{lem:NWRegBoostConstant}. Indeed, one can extract from Yuster's short proof of Haxell-R\"odl the existence of a very regular set of cliques covering most of the edges (equivalently a low weight fractional $K_q$-packing covering most of the edges). The issue is that this only covers most of the edges with no guarantee that the graph of uncovered edges has low maximum degree. To remedy this, one can delete all cliques containing vertices of large uncovered degree and then use a low-weight fractional version of Hajnal-Szemer\'edi on the neighborhood of each such vertex in turn. 

Another alternative for (3) and (4) is to sidestep a regularity boosting lemma by using the covering version of the fractional approach outlined above in this remark, similar to the method in Glock, K\"{u}hn, Lo, Montgomery, and Osthus~\cite{GKLMO19}. Namely, one uses the approximate $K_q$-decomposition of Haxell-R\"odl and then modifies it to have low maximum degree leftover so as to apply the Lov\'asz Loval Lemma into the reserves. The low maximum degree leftover is accomplished by covering the neighborhoods of high uncovered degree vertices each in turn using a robust version of Hajnal-Szemer\'edi and martingale concentration. 

We note that both approaches mentioned above are much longer and more intricate than the approach using the Inheritance Lemma and neither easily generalizes to hypergraphs (e.g.~that would require use of the Hypergraph Regularity Lemma and the hypergraph generalization of Haxell-R\"odl by R{\"o}dl, Schacht, Siggers, and Tokushige~\cite{RSST07}) while the Inheritance Lemma approach easily generalizes to hypergraphs.  
\end{remark}

\subsection{A Refined Absorption Proof for High Girth Existence}

As to a refined absorption proof of \Erdos{}' Conjecture, we now discuss at a high level how the proof of the High Girth Existence Conjecture from~\cite{DPII} modifies the basic refined absorption framework as follows. For parts (1) and (3) of the proof outline, the proof remains the same. 

Whereas for (2), one instead constructs a \emph{high girth} omni-absorber $A$ of $X$ that \emph{does not shrink `too many' configurations}. We refer the reader to~\cref{def:OmniAbsorberProj} and its subsequent remark for an explanation of this concept. To accomplish this, one uses~\cref{thm:RefinedEfficientOmniAbsorber} to find an omni-absorber $A_0$ of $X$ in $K_n$. One then uses ``girth boosters" to boost the girth of each clique in $\cF_{A_0}$. A \emph{$K_q$-booster} (see~\cref{def:rootedbooster}) is just a minimal non-trivial $K_q$-absorber for a copy of $K_q$; that is, a $K_q$-booster is equivalent to having two disjoint $K_q$-decompositions. This allows us the freedom to switch which decomposition is used. A ``girth booster" then is just an informal term for a booster where both decompositions have high girth (we refer the reader to~\cref{def:RootedGirth} for the more formal definition of \emph{rooted girth} of a booster). 

Using these boosters for each clique in the decomposition family $\cF_{A_0}$ of $A_0$ naturally defines a new omni-absorber $A$ of $X$ (namely if a decomposition would use a clique in $\cF_{A_0}$ we instead use the alternate decomposition of its booster; if it is not used, we  decompose the booster without the clique via the first decomposition). By choosing each booster randomly, one can also argue with high probability that $A$ collectively has high girth and that $A$ does not shrink too many configurations. 

For (4), we used the ``Forbidden Submatchings with Reserves'' Theorem, the main theorem from~\cite{DP22}. All of this machinery is rather technical and so precise definitions will be delayed until Section~\ref{s:HighGirth}. We note that the machinery is not really simpler for the triangle case as opposed to the general $K_q^r$ setting (except for girth boosters whose constructions are thankfully simpler for triangles and even for graphs).  

\subsection{A Refined Absorption Proof for Erd\H{o}s meets Nash-Williams}

Here then is our outline for the proof of Theorem~\ref{thm:Main_ErdosNashWilliams}:

\begin{enumerate}\itemsep.05in
    \item[(1)] `Reserve' a random subset $X$ of $E(G)$.
    \item[(2)] Construct a high girth omni-absorber $A$ of $X$ \emph{inside $G$} that does not shrink `too many' configurations. 
    \item[(3)] ``Regularity Boost'' via the Inheritance Lemma, fixing low-weight ``almost" fractional decompositions into low-weight fractional decompositions ones inside the subgraphs produced by the Inheritance Lemma, ``seeding'' the decomposition so every clique has some non-negligible weight, and then using the General Boosting Lemma (the latter two are only necessary to avoid the \emph{dangerous} copies of $K_q$ arising from low girth configurations shrunk by the high girth omni-absorber).
    \item[(4)] Apply ``Forbidden Submatchings with Reserves'' theorem to find a \emph{high girth} $K_q$ packing of $G\setminus A$ covering $G\setminus (A\cup X)$ and then extend this to a \emph{high girth} $K_q$ decomposition of $G$ by definition of \emph{high girth} omni-absorber.
\end{enumerate}

Our approach to show part (1) of the proof outline here is the same as the one in the discussion of Nash-Williams' Conjecture above (namely we utilize Lemma~\ref{lem:NWRegBoost}). For (2), we start with an omni-absorber $A_0$ as given by Theorem~\ref{thm:NWRefinedEfficientOmniAbsorber} and subsequently use girth boosters to make it high girth - with some care needed to ensure we only use boosters that embed \emph{inside} $G$ (this works since the girth boosters also have rooted degeneracy at most $2q-2$). Note that the bottleneck on improving the minimum degree requirement comes from the rooted degeneracy of these girth boosters. For $q=3$, this yields the desired results; whereas for general $q$, a more sophisticated argument would be needed in order to work around the rooted degeneracy (which is tight for girth boosters just as it was for absorbers) if one wants to obtain a minimum degree of $\big(1-\frac{1}{q+1}\big)$ as in Conjecture~\ref{conj:ENWLargeCliquesNoEpsilon}. 

As to how we accomplish using the girth boosters, we extract from the proof given in~\cite{DPII}, a stronger theorem than what is stated in~\cite{DPII}; namely, Theorem~\ref{thm:HighGirthQuantumOmniBooster} (also see Theorem~\ref{cor:ExistenceOmniAbsorber} for its use), which says that the girth booster machinery works as intended even when each clique demands that their booster comes from only some constant proportion of the possible girth boosters; we quarantine this extraction to the appendix. Given the rooted degeneracy of our girth booster, it is then straightforward to show that not only does one such girth booster embed in $G$ but indeed a constant proportion do (see Proposition~\ref{prop:DegeneracySphere}). Since the definitions needed for the formal statements of these theorems are rather technical, we delay them until Section~\ref{s:HighGirth} where we then also prove a high girth omni-absorber analogue of Theorem~\ref{thm:NWRefinedEfficientOmniAbsorber} (namely Theorem~\ref{thm:HighGirthAbsorber}) assuming our extraction theorem. 

For part (4) of the outline, we finish similarly to the proof of the High Girth Existence Conjecture by using the ``Forbidden Submatchings with Reserves'' Theorem (see Section~\ref{s:HighGirth} for its statement and relevant definitions). We note that one could probably also use the conflict-free hypergraph matchings machinery from Glock, Joos, Kim, K{\"u}hn, and Lichev \cite{GJKKL24}, and more precisely its ``reserve'' version by Joos, Mubayi, and Smith~\cite{JMS24}; however, this would require a very substantial rewriting of the results from~\cite{DPII} that we intent to use as black boxes as much as possible.

\subsection{Proof Overview for Regularity Boosting for \Erdos{} meets Nash-Williams}

The main issue then is in step (3) - namely, regularity boosting. In particular the use of ``Forbidden Submatchings with Reserves'' requires an irregularity in our set of cliques that is polynomially small in $n$. Thus an important issue for this paper is how to resolve this discrepancy in the irregularity; namely, we will instead prove the following stronger regularity boosting lemma in Section~\ref{s:RegBoosting}. 

\begin{lem}[Nash-Williams Boosting Lemma - Polynomially Small Irregularity Version]\label{lem:NWRegBoost}
For every integer $q\geq 3$ and real $\varepsilon \in (0,1)$, there exists $c\in(0,1)$ such that the following holds for all $n$ large enough. If $J\subseteq K_n$ with minimum degree at least $(\max\{\delta_{K_q}^*,1-\frac{1}{q+1}\}+\varepsilon)n$, then there exists a family $\mc{H}$ of copies of $K_q$ in $J$ such that every $e\in J$ is in $\parens*{c\pm n^{-1/3}}\cdot \binom{n-2}{q-2}$ copies of $K_q$ in $\mc{H}$.
\end{lem}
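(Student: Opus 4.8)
The plan for \cref{lem:NWRegBoost} is to strengthen the constant-irregularity version \cref{lem:NWRegBoostConstant} along a four-step route: (i) use an inheritance argument to build a nonnegative weighting $\psi_0$ of the copies of $K_q$ in $J$ that is a \emph{low-weight almost-fractional decomposition} --- every edge of $J$ has $\psi_0$-weight within a small constant $\beta$ of $1$, while every copy of $K_q$ has $\psi_0$-weight at most $C_1 n^{-(q-2)}$; (ii) \emph{seed} $\psi_0$ so that in addition every copy of $K_q$ carries weight $\Omega(n^{-(q-2)})$; (iii) feed the seeded weighting into a general form of the Boosting Lemma of Glock, K\"uhn, Lo, and Osthus~\cite{GKLO16} to turn it into an \emph{honest} fractional $K_q$-decomposition $\psi^*$ of $J$ with $\|\psi^*\|_\infty \le C' n^{-(q-2)}$; and (iv) randomly sample the copies of $K_q$ with probabilities proportional to $\psi^*$ and finish with Chernoff bounds.

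For step (i), I would fix a large constant $s = s(q,\varepsilon)$ and set $\beta = \beta(s) \to 0$ as $s \to \infty$; call an $s$-subset $S$ of $V(J)$ \emph{good} if $\delta(J[S]) \ge (\delta^*_{K_q} + \varepsilon/2)\,s$. By Lang's Inheritance Lemma~\cite{lang2023tiling} in its rooted, graph-vertex-degree form, applied with a single edge $e$ as root and using $\delta(J) \ge (\delta^*_{K_q}+\varepsilon)n$, for every edge $e$ all but a $\beta$-fraction of the $s$-subsets $S \supseteq e$ are good (for $s$ large). Every good $S$ admits a fractional $K_q$-decomposition $\psi_S$ of $J[S]$ by definition of $\delta^*_{K_q}$ (the $n_0$ restriction being unnecessary for cliques), and I set
\[
\psi_0(Q) := \frac{1}{\binom{n-2}{s-2}} \sum_{\substack{S \text{ good} \\ V(Q) \subseteq S}} \psi_S(Q)
\]
for each copy $Q$ of $K_q$ in $J$. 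Since $0 \le \psi_S(Q) \le 1$ and $Q$ lies in exactly $\binom{n-q}{s-q}$ sets of size $s$, we get $\|\psi_0\|_\infty \le \binom{n-q}{s-q}/\binom{n-2}{s-2} \le (2s)^{q-2}\,n^{-(q-2)}$; exchanging the order of summation gives $\sum_{Q \ni e} \psi_0(Q) = |\{S \text{ good} : e \subseteq S\}| / \binom{n-2}{s-2} \in [1-\beta,\,1]$ for every edge $e$. For step (ii), put $w_0 := \beta/\binom{n-2}{q-2}$ and $\psi_1(Q) := \psi_0(Q) + w_0$; since each edge of $J$ lies in at most $\binom{n-2}{q-2}$ copies of $K_q$, seeding changes every edge total by at most $\beta$, so every edge has $\psi_1$-total in $[1-\beta,\,1+\beta]$, we still have $\|\psi_1\|_\infty \le C_2\,n^{-(q-2)}$, and crucially now $\psi_1(Q) \ge w_0 = \Omega(n^{-(q-2)})$ for \emph{every} copy $Q$.

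For step (iii) I would invoke a general form of the GKLO Boosting Lemma, roughly of the shape: if a graph $G$ has $\Omega(n^{q-2})$ copies of $K_q$ through each edge, and $\psi$ is a weighting of its $K_q$-copies with every edge-total in $[1-\rho,\,1+\rho]$ for a small enough $\rho$ and with $c_0 n^{-(q-2)} \le \psi(Q) \le C\,n^{-(q-2)}$ for every copy $Q$ (some $0 < c_0 \le C$), then $G$ has an honest fractional $K_q$-decomposition $\psi^*$ with $\|\psi^*\|_\infty \le C'\,n^{-(q-2)}$. The host $J$ meets the structural hypothesis because $\delta^*_{K_q} \ge 1 - \frac{1}{q+1} > 1 - \frac{1}{q-1}$, so $\delta(J)$ comfortably exceeds $(1 - \frac{1}{q-1})n$ and hence each edge of $J$ lies in $\Omega(n^{q-2})$ copies of $K_q$; taking $s$ large enough, the seeded weighting $\psi_1$ meets the weighting hypotheses, and it is exactly the seed lower bound $\psi_1(Q) \ge w_0 = \Omega(n^{-(q-2)})$ that gives the boosting procedure the room to subtract its (signed) corrections. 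This produces $\psi^*$. For step (iv), I would choose $c = c(q,\varepsilon) \in (0,1)$ small enough that $c\binom{n-2}{q-2}\|\psi^*\|_\infty \le 1$, and form $\mc{H}$ by including each copy $Q$ of $K_q$ in $J$ independently with probability $c\binom{n-2}{q-2}\psi^*(Q)$; then for each edge $e$ the number of copies of $\mc{H}$ through $e$ has mean $c\binom{n-2}{q-2}\sum_{Q \ni e}\psi^*(Q) = c\binom{n-2}{q-2}$ and variance at most the same. Since $\frac{q-2}{2} < q-2-\frac13$ for $q \ge 3$, the typical deviation $O(n^{(q-2)/2}\sqrt{\log n})$ is $o\!\left(n^{-1/3}\binom{n-2}{q-2}\right)$, so a Chernoff bound together with a union bound over the $\binom{n}{2}$ edges yields that with positive probability every edge of $J$ lies in $(c \pm n^{-1/3})\binom{n-2}{q-2}$ copies of $\mc{H}$, as required.

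I expect the main obstacle to be step (iii): isolating and proving a form of the Boosting Lemma general enough to apply to an arbitrary host graph $J$ rather than to $K_n$ as in~\cite{GKLO16}, and balancing parameters so that the seed $w_0$ is large enough to absorb the signed corrections the boosting procedure applies while the input irregularity $\beta$ stays below the lemma's threshold $\rho$. I should stress that simply taking $s$ polynomially large --- which would make $\beta$ polynomially small and let us skip the boosting entirely --- is not an option, since that destroys the $O(n^{-(q-2)})$ bound on the clique weights which the sampling in step (iv) crucially relies on. The inheritance-and-averaging part (steps (i)--(ii)) and the concluding sampling argument (step (iv)) should be routine; indeed (i)--(ii) is essentially the proof behind \cref{lem:NWRegBoostConstant} combined with the per-edge rooted form of the Inheritance Lemma.
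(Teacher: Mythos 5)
Your plan shares steps (i) and (iv) with the paper --- average fractional decompositions of good $s$-subsets via Lang's Inheritance Lemma to get a low-weight weighting, then finish by sampling and Chernoff --- but diverges at the crucial middle step, and the divergence hides a genuine gap.

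The paper does not turn the almost-decomposition into an exact one by a fractional boosting lemma. Instead it uses Montgomery's ``fixing'' trick (\cref{lem:SeededFixed}): for each good $s$-subset $S$ it constructs a fractional $F$-packing $\Phi_S$ of $J[S]$ that hits a prescribed per-edge target $\varphi(e)$, where $\varphi(e)$ is set inversely proportional to the number of good $s$-subsets through $e$. This is done by averaging the decomposition of $J[S]$ with the decompositions of $J[S]-f$ for each $f\in J[S]$; the hypothesis it needs, $\varphi(e)\geq 1-\frac{1}{e(J[S])}$, is met because the inheritance error $e^{-\sqrt{s}}$ beats $s^{-2}\geq \frac{1}{e(J[S])}$ already at constant $s$. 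Summing $\Phi_S$ over $S$ then gives \emph{exactly} weight $1$ on every edge with no residual irregularity, at which point one samples and is done. No boosting lemma ever enters the proof of \cref{lem:NWRegBoost} or \cref{lem:LowWeightDecomposition}.

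Your step (iii), by contrast, postulates a ``general form of the GKLO Boosting Lemma'' that takes a \emph{weighting} with edge totals in $[1-\rho,1+\rho]$ on an arbitrary host $J$ and produces an honest low-weight decomposition. You flag this as your main obstacle, and rightly so: it is the heart of the matter, and it is left unproven. The Boosting Lemma of~\cite{GKLO16} is proved for near-complete graphs (minimum degree $(1-\eta)n$ for a small absolute $\eta$, far above the Nash-Williams threshold), and the generalized version the paper does import from~\cite{DKPIII} (\cref{lem:GeneralBoostLemma}) is a statement about \emph{families} of cliques governed by a $K_{q+2}$-codegree ratio condition, not about weightings; the paper applies it only in \cref{lem:NWRegBoostTreasury}, after sampling a family $\cH$ and building a $K_{q+2}$-family $\cQ$, not at the weighting level. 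Your stated hypothesis for step (iii) --- ``$\Omega(n^{q-2})$ copies of $K_q$ through each edge'' --- is also not the right structural input: what boosting actually consumes is abundance of $K_{q+2}$'s through each edge with all their $K_q$-subgraphs usable (which is why the $\big(1-\frac{1}{q+1}\big)$ threshold and, in the paper, the balancedness/seeding are there at all).

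Two smaller points. First, the seeding in step (ii) is unnecessary for the present lemma: \cref{lem:NWRegBoost} only needs an upper bound on clique weights; a lower bound (balancedness) is needed in the paper only for \cref{lem:NWRegBoostTreasury} to guarantee that sampled cliques contain many fully-sampled $K_{q+2}$'s so that Janson applies. Moreover your additive seeding $\psi_1 = \psi_0 + w_0$ worsens the edge-totals to $[1-\beta,1+\beta]$, giving your step (iii) more, not less, to repair. Second, your observation that taking $s$ polynomially large is off the table (because it blows up the $L^\infty$ clique bound) is correct and is exactly why Montgomery's fixing idea is needed: it removes the residual $O(e^{-\sqrt{s}})$ irregularity at the level of the weighting for constant $s$, without any recourse to a boosting lemma. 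That is the idea your proposal is missing.
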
 

~\cref{lem:NWRegBoost} is a corollary of the following lemma (via sampling and Chernoff bounds). We say a fractional $F$-decomposition of a graph $G$ on $n$ vertices is \emph{$C$-low-weight} if every copy of $F$ in $G$ has weight at most $\frac{C}{n^{v(F)-2}}$. 

\begin{thm}\label{lem:LowWeightDecomposition}
    For every graph $F$ and real $\varepsilon\in (0,1)$, there exists an integer $C\geq 1$ such that the following holds for every integer $n$ large enough.    
    
    If $G$ is a $n$-vertex graph with $\delta(G)\geq (\delta^*_F+\varepsilon)n$, then there exists a $C$-low-weight fractional $F$-decomposition of $G$.
\end{thm}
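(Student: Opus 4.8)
The plan is to produce the low-weight fractional $F$-decomposition by averaging over fractional $F$-decompositions of induced subgraphs on a moderately large but fixed-size vertex set $S$, exactly as sketched in the proof-overview paragraph following Lemma~\ref{lem:NWRegBoostConstant}. Fix a large integer $s = s(F,\varepsilon)$ to be chosen later, and for each $s$-subset $S$ of $V(G)$ consider the induced subgraph $G[S]$. The first step is an \emph{inheritance} step: I would invoke the Inheritance Lemma (the special case of Lang's lemma recorded as Lemma~\ref{lem:LangInheritance}, restricted to graphs and vertex degree) to argue that, for all sufficiently large $n$, the vast majority of $s$-subsets $S$ satisfy $\delta(G[S]) \ge (\delta^*_F + \tfrac{\varepsilon}{2})\,s$, simply because $\delta(G) \ge (\delta^*_F+\varepsilon)n$ and degrees concentrate after random sampling. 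Call such an $S$ \emph{good}. Since $s$ is a fixed constant independent of $n$, each good $S$ admits a fractional $F$-decomposition by the very definition of $\delta^*_F$ (here one uses that $s$ is large enough that $s \ge n_0(F,\varepsilon/2)$ from that definition; if $F$ is not a clique this is where the $n_0$ in the definition of $\delta^*_F$ matters, and one takes $s$ correspondingly large).

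The second step is to \emph{average}. For each good $S$, fix one fractional $F$-decomposition $\omega_S$ of $G[S]$, and for each copy $F'$ of $F$ in $G$ set
\[
\omega(F') \;=\; \frac{1}{Z}\sum_{S \text{ good},\, V(F')\subseteq S}\omega_S(F'),
\]
where $Z$ is the normalising count to be determined. For a fixed edge $e$ of $G$, the total weight $\sum_{F'\ni e}\omega(F')$ equals $\tfrac1Z$ times the number of good $S$ containing both endpoints of $e$ (since $\omega_S$ is a genuine fractional decomposition of $G[S]$, the inner sum over copies of $F$ through $e$ inside $G[S]$ is exactly $1$ for each such $S$). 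The number of good $S$ containing a fixed pair of vertices is $(1\pm o(1))\binom{n-2}{s-2}$ by the inheritance step, so choosing $Z = \binom{n-2}{s-2}$ makes every edge receive weight $1\pm o(1)$; a tiny deterministic correction (scale by a factor $1\pm o(1)$, or move a negligible amount of weight onto an arbitrary fixed copy of $F$ through each deficient edge) turns this into an exact fractional $F$-decomposition without disturbing the low-weight property. Finally, for the \emph{low-weight bound}: a fixed copy $F'$ of $F$ sits in at most $\binom{n-v(F)}{s-v(F)}$ sets $S$, and each $\omega_S(F') \le 1$ trivially (in fact much smaller), so
\[
\omega(F') \;\le\; \frac{\binom{n-v(F)}{s-v(F)}}{\binom{n-2}{s-2}} \;\le\; \frac{C_0}{n^{v(F)-2}}
\]
for a constant $C_0 = C_0(F,\varepsilon)$ depending only on $s$ (and hence only on $F$ and $\varepsilon$); absorbing the $1\pm o(1)$ correction factor gives the claimed constant $C$.

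The main obstacle, such as it is, is the bookkeeping around the Inheritance Lemma: one must check that it is being applied with the correct parameters (vertex degree, graph case, the gap $\varepsilon$ versus $\varepsilon/2$, and $s$ large enough both for concentration to kick in and for $s \ge n_0(F,\varepsilon/2)$), and that the exceptional $s$-subsets — those which are \emph{not} good — contribute negligibly to every edge's weight rather than being simply discarded. The latter is immediate once we only ever sum $\omega_S$ over good $S$, but one should note that dropping the bad $S$ is exactly why we get $1\pm o(1)$ rather than $1$ on each edge, hence the need for the final correction. Everything else is a routine counting estimate comparing the binomial coefficients $\binom{n-v(F)}{s-v(F)}$ and $\binom{n-2}{s-2}$, which are both $\Theta(n^{s-v(F)})$ and $\Theta(n^{s-2})$ respectively, giving the $n^{-(v(F)-2)}$ savings.
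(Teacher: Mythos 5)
Your overall strategy---applying the Inheritance Lemma (Lemma~\ref{lem:LangInheritance}) and averaging fractional $F$-decompositions over good $s$-subsets---matches the paper's skeleton, but the ``tiny deterministic correction'' at the end is where the argument genuinely breaks. After summing only over good $S$, each edge $e$ receives total weight $N_e/\binom{n-2}{s-2}$, where $N_e$ is the number of good $s$-sets containing $e$. The Inheritance Lemma only guarantees $N_e \ge (1-e^{-\sqrt{s}})\binom{n-2}{s-2}$; it does not make $N_e$ the same for every edge. So the deficiency is edge-dependent, and your first fix (uniform rescaling by $1\pm o(1)$) cannot equalize the weights. Your second fix (pushing weight $\delta_e$ onto a single copy $F'\ni e$) spills that weight onto all other edges of $F'$, overshooting on edges already at weight $1$; a fractional $F$-decomposition needs boundary exactly $1$, not at most or at least $1$. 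Filling in edge-dependent deficiencies is itself a fractional-decomposition-type problem, and neither proposed patch solves it.

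The paper avoids this by calibrating \emph{before} averaging, not after. It first proves a target-weight lemma (Lemma~\ref{lem:SeededFixed}, built on Montgomery's trick of averaging a decomposition of $G$ with decompositions of $G-e$ over all edges $e$), which, given any prescribed $\varphi\colon E\to[1-1/e(G),1]$, produces a fractional $F$-packing whose boundary on $e$ is exactly $\varphi(e)$. Then for each good $S$ it sets $\varphi(e)=(1-e^{-\sqrt{s}})\binom{n-2}{s-2}/N_e$ (so that the $N_e$'s cancel upon summing), and the averaged object is an exact fractional decomposition with no post-hoc correction needed. (The paper actually only proves the stronger Theorem~\ref{lem:BalancedDecomposition}, which additionally seeds every copy of $F$ with a positive weight via Lemma~\ref{lem:Seeded}; you do not need the seeding for the upper bound, but you do need the fixing step.) Your binomial-coefficient count for the low-weight bound is correct, and the $s\ge n_0$ bookkeeping you flag is the right thing to check, but without some version of the target-weight mechanism the proposal does not produce an exact fractional $F$-decomposition.
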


We prove~\cref{lem:LowWeightDecomposition} by ``fixing'' the ``almost'' $C$-low-weight fractional decomposition of $G$ generated by the Inheritance Lemma. For this, we were inspired by a clever idea of Montgomery from his work on fractional decompositions~\cite{M19b}. Namely, if a graph $G$ has a fractional $F$-decomposition and for every $e\in G$, we have that $G-e$ has a fractional $F$-decomposition, then for any `target' weights $\phi(e)\in [1-\frac{1}{e(G)},1]$, there exists a fractional $F$-packing hitting these targets. This is accomplished by appropriately averaging the different fractional decompositions (see~\cref{lem:SeededFixed} for an application of this proof idea).

To prove~\cref{lem:LowWeightDecomposition} then, we use the target weight version inside the subgraphs generated by the Inheritance Lemma. Namely for each edge $e$ we calculate how many high minimum degree $s$-subsets $S$ that $e$ is in and adjust its target weight $\phi(e)$ for all of those subsets accordingly (that is such that the resulting weight on all edges is equal). This works since the value of ``almost'' from the Inheritance Lemma is $1-e^{-\sqrt{s}}$ while $e(G[S])\le s^2$ and $s$ is large enough so that $e^{\sqrt{s}}\ge s^2$. 

Unfortunately,~\cref{lem:NWRegBoost} is not sufficient for our purposes as one rather annoying technicality remains. Namely, we need that this family $\mc{H}$ does not include any \emph{dangerous} clique (a clique that would form a low girth configuration when combined with cliques from some decomposition from the high girth omni-absorber $A$). For this, it suffices to prove the version of~\cref{lem:NWRegBoost} where $\mc{H}$ must lie inside some given family of cliques with the property that every edge is in at least $(1-\alpha)\binom{n-2}{q-2}$ copies of $K_q$ for some small enough $\alpha$ (since the non-dangerous cliques will have this property) as follows.

\begin{lem}[Nash-Williams Restricted Boosting Lemma]\label{lem:NWRegBoostTreasury}
For every integer $q\geq 3$ and real $\varepsilon \in (0,1)$, there exist $\alpha,c\in(0,1)$ such that the following holds for all $n$ large enough. Let $J\subseteq K_n$ with minimum degree at least $(\max\{\delta_{K_q}^*,1-\frac{1}{q+1}\}+\varepsilon)n$ and $\cF_{orb}$ be a family of copies of $K_q$ in $J$, such that every edge $e\in J$ is in at most $\alpha\binom{n-2}{q-2}$ elements of $\cF_{orb}$. Then there exists a family $\mc{J}$ of copies of $K_q$ in $J$ with $\mc{J}\cap\cF_{orb}=\emptyset$ and such that every $e\in J$ is in $\parens*{c\pm n^{-1/3}}\cdot \binom{n-2}{q-2}$ copies of $K_q$ in $\mc{J}$.
\end{lem}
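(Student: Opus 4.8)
The plan is to derive \cref{lem:NWRegBoostTreasury} from a restricted, forbidden-family-avoiding version of \cref{lem:LowWeightDecomposition}, in precisely the same way that \cref{lem:NWRegBoost} is derived from \cref{lem:LowWeightDecomposition} (by random sampling and Chernoff bounds). Specifically, I will first prove that there exist $\alpha\in(0,1)$ and an integer $C\ge 1$ (depending only on $q$ and $\varepsilon$) so that the following holds for all large $n$: if $J$ is an $n$-vertex graph with $\delta(J)\ge(\delta^*_{K_q}+\varepsilon)n$ (recall this is the same as the hypothesis of \cref{lem:NWRegBoostTreasury}, since $\delta^*_{K_q}\ge 1-\tfrac{1}{q+1}$) and $\cF_{orb}$ is a family of copies of $K_q$ in $J$ with every edge in at most $\alpha\binom{n-2}{q-2}$ members of $\cF_{orb}$, then $J$ admits a $C$-low-weight fractional $K_q$-decomposition $\omega$ whose support is disjoint from $\cF_{orb}$. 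Granting this, \cref{lem:NWRegBoostTreasury} follows by including each copy $K$ of $K_q$ with $\omega(K)>0$ in $\mc{J}$ independently with probability $\lambda n^{q-2}\omega(K)$, where $\lambda=\lambda(q,\varepsilon)>0$ is small enough that $\lambda C\le 1$ and $c:=\lambda(q-2)!\in(0,1)$: since $\sum_{K\ni e}\omega(K)=1$ for every edge $e$ and $\omega$ is supported off $\cF_{orb}$, the number of copies of $K_q$ in $\mc{J}$ through a fixed edge $e$ is a sum of independent indicators with mean $\lambda n^{q-2}=\big(c+O(1/n)\big)\binom{n-2}{q-2}$, so (using $q\ge 3$) Chernoff's inequality plus a union bound over the at most $\binom n2$ edges shows that with positive probability every edge of $J$ lies in $\big(c\pm n^{-1/3}\big)\binom{n-2}{q-2}$ copies of $K_q$ of $\mc{J}$, while $\mc{J}\cap\cF_{orb}=\emptyset$ by construction.

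To prove the restricted low-weight decomposition, I will adapt the proof of \cref{lem:LowWeightDecomposition}. That proof averages, over an appropriate family of $s$-subsets $S\subseteq V(J)$ (with $s=s(q,\varepsilon)$ a large constant), low-weight fractional $K_q$-decompositions of the induced subgraphs $J[S]$, using the Inheritance Lemma (\cref{lem:LangInheritance}) to know that all but an $e^{-\sqrt s}$-fraction of the relevant $s$-subsets satisfy $\delta(J[S])\ge(\delta^*_{K_q}+\varepsilon/2)s$, and using the target-weight trick inside each $J[S]$ to absorb the tiny fluctuation in the number of such subsets containing a given edge. The one new ingredient is that I additionally restrict the average to those $s$-subsets $S$ that contain \emph{no} member of $\cF_{orb}$; then every copy of $K_q$ in $J[S]$ lies outside $\cF_{orb}$, so the fractional decomposition of $J$ that results from the average is automatically supported off $\cF_{orb}$, and the per-clique weight bound $C/n^{q-2}$ is inherited from the per-clique weights $\le C_1/s^{q-2}$ of the $J[S]$-decompositions exactly as before. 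This extra restriction costs essentially nothing: a short first-moment calculation (using that each edge, hence each vertex, lies in few members of $\cF_{orb}$, hence $|\cF_{orb}|=O(\alpha n^q)$) shows that a uniformly random $s$-subset of $V(J)$ --- or one conditioned to contain a fixed edge --- meets $\cF_{orb}$ with probability at most $C'(s,q)\cdot\alpha$ for some polynomial $C'(s,q)$. Choosing $\alpha=\alpha(q,\varepsilon)$ small enough that this probability is below $e^{-\sqrt s}$, the number of admissible $s$-subsets (high minimum degree \emph{and} $\cF_{orb}$-free) containing any fixed edge remains $\big(1\pm O(e^{-\sqrt s})\big)\binom{n-2}{s-2}$, which is exactly the regime in which the target weights stay inside the admissible interval $[1-1/e(J[S]),1]$, so the argument of \cref{lem:LowWeightDecomposition} goes through verbatim.

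The crux, and the only genuinely new point over \cref{lem:LowWeightDecomposition}, is the order of quantifiers: the constant $s$ is fixed first (as demanded by the Inheritance Lemma and the target-weight machinery), and $\alpha$ is chosen small only afterwards in terms of $s$. The reason this suffices is that a forbidden family which is merely sparse at the scale $\binom{n-2}{q-2}$ becomes, at the scale of a constant-size subset, so sparse that almost every $s$-subset avoids it entirely; thus we never need to solve the much harder problem of building a fractional $K_q$-decomposition that must \emph{actively} steer around a non-negligible forbidden family. Everything else --- the Chernoff concentration in the sampling step, the estimates on $|\cF_{orb}|$ and on the forbidden degrees of edges and vertices, and the low-weight bookkeeping --- is routine and essentially identical to the corresponding parts of the proofs of \cref{lem:LowWeightDecomposition} and \cref{lem:NWRegBoost}.
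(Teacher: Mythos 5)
Your proof is correct, and it takes a genuinely different route from the paper's. The paper starts from a $(\sigma,C)$-balanced fractional $K_q$-decomposition (\cref{lem:BalancedDecomposition}), zeroes out the weight on every clique of $\cF_{orb}$ (creating an irregularity of order $\alpha C$ on the edge weights), samples to get a discrete family $\cH$, and then invokes Janson's inequality plus the General Boosting Lemma (\cref{lem:GeneralBoostLemma}) to repair the irregularity before sampling a second time; the ``balanced'' (seeded) property is precisely what makes the $K_{q+2}$-hypothesis of the General Boosting Lemma verifiable. Your approach avoids this entirely: instead of deleting forbidden cliques after the fact, you restrict the Inheritance-Lemma averaging in the proof of \cref{lem:LowWeightDecomposition} to $s$-subsets that are $\cF_{orb}$-free, observing that by a first-moment bound ($|\cF_{orb}|=O(\alpha n^q)$, so a random $s$-subset through a fixed edge meets $\cF_{orb}$ with probability $O(\alpha s^q)$) this excludes only an $O(\alpha s^q)$-fraction of subsets, which sits comfortably below the $e^{-\sqrt s}$ tolerance as long as $\alpha$ is chosen small after $s$; the resulting low-weight fractional decomposition is then automatically supported off $\cF_{orb}$, with no irregularity to correct, and a single sampling step finishes. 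The point you correctly isolate as the crux — that $s$ is fixed first, so a forbidden family sparse at scale $n$ is almost always entirely absent at scale $s$ — is exactly what makes this shortcut legitimate. Your route is shorter and requires less machinery: you do not need the balanced decomposition (\cref{lem:BalancedDecomposition}), Janson's inequality, the General Boosting Lemma, or the second sampling, and in fact you only need the target-weight ``fixing'' part of \cref{lem:SeededFixed}, not its seeding. The trade-off is that you must reopen the proof of \cref{lem:LowWeightDecomposition} rather than treat it as a black box, whereas the paper's approach is more modular (and the authors may also have preferred to highlight \cref{lem:BalancedDecomposition} as a result of independent interest).
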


To prove this, we could just attempt to delete the cliques in the $C$-low-weight fractional $K_q$-decomposition of $G$ given by~\cref{lem:LowWeightDecomposition}. However, this idea (combined with sampling and Chernoff) would yield an irregularity proportional to $\alpha$; granted $\alpha$ is a small constant, as small compared to $\varepsilon$ and $C$ as we desire, but still a constant. So how to fix this new irregularity? We could attempt to use the general version of the Boosting Lemma of Glock, K\"{u}hn, Lo, and Osthus~\cite{GKLO16} (as found in~\cite{DKPIII}). This would work if every edge of $J$ was in many $K_{q+2}$'s all of whose $K_q$ subgraphs were in $\mc{H}$. While the minimum degree condition on $J$ precisely guarantees that every edge of $H$ is in many $K_{q+2}$'s, there is no guarantee from~\cref{lem:NWRegBoost} that their $K_q$ subgraphs are in $\mc{H}$. \emph{But what if we could guarantee this?} What if we did a version of~\cref{lem:LowWeightDecomposition} that guarantees this?

This leads to our last innovation - `seeding' the fractional decomposition. Namely, we will have the desired $K_{q+2}$ property if we can guarantee that every copy of $K_q$ in $G$ receives some large enough weight (because then when we sample from the fractional decomposition, the proportion of $K_{q+2}$ all of whose $K_q$-subgraphs are sampled is still very high). To that end, we make the following definition.

\begin{definition}[Balanced Fractional Decomposition]\label{def:BalancedDecomposition}
    For an $n$-vertex graph $G$ and real numbers $\sigma,C > 0$, we say that a fractional $F$-decomposition $\phi$ of $G$ is \emph{$(\sigma,C)$-balanced} if for each copy $H$ of $F$ in $G$ we have $\phi(H)\cdot  \binom{n-2}{v(F)-2}\in [\sigma,C]$.
\end{definition}

Here then is the crucial fractional decomposition theorem (which we think is nice in its own right).

\begin{thm}\label{lem:BalancedDecomposition}
    For every graph $F$ and real $\varepsilon\in (0,1)$, there exist a real $\sigma\in (0,1)$ and an integer $C\geq 1$ such that the following holds for every integer $n$ large enough.    
    
    If $G$ is a $n$-vertex graph with $\delta(G)\geq (\delta^*_F+\varepsilon)n$, then there exists a $(\sigma,C)$-balanced fractional $F$-decomposition of $G$.
\end{thm}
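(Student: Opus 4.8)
The plan is to follow the proof of~\cref{lem:LowWeightDecomposition}---averaging fractional $F$-decompositions of the bounded-size induced subgraphs supplied by the Inheritance Lemma~(\cref{lem:LangInheritance})---but to arrange that each of those local decompositions already ``seeds'' every copy of $F$ with a constant amount of weight. Write $k := v(F)$ and fix a large constant $s = s(F,\varepsilon)$, chosen so that (i) the Inheritance Lemma applied inside $G$ has additive degree error well below $\tfrac{\varepsilon}{2}n$ and multiplicative ``almost'' error below $\tfrac1{s^2}$, (ii) $s$ exceeds any $n_0$ needed for the fractional threshold $\delta^*_F$ at scale $\delta^*_F+\tfrac{\varepsilon}{2}$, and (iii) $e^{-\sqrt s} < \tfrac1{s^2} < \tfrac1{\binom s2}$. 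Call an $s$-set $S\subseteq V(G)$ \emph{good} if $\delta(G[S]) \ge (\delta^*_F+\tfrac{\varepsilon}{2})s$; by the Inheritance Lemma, for any fixed $M\subseteq V(G)$ with $|M|\le k$, all but an $e^{-\sqrt s}$-fraction of the $s$-sets containing $M$ are good.

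The key step, and the one I expect to be the main obstacle, is a \emph{local usability} statement: for any graph $H'$ on $s$ vertices with $\delta(H')\ge(\delta^*_F+\tfrac{\varepsilon}{2})s$, \emph{every} copy $T$ of $F$ in $H'$ lies, with weight at least $\tfrac1{e(F)}$, in some fractional $F$-decomposition of $H'$. A priori one worries that $T$ might be ``zero-forced''---assigned weight $0$ in every fractional $F$-decomposition of $H'$. What rules this out is that only the $e(F)$ edges of $T$ require special handling: since $\delta(H'-e)\ge\delta(H')-1$ is still above the threshold, each $H'-e$ has a fractional $F$-decomposition $\phi_e$, and then (following the target-weight idea of Montgomery~\cite{M19b}, as in the discussion before~\cref{lem:LowWeightDecomposition}) $\psi := \tfrac1{e(F)}\sum_{e\in E(T)}\phi_e$ is a nonnegative weighting of copies of $F$ in $H'$ that covers each edge of $T$ with weight exactly $1-\tfrac1{e(F)}$ and every other edge with weight exactly $1$; adding weight $\tfrac1{e(F)}$ to the copy $T$ itself then yields the claimed decomposition. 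The point is that the total ``deficit'' $\sum_{e\in E(T)}\tfrac1{e(F)}=1$ is exactly affordable for a single copy; the same move fails outright if attempted on all of $G$ simultaneously, since seeding every copy there would create a deficit of order $n^2$, which is precisely why one must localize to constant-size subgraphs. Averaging these per-copy decompositions over the (at most $M := \binom sk\,k!$) copies of $F$ in $H'$ gives a single fractional $F$-decomposition $\phi^*_{H'}$ of $H'$ with $\phi^*_{H'}(T)\ge \sigma_1 := \tfrac1{M\,e(F)}$ for every copy $T$, and with $\phi^*_{H'}(T)\le 2$.

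With this in hand I would assemble the global decomposition as in the proof of~\cref{lem:LowWeightDecomposition}. For each good $S$, set $\phi^*_S := \phi^*_{G[S]}$ and correct it to prescribed edge targets $\tau_S\colon E(G[S])\to[1-\tfrac1{s^2},1]$ via $\psi_S := \bigl(1-\sum_e\lambda^S_e\bigr)\phi^*_S + \sum_e\lambda^S_e\,\phi^S_e$, where $\lambda^S_e := 1-\tau_S(e)\le\tfrac1{s^2}$ and $\phi^S_e$ is a fractional $F$-decomposition of $G[S]-e$; since $\sum_e\lambda^S_e\le\binom s2/s^2\le\tfrac12$, the coefficient of $\phi^*_S$ is at least $\tfrac12$, so $\psi_S$ realises $\tau_S$ exactly while keeping $\psi_S(T)\in[\tfrac{\sigma_1}2,3]$ for every copy $T$ of $F$ in $G[S]$. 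One then picks the family $\{\tau_S\}$ so that for every edge $e$ of $G$ we have $\sum_{S\text{ good},\,e\subseteq S}\tau_S(e) = N$ for the common value $N := (1-\tfrac1{s^2})\binom{n-2}{s-2}$; this is feasible because the number of good $s$-sets containing $e$ lies in $[(1-\tfrac1{s^2})\binom{n-2}{s-2},\binom{n-2}{s-2}]$ by the Inheritance Lemma, so each relevant $S$ need only absorb a deficit of at most $\tfrac1{s^2}$. Finally set $\phi(H) := \tfrac1N\sum_{S\text{ good},\,V(H)\subseteq S}\psi_S(H)$ for each copy $H$ of $F$ in $G$. Interchanging the two sums shows that $\phi$ has edge-weight exactly $1$ on every edge, so $\phi$ is a fractional $F$-decomposition of $G$; and since each copy $H$ is contained in between $(1-e^{-\sqrt s})\binom{n-k}{s-k}$ and $\binom{n-k}{s-k}$ good $s$-sets, the identity $\binom{n-k}{s-k}/\binom{n-2}{s-2} = \binom{s-2}{k-2}/\binom{n-2}{k-2}$ gives $\phi(H)\binom{n-2}{k-2}\in[\sigma,C]$ for constants $\sigma := \tfrac{\sigma_1}{2}\binom{s-2}{k-2}\in(0,1)$ and $C := \lceil 6\binom{s-2}{k-2}\rceil$ depending only on $F$ and $\varepsilon$. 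The remaining work is routine: fixing the ``$s$ large enough'' choices, checking exact feasibility of the target system $\{\tau_S\}$, and the two-sided binomial estimates; the genuinely new ingredients are the local usability lemma and the observation that an edge-by-edge seed is affordable only after passing to constant-size subgraphs.
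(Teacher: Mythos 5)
Your proof is correct, but the key seeding step is handled by a genuinely different and simpler argument than the paper's. The paper first proves~\cref{lem:Seeded} (Seeded Fractional Decomposition): for \emph{any} large $n$-vertex graph above the fractional threshold, there is a fractional $F$-decomposition giving every copy of $F$ weight at least $\sigma/\binom{n-2}{v(F)-2}$. Its proof is the most technical step: it constructs an auxiliary bipartite hypergraph, invokes a Lov\'asz-Local-Lemma-based matching theorem (a variant of Alon/Haxell) to partition the copies of $F$ into $C$ color classes $G_c$ of bounded maximum degree, builds one decomposition per color class by combining $G_c$ with a fractional decomposition of $G\setminus G_c$, and averages. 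The paper then applies this (via~\cref{lem:SeededFixed} and the Inheritance Lemma) only to constant-size induced subgraphs $G[S]$. You observe that for a graph on a bounded number $s$ of vertices the hypergraph-matching machinery is unnecessary: there are only $O_s(1)$ copies of $F$, so one can simply take, for each copy $T$, the Montgomery-style mixture $\psi_T := \tfrac{1}{e(F)}\sum_{e\in E(T)}\phi_e + \tfrac{1}{e(F)}\mathds{1}_T$ (using $\delta(H'-e)$ still above threshold), and average these $\psi_T$ over all copies $T$; this yields a seed $\sigma_1 = 1/(M\,e(F))$ with $M\le\binom{s}{v(F)}v(F)!$. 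This is weaker than~\cref{lem:Seeded} in general (for an $n$-vertex graph it gives seed $\Theta(n^{-v(F)})$ rather than $\Theta(n^{-(v(F)-2)})$), but since after the Inheritance-Lemma reduction one is always working at a fixed constant size $s$, the weaker bound suffices and the constants come out depending only on $s$, hence on $F$ and $\varepsilon$. The remainder of your argument --- the target-weight correction (a minor repackaging of~\cref{lem:SeededFixed}) and the global averaging over good $s$-sets with the binomial identity $\binom{n-k}{s-k}/\binom{n-2}{s-2}=\binom{s-2}{k-2}/\binom{n-2}{k-2}$ --- matches the paper's proof of~\cref{lem:BalancedDecomposition} in structure and detail. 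In short: you trade the paper's standalone Lemma~\ref{lem:Seeded} (stronger, LLL-based, nice on its own) for a local averaging trick that is much more elementary but gives only what the final theorem needs; the routine verifications (absorbing the $(1-e^{-\sqrt{s}})$ factor, the $s-1$ vs. $s$ slack in the Inheritance Lemma) are exactly as you flag them and pose no issue.
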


Note that~\cref{lem:BalancedDecomposition} is strictly stronger than~\cref{lem:LowWeightDecomposition} and so we only prove this stronger version (since we do not use the former as a black-box but rather adapt its would-be proof). We do this by proving a lemma (\cref{lem:Seeded}) which shows that in the high minimum degree setting, there exists fractional $F$-decomposition where each copy of $F$ has weight at least $\sigma/ \binom{n-2}{v(F)-2}$ for some $\sigma\in (0,1)$ only depending on $F$ and $\varepsilon$ (what we call `seeded' fractional decompositions). The key idea is again a probabilistic trick; we partition the copies of $F$ into edge-disjoint sets of maximum degree at most $\varepsilon n/2$. For each set of copies $\mc{S}$, we build a decomposition of $G$ using $\mc{S}$ and a fractional $F$-decomposition of the remaining graph, $G-\bigcup \mc{S}$, as guaranteed by the definition of $\delta_F^*$. We then average these decompositions to generate a seeded fractional decomposition as desired. Finally,~\cref{lem:LowWeightDecomposition} follows by combining seeded fractional decompositions with our fixing idea (to yield~\cref{lem:SeededFixed}) and then the Inheritance Lemma. See Section~\ref{s:RegBoosting} for these proofs.

 
\subsection{Outline of Rest of Paper}

In Section~\ref{s:NW}, we prove our Nash-Williams Refined Efficient Omni-Absorber Theorem,~\cref{thm:NWRefinedEfficientOmniAbsorber}, via embedding fake edge and absorbers of low rooted degeneracy. We actually first prove a General Embedding Lemma,~\cref{lem:EmbedGeneral}, to simplify this process which might prove to be of further use (we include its proof, even of its hypergraph generalization in the appendix). Finally, we end the section with a proof of the Reserves Lemma,~\cref{lem:RandomXHighMinDeg}, via Chernoff bounds. 

In Section~\ref{s:RegBoosting}, we prove our Nash-Williams Restricted Boosting Lemma,~\cref{lem:NWRegBoostTreasury}. As outlined in the proof overview, this is accomplished by first proving a seeded fractional decomposition lemma,~\cref{lem:Seeded}. This is then combined with the fixing idea to prove a target weight version of the seeded lemma,~\cref{lem:SeededFixed}. We then invoke this lemma combined with the Inheritance Lemma to prove our balanced fractional decomposition theorem,~\cref{lem:BalancedDecomposition}. Finally we use sampling, Chernoff bounds, and the General Boosting Lemma from~\cite{DKPIII} (reproduced as Lemma~\ref{lem:GeneralBoostLemma} below) to prove~\cref{lem:NWRegBoostTreasury}. 

In Section~\ref{s:HighGirth}, we prove an \Erdos{}-Nash-Williams Refined Efficient Omni-Absorber Theorem,~\cref{thm:HighGirthAbsorber}. We do this by starting with an omni-absorber given by our Nash-Williams Refined Efficient Omni-Absorber Theorem,~\cref{thm:NWRefinedEfficientOmniAbsorber}, and then using girth boosters and a theorem we extract from the proof of the High Girth Existence Conjecture (\cref{thm:HighGirthQuantumOmniBooster}), whose extraction is discussed in the appendix. This is mostly straightforward given the existence of high rooted girth $K_q$-boosters of rooted degeneracy $2q-2$ (as was shown in~\cite{DPII} but we include a proof for completeness). Nevertheless, to understand all the terminology in the theorem statements as well as the statement of Forbidden Submatchings with Reserves, requires many definitions and hence many pages. 

In Section~\ref{sec:Finishing}, we derive~\cref{thm:Main_ErdosNashWilliams} from the above theorems; this is mostly straightforward checking that theorems work together as promised in the proof overview but we include a proof for completeness. We also discuss the simple modifications to prove the more general main result,~\cref{thm:GenralisationMain2q}.

Finally, in Section~\ref{sec:Concluding}, we discuss questions about improving the bound for general $q$, proving a higher uniformity version, and other open questions. 

\section{Nash-Williams Omni-Absorber}\label{s:NW}

In this section, we both prove our Nash-Williams Omni-Absorber Theorem (\cref{thm:NWRefinedEfficientOmniAbsorber}) as well as at the end of this section, we prove the Reserves Lemma (\cref{lem:RandomXHighMinDeg}). 

As described in the proof overview, we have by Theorem~\ref{thm:RefinedEfficientOmniAbsorber} that there exists an omni-absorber $A\subseteq K_n\setminus X$ for our reserves $X$. Unfortunately, $A$ may use edges from $K_n\setminus G$ (our main result is not necessarily for high girth). To workaround this first issue, we follow the strategy of~\cite{DPI} by replacing the edges in $A$ with a gadget called a `fake edge' that has the same divisibility properties as an edge. These gadgets can be shown to exist in large numbers inside $G\setminus X$ at our minimum degree level; from this, we can show that we can embed all of them edge-disjointly inside $G\setminus X$ with low maximum degree. Of course, the cliques in the decomposition family will now be `fake cliques' and so we must embed an absorber for each of these to restore it to yielding a $K_q$-decomposition. This is only possible since the original omni-absorber is $C$-refined (so there are not too many private absorbers to embed). Once again, these absorbers can be shown to exist in large numbers inside $G$ at our minimum degree level; from this, we can show that we can embed all of them edge-disjointly inside $G$ with low maximum degree. Later we will also need to embed girth boosters so as to boost the girth of our omni-absorber. Since we have three embedding steps to accomplish, it is easier to craft a general embedding lemma (which can also be re-used later). Unfortunately, the embedding lemma in~\cite{DPI} only works with minimum degree $(1-\varepsilon)n$ and so we will need to prove a stronger form that can work in the setting of Nash-Williams' Conjecture.  

Thus we begin this section with definitions and results relevant to these embedding steps and then state the graph case of our General Embedding Lemma. 

\subsection{The Embedding Lemma}

First recall that a graph $W$ is a \emph{supergraph} of graph $H$ if $V(H)\subseteq V(W)$ and $E(H)\subseteq E(W)$. We define a supergraph system to describe the relationship between the gadgets we wish to embed and the subgraphs we wish to embed them on. To facilitate the embedding process, we introduce the notion of $C$-boundedness to ensure that these gadgets are not ``too big.''

\begin{definition}[$C$-bounded Supergraph System]
Let $\mc{H}$ be a family of subgraphs of a graph $J$. A \emph{supergraph system} $\mc{W}$ for $\mc{H}$ is a family $(W_H : H\in \mc{H})$ where for each $H\in \mc{H}$, $W_H$ is a supergraph of $H$ with $V(W_H)\cap V(J)\neq\emptyset$ and for all $H'\ne H\in \mc{H}$, we have $V(W_H)\cap V(W_{H'}) \setminus V(J)= \emptyset$. We let $\bigcup \mc{W}$ denote $\bigcup_{H\in \mc{H}} W_H$ for brevity. For a real $C \geq 1$, we say that $\mc{W}$ is \emph{$C$-bounded} if $\max\{e(W_H),~v(W_H)\}\le C$ for all $H\in \mc{H}$.
\end{definition}

\noindent
We now formalize the notion of embedding a supergraph system into a host graph $G$ as follows.

\begin{definition}[Embedding a Supergraph System]
Let $J$ be a graph and let $\mc{H}$ be a family of subgraphs of $J$. Let $\mc{W}$ be a supergraph system for $\mc{H}$ and let $G$ be a supergraph of $J$. An \emph{embedding} of $\mc{W}$ \emph{into} $G$ is a map $\phi : V(\bigcup \mc{W}) \hookrightarrow V(G)$ preserving edges such that $\phi(v)=v$ for all $v\in V(J)$. We let $\phi(\mc{W})$ denote $\bigcup_{e\in \bigcup W} \phi(e)$ (i.e.~the subgraph of $G$ corresponding to $\bigcup \mc{W}$).
\end{definition}

Equipped with these notions, we are now prepared to formally state an embedding lemma. We note that although the general embedding lemma applies to multi-hypergraphs, we restrict our attention to the graph subcase in Lemma~\ref{lem:EmbedGeneral}.  This is convenient for us, not only because it is all that is needed for our current arguments, but also because it allows us to sweep under the rug a technical condition that is trivially satisfied on graph provided $J$ has no isolated vertices. We direct the reader to~\cref{sec:ProofEmbedding} for the statement and proof of the hypergraph generalization of this general embedding lemma (we omit a direct proof of Lemma~\ref{lem:EmbedGeneral} because limiting our attention to the graph case does not significantly simplify the argument). Similarly to the notion of refined absorbers, for a real $C \geq 1$, we say that a family $\mc{H}$ of subgraphs of a graph $J$ is {\em $C$-refined} if every edge $e\in J$ is contained in at most $C$ elements of $\cH$.

\begin{lem}[General Embedding for Graphs]\label{lem:EmbedGeneral}
For every reals $C\geq 1$ and $\gamma\in (0,1]$, there exists $C'\ge 1$ such that the following hold. Let $J\subseteq G\subseteq K_n$ with $\Delta(J)\le \frac{n}{C'}$ with no isolated vertices. Suppose that $\mc{H}$ is a $C$-refined family of subgraphs of $J$ and $\mc{W}$ is a $C$-bounded supergraph system of $\mc{H}$. If for each $H\in \mc{H}$ there exist at least $\gamma\cdot n^{|V(W_H)\setminus V(H)|}$ embeddings of $W_H$ into $G\setminus (E(J)\setminus E(H))$,
then there exists an embedding $\phi$ of $\mc{W}$ into $G$ such that $\Delta(\phi(\mc{W})) \le C'\cdot \Delta(J)$.
\end{lem}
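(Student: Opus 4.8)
\emph{The plan} is to place the gadgets one at a time by a greedy procedure that maintains a degree budget at every vertex of $G$; we take $C'$ to be a sufficiently large polynomial in $C$ and $1/\gamma$ (say $C'=\lceil 8C^{3}/\gamma\rceil+8C^{2}$). Two pieces of bookkeeping drive the argument. First, since $\mc{H}$ is $C$-refined, $|\mc{H}|\le C\cdot e(J)\le \tfrac{C}{2}\,n\,\Delta(J)$, so because each $W_H$ has at most $C$ edges the total degree accumulated over the whole process satisfies $\sum_{u\in V(G)}\deg_{\phi(\mc{W})}(u)=2\sum_{H\in\mc{H}}e(W_H)\le 2C\,|\mc{H}|\le C^{2}\,n\,\Delta(J)$; hence the \emph{average} degree in $\phi(\mc{W})$ is at most $C^{2}\Delta(J)$, comfortably within budget, and the task is simply to spread this load evenly. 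Second, since $J$ has no isolated vertices, every $v\in V(J)$ lies in at most $C\,\Delta(J)$ members of $\mc{H}$ (it meets at most $\Delta(J)$ edges of $J$, each in at most $C$ members of $\mc{H}$), so the edges that occurrences of $v$ as a \emph{root} of a gadget — that is, as the image $\phi(v)=v$ of a vertex of some base $H\ni v$ — contribute to $\phi(\mc{W})$ number at most $C\Delta(J)\cdot C=C^{2}\Delta(J)$, regardless of any later choices. (This is where the mild non-degeneracy condition alluded to in the text is used, and it is automatic for graphs without isolated vertices.) Thus the only load we actually have to steer is that created by the \emph{free} vertices $V(W_H)\setminus V(H)$.

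For the greedy step, order $\mc{H}=\{H_1,\dots,H_m\}$ arbitrarily and build $\phi$ gadget by gadget. Suppose $W_{H_1},\dots,W_{H_{i-1}}$ have been embedded so that every vertex of $G$ is in at most $\tfrac12 C'\Delta(J)$ edges arising from free vertices of those gadgets, and call $u\in V(G)$ \emph{saturated} if it currently lies in at least $\tfrac12 C'\Delta(J)-C$ such edges. By the total-degree bound, the number of saturated vertices is at most $C^{2}n\Delta(J)/(\tfrac12 C'\Delta(J)-C)\le 4C^{2}n/C'<\gamma n/C$ by the choice of $C'$ (using $\Delta(J)\ge 1$). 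Now each of the at least $\gamma\,n^{\,k_i}$ available embeddings of $W_{H_i}$ into $G\setminus(E(J)\setminus E(H_i))$, where $k_i:=|V(W_{H_i})\setminus V(H_i)|\le C$, is determined by where it sends the $k_i$ free vertices, the roots in $V(H_i)\subseteq V(J)$ being pinned to themselves; and at most $k_i\cdot(\gamma n/C)\cdot n^{k_i-1}<\gamma\,n^{k_i}$ of them send some free vertex to a saturated vertex. Hence at least one available embedding sends no free vertex into the saturated set, and we use it for $W_{H_i}$.

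To see the invariant survives: a vertex that receives a free vertex of $W_{H_i}$ was in fewer than $\tfrac12 C'\Delta(J)-C$ such edges before and gains at most $\Delta(W_{H_i})\le C$ more, so it stays below $\tfrac12 C'\Delta(J)$. Once all $m$ gadgets are placed, the degree of any vertex $u$ in $\phi(\mc{W})$ is at most (root-edges)$+$(free-edges)$\le C^{2}\Delta(J)+\tfrac12 C'\Delta(J)\le C'\Delta(J)$ by the choice of $C'$; and $C'\Delta(J)\le n$ since $\Delta(J)\le n/C'$, so this is attainable inside $K_n$. This is exactly the required conclusion.

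The step I expect to be the genuine obstacle is precisely this bookkeeping: one cannot run a one-shot Lov\'asz Local Lemma argument, because a degree violation at a vertex $v$ can be blamed on any of the $\Theta(n^{2})$ gadgets that might be placed near $v$, so the relevant bad events lack bounded dependency. The greedy/amortized accounting above circumvents this, but only after separating, for each gadget, the contribution of its root vertices (uncontrollable by our choices, but bounded \emph{a priori} by the $C$-refinedness of $\mc{H}$) from that of its free vertices (equidistributed by the greedy rule). The full proof — including its generalization to multi-hypergraphs, where the non-degeneracy condition that is automatic here must be imposed explicitly — is carried out in~\cref{sec:ProofEmbedding}.
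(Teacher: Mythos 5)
Your proof is correct and, at its core, takes the same ``avoid the bad'' approach as the paper's appendix argument: split a vertex's load in $\phi(\mathcal{W})$ into a root contribution (bounded \emph{a priori} by $C$-refinedness and the degree condition on $J$) and a free contribution (controlled by the process), and then show that at each step the number of vertices one must avoid is a tiny fraction of $n$. The presentational difference is that you run a forward one-gadget-at-a-time greedy that maintains the free-load invariant explicitly, whereas the paper chooses a \emph{maximal} partial embedding subject to a codegree invariant $d_{T'}(S,R)\le m$ and derives a contradiction if it is not total; for $r=2$ the paper's codegree bookkeeping collapses to degree bookkeeping and the two arguments are essentially the same. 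The extra codegree machinery in the paper is only needed for the multi-hypergraph generalization in Appendix~B, which you also correctly identify as the place where the non-degeneracy/edge-intersecting assumption must be made explicit.

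One small caveat: your closing heuristic that ``one cannot run a one-shot Lov\'asz Local Lemma argument'' is contradicted by the paper's own remark after Lemma~\ref{lem:EmbedGeneral}, which notes that an LLL-plus-slotting argument in the style of Delcourt--Kelly--Postle would also prove this lemma (and even a spread version). This does not affect your proof's correctness, but the stated reason that a greedy is \emph{forced} is not accurate. Also, the claim ``every $v\in V(J)$ lies in at most $C\Delta(J)$ members of $\mathcal{H}$'' silently assumes that $v$ is non-isolated in each $H\ni v$ (not merely in $J$); this is exactly the technical condition the hypergraph version encodes via ``edge-intersecting'', holds for every $\mathcal{H}$ actually used in the paper, and is the same implicit hypothesis used in the paper's own bound $|T'(S)|\le C^2\Delta_{r-1}(J)+\sum_R C\,d_{T'}(S,R)$, so this is not a gap specific to your argument.
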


The proof of the hypergraph generalization~\cref{lem:EmbedGeneral} in the appendix is a generalization of the ``avoid the bad" proof of the embedding lemma in~\cite{DPI}. We note that Delcourt, Kelly, and Postle~\cite[Lemma~4.5]{DKPIV} used a variant of the Lov\'asz Local Lemma and a ``slotting" argument to prove an embedding lemma for rooted cliques in a graph $G$ with high minimum degree (at least $(1-\varepsilon)n$ for a sufficiently small $\varepsilon>0$). Their lemma further yields a ``spread'' distribution over all such embeddings. We note that a similar strategy using the Lov\'asz Local Lemma and a ``slotting" argument could also be used to prove~\cref{lem:EmbedGeneral}. Similarly, we do not pursue a `spread' version of the above lemma here (as it is not needed for our purposes).

\subsection{Nash-Williams Omni-Absorbers}\label{subsec:omni}

To embed a refined omni-absorber $A$ into the graph $G$, we start by replacing edges of $A$ by so-called ``fake-edges'' formally defined below, which have small rooted degeneracy, and hence we can embedded them in $G$. Then, for every $F\cong K_q$ in the decomposition family of $A$, we take a low degeneracy $K_q$-absorber for the graph $F'$, built from $F$ by replacing the edges of $A$ by the embedding of their respective fake-edges. To that end, we recall the following gadgets from~\cite{DPI} restricted to the graph case.

\begin{definition}\label{def:BasicGadget}
Let $q\geq 3$ be an integer. Let $S$ be a set of $2$ vertices.
\begin{itemize}\itemsep.05in
    \item An \emph{anti-edge on $S$}, denoted ${\rm AntiEdge}_q(S)$, is a set of new vertices $x_1,\ldots, x_{q-2}$ together with edges $\binom{S\cup \{x_i:~i\in[q-2]\} }{2} \setminus \{S\}$.
    \item A \emph{fake-edge on $S$}, denoted ${\rm FakeEdge}_q(S)$, is a set of new vertices $x_1,\ldots, x_{q-2}$ together with anti-edges $\{ {\rm AntiEdge}_q(T): T\in \binom{S\cup \{x_i:~i\in[q-2]\} }{2} \setminus \{S\} \}$.
\end{itemize}    
\end{definition}

We note that every fake-edge has at most $q^{3}$ vertices and at most $q^{4}$ edges. We note that fake edges have the same divisibility properties as actual edges, that is, if $F={\rm FakeEdge}_q(S)$, then $e(F) \equiv 1 \mod \binom{q}{2}$, $d_F(v)\equiv 1 \mod (q-1)$ for every $v\in S$ and $d_F(v)\equiv 0 \mod (q-1)$ for every $v\in V(F)\setminus S$. Therefore, replacing an edge $e$ of a graph $J$, where $V(e)=S$, by a fake edge on $S$ maintains the divisibility properties of $J$. We note for the reader that a collection of (well-chosen) fake edges can be viewed as a \textit{refiner}, a key concept in the proof of~\cref{thm:RefinedEfficientOmniAbsorber} from \cite{DPI}, which we do not require here. The capacity to embed small graphs into a host graph with high enough minimum degree is then related to the following notion.

\begin{definition}[Rooted degeneracy]\label{def:RootedDegeneracy}
For a graph $W$ and $U\subseteq V(W)$, the \emph{degeneracy of $W$ rooted at $U$} is the smallest non-negative integer $d$ such that there exists an ordering $v_1,\ldots, v_{v(W)-|U|}$ of the vertices $V(W)\setminus U$ such that for all $i\in [v(W)-|U|]$, we have $|N_W(v_i)\cap (U\cup \{v_j: 1\le j < i\})| \le d$.  
\end{definition}

We note that a fake-edge on $S$ has degeneracy rooted at $S$ at most $q-1$. The following easy lemma shows that high enough minimum degree guarantees a constant proportion of embeddings of small rooted degeneracy subgraphs.

\begin{lem}\label{lemma:DegeneracyEmbedding}
    For every real $\varepsilon\in(0,1)$ and integer $d\geq 2$, there exists $\gamma\in(0,1)$ such that the following holds for every fixed graph $W$ and for every $n$ large enough. Let $G$ be a graph on $n$ vertices with minimum degree at least $(1-\frac{1}{d}+\varepsilon)n$. Let $H$ be a subgraph of $G$ such that $W$ is a supergraph of $H$ with degeneracy rooted at $V(H)$ at most $d$. 
    There are at least $(\gamma\cdot n)^{|V(W)\setminus V(H)|}$ embeddings of $W$ in $G$.
\end{lem}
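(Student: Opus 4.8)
\textbf{Proof proposal for \cref{lemma:DegeneracyEmbedding}.}

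The plan is to embed $W$ one vertex at a time following the degeneracy ordering, counting at each step the number of choices available, and then multiply. Fix an ordering $v_1, \dots, v_k$ of $V(W) \setminus V(H)$ witnessing that $W$ has degeneracy at most $d$ rooted at $V(H)$, where $k = |V(W) \setminus V(H)|$. We build the embedding $\phi$ so that $\phi$ restricted to $V(H)$ is the identity (which is forced since $H \subseteq G$), and then place $v_1, v_2, \dots, v_k$ in order. Having already placed $v_1, \dots, v_{i-1}$, the vertex $v_i$ has at most $d$ neighbours among $V(H) \cup \{v_1, \dots, v_{i-1}\}$, say $w_1, \dots, w_\ell$ with $\ell \le d$; their images $\phi(w_1), \dots, \phi(w_\ell)$ are already determined vertices of $G$, and we need to choose $\phi(v_i)$ in the common neighbourhood $\bigcap_{j=1}^{\ell} N_G(\phi(w_j))$, while also avoiding the at most $v(W)$ vertices already used as images (to keep $\phi$ injective).

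The key point is that the common neighbourhood of $\ell \le d$ vertices in a graph of minimum degree at least $(1 - \tfrac{1}{d} + \varepsilon)n$ is large: each vertex is a non-neighbour of at most $(\tfrac{1}{d} - \varepsilon)n$ vertices (excluding itself, so really at most $(\tfrac{1}{d} - \varepsilon)n + 1$), so by a union bound over the $\ell \le d$ vertices, the common neighbourhood has size at least $n - d\big((\tfrac{1}{d} - \varepsilon)n + 1\big) = \varepsilon d n - d \ge \tfrac{\varepsilon d}{2} n$ once $n$ is large enough. Subtracting the at most $v(W)$ forbidden images (a constant, since $W$ is fixed), we still have at least $\tfrac{\varepsilon d}{4} n \ge \tfrac{\varepsilon}{4} n$ valid choices for $\phi(v_i)$ for $n$ large. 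Thus setting $\gamma := \varepsilon / 4$ (any constant $\le \varepsilon d / 4$ works; we can simply take $\gamma = \varepsilon/4$ to have something depending only on $\varepsilon$ and $d$), we get at least $(\gamma n)$ choices at each of the $k$ steps, and multiplying over $i = 1, \dots, k$ yields at least $(\gamma n)^k = (\gamma n)^{|V(W) \setminus V(H)|}$ embeddings, as desired. Note each resulting map is genuinely an embedding: it preserves all edges of $W$ because every edge of $W$ either lies in $H$ (hence in $G$ already) or has its later endpoint $v_i$ chosen in the neighbourhood of the image of its earlier endpoint.

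I do not expect a real obstacle here; the only points requiring a little care are (i) making sure the ``$+1$'' and constant corrections (one cannot be one's own neighbour, and one must avoid previously-used images) are absorbed by choosing $n$ large relative to the fixed constants $d$, $\varepsilon$, and $v(W)$, and (ii) confirming that distinct sequences of choices give distinct embeddings, which is immediate since the embedding is literally the tuple of choices. One subtlety worth flagging: the statement requires $\gamma$ to depend only on $\varepsilon$ and $d$ (chosen before $W$), which is fine because the lower bound $\tfrac{\varepsilon d}{4} n$ on the number of choices at each step does not involve $W$ at all — only the ``large enough $n$'' threshold and the final count's exponent depend on $W$. So $\gamma := \varepsilon/4$ suffices.
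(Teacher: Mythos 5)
Your proposal is correct and follows exactly the same greedy, vertex-by-vertex argument as the paper: embed $V(W)\setminus V(H)$ in the degeneracy order, bound the common neighbourhood of the $\le d$ already-placed neighbours via a union bound against non-neighbours, and subtract a constant for already-used images. The only (cosmetic) difference is that you take $\gamma = \varepsilon/4$ explicitly, which cleanly stays in $(0,1)$, whereas the paper leaves $\gamma$ implicit with a bound of roughly $d\varepsilon n/2$ choices per step.
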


\begin{proof}
    Let $h=|V(H)|$ and $w=|V(W)|$. Fix an ordering $v_1,\ldots, v_{w-h}$ of the vertices $V(W)\setminus V(H)$ such that for all $i\in [w-h]$, we have $|N_W(v_i)\cap (V(H)\cup \{v_j: 1\le j < i\})| \leq d$. We embed the vertices $v_1,\ldots, v_{w-h}$ one at a time in $G$. For every $i\in [w-h]$, there are at least $(d\varepsilon n-v(W_H))$ choices for the vertex $v_i$. As $n$ is large enough, there are at least $(d\varepsilon n/2)^{w-h}$ embeddings of $W$ in $G$.
\end{proof}

We also need absorbers of rooted degeneracy at most $2q-2$, as were shown to exist by Delcourt, Kelly, and Postle in~\cite{DKPIII}. 

\begin{thm}[\cite{DKPIII}]\label{thm:AbsorbersSmallDegeneracy}
    For every integers $q\geq 3$ and $C_L\geq 1$, there exists a positive integer $C_A$ such that for every $K_q$-divisible graph $L$ on at most $C_L$ vertices, there exists a $K_q$-absorber $A$ of $L$ on at most $C_A$ vertices with rooted degeneracy rooted at $V(L)$ at most $2q-2$.
\end{thm}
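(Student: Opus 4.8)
The plan is to first trade the rooted-degeneracy requirement for a plain maximum-degree bound. If $A$ is any graph in which every vertex of $V(A)\setminus V(L)$ has $A$-degree at most $2q-2$, then $A$ has degeneracy rooted at $V(L)$ at most $2q-2$: for \emph{any} ordering of $V(A)\setminus V(L)$, each such vertex $v$ has at most $\deg_A(v)\le 2q-2$ neighbours among $V(L)$ and the earlier vertices (cf.\ \cref{def:RootedDegeneracy}). Moreover an absorber is $K_q$-divisible (being $K_q$-decomposable), so every non-root vertex has degree a multiple of $q-1$, and the condition ``$\deg_A(v)\le 2q-2$ for all $v\notin V(L)$'' is the same as asking that every vertex outside $V(L)$ lie in at most two cliques of the relevant $K_q$-decomposition. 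Since there are only finitely many $K_q$-divisible graphs on at most $C_L$ vertices up to isomorphism (and we may discard isolated vertices of $L$), it therefore suffices to construct, for each fixed such $L$, a $K_q$-absorber $A$ for $L$ of bounded size in which $V(L)$ is independent and every non-root vertex lies in at most two cliques of a (hence of either) $K_q$-decomposition; then $C_A$ is the largest of these finitely many bounds.

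For the construction I would assemble $A$ from bounded ``two-clique-per-vertex'' gadgets glued only along $V(L)$, so that the sparsity is inherited automatically — distinct gadgets share only root vertices, which carry no degree constraint. The basic gadget is a \emph{booster}: a bounded graph $B$ with a $q$-set $R$ independent in $B$ such that both $B$ and $B\cup K_q[R]$ are $K_q$-decomposable and every non-root vertex of $B$ lies in exactly two cliques of its decomposition. For $q=3$ one can take $B$ to be the octahedron $K_{2,2,2}$, with parts $\{r_1,s_1\},\{r_2,s_2\},\{r_3,s_3\}$, with the triangle on $\{r_1,r_2,r_3\}$ deleted: the triangle decomposition $\{r_1r_2r_3,\ r_1s_2s_3,\ s_1r_2s_3,\ s_1s_2r_3\}$ of $K_{2,2,2}$ uses $r_1r_2r_3$ and this is the only triangle of the decomposition meeting $\{r_1r_2,r_1r_3,r_2r_3\}$, so deleting it leaves a triangle decomposition of $B$, while $B\cup K_3[R]=K_{2,2,2}$ is triangle-decomposable; each $s_i$ has degree $4=2\cdot 3-2$ and lies in exactly two triangles, so the rooted degeneracy of $B$ at $R$ is exactly $4$. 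For general $q$ one replaces the octahedron by a correspondingly larger, but still bounded, $K_q$-decomposable ``two-clique-per-vertex'' design playing the same role. If $L$ is itself $K_q$-clique-decomposable one then obtains the desired $A$ by attaching one booster to each clique of a fixed decomposition of $L$ (using that clique's vertex set as the booster's root set $R$): this makes $A$ the union of those boosters, which uses no edge inside $V(L)$, is $K_q$-decomposable, has $A\cup L$ $K_q$-decomposable, and keeps every non-root vertex in two cliques.

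The principal difficulty is that a $K_q$-divisible graph $L$ need not be clique-decomposable — already for $q=3$, $L=C_6$ is $K_3$-divisible but triangle-free — so one must first replace the edges of $L$ by a clique-decomposable configuration. This is done with a ``path of $K_q$'s'' transformer, and the technical heart of the argument is to build such a transformer so that simultaneously: (a) it uses no edge inside $V(L)$, routing everything through fresh vertices so that $V(L)$ stays independent; (b) every newly introduced vertex lies in exactly two cliques of \emph{each} of the two decompositions it participates in, which is precisely what pins the degeneracy to $2q-2$; and (c) the $K_q$-divisibility closes up at every junction of the clique-path, which constrains the admissible lengths of these paths (a phenomenon already visible when one tries to absorb a cycle with a chain of triangles). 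For general $q$ one also needs the analogue of the octahedron-minus-a-triangle booster, which is a further, genuinely combinatorial, design-construction problem. Once these sparse transformer and booster gadgets are in hand, gluing them along the common root set $V(L)$ and verifying that no non-root degree exceeds $2q-2$ and that the two global decompositions agree is routine bookkeeping; and the bound $2q-2$ cannot be lowered, as remarked on later in the paper.
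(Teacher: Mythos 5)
The paper does not actually prove this statement: it is quoted verbatim from the companion paper \cite{DKPIII}, so there is no internal proof to compare your proposal against. Judged on its own terms, however, your sketch has both a structural concern and several concrete gaps.

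The structural concern is that you trade the rooted-degeneracy requirement for the \emph{strictly stronger} condition that every non-root vertex have degree at most $2q-2$ (equivalently, lie in at most two cliques of the decomposition). While that condition would certainly imply rooted degeneracy at most $2q-2$, it is a much harder thing to ask for, and there is strong evidence from the companion constructions in the same series of papers that it is not what they actually achieve: the girth-booster construction recalled in the proof of Lemma~\ref{lem:DegeneracyBoosterGeneralQ} deliberately uses two ``exceptional'' apex vertices $v_1,v_2$ that are adjacent to \emph{all} of $V(B')$ (degree linear in $v(B')$, far above $2q-2$) and obtains rooted degeneracy $2q-2$ only because those high-degree vertices can be placed first in the ordering. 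You give no argument that absorbers avoiding such exceptional vertices exist for arbitrary $K_q$-divisible $L$; insisting on a uniform $2q-2$ degree bound is an unjustified strengthening, and if it fails your whole reduction collapses.

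The concrete gaps are the parts you yourself flag as ``the technical heart'' and ``a further, genuinely combinatorial, design-construction problem'': you do not construct the ``path of $K_q$'s'' transformer that converts a general $K_q$-divisible $L$ into something clique-decomposable, and you do not construct the general-$q$ booster gadget (only the octahedron-minus-triangle for $q=3$, which is correct). Both of these are the actual mathematical content of the theorem, not afterthoughts, and without them you have only the ($L$ clique-decomposable) special case. You also assert that ``distinct gadgets share only root vertices, which carry no degree constraint'' and that gluing is ``routine bookkeeping,'' but once transformers are involved the cliques to which you attach boosters live on transformer-introduced vertices, not on $V(L)$; those vertices are \emph{not} roots of $A$, do carry the $2q-2$ constraint, and already receive degree from the transformer itself, so the bookkeeping is exactly where the difficulty concentrates and cannot be waved off. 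What you do get right: the $q=3$ booster and its two decompositions check out, the observation that a uniform degree bound suffices for rooted degeneracy is valid, and the remark that the $2q-2$ bound is tight agrees with the paper.
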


Armed with the tools of low degeneracy fake-edges and absorbers, we are now ready to prove that we can embed refined absorbers into a graph with high enough minimum degree.

\begin{proof}[Proof of Theorem~\ref{thm:NWRefinedEfficientOmniAbsorber}]
    Fix $q\geq 3$ and $\varepsilon\in(0,1)$. Let $C_A\geq q^4$ be such that~\cref{thm:AbsorbersSmallDegeneracy} holds for $C_L=q^4$, that is, there exists an absorber with small degeneracy on at most $C_A$ vertices for every $K_q$-divisible graph on at most $q^4$ vertices. Let $C_0\geq C_A$ be an integer such that~\cref{thm:RefinedEfficientOmniAbsorber} (the existence of $C_0$-refined $K_q$-omni-absorbers in $K_n$) holds. 
    Let $\gamma\in(0,1)$ be such that~\cref{lemma:DegeneracyEmbedding} holds for $\varepsilon/4$ and $d=2q-2$, and let $\gamma'>0$ be small enough compared to $\gamma$ and $q$. Let $C'\geq 1$ such that the General Embedding Lemma,~\cref{lem:EmbedGeneral}, holds for $C_0^2$ and $\gamma'$.
    Finally, let $C:= 4\cdot (C')^2\cdot C_0 /\varepsilon$. 

    For $n$ large enough, let $X\subseteq  G\subseteq K_n$ with $\delta(G)\ge (1-\frac{1}{2q-2} + \varepsilon)n$ and $\Delta(X) \leq \frac{n}{C}$. Let $\Delta:=\max\left\{\Delta(X),~\sqrt{n}\cdot \log n\right\}$. Let $G_0 := G\setminus X$ and note that $G_0$ has minimum degree at least $(1-\frac{1}{2q-2} + \frac{\varepsilon}{2})n$. Recall that we aim to prove that there exists a $C$-refined $K_q$-omni-absorber $A$ for $X$ in $G$ with $\Delta(A) \le C\cdot \max\left\{\Delta(X),~\sqrt{n}\cdot \log n\right\}$. We do so by taking a refined omni-absorber in $K_n$, replacing each one of its edges by a fake edge in $G$, and therefore each copy of $K_q$ in its decomposition family by a graph $F$, and then finding private absorber in $G$ for each such graph $F$.

    By~\cref{thm:RefinedEfficientOmniAbsorber} there exists a $C_0$-refined $K_q$-omni-absorber $A_0$ for $X$ with $\Delta(A_0) \leq C_0\cdot \Delta\leq C_0n/C$, with decomposition family $\mc{F}_0$ and decomposition function $\mc{Q}_0$.
    Let $J:=A_0$ and $\mc{H}:=\set*{\{e\}\colon e\in J}$. We define a supergraph system $\mc{W}:=(W_e: \{e\}\in \mc{H})$ where $W_e:= {\rm FakeEdge}_q(e)$. We start by embedding $\mc{W}$ into $G_0$. Observe that, by definition of $C$, we have $C_0/C\geq \varepsilon /4$. Therefore, for every edge $e\in J$, and for $n$ large enough, $G_0\setminus(J\setminus \{e\})$ has minimum degree at least
    \[\Big(1-\frac{1}{2q-2}+\frac{\varepsilon}{2}\Big)n-\Delta(A_0)\geq \Big(1-\frac{1}{2q-2}+\frac{\varepsilon}{4}\Big)n,\]
    while $W_e$ has degeneracy rooted at $V(e)$ at most $q-1$. Therefore, by~\cref{lemma:DegeneracyEmbedding}, there are at least $(\gamma\cdot n)^{|V(W_e)\setminus V(e)|}\geq \gamma'\cdot n^{|V(W_e)\setminus V(e)|}$ embeddings of $W_e$ into $G_0\setminus (J\setminus \{e\})$. 
    Note that $\mc{W}$ is $q^4$-bounded and that~$\mc{H}$ is $1$-refined. By~\cref{lem:EmbedGeneral}, as $C_0^2\geq q^4$, there exists an embedding $\phi$ of $\mc{W}$ into $G_0$ such that $\Delta(\phi(\mc{W})) \leq C'\cdot \Delta(J)$.\bigskip

    Let $J':=X\cup \phi(\mc{W})$. Observe that with $C\geq 2(C')^2\cdot C_0$, we have 
    $$\Delta(J')\leq \Delta(X)+C'\cdot C_0\cdot\Delta\leq 2\cdot C'\cdot C_0\cdot \frac{n}{C}\leq \frac{n}{C'}$$ 
    with no isolated vertices. Let
    \[\mc{H}':=\big\{ (F\cap X)\cup \bigcup_{e\in F\setminus X} \phi(W_e)\colon F\in \mc{F}_0\big\}.\]
    Observe that $\mc{H}'$ is a $C_0$-refined family of subgraphs of $J'$. For every $F\in\mc{H}'$, let $A_F$ be a $K_q$-absorber for $F$ on at most $C_A\leq C_0$ vertices and with degeneracy rooted at $V(F)$ at most $2q-2$ as guaranteed by~\cref{thm:AbsorbersSmallDegeneracy}, and such that $A_F$ and $A_{F'}$ are edge-disjoints for distinct $F,F'\in\mc{H}'$. We define a supergraph system $\mc{W}':=(W_F: F\in \mc{H}')$ where $W_F=F\cup A_F$. Note that every element of $\mc{W}'$ has at most $q^4+\binom{C_0}{2}$ edges, hence $\mc{W}'$ is $C_0^2$-bounded.

    Observe that for every $F\in \mc{H}'$, $W_F$ has degeneracy at most $2q-2$, while $G_0\setminus (J'\setminus F)$ has minimum degree at least $(1-\frac{1}{2q-2}+\frac{\varepsilon}{4})$. Therefore, by~\cref{lemma:DegeneracyEmbedding}, for every $F\in \mc{H}'$, there are at least $\gamma'\cdot n^{|V(W_F)\setminus V(F)|}$ embeddings of $W_F$ into $G_0\setminus (J'\setminus F)$. By~\cref{lem:EmbedGeneral}, there exists an embedding $\phi'$ of $\mc{W}'$ into $G_0$ such that $\Delta(\phi'(\mc{W}')) \le C'\cdot \Delta(J')$.

    For each $F \in \mc{F}_0$, let $F'\in\mc{H}'$ such that $F':= (F \cap X) \cup \bigcup_{e \in F \setminus X} \phi(W_e)$. Let $A=\phi(\mc{W})\cup\phi'(\mc{W}')$.
    We first define a decomposition family $\mc{F}_A$ for $A$ as follow. We include in $\mc{F}_A$ both the $K_q$-decomposition of $\phi'(A_{F'})$ and of $F' \cup \phi'(A_{F'})$. Observe that these decompositions exist because $A_{F'}$ is a $K_q$-absorber of $F'$. 
    
    We then define a decomposition function $\cQ_A$ for $A$ as follows. Given a $K_q$-divisible $L \subseteq X$, for every $F\in \mc{F}_0$, we let $\cQ_{A}(L)$ contain the $K_q$-decomposition of $F' \cup \phi'(A_{F'})$ if $F \in \cQ_{A_0}(L)$ and of $\phi'(A_{F'})$ if $F \in \mc{F}_0\setminus \cQ_{A_0}(L)$.
    
    By construction, we have $\Delta(A)\leq \Delta(\phi(\mc{W}))+\Delta(\phi'(\mc{W}')) \le 3C'\cdot C_0 \cdot \Delta\leq C\cdot \Delta$, while every edge of $A\cup X$ is in at most $C_0+2$ elements in $\mc{F}_A$. We obtain that $A$ is a $C$-refined $K_q$-omni-absorber for $X$ in $G$ with decomposition family $\mc{F}_A$ and decomposition function $\cQ_A$, as desired. 
\end{proof}

\subsection{The Reserve Lemma}\label{sec:ReserveLemma}

In this subsection, we prove our reserve lemma via standard applications of the Chernoff bound.

\begin{lateproof}{lem:RandomXHighMinDeg}
Let $\gamma > 0$ be small enough, in particular such that $\gamma(q-1)<1$. Let $n$ be a large enough integer. Let $X$ be the spanning subgraph of $G$ obtained by choosing each edge of $G$ independently with probability $p\in [n^{-\gamma},1)$.

For every vertex $v\in V(G)$, we have \(\Expect{d_X(v)}=p\cdot d_G(v)\). By the Chernoff bound~\cite{AS16}, we find that \(\Prob{d_X(v)\geq 2p\cdot d_G(v)} \leq e^{-p\cdot d_G(v) / 3}\). By the Union bound, since $p\geq n^{-\gamma}$, we obtain
\[\Prob{ \Delta(X)>2pn} \leq \Prob{\exists v\in V(G)\colon d_X(v)\geq 2p\cdot d_G(v)} <ne^{-p n(1+\varepsilon-1/(q+1)) / 3} = o(1).\]

Recall that $G$ is a spanning subgraph of $K_n$. Fix a set $T\subseteq V(G)$ with $1\le |T|\le q-1$, and let $t:=|T|$.
We define $N_G(T)$, respectively $N_X(T)$, to be the common neighborhood of all vertices of $T$ in $G$, respectively in $X$, that is, 
\[N_G(T):= \left\{ v\in V(G)\setminus T: uv \in E(G) \text{ for every } u\in T\right\},\]
and
\[N_X(T):= \left\{ v\in V(G)\setminus T: uv \in E(X) \text{ for every } u\in T\right\}.\]

Since $\delta(G)\ge (1-\frac{1}{q+1}+\varepsilon)n$ and $t\leq q-1$, it follows that \(|N_G(T)| \geq 2n/(q+1)\). For every $v\in N_G(T)$, we have that $\Prob{v\in N_X(T)} = p^t$. Thus by Linearity of Expectation, we obtain
\[\Expect{|N_X(T)|} = \sum_{v\in N_G(T)} \Prob{v\in N_G(T)} = p^t\cdot |N_G(T)|.\]

Note that $|N_X(T)|$ is the sum of independent Bernoulli $\{0,1\}$-random variables. Let $B_T$ be the event that $|N_X(T)| < p^t\cdot |N_G(T)|/2$.  By the Chernoff bound, we find that
\[\Prob{B_T} \le \exp\left[-p^t\frac{|N_G(T)|}{8}\right]\leq \exp\left[-n^{t\gamma}\frac{n}{4(q+1)}\right]\]
hence 
\[\Prob{ \bigcup_{T} B_T} \leq  2n^{q-1}\cdot\exp\left[-\frac{n^{1-(q-1)\gamma}}{4(q+1)}\right]=o(1),\]
because $t\leq q-1$ and $\gamma<1/(q-1)$. Therefore with positive probability, $\Delta(X)\leq 2pn$ and none of the $B_T$ happen. Fix such an outcome. For $e\in K_n\setminus X$, since none of the events $B_T$ happen, the number of copies of $K_q$ in $X\cup \{e\}$ containing $e$ is at least 

\begin{align*}
\prod_{t=2}^{q-1} \left(\frac{p^t}{2}\cdot \frac{2n}{q+1}\cdot\frac{1}{t+1}\right)
\geq \frac{1}{(q+1)^{q-2}} \cdot p^{\binom{q}{2}-1} \cdot \frac{n^{q-2}}{q!}
&\geq \frac{1}{(q+1)^{q}}\cdot p^{\binom{q}{2}-1}\cdot \binom{n}{q-2},
\end{align*}
where we used the standard bound that $\frac{n^k}{k!} \geq \binom{n}{k}$.
\end{lateproof}

\section{Regularity Boosting}\label{s:RegBoosting}

In this section, we prove~\cref{lem:NWRegBoostTreasury}. First we need some additional notation and definitions as follows. For graphs $F,G$, we denote the set of copies of $F$ in $G$ by $\binom{G}{F}$. Given an $F$-weighting $\psi$ of a graph $G$ and an edge $e$ of $G$, we let $\partial\psi(e)$ denote the total weight of $\psi$ over the edge $e$, that is $\partial\psi(e):= \sum_{F: e\subseteq F} \psi(F)$. Similarly, for a set of edges $R$ of $G$, we let $\partial\psi(R):= \sum_{F: V(R)\subseteq F} \psi(Q)$.

Throughout this section, if $\cH$ is a family of copies of $K_q$ in a graph $G$, we use notation as if $\cH$ is a hypergraph with $V(\cH) = V(G)$ and $E(\cH) = \cH \subseteq \binom{V(G)}{q}$. In particular for a set of vertices $R$, we denote by $\cH(R)$ the set of edges of $\cH$ containing $R$. We say that a hypergraph $G=(A,B)$ is \emph{bipartite with parts $A$ and $B$} if $V(G)=A\cup B$ and every edge of $G$ contains exactly one vertex from $A$; we further say that a \emph{matching} $M$, a set of vertex disjoint edges, of $G$ is \emph{$A$-perfect} if every vertex of $A$ is in an edge of the matching $M$.

\subsection{Seeded Fractional Decompositions}

We start by building a {\em seeded} fractional decomposition, that is, one where the weight on each graph is bounded below.

\begin{lem}[Seeded Fractional Decomposition]\label{lem:Seeded}
    For every graph $F$ and real $\varepsilon$, there exist $\sigma\in(0,1)$ and an integer $n_0\geq 1$, such that following holds for every $n\geq n_0$. Every graph $G$ on $n$ vertices with minimum degree at least $(\delta^*_F+\varepsilon)n$ admits a fractional $F$-decomposition such that  every copy of $F$ has weight at least $\sigma / \binom{n-2}{v(F)-2}$.
\end{lem}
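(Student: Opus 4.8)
The plan is to use the probabilistic "partition and average" trick advertised in the proof overview. First I would partition the set $\binom{G}{F}$ of all copies of $F$ in $G$ into a bounded number of classes $\mc{S}_1,\ldots,\mc{S}_k$, each of which has small maximum degree as an edge-coloured hypergraph — concretely, so that in each $\mc{S}_j$ every edge of $G$ lies in at most $\varepsilon n /2$ copies of $F$. This is possible with $k$ bounded (in terms of $F$ and $\varepsilon$ only): since each edge of $G$ lies in at most $n^{v(F)-2}$ copies of $F$, a greedy/equitable colouring of the "conflict graph" on $\binom{G}{F}$ (two copies conflict if they share an edge, and this graph has bounded-degree-per-edge structure) gives $k = O_{F,\varepsilon}(1)$ classes. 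One clean way is to take a uniformly random colouring with $k$ colours for $k$ a large enough constant and apply a Chernoff/union bound to see that whp every edge sees at most $\varepsilon n/2$ copies in each class — this already suffices and avoids any delicate combinatorial colouring argument.

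Next, for each class $\mc{S}_j$ I build a fractional $F$-decomposition $\phi_j$ of $G$ as follows: put weight $w$ on every copy of $F$ in $\mc{S}_j$ (for a small constant $w>0$ to be chosen), and let $G_j := G - \bigcup \mc{S}_j$ be the graph obtained by deleting, from $G$, all edges covered with positive weight so far. By the degree bound on $\mc{S}_j$, every vertex loses at most $\varepsilon n/2$ in degree, so $\delta(G_j) \ge (\delta^*_F + \varepsilon/2) n$, and hence (for $n$ large) $G_j$ admits a fractional $F$-decomposition $\psi_j$ by the definition of $\delta^*_F$. However, I must be careful: $\psi_j$ decomposes $G_j$, which is missing edges, so I cannot simply add $\psi_j$ to the weight $w$ on $\mc{S}_j$ — instead I should choose $w$ small enough that $w \le 1$ on the covered edges and then use $(1-w)\cdot(\text{fractional decomposition of }G)$-type bookkeeping. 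The cleanest fix: take $\psi_j$ to be a fractional $F$-decomposition of $G_j$, define $\phi_j$ to assign $w + (1-w)\psi'(F)$-style weights — actually the simplest correct construction is to assign weight $w$ to each $F \in \mc{S}_j$, then note the edges of $\mc{S}_j$ are over-covered by exactly $w\cdot(\#\text{copies through it in }\mc S_j) \le w \varepsilon n/2$, choose $w$ so this is $\le 1$, and then cover the deficiency on every edge using a fractional decomposition of $G$ itself (which exists since $\delta(G)\ge(\delta^*_F+\varepsilon)n$) scaled by $(1 - w\cdot(\text{coverage}))$ on that edge — but scaling per-edge breaks fractionality, so instead one uses the deletion version $G_j$ and adds $w\cdot\mathds{1}_{\mc S_j} + (1-w\cdot c_j)\psi_j$ where the point is that after putting weight $w$ on $\mc S_j$, what remains to cover on each edge of $G\setminus\bigcup\mc S_j$ is exactly $1$ and on each edge of $\bigcup\mc S_j$ is $1 - w\cdot(\text{its }\mc S_j\text{-degree})$; covering the former via $\psi_j$ (a decomposition of $G_j$) is fine, and for the latter we need $w$ small. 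I would sidestep this entirely: set weight $w$ on $\mc S_j$ with $w$ tiny, let $R_j$ be the set of edges over-covered, and simply require $w \cdot \varepsilon n /2 \le$ (a quantity that another averaged-in decomposition can absorb). The honest clean route is: $\phi_j := w\cdot \mathds{1}_{\mc{S}_j} + \psi_j$ where $\psi_j$ is a fractional $F$-decomposition of the graph $G_j^w$ obtained from $G$ by deleting $\bigcup\mc S_j$ and re-adding each such edge $e$ with a "fractional requirement" $1 - w\cdot d_{\mc S_j}(e) \in [1-\varepsilon/2, 1]$; since $\delta_F^*$ gives a genuine decomposition, scale it — this requires a "fractional decomposition with prescribed demands in $[1-\eta,1]$", which follows from Montgomery's averaging trick (cited in the overview, and the engine behind \cref{lem:SeededFixed}) applied to $G_j$. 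I would just invoke that.

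Finally, set $\phi := \frac{1}{k}\sum_{j=1}^k \phi_j$. This is a fractional $F$-decomposition of $G$ (a convex combination of such), and every copy $F_0$ of $F$ lies in exactly one class, say $\mc{S}_{j_0}$, so $\phi(F_0) \ge \frac{1}{k}\cdot w > 0$ is bounded below by a constant $\sigma_0$ depending only on $F,\varepsilon$. To match the stated normalization $\sigma/\binom{n-2}{v(F)-2}$: since $\binom{n-2}{v(F)-2} \le n^{v(F)-2}$ and $w$ can be taken of order $1/n^{v(F)-2}$ (indeed $w \le 2/(\varepsilon n) \cdot (\text{something})$ — actually $w$ must scale like $n^{-(v(F)-2)}$ precisely because an edge lies in up to $\Theta(n^{v(F)-2})$ copies and total weight on it is $\le 1$), the bound $\phi(F_0) \ge \sigma/\binom{n-2}{v(F)-2}$ holds for a suitable constant $\sigma$.

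\textbf{Main obstacle.} The genuinely delicate point is \emph{not} the partition (random colouring handles it) but ensuring that after seeding weight $w$ on a class $\mc{S}_j$, the \emph{remaining} demand on every edge can still be met by a legitimate fractional $F$-decomposition — i.e.\ handling the edges that are over-covered by the seed. This is exactly where one needs the "prescribed-demand" strengthening of the definition of $\delta_F^*$, which the paper obtains via Montgomery's averaging idea (the mechanism behind \cref{lem:SeededFixed}); getting the constants consistent (how small $w$ must be versus how much minimum degree slack $\varepsilon/2$ we reserved versus the $1-\eta$ demand window) is the part requiring care, though each individual estimate is routine.
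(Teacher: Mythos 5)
You have correctly recalled the high-level ``partition, seed, decompose the remainder, and average'' idea from the proof overview, but the details diverge in a way that creates the over-coverage problem you flag as the ``main obstacle'' --- and your proposed patch does not close it. The missing insight is that the colour classes must be $F$-\emph{packings}: each edge of $G$ should lie in at most \emph{one} copy of $F$ per class, not merely at most $\varepsilon n/2$. With packings, one puts weight exactly $1$ on every copy in the class $\mc{S}_j$, which precisely covers $\bigcup\mc{S}_j$ with no over-coverage at all, and then any fractional $F$-decomposition of $G\setminus\bigcup\mc{S}_j$ completes $\phi_j$ to a genuine fractional $F$-decomposition of $G$ --- no prescribed-demand machinery is needed. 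Invoking Montgomery's prescribed-demand trick here would in any case be circular, since \cref{lem:SeededFixed} (which packages exactly that trick) is proved \emph{using} the present lemma; moreover the window $[1-1/e(G),1]$ that trick supplies is far narrower than the $\Theta(1)$ deficits (for $v(F)=3$, say) that your over-coverage creates.

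There are three further issues. First, your claim $k=O_{F,\varepsilon}(1)$ fails for $v(F)\ge 4$: each edge lies in $\Theta(n^{v(F)-2})$ copies, so bringing this down to $\le\varepsilon n/2$ per class forces $k=\Omega(n^{v(F)-3})$. The paper instead uses $C=\Theta\bigl(\binom{n-2}{v(F)-2}\bigr)$ colours, and this is exactly where the $1/\binom{n-2}{v(F)-2}$ in the conclusion comes from --- via $\phi(F_0)\ge 1/C$ --- rather than from a vanishing seed weight $w$; in the paper the seed weight is $1$. Second, bounding the number of copies per \emph{edge} does not bound $\Delta(\bigcup\mc{S}_j)$: a vertex can still lose essentially all of its degree, so the deduction $\delta(G_j)\ge(\delta_F^*+\varepsilon/2)n$ is unjustified as written. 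The paper instead caps the number of $c$-coloured copies through each \emph{vertex}. Third, a uniformly random colouring does not yield packing classes; the paper achieves the required equitable colouring by finding an $A$-perfect matching in an auxiliary bipartite hypergraph whose $(e,c)$-vertices force edge-disjointness within each colour class and whose $(v,c,m)$-vertices cap the per-vertex multiplicity, via a Lov\'asz-Local-Lemma-style bipartite matching result (a corollary of Alon's).
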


We prove~\cref{lem:Seeded} by finding a perfect matching in a well-chosen auxiliary bipartite hypergraph. There are many results establishing sufficient conditions on a bipartite hypergraph $\mc{G} = (A,B)$ to guarantee the existence of an $A$-perfect matching, generalizing the classical Hall’s theorem to hypergraphs. For our purposes, it suffices to use the following lemma (which is a corollary of Proposition 5.3 of Alon~\cite{alon1994probabilistic}) whose proof follows easily from the Lov\'{a}sz Local Lemma; we note the best result of this form follows as a corollary from the work of Haxell~\cite{H95} (specifically Theorem 3 in~\cite{H95} which shows that degree $(2k-3)D$ would suffice below) which is essentially optimal. 

\begin{lem}\label{thm:AlonMatching}
        Let $\mc{G}=(A,B)$ be a $k$-uniform bipartite hypergraph. Assume that for some integer $D$, every vertex $v\in A$ has degree at least $2ekD$, and every vertex $b\in B$ has degree at most $D$. Then $\mc{G}$ contains an $A$-perfect matching. 
\end{lem}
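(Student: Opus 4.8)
The plan is to derive this from the symmetric Lovász Local Lemma by the standard ``random representatives'' argument, after one preliminary pruning step that controls the dependencies. First I would observe that only a lower bound on the $A$-degrees is available, so I am free to discard edges at $A$: for each $v\in A$ fix a set $E'(v)\subseteq E_{\mc{G}}(v)$ of exactly $m:=\lceil 2ekD\rceil$ edges (legitimate since $\deg_{\mc{G}}(v)\ge 2ekD$, and $2ekD\notin\mathbb{Z}$ forces $m>2ekD$), and pass to the subhypergraph $\mc{G}'$ with edge set $\bigcup_{v\in A}E'(v)$. Since pruning only deletes edges, every vertex of $B$ still has degree at most $D$ in $\mc{G}'$, while every vertex of $A$ now has degree exactly $m$.

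Next I would run the following experiment: independently for each $v\in A$, choose $e_v\in E'(v)$ uniformly at random. Because every edge of $\mc{G}$ meets $A$ in exactly one vertex, for $u\ne w$ the chosen edges $e_u,e_w$ can intersect only inside $B$; hence $\{e_v:v\in A\}$ is an $A$-perfect matching precisely when no two chosen edges meet. For every unordered pair of distinct $u,w\in A$ and every $e\in E'(u)$, $f\in E'(w)$ with $e\cap f\ne\emptyset$, let $B_{e,f}$ be the event $\{e_u=e,\ e_w=f\}$; avoiding all the $B_{e,f}$ yields the matching. Each such event has probability $1/m^{2}$ and depends only on the independent variables $e_u,e_w$, so in the variable version of the local lemma $B_{e,f}$ is mutually independent of all bad events involving neither $u$ nor $w$.

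It then remains to bound the dependency degree and verify the numerical condition. The number of bad events involving a fixed $A$-vertex $u$ is at most $m(k-1)D$: choose the edge at $u$ in $m$ ways, then the second edge must meet it in one of its $k-1$ vertices of $B$, each lying in at most $D$ edges of $\mc{G}'$. Hence the dependency degree satisfies $d\le 2m(k-1)D$, and with $p=1/m^{2}$ we obtain
\[
ep(d+1)\ \le\ e\cdot\frac{2m(k-1)D+1}{m^{2}}\ \le\ e\cdot\frac{2mkD}{m^{2}}\ =\ \frac{2ekD}{m}\ <\ 1,
\]
using $2m(k-1)D+1\le 2mkD$ (equivalently $2mD\ge 1$) and $m>2ekD$. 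The symmetric local lemma then provides an outcome avoiding every $B_{e,f}$, and for that outcome $\{e_v:v\in A\}$ is the desired $A$-perfect matching. The only genuine wrinkle is the first step: $A$-vertices may have arbitrarily large degree, which would make the dependency graph unbounded if one sampled uniformly from all of $E_{\mc{G}}(v)$; trimming each $E_{\mc{G}}(v)$ to size $\lceil 2ekD\rceil$ both resolves this and is exactly what calibrates the constant $2ek$ so that $ep(d+1)\le 1$ holds. (This recovers Alon's Proposition~5.3; Haxell's theorem improves the hypothesis to degree $(2k-3)D$.)
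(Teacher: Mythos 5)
Your proof is correct and is essentially the argument the paper has in mind: the lemma is quoted from Alon's Proposition~5.3 with the remark that it "follows easily from the Lov\'asz Local Lemma," and your trim-to-degree-$\lceil 2ekD\rceil$, pick-one-random-edge-per-$A$-vertex, symmetric-LLL argument is exactly that standard proof, with the dependency count and the verification $ep(d+1)\le 2ekD/m<1$ done correctly.
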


We are now prepared to prove~\cref{lem:Seeded}. 

\begin{proof}[Proof of~\cref{lem:Seeded}]
    Let $q:=v(F)$, $C:=8e\cdot q^{q+2}\cdot\binom{n-2}{q-2}\cdot\varepsilon^{-1}$, and let $v_1,\ldots,v_{q}$ be an ordering of $V(F)$. Let $M := \floor*{\varepsilon n/(2q)}$. We build an auxiliary bipartite hypergraph $\mc{G}=(A,B)$ where 
    \[A:=\binom{G}{F},\qquad B:=\big\{(e,c): e\in E(G), c\in[C]\big\}\cup\big\{(v,c,m)\colon v\in V(G), c\in [C], m\in[M] \big\},\]
    and for every $H\in \binom{G}{F}$, every $c\in[C]$, and every $m_1,\ldots,m_{q}\in[M]$, $E(\mc{G})$ contains the edge
    \[ \{H\}\cup \bigcup_{e\in H}(e,c)\cup \bigcup_{v_j\in V(H) } (v_j,c,m_j).\]
    
    For every edge $e$ and every color $c$, there is a unique vertex $(e,c)$ in $B$; therefore an $A$-perfect matching of $\mc{G}$ represents a $C$-coloring of the copies of $F$ in $G$ such that for every color $c\in [C]$, all $c$-colored copies of $F$ are pairwise edge-disjoint. 
    Furthermore, for every vertex $v$ and every color $c$, there are exactly $M$ vertices in $B$ of the form $(v,c,m)$ for some integer $m$; therefore an $A$-perfect matching of $\mc{G}$ is a $C$-coloring of the copies of $F$ in $G$ such that for every $c\in[C]$, every vertex appears in at most $M$ $c$-colored copies of $F$. 

    Let $k:=1+e(F)+q\leq q^2$ be the uniformity of $\mc{G}$. Every edge $e\in G$ is in at most $q^q\cdot\binom{n-2}{q-2}$ copies of $F$, while every vertex $v\in V(G)$ is in at most $q^q\cdot\binom{n-1}{q-1}$ copies of $F$. Then, every vertex $b\in B$ has degree at most
    \[\max\set*{q^{q}\cdot\binom{n-2}{q-2}\cdot M^{q},~q^{q}\cdot\binom{n-1}{q-1}\cdot M^{q-1}}
    \leq
    \binom{n-2}{q-2}\cdot q^{q-1}\cdot M^{q-1}\cdot n=:D\]
    Meanwhile, every vertex $a\in A$ has degree
    \[C\cdot M^{q} 
    \geq \frac{8eq^{q+2}}{\varepsilon}\binom{n-2}{q-2} \cdot M^{q-1} \cdot \frac{\varepsilon n}{4q}
    \geq 2eq^2 \cdot \binom{n-2}{q-2} \cdot q^{q-1} \cdot M^{q-1} \cdot n \geq 2ekD.\]
     Therefore, by~\cref{thm:AlonMatching}, $\mc{G}$ contains an $A$-perfect matching. This matching encodes a $C$-coloring of the copies of $F$ in $G$. For each $c\in [C]$, let $G_c$ be the subgraph of $G$ induced by the edges contained in a $c$-colored copy of $F$. Observe that for every color $c\in [C]$,
    \begin{enumerate}
        \item $G_c$ is the union of edge-disjoint copies of $F$,
        \item every vertex appears in at most $M$ copies of $F$ colored $c$, hence $\Delta(G_c)\leq \varepsilon n/2$.
    \end{enumerate}
    
    For each color $c\in [C]$, we have $\delta(G\setminus G_c)\geq (\delta^*_F+\varepsilon/2)n$, therefore there exists a fractional $F$-decomposition $\phi_c$ of $G\setminus G_c$. We extend $\phi_c$ by adding a weight of $1$ to every copy of $F$ in $G_c$, forming a fractional $F$-decomposition of $G$ by $(i)$. We then define
    \[\phi(H) := \frac{1}{C}\sum_{c\in[C]}\phi_c(H),\]
    which is the desired fractional $F$-decomposition of $G$, where every copy of $F$ has weight at least $\frac{1}{C}=\frac{\sigma}{\binom{n-2}{q-2}}$ with $\sigma = \frac{\varepsilon}{8e\cdot q^{q+2}}$.
\end{proof}

We now extend~\cref{lem:Seeded} to a {\em fixed} version, allowing us to set a target for the weight on every edge instead of a weight of $1$.
\begin{lem}[Seeded-Fixed Fractional Decomposition]\label{lem:SeededFixed}
    For every graph $F$ and real $\varepsilon$, there exist $\sigma\in(0,1)$ and an integer $n_0\geq 1$, such that following holds for every $n\geq n_0$: Let $G$ be a graph on $n$ vertices with minimum degree at least $(\delta^*_F+\varepsilon)n$, and let $\varphi:E(G)\to[1-\frac{1}{e(G)},1]$. Then there exists a fractional $F$-packing $\Phi$ of $G$ such that every copy of $F$ has weight at least $\sigma / \binom{n-2}{v(F)-2}$, and $\partial\Phi(e)=\varphi(e)$ for every edge $e\in G$.
\end{lem}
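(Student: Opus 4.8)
The plan is to combine the seeded fractional decomposition from \cref{lem:Seeded} with the ``target weight'' averaging trick attributed to Montgomery in the proof overview. First I would apply \cref{lem:Seeded} to $G$ (with the same $F$ and $\varepsilon$) to obtain a seeded fractional $F$-decomposition $\phi_0$ of $G$, so every copy of $F$ has weight at least $2\sigma_0/\binom{n-2}{v(F)-2}$ for some $\sigma_0 = \sigma_0(F,\varepsilon)$; set $\sigma := \sigma_0/2$ (shrinking as needed so the arithmetic below goes through). The key additional input I need is that for \emph{each} edge $e \in G$, the graph $G - e$ still has minimum degree at least $(\delta^*_F + \varepsilon/2)n$ — which is immediate since deleting one edge drops each degree by at most one — and hence, by \cref{lem:Seeded} applied to $G-e$ (with $\varepsilon/2$ in place of $\varepsilon$, and $n$ large), there is a seeded fractional $F$-decomposition $\phi_e$ of $G-e$ in which every copy of $F$ in $G-e$ gets weight at least $\sigma_0/\binom{n-2}{v(F)-2}$. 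Think of $\phi_e$ as a fractional $F$-packing of $G$ that saturates every edge except $e$, which it does not cover at all.

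The second step is the convex combination. Given the target function $\varphi : E(G) \to [1 - \tfrac{1}{e(G)}, 1]$, I would set, for each edge $e$, a coefficient $\lambda_e := 1 - \varphi(e) \in [0, \tfrac{1}{e(G)}]$, so that $\sum_{e} \lambda_e \le 1$, and then define
\[
\Phi \;:=\; \Big(1 - \sum_{e \in G} \lambda_e\Big)\,\phi_0 \;+\; \sum_{e \in G} \lambda_e\, \phi_e .
\]
For any fixed edge $f \in G$, since $\partial\phi_0(f) = 1$, $\partial\phi_e(f) = 1$ for all $e \neq f$, and $\partial\phi_f(f) = 0$, we compute $\partial\Phi(f) = (1 - \sum_e \lambda_e) + \sum_{e \neq f}\lambda_e = 1 - \lambda_f = \varphi(f)$, as required; and since all coefficients are nonnegative and each copy of $F$ gets weight at least $\sigma_0/\binom{n-2}{v(F)-2}$ under $\phi_0$ (and nonnegative weight under every $\phi_e$), while the coefficient on $\phi_0$ is $1 - \sum_e \lambda_e \ge 0$, one has $\Phi(H) \ge (1 - \sum_e\lambda_e)\cdot \sigma_0/\binom{n-2}{v(F)-2}$. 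Here there is one genuine wrinkle: the coefficient on $\phi_0$ could in principle be as small as $0$, which would destroy the lower bound. To fix this I would instead distribute the weight over the $\phi_e$'s alone — note $\sum_e \lambda_e = e(G) - \sum_e \varphi(e) \le e(G)\cdot\tfrac{1}{e(G)} = 1$, but we also need this sum to be \emph{bounded below} to exploit the seededness of the $\phi_e$. The clean workaround is to write $\Phi := \sum_e \mu_e \phi_e$ where $\mu_e$ is chosen so that $\partial\Phi(f) = \sum_{e\neq f}\mu_e = (\sum_e \mu_e) - \mu_f = \varphi(f)$; this forces $\sum_e \mu_e =: s$ and $\mu_f = s - \varphi(f)$ for all $f$, so we need $s \ge \max_f \varphi(f)$ to keep $\mu_f \ge 0$, and summing gives $s = \sum_f \mu_f /\, \ldots$ — actually $\sum_f \mu_f = e(G)s - \sum_f\varphi(f) = s$, forcing $e(G)s - \sum_f \varphi(f) = s$, i.e. $s = \frac{\sum_f \varphi(f)}{e(G)-1} \ge \frac{(1-1/e(G))e(G)}{e(G)-1} = 1$. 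So $s \ge 1 \ge \max_f \varphi(f)$, hence every $\mu_f = s - \varphi(f) \ge 1 - \varphi(f) \ge 0$, and in fact $\sum_e \mu_e = s \ge 1$, so some $\mu_{e_0} \ge 1/e(G) \cdot s \ge \ldots$; more simply, since each copy $H$ of $F$ lives in $G - e$ for all but at most $e(F)$ edges $e$, we get $\Phi(H) = \sum_{e : H \subseteq G-e}\mu_e \phi_e(H) \ge \big(\sum_{e}\mu_e - \sum_{e : e \in H}\mu_e\big)\cdot \frac{\sigma_0}{\binom{n-2}{v(F)-2}} \ge (s - e(F)\cdot s)\cdot(\ldots)$, which is negative — so I must be more careful and not bound all the $\mu_e$ below by something large.

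The cleanest correct version: keep $\Phi := (1-\sum_e\lambda_e)\phi_0 + \sum_e \lambda_e\phi_e$ but observe that a copy $H$ of $F$ is covered by $\phi_e$ for \emph{every} edge $e \notin E(H)$, so $\Phi(H) \ge \big((1-\sum_e\lambda_e) + \sum_{e\notin E(H)}\lambda_e\big)\cdot \frac{\sigma_0}{\binom{n-2}{v(F)-2}} = \big(1 - \sum_{e\in E(H)}\lambda_e\big)\cdot\frac{\sigma_0}{\binom{n-2}{v(F)-2}} \ge (1 - e(F)/e(G))\cdot \frac{\sigma_0}{\binom{n-2}{v(F)-2}} \ge \frac{\sigma_0/2}{\binom{n-2}{v(F)-2}}$ for $n$ (hence $e(G)$) large. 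This is exactly the bound we want with $\sigma := \sigma_0/2$, and the target constraint $\partial\Phi(f) = \varphi(f)$ is verified as in the computation above. The main obstacle is thus purely bookkeeping — making sure the seeded lower bound survives the averaging — and is resolved by the observation that each copy of $F$ misses all but $e(F)$ of the ``deletion'' decompositions, so the total coefficient multiplying a positive lower bound on $\phi_0, \phi_e$ at $H$ is at least $1 - e(F)/e(G)$. I would also note explicitly that the $\phi_e$ are genuine fractional $F$-decompositions of $G-e$ (every edge of $G-e$ is exactly saturated), which is what makes the edge computation $\partial\phi_e(f) = \mathbf{1}[f \neq e]$ valid, and that $\Phi$ is only a fractional $F$-\emph{packing} of $G$ (not a decomposition) since $\varphi(e)$ may be strictly below $1$.
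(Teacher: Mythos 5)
Your proposal is correct and, after the detour (which you yourself abandon), lands on essentially the same proof as the paper: a seeded decomposition $\phi_0$ of $G$ plus, for each edge $e$, a seeded decomposition $\phi_e$ of $G-e$, combined with coefficients $\lambda_e = 1-\varphi(e)$ so that $\partial\Phi(f)=\varphi(f)$, with the lower bound surviving because each copy $H$ of $F$ misses at most $e(F)$ of the $\phi_e$. The paper's combination looks superficially different (it defines $\Phi'_e := \lambda'_e\Phi_0 + (1-\lambda'_e)\Phi_e$ with $\lambda'_e := e(G)\big[\varphi(e)-(1-\tfrac{1}{e(G)})\big]$ and then averages $\tfrac{1}{e(G)}\sum_e\Phi'_e$), but one checks $\tfrac{1-\lambda'_e}{e(G)} = 1-\varphi(e) = \lambda_e$ so it is the same $\Phi$; the only blemishes in your writeup are the transient $2\sigma_0$ versus $\sigma_0$ notational slip in the first paragraph and the dead-end exploration in the middle.
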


\begin{proof}
Note that as $n$ is large enough, we have that $\varepsilon n\geq2$ and hence $\delta(G-e)\geq(\delta^*_F+\varepsilon/2)n$ for every $e\in G$. Let $\sigma'$ be as the $\sigma$ in~\cref{lem:Seeded} for $F$ and $\varepsilon/2$. By~\cref{lem:Seeded} as $n$ is large enough, there exists a ``seeded'' fractional $F$-decomposition $\Phi_0$ of $G$, where every copy of $F$ in $G$ has weight at least $\sigma' / \binom{n-2}{v(F)-2}$. Similarly, for every $e\in G$ by~\cref{lem:Seeded}, there exists a fractional $F$-decomposition  $\Phi_{e}$ of $G-e$ where every copy of $F$ in $G-e$ has weight at least $\sigma' / \binom{n-2}{v(F)-2}$.  Let $\sigma:=\frac{\sigma'}{2}$.

For each $e\in G$, let $\lambda_e := e(G)\Big[\varphi(e) - \big(1-\frac{1}{e(G)}\big)\Big]$. Note that since $\varphi(e) \in [1-\frac{1}{e(G)},1]$, we have that $\lambda_e\in [0,1]$. We define 
$$\Phi_e' := \lambda_e \cdot \Phi_0 + (1-\lambda_e) \cdot \Phi_e.$$ 
Note that $\Phi'_e$ is fractional $F$-packing of $G$ such that $\partial \Phi'_e(e) = \lambda_e$ and $\partial \Phi'_e(f) = 1$ for each $f\in G-e$. 
Finally we set
$$\Phi := \frac{1}{e(G)} \cdot \sum_{e\in G} \Phi'_e.$$
Observe that $\Phi$ is a fractional $F$-packing of $G$ (since it is the average of fractional $F$-packings of $G$). Furthermore for each $f\in G$, we have that
\begin{align*}
\partial \Phi(f) = \frac{1}{e(G)} \cdot \sum_{e\in G} \partial \Phi'_e(f) = \frac{1}{e(G)} \cdot \left( e(G)-1 + \lambda_f\right) = 1 - \frac{1}{e(G)} + \frac{1}{e(G)} \cdot \lambda_f = \varphi(f),
\end{align*}
as desired. Finally, we verify that the fractional packing assigns the desired minimum weight to each copy of $F$ in $G$. For any such copy $H$ we have
\begin{align*}
\Phi(H)
&= 
\frac{1}{e(G)} \cdot \sum_{e\in G} \lambda_e \cdot \Phi_0(H) + (1-\lambda_e) \cdot \Phi_e(H)\\
&\geq 
\frac{1}{e(G)} \cdot \sum_{e\in G} \lambda_e \cdot \frac{\sigma'}{\binom{n-2}{v(F)-2}} + (1-\lambda_e) \cdot \frac{\sigma'}{\binom{n-2}{v(F)-2}}\cdot\mathds{1}_{\{e\notin H\}}\\
&\geq \frac{e(G)-e(F)}{e(G)}\cdot\frac{2\sigma}{\binom{n-2}{v(F)-2}}    \\
&\geq \frac{\sigma}{\binom{n-2}{v(F)-2}},  
\end{align*}
as desired where we used that $H$ is seeded in $\Phi_e$ if and only if $e\not\in H$, that $\sigma'=2\sigma$, and that $e(F)\le \frac{e(G)}{2}$ as $n$ is large enough.
\end{proof}

\subsection{Balanced Fractional Decompositions}

We now use the fractional packings of~\cref{lem:SeededFixed}, to build a balanced fractional decomposition. Here is the special case of Lang's Inheritance Lemma~\cite{lang2023tiling} required to prove~\cref{lem:BalancedDecomposition}.

\begin{lem}[Inheritance Lemma for minimum degree~\cite{lang2023tiling}]\label{lem:LangInheritance}
    For every $\varepsilon\in(0,1)$ and integer $m\geq 1$, there exists $s_0$ such that for all $s\geq s_0$, the following holds for every $n$ large enough. Let $\delta\geq 0$, and let $G$ be an $n$-vertex graph with $\delta(G)\geq (\delta+\varepsilon)(n-1)$. Then for every m-set $M\subseteq V(G)$, there are at least $(1-e^{-\sqrt{s}})\binom{n-m}{s-m}$ distinct $s$-sets $S\subseteq V(G)$ such that $M\subseteq S$ and $\delta(G[S])\geq (\delta+\varepsilon/2)(s-1)$.
\end{lem}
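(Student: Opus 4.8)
\emph{Approach.} The plan is to follow Lang's argument, which is essentially a single hypergeometric Chernoff bound dressed up with a union bound (and, as noted in the introduction, uses no regularity lemma). Sample $S$ uniformly at random among the $\binom{n-m}{s-m}$ sets of size $s$ with $M\subseteq S\subseteq V(G)$; equivalently, adjoin to $M$ a uniformly random $(s-m)$-subset of $V(G)\setminus M$. Since the number of ``bad'' $s$-sets (those with $\delta(G[S])<(\delta+\varepsilon/2)(s-1)$) is exactly $\Prob{S\text{ bad}}\cdot\binom{n-m}{s-m}$, it suffices to show $\Prob{S\text{ bad}}\le e^{-\sqrt s}$. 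We do this by a union bound over $v\in V(G)$ of the event $\mathcal B_v$ that $v\in S$ and $d_{G[S]}(v)<(\delta+\varepsilon/2)(s-1)$, so that $\Prob{S\text{ bad}}\le\sum_v\Prob{\mathcal B_v}$.

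\emph{The per-vertex estimate.} Fix $v$ and condition on $v\in S$. Then $S$ is the union of $M\cup\{v\}$ with a uniformly random set $T$ of $s-m-1$ vertices (or $s-m$, if $v\in M$) drawn from $U:=V(G)\setminus(M\cup\{v\})$. Discarding the nonnegative contribution of neighbours of $v$ lying in $M$, we have $d_{G[S]}(v)\ge|N_G(v)\cap T|$, and the latter is exactly a hypergeometric random variable: the number of elements of the ``success set'' $N_G(v)\cap U$ among the $|T|$ elements sampled without replacement from $U$. Here $|N_G(v)\cap U|\ge d_G(v)-m\ge(\delta+\varepsilon)(n-1)-m$, so (using $\delta+\varepsilon\le 1$, since degrees are at most $n-1$, and taking $n$ large in terms of $m,\varepsilon$ to absorb the $O(m/n)$ slack) its mean is at least $(\delta+\tfrac78\varepsilon)(s-m)$, which for $s\ge s_0(\varepsilon,m)$ exceeds the target $(\delta+\tfrac12\varepsilon)(s-1)$ by at least $\tfrac\varepsilon4\,s$. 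Since hypergeometric distributions obey the same lower-tail Chernoff/Hoeffding bound as binomials (e.g.\ by negative association, or by Hoeffding's reduction of sampling without replacement to sampling with replacement), we get $\ProbCond{\mathcal B_v}{v\in S}\le e^{-\varepsilon^2 s/8}$.

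\emph{Assembling the bound and fixing the quantifiers.} Writing $\Prob{\mathcal B_v}=\Prob{v\in S}\cdot\ProbCond{\mathcal B_v}{v\in S}$ and using $\Prob{v\in S}\le s/n$ for $v\notin M$ (and $\Prob{v\in S}=1$ for the $m$ vertices of $M$), summing over all $v$ gives $\Prob{S\text{ bad}}\le(s+m)\,e^{-\varepsilon^2 s/8}\le e^{-\sqrt s}$ as soon as $s\ge s_0(\varepsilon,m)$, because $\varepsilon^2 s/8-\sqrt s-\ln(s+m)\to\infty$. This pins down $s_0$ as a function of $\varepsilon$ and $m$ alone, after which ``$n$ large enough'' is chosen depending on $s$, entering only through the $O(m/n)$ slack used above; thus the quantifier order $(\varepsilon,m)\to s_0\to s\to n$ required by the statement is respected. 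I do not expect a genuine obstacle here: the entire content is one hypergeometric tail bound plus a union bound. The only point requiring care is the constant bookkeeping — verifying that the two successive losses (from $(\delta+\varepsilon)(n-1)$ down to roughly $(\delta+\varepsilon)(s-m)$ because of the fixed block $M$, and then down to the target $(\delta+\tfrac12\varepsilon)(s-1)$) leave a gap that is still linear in $s$, so that the resulting tail bound $e^{-\Omega(\varepsilon^2 s)}$ comfortably beats $e^{-\sqrt s}$.
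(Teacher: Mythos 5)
Your proposal is correct, and it coincides with the argument the paper relies on: the paper does not prove this lemma itself but cites it from Lang, remarking only that it "has a remarkably simple proof from standard concentration inequalities," which is exactly your hypergeometric lower-tail bound (valid for sampling without replacement) combined with a union bound over vertices, with the conditioning on $v\in S$ and the $\Prob{v\in S}\le s/n$ factor handled correctly. The constant bookkeeping also checks out: the mean-versus-target gap is indeed linear in $s$ (of order $\varepsilon s$), so $(s+m)e^{-\Omega(\varepsilon^2 s)}\le e^{-\sqrt{s}}$ once $s\ge s_0(\varepsilon,m)$, and the quantifier order $(\varepsilon,m)\to s_0\to s\to n$ is respected as the statement requires.
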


We are now prepared to prove~\cref{lem:BalancedDecomposition}.

\begin{proof}[Proof of~\cref{lem:BalancedDecomposition}]
    Observe that we may assume that $\delta^*_F < 1$ as otherwise the statement is vacuously true. 
    Let $s_1\geq 1$ such that~\cref{lem:LangInheritance} holds for $\varepsilon$ and $m=2$; let $s_2\geq 1$ such that~\cref{lem:LangInheritance} holds for $\varepsilon$ and $m=v(F)$; and let $\sigma\in(0,1)$ and $s_3\geq 1$ such that~\cref{lem:SeededFixed} holds for $F$ and graphs $G$ on at least $s_3$ vertices. Let $s$ be large enough, in particular such that $s\geq\max\{s_1,s_2,s_3\}$. Let $C:=2\cdot\binom{s-2}{v(F)-2}$. 
    
    Let $n$ be large enough, and let $G$ be a $n$-vertex graph with $\delta(G)\geq (\delta^*_F+\varepsilon)n$. Let $\cS$ be the family of all $s$-sets in $V(G)$ with minimum degree at least $(\delta^*_F+\varepsilon/2)(s-1)$. For every edge $e\in G$, we define $\varphi(e)$ to be
    \[\varphi(e):=(1-e^{-\sqrt{s}})\cdot \frac{\binom{n-2}{s-2}}{|\{S\in\mc{S}:e\in S\}|}.\]

    By~\cref{lem:LangInheritance}, every edge $e\in G$ is in at least $(1-e^{-\sqrt{s}})\cdot\binom{n-2}{s-2}$ sets in $\mc{S}$. Therefore, by building, for every $S\in\mc{S}$, an $F$-decomposition $\Phi_S$ of $G[S]$ such that $\partial\Phi_S(e)=\varphi(e)$, and taking the sum of all these decompositions, we obtain an assignment such that the weight on each edge is constant; we then only need a scaling factor to obtain the desired $F$-decomposition.
    
    Observe that, by~\cref{lem:LangInheritance}, we have $\varphi(e)\in[1-e^{-\sqrt{s}},1]$ and hence as $s$ is large enough, we have $\varphi(e)\geq 1-\frac{1}{s^2}$.
    Thus by~\cref{lem:SeededFixed}, for each $S\in\cS$, there exists a seeded and fixed fractional $F$-packing $\Phi_S$ of $G[S]$, that is where every copy of $F$ has weight at least $\sigma/\binom{s-2}{v(F)-2}$ and such that $\partial\Phi_S(e)=\varphi(e)$ for every edge $e\in G[S]$. We let $\Phi_S(H)=0$ if $V(H)\setminus S\ne\emptyset $ so as to extend the function to all copies of $F$ in $G$. We define the following fractional $F$-packing of $G$. For every $H\in\binom{G}{F}$, define
    \[\phi(H):=\frac{1}{(1-e^{-\sqrt{s}})\cdot\binom{n-2}{s-2}}\cdot\sum_{S\in\cS}\Phi_S(H).\]
    By~\cref{lem:LangInheritance}, every $H\in \binom{G}{F}$ is in at least $(1-e^{-\sqrt{s}})\binom{n-v(F)}{s-v(F)}$ sets in $\cS$, therefore
    \[\phi(H)\geq 
    \frac{1}{\binom{n-2}{s-2}}\cdot \binom{n-v(F)}{s-v(F)}\cdot \frac{\sigma}{\binom{s-2}{v(F)-2}}
    = \frac{\sigma}{\binom{n-2}{v(F)-2}}.
    \]
    Similarly, every $H\in \binom{G}{F}$ is in at most $\binom{n-v(F)}{s-v(F)}$ sets $S\in\cS$, hence
    \[\phi(H)\leq \frac{1}{(1-e^{-\sqrt{s}})\cdot\binom{n-2}{s-2}}\cdot \binom{n-v(F)}{s-v(F)}
    = \frac{\binom{s-2}{v(F)-2}}{(1-e^{-\sqrt{s}})\cdot\binom{n-2}{v(F)-2}}
    \leq \frac{C}{\binom{n-2}{v(F)-2}},    \]
    where we used $(1-e^{-\sqrt{s}})\geq 1/2$ for $s\geq 1$. Finally, by definition of $\varphi$, every edge $e\in G$ is in exactly $\frac{1-e^{-\sqrt{s}}}{\varphi(e)}\cdot\binom{n-2}{s-2}$ sets in $\cS$, therefore we obtain 
    \[\partial\phi(e)=\frac{1}{(1-e^{-\sqrt{s}})\cdot\binom{n-2}{s-2}}\cdot\sum_{S\in\cS}\partial\phi_S(e)=\frac{1}{(1-e^{-\sqrt{s}})\cdot\binom{n-2}{s-2}}\cdot\brackets*{\frac{1-e^{-\sqrt{s}}}{\varphi(e)}\cdot\binom{n-2}{s-2}\cdot \varphi(e)} = 1.
    \]
\noindent
    Hence $\phi$ is the desired $(\sigma,C)$-balanced fractional $F$-decomposition of $G$.
\end{proof}

\subsection{From Decompositions To Regular Cliques}

To conclude the proof of the Nash-Williams Boosting Lemma, we require the following simplified version of the general boosting lemma from~\cite{DKPIII}. The \emph{support} of an $F$-weighting $\psi$ is the set of subgraphs $F$ of $G$ such that $\psi(F)>0$. 
\begin{lem}[General Boosting Lemma]\label{lem:GeneralBoostLemma}
For every integer $q \geq 3$, there exists $\Gamma > 0$ such that the following holds: Let $G$ be a graph, let $\cH$ be a family of copies of $K_q$ in $G$ and $\cQ$ be a family of copies of $K_{q+2}$ in $G$ such that for all $Q\in \cQ$, we have $\binom{Q}{q} \subseteq \cH$. If there exists a real $d > 0$ such that 
\[\frac{\Big| |\cH(e)|-d \Big|}{|\cQ(e)|} \cdot \left(\max_{R\in \binom{V(G)}{q}} |\cQ(R)|\right) \leq \Gamma\]
for all $e \in E(G)$, then there exists a fractional $K_q$-decomposition of $G$ whose support is in $\cH$ and whose weights are in $\left(1\pm \frac{1}{2}\right) \cdot \frac{1}{d}$.
\end{lem}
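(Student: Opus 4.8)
\emph{Proof plan.} The plan is to run the usual ``averaging and correction'' argument driven by the $K_{q+2}$-booster gadget. The one genuinely non-routine input, which I would isolate first, is a purely local statement about $K_{q+2}$: for every fixed edge $e_0$ of $K_{q+2}$ there exist rational coefficients $(w_K)_{K\in\binom{K_{q+2}}{q}}$, each of absolute value at most some constant $M=M(q)$, such that $\sum_{K\,:\,f\in K} w_K=\mathds{1}[f=e_0]$ for every edge $f$ of $K_{q+2}$; equivalently, the indicator vector of a single edge lies in the rational column span of the (edge)$\times$($K_q$-subgraph) incidence matrix of $K_{q+2}$. This is exactly the classical fact underlying every boosting lemma in the area --- the ``$+2$'' is essential, since in $K_{q+1}$ the $K_q$-subgraphs correspond to vertices and the analogous incidence matrix is far from having full row rank --- and the version with explicit coefficient bounds is what is recorded in~\cite{DKPIII}. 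Transporting this gadget onto an arbitrary copy $Q\cong K_{q+2}$ in $G$, with some chosen $e\in E(Q)$ playing the role of $e_0$, produces a signed weighting $w^{(e)}_Q$ supported on $\binom{Q}{q}$ with $\partial w^{(e)}_Q(f)=\mathds{1}[f=e]$ for every edge $f$ of $G$ and $|w^{(e)}_Q(K)|\le M$ for all $K$.

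Given that, I would fix $\Gamma:=\bigl(2M\binom{q+2}{2}\bigr)^{-1}$ and begin from the uniform weighting $\psi_0$ defined by $\psi_0(K)=1/d$ for $K\in\cH$ and $\psi_0(K)=0$ otherwise, so that $\partial\psi_0(e)=|\cH(e)|/d=1+\rho(e)$ where $\rho(e):=\bigl(|\cH(e)|-d\bigr)/d$. Two degenerate cases are dispatched immediately: if $\cQ=\emptyset$, or more generally on any edge $e$ with $|\cQ(e)|=0$, the hypothesis forces $|\cH(e)|=d$, hence $\rho(e)=0$ and nothing needs correcting (and if this happens for all $e$, then $\psi_0$ itself is already the desired decomposition). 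Otherwise, I would correct $\psi_0$ edge by edge by setting
\[
\psi \;:=\; \psi_0 \;-\; \sum_{e\in E(G)\,:\,|\cQ(e)|>0}\frac{\rho(e)}{|\cQ(e)|}\sum_{Q\in\cQ(e)} w^{(e)}_Q.
\]
For each such $e$ the inner average has boundary $\partial\bigl(|\cQ(e)|^{-1}\sum_{Q\in\cQ(e)}w^{(e)}_Q\bigr)(f)=\mathds{1}[f=e]$ at every edge $f$, so summing gives $\partial\psi(f)=\bigl(1+\rho(f)\bigr)-\rho(f)=1$ for all $f\in E(G)$; thus $\psi$ meets every edge constraint. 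Moreover $\psi$ is supported in $\cH$, since $\psi_0$ is and since each $w^{(e)}_Q$ is supported on $\binom{Q}{q}\subseteq\cH$ by the hypothesis on $\cQ$.

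Finally I would bound the weights of $\psi$, which is the only place the quantitative hypothesis enters. Fix $K\in\cH$. A term $w^{(e)}_Q(K)$ is nonzero only when $K\subseteq Q$ and $e\in E(Q)$; there are at most $N:=\max_{R\in\binom{V(G)}{q}}|\cQ(R)|$ cliques $Q\in\cQ$ with $K\subseteq Q$, and each such $Q$ has $\binom{q+2}{2}$ edges; and the hypothesis rearranges to $|\rho(e)|/|\cQ(e)|\le \Gamma/(dN)$ for every $e$ with $|\cQ(e)|>0$. Hence
\[
\Bigl|\psi(K)-\tfrac1d\Bigr|\;\le\;\sum_{Q\in\cQ\,:\,K\subseteq Q}\ \sum_{e\in E(Q)}\frac{|\rho(e)|}{|\cQ(e)|}\cdot M\;\le\;N\cdot\binom{q+2}{2}\cdot\frac{\Gamma M}{dN}\;=\;\binom{q+2}{2}\frac{\Gamma M}{d}\;\le\;\frac{1}{2d},
\]
so every weight of $\psi$ lies in $\bigl(1\pm\tfrac12\bigr)\cdot\tfrac1d$; in particular all weights are strictly positive, so $\psi$ is a bona fide fractional $K_q$-decomposition of $G$ with support in $\cH$, as required. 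The main obstacle is precisely the first step: establishing the bounded $K_{q+2}$-gadget. But this is classical and already available in the exact form we need in~\cite{DKPIII}, so in practice this lemma is obtained simply by specializing the general boosting lemma proved there, and the two remaining steps above are routine bookkeeping.
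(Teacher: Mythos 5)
Your proof is correct, and it is essentially the standard boosting argument. Note, however, that the paper does not prove this lemma at all: it explicitly imports it as a ``simplified version of the general boosting lemma from~\cite{DKPIII}'' and uses it as a black box. Your self-contained derivation — invert the (edge)$\times$($K_q$-subgraph) incidence matrix of $K_{q+2}$ to get a bounded signed gadget with boundary $\mathds{1}[f=e_0]$, start from the uniform weighting $1/d$ on $\cH$, and cancel the error $\rho(e)$ edge by edge by averaging the gadget over $\cQ(e)$ — is precisely the mechanism behind the cited result (tracing back to Glock--K\"uhn--Lo--Osthus), and your bookkeeping of the weight perturbation, including the degenerate cases $|\cQ(e)|=0$ and the observation that $|\cQ(e)|>0$ forces $|\cH(e)|>0$ via $\binom{Q}{q}\subseteq\cH$, is accurate. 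So there is nothing to correct; your write-up simply unpacks the citation the paper leaves opaque.
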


We are now prepared to prove~\cref{lem:NWRegBoostTreasury}.

\begin{lateproof}{lem:NWRegBoostTreasury}
    For $q\geq 3$ and $\varepsilon\in(0,1)$, let $\Gamma$ be as guaranteed by~\cref{lem:GeneralBoostLemma}, and let $\sigma>0$ and $C\geq 1$ such that~\cref{lem:BalancedDecomposition} holds for $F=K_q$. Let $\alpha\in(0,1)$ be a small enough constant. Let $n$ be a large enough integer, and let $J$ and $\cF_{orb}$ be as described in the statement of~\cref{lem:NWRegBoostTreasury}. 

    Fix an edge $e\in J$. Observe that, for every $n$ large enough, $e$ is in at least 
    \[\Big(1-\frac{2}{q+1}+\varepsilon\Big)n\cdot \Big(1-\frac{3}{q+1}+\varepsilon\Big)\frac{n}{4}\geq \Big(1-\frac{5}{q+1}\Big)\frac{n^2}{4}\]
    copies of $K_4$ in $J$. Fix such a copy $K$, and let $H$ be a copy of $K_{q+2}$ containing $K$. If $H$ intersects $\cF_{orb}$, that is, if it contains a copy of $K_q$ in $\cF_{orb}$, then that copy of $K_q$ must be in $\cF_{orb}(f)$ for an edge $f\in K$. Recall that $|\cF_{orb}(f)|\leq \alpha\binom{n-2}{q-2}$ for every edge $f\in G$. Therefore the total number of copies of $K_{q+2}$ containing $e$, and not intersecting $\cF_{orb}$, is at least
    \[\Big(1-\frac{5}{q+1}\Big)\frac{n^2}{4}\cdot
    \parens*{
        \brackets*{\prod_{t=4}^{q+1}\Big(1-\frac{t}{q+1}+\varepsilon\Big)\frac{n}{t+1}}
        -6\alpha\binom{n-2}{q-2}}.\]
    By taking $\alpha$ small enough compared to $q,\varepsilon$, we obtain that there exists a constant $\xi=\xi(q,\varepsilon)$ such that there are at least $\xi\cdot\binom{n}{q}$ copies of $K_{q+2}$ containing $e$ and not intersecting $\cF_{orb}$.

    Let $\phi$ be the $(\sigma,C)$-balanced fractional $K_q$-decomposition of $J$ guaranteed by~\cref{lem:BalancedDecomposition}. For every $H\in\binom{J}{K_q}$, set $\phi(H)=0$ if $H\in \cF_{orb}$. For every edge $e\in G$ we have
    \[\partial\phi(e)\geq 1-\frac{C}{\binom{n-2}{q-2}}\cdot \alpha \binom{n-2}{q-2}= 1-\alpha\cdot C.\]
    
    Let $d:=\frac{1}{C}\binom{n-2}{q-2}$. We define $\cH$ to be a random subcollection of copies of $K_q$ in $J$, by including every $H\in\binom{J}{K_q}$ with probability $\phi(H)\cdot d$, all independently. Observe that $\phi$ is $(\sigma,C)$-balanced, therefore $\phi(H)\cdot d$ is within $[0,1]$ for every $H$. Then, for every $e\in J$, $\Expect{|\cH(e)|}=\partial\phi(e)\cdot d$ with $\partial\phi(e)\in[1-\alpha\cdot C,1]$.  By a standard application of 
    the Chernoff bound, we see that
    \[\Prob{\Big||\cH(e)|-\partial\phi(e)\cdot d\Big|\geq \alpha\cdot C\cdot \partial\phi(e)\cdot d} \leq 2e^{-(\alpha\cdot C)^2 \partial\phi(e) d/3}, \]
    and we obtain from the Union Bound that with probability at least $2/3$, for every edge $e\in J$, we have
    \begin{equation}\label{eq:BoostingH}
        \Big||\cH(e)|-  d \Big|\leq 2\alpha\cdot C\cdot d.
    \end{equation}

    Let $\cQ$ be the family of all copies of $K_{q+2}$ in $J$ such that for every $Q\in\cQ$ we have $\binom{Q}{K_q}\subseteq\cH$. Recall that for every edge $e\in J$, there are at least $\xi\cdot\binom{n}{q}$ copies of $K_{q+2}$ containing $e$ and not intersecting $\cF_{orb}$, and observe that every copy of $K_q$ not in $\cF_{orb}$ is selected at random with probability $\phi(H)\cdot d \geq \sigma /C$. Therefore we obtain 
    \[\Expect{\cQ(e)}\geq \Big(\frac{\sigma}{C}\Big)^{\binom{q+2}{2}}\cdot\xi\cdot\binom{n}{q},\text{ and }\qquad\Expect{\cQ(e)} = \Theta(n^{q}).\]

    Note that $|\cQ(e)|=\sum_{A}I_A$ where the sum is taken over all copies $A$ of $K_{q+2}$ containing $e$ in $J$, and $I_A := \prod_{R}\mathds{1}_{\{R\in\mc{H}(e)\}}$, where the product is taken over all copies 
    $R$ of $K_{q}$. By Janson's inequality~\cite{janson1990poisson}, 
    \[\Prob{|\cQ(e)|\leq \frac12 \Expect{\cQ(e)}} \leq \exp\brackets*{\frac{-\Expect{\cQ(e)}^2}{2(\Expect{\cQ(e)}+\Delta_e)}}, \]
    where 
    \[\Delta_e := \sum_{\{A,B\}\in S_e}\Expect{I_AI_B} \qquad \textrm{ and }
    \qquad
    S_e:=\Big\{\{A,B\}\colon  A\neq B\in \binom{J}{K_{q+2}},\ e\in A\cap B,\ \binom{A}{K_q}\cap \binom{B}{K_q}\neq\emptyset\Big\}.
    \]
    Any two copies $A,B$ of $K_{q+2}$ containing $e$ intersect in at least one $K_q$ if and only if they share $i\in\{q-2,q-1\}$ other vertices. Therefore, using $\Expect{I_AI_B}\leq 1$, we obtain
    \[\Delta_e\leq |S_e| \leq \sum_{i\in\{q-2,q-1\}}\frac12\binom{n-2}{i}\binom{n-2-i}{q-i}\binom{n-2-q}{q-i}\leq n^{q+2},\]
    and therefore,
    \[\Prob{|\cQ(e)|\leq \frac12 \Expect{\cQ(e)}} \leq \exp\brackets*{-\Theta(n^{q-2})}.\]
    By the Union Bound, as $q\geq 3$ is fixed while $n$ is large enough, with probability at least $2/3$, for every edge $e\in J$ we have
    \begin{equation}\label{eq:BoostingQ}
        |\cQ(e)|\geq \frac12 \Expect{\cQ(e)}\geq\frac{\xi}{2}\cdot\Big(\frac{\sigma}{C}\Big)^{\binom{q+2}{2}}\cdot\binom{n}{q}.
    \end{equation}

    Fix a realization of $\cH$ such that~\cref{eq:BoostingH,eq:BoostingQ} hold. Recall that $d:=\frac{1}{C}\binom{n-2}{q-2}$.  We trivially have \(\max_{R\in \binom{V(G)}{q}} |\cQ(R)|\leq \binom{n}{2}\); then, 
    \[\frac{\Big| |\cH(e)|- d \Big|}{|\cQ(e)|} \cdot \max_{R\in \binom{V(G)}{q}} |\cQ(R)|
    \leq 
    \frac{4\cdot\alpha}{\xi}
    \cdot \Big(\frac{C}{\sigma}\Big)^{\binom{q+2}{2}}
    \cdot \binom{n-2}{q-2} \cdot\binom{n}{2}/\binom{n}{q}
    = \alpha\cdot \frac{4}{\xi}\cdot \Big(\frac{C}{\sigma}\Big)^{\binom{q+2}{2}} \cdot \binom{q}{2}
    \leq\Gamma,\]
    where in the last inequality we used the fact that $\Gamma$, $\sigma$, $C$ and $\xi$ depend only on $q$ and $\varepsilon$ while $\alpha$ is arbitrarily small compared to $q$ and $\varepsilon$. Therefore~\cref{lem:GeneralBoostLemma} yields a fractional $K_q$-decomposition $\psi$ of $J$, whose support is in $\cH$ and whose weights are in $\left(1\pm \frac{1}{2}\right) \cdot \frac{1}{d}$.

    We define $\mc{J}\subseteq \cH$ to be a random subcollection of copies of $K_q$, by including every $H\in \cH$ with probability $p_H = \frac23 \cdot \psi(H)\cdot d$, all independently. Observe that all weights of $\psi$ are in $\left(1\pm \frac{1}{2}\right) \cdot \frac{1}{d}$, therefore $p_H\in[0,1]$ for every $H\in\cH$. Then, for every $e\in G$, $\Expect{|\mc{J}(e)|}=\frac23 d$, and by the Chernoff bound,

    \[\Prob{\Big||\mc{J}(e)|-\frac23 d\Big|\geq n^{-1/3}\binom{n-2}{q-2}}\leq 2\cdot \exp\parens*{-\frac{2C}{9}n^{-2/3}\binom{n-2}{q-2}}.\]
    By the Union Bound over the at most $n^2$ edges of $G$, as $n$ is large enough, there exists a family $\mc{J}$ of copies of $K_q$ in $J$ with $\mc{J}\subseteq\cH$, hence $\mc{J}\cap\cF_{orb}=\emptyset$, and such that every $e\in J$ is in $\parens*{\frac{2}{3C}\pm n^{-1/3}}\cdot \binom{n-2}{q-2}$ copies of $K_q$ in $\mc{J}$, as desired.
\end{lateproof}

\section{High Girth Absorption}\label{s:HighGirth}

We now build \emph{high girth} omni-absorbers. To accomplish this, we use~\cref{thm:RefinedEfficientOmniAbsorber} to find an omni-absorber, and then embed additional girth boosters to arbitrarily increase the girth of every triangle-packing in our construction. We prove the following theorem, with required formal definitions in the next subsection.

\begin{thm}[Erd\H{o}s-Nash-Williams Omni-Absorber Theorem]\label{thm:HighGirthAbsorber}
For every integer $g\ge 3$ and reals $\varepsilon\in(0,1)$ and $\alpha\in(0,1/6)$, there exist integers $a,n_0\ge 1$ such that the following holds for all $n\ge n_0$. Let $G\subseteq K_n$ be a graph such that $\delta(G)\geq(\frac34+\varepsilon)n$, $X$ be a spanning subgraph of $G$ with $\Delta(X) \leq \frac{n}{\log^{ag}n}$ and such that ${\rm Treasury}^g(K_n,K_n,K_3,X)$ is $(n,\alpha n,\frac{1}{2g},\alpha)$-regular. Let $\Delta:= \max\left\{ \Delta(X),~\sqrt{n}\cdot \log n\right\}$. Then there exists a $K_3$-omni-absorber $A$ for $X$ in $G$ with $\Delta(A)\le a\Delta\cdot \log^{a} \Delta$ and collective girth at least $g$, together with a subtreasury $T=(G_1,G_2,H)$ of ${\rm Proj}_g(A, K_n, X)$ that is $(n,6\alpha n,\frac{1}{4g},4\alpha)$-regular.
\end{thm}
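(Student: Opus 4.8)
The plan is to build the high girth omni-absorber $A$ in two stages: first apply our Nash-Williams omni-absorber theorem (\cref{thm:NWRefinedEfficientOmniAbsorber}) to get an ordinary omni-absorber $A_0$ for $X$ inside $G$, and then attach a girth booster to each triangle of its decomposition family, exactly as in the refined-absorption proof of the High Girth Existence Conjecture in~\cite{DPII}. The only genuinely new point relative to~\cite{DPII} is that the boosters must be embedded \emph{inside $G$} rather than inside $K_n$, and this is precisely where the minimum degree hypothesis $\delta(G) \geq (\tfrac34 + \varepsilon)n$ is used.

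Concretely, first I would apply \cref{thm:NWRefinedEfficientOmniAbsorber} with $q = 3$ (noting $1 - \tfrac{1}{2q-2} = \tfrac34$, and that $\Delta(X) \leq n/\log^{ag} n \leq n/C$ for $n$ large) to obtain a $C$-refined $K_3$-omni-absorber $A_0$ for $X$ in $G$ with $\Delta(A_0) \leq C\Delta$, together with its decomposition family $\cF_{A_0}$ and decomposition function $\cQ_{A_0}$. Next, for each triangle $F \in \cF_{A_0}$, I would use \cref{prop:DegeneracySphere}: since the high rooted girth $K_3$-boosters of~\cite{DPII} have rooted degeneracy at most $2q - 2 = 4$ and $\delta(G) \geq (1 - \tfrac14 + \varepsilon)n$, a constant proportion of the candidate girth boosters rooted at $V(F)$ embed into $G$ while avoiding the previously used (bounded-degree) edges. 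This constant-density guarantee is exactly the hypothesis of the extracted theorem \cref{thm:HighGirthQuantumOmniBooster} (whose extraction from~\cite{DPII} is deferred to the appendix), which states that the girth-booster machinery still succeeds when each clique's booster is confined to a prescribed constant fraction of the possible boosters. Feeding $A_0$ together with these quanta of embeddable boosters into \cref{thm:HighGirthQuantumOmniBooster} (via \cref{cor:ExistenceOmniAbsorber}) then produces a $K_3$-omni-absorber $A$ for $X$ in $G$ with collective girth at least $g$ and $\Delta(A) \leq a\Delta\log^a\Delta$, the polylogarithmic overhead being inherited from the booster construction of~\cite{DPII}.

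For the subtreasury, I would invoke the ``does not shrink too many configurations'' guarantee that \cref{thm:HighGirthQuantumOmniBooster} also provides: for every configuration of size less than $g$ and every edge, only a bounded number of the decompositions $\cQ_A(L)$ create it. Since moreover $\Delta(A) = o(n)$ (this is where $\Delta(X) \leq n/\log^{ag}n$ is used), every edge of $K_n$ still lies in essentially the same number of the relevant configurations of ${\rm Proj}_g(A, K_n, X)$ as it did in ${\rm Treasury}^g(K_n, K_n, K_3, X)$, up to these bounded additive corrections and up to deleting the $o(n)$-degree subgraph of absorber edges. Discarding the few edges and configurations on which the resulting estimate fails leaves a subtreasury $T = (G_1, G_2, H)$ of ${\rm Proj}_g(A, K_n, X)$ whose four parameters degrade only by the stated constant factors, i.e.\ it is $(n, 6\alpha n, \tfrac{1}{4g}, 4\alpha)$-regular; here $\alpha < 1/6$ keeps $6\alpha n < n$ and $4\alpha < 1$.

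The step I expect to be the main obstacle is the extraction of \cref{thm:HighGirthQuantumOmniBooster} from~\cite{DPII}. One must reopen that (highly technical) argument, which was written for embedding girth boosters into $K_n$ uniformly at random, and verify that every such random choice still goes through when the booster is instead sampled from an arbitrary constant-density subfamily of boosters --- here the subfamily of those that embed into $G$ --- and that the resulting collective-girth and ``few shrunk configurations'' bounds are unaffected. Once that is isolated in the appendix, the proof of \cref{thm:HighGirthAbsorber} becomes a short matter of assembling \cref{thm:NWRefinedEfficientOmniAbsorber}, \cref{prop:DegeneracySphere}, and \cref{thm:HighGirthQuantumOmniBooster} and carefully tracking the regularity parameters.
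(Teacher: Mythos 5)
Your proposal is correct and matches the paper's proof essentially verbatim: the paper applies \cref{prop:DegeneracySphere} to get a constant-density family $\cS$ of $g$-spheres in $G$, applies \cref{thm:NWRefinedEfficientOmniAbsorber} (with $q=3$, $1-\tfrac{1}{2q-2}=\tfrac34$) to get $A_0$ inside $G$, observes the treasury regularity only degrades from $(n,\alpha n,\tfrac{1}{2g},\alpha)$ to $(n,2\alpha n,\tfrac{1}{2g},2\alpha)$ after removing $A_0$, and then invokes \cref{cor:ExistenceOmniAbsorber} (which packages \cref{thm:HighGirthQuantumOmniBooster} and a sampling step) to produce $A$ and the $(n,6\alpha n,\tfrac{1}{4g},4\alpha)$-regular subtreasury. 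You also correctly identify the extraction of \cref{thm:HighGirthQuantumOmniBooster} from the quantum-booster machinery of~\cite{DPII} as the genuine technical content, which the paper likewise quarantines in the appendix.
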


An attentive reader might notice that, while the absorber $A$ is guaranteed to be in $G$, the input and output treasuries of~\cref{thm:HighGirthAbsorber} are in $K_n$ and not necessarily $G$.  This provides stronger results, by forbidding all low girth configurations, with no requirement for them to be in $G$. This statement is simpler to deduce from the work of Delcourt and Postle~\cite{DPII} rather than switching to the graph $G$, but also necessary to distinguish the cliques that are forbidden due to girth restrictions from the ones forbidden because they are not in~$G$. 

We also require the following definitions and concepts related to hypergraphs. For an integer $r\ge 1$, a hypergraph $H$ is said to be \emph{$r$-bounded} if every edge of $H$ has size at most $r$ and  \emph{$r$-uniform} if every edge has size exactly $r$; for brevity, we  refer to an \emph{$r$-uniform hypergraph} as an \emph{$r$-graph}. For a hypergraph $G$ and subset $S\subseteq V(G)$, we let $G(S)$ denote the set $\{e\in G: S\subseteq e\}$. Note this differs from the standard notation $G(S)$ used to denote the related concept of a `link hypergraph' where the set $S$ is removed from every edge of $G(S)$. For an integer $r\ge 1$, we let $G^{(r)}$ denote the $r$-uniform subhypergraph of $G$ consisting of all edges of $G$ of size $r$. If $G$ is uniform, then for a vertex $v \in V(G)$, we let $d_G(v)$ denote the number of edges of $G$ containing $v$. If $G$ is not uniform, then for an integer $i\ge 1$, we thus write $d_{G^{(i)}}(v)$ for the \emph{$i$-degree of $v$ in $G$}, which is defined to be the number of edges of $G$ of size $i$ containing $v$. 
If $G$ is uniform, then \emph{maximum $i$-codegree of $G$}, denoted $\Delta_i(G)$ is the maximum of $|G(U)|$ for $U\subseteq V(G)$ with $|U|=i$. If $G$ is $r$-uniform, then the \emph{maximum degree of $G$}, denoted $\Delta(G)$ is $\Delta_{r-1}(G)$; similarly we write $\delta(G)$ for the \emph{minimum degree of $G$} which is $\delta_{r-1}(G)$. If $G$ is not uniform, then the \emph{maximum $(s,t)$-codegree of $G$}, denoted $\Delta_{t}\left(G^{(s)}\right)$ is the maximum of $G^{(s)}(U)$ for $U\in \binom{V(G)}{t}$. 

\subsection{Treasuries}

We rephrase our decomposition problems into matchings in auxiliary hypergraphs, using the Forbidden Submatchings with Reserves methodology developed by Delcourt and Postle~\cite{DP22}. To that extend, we use the following concept of {\em treasury}, that is, a triple of hypergraphs, encoding the cliques that can be used for a decomposition, and the configurations we may need to avoid (e.g. low girth configurations in the present article).

Recall that a hypergraph $G=(A,B)$ is bipartite with parts $A$ and $B$ if $V(G)=A\cup B$ and every edge of $G$ contains exactly one vertex from $A$, and that a matching of $G$ is $A$-perfect if every vertex of $A$ is in an edge of the matching. We say a hypergraph $H$ is a \emph{configuration hypergraph} for $G$ if $E(G)\subseteq V(H)$ and every edge $e$ of $H$ has size at least two and $e\cap E(G)$ is a matching of $G$. We say a matching of $G$ is \emph{$H$-avoiding} if it spans no edge of $H$.
 
\begin{definition}[Treasury]\label{def:Treasury}
Let $r,g \ge 2$ be integers. An \emph{$(r,g)$-treasury} is a tuple $T=(G_1,G_2,H)$ where $G_1$ is an $r$-uniform hypergraph and $G_2=(A,B)$ is an $r$-bounded bipartite hypergraph such that $V(G_1)\cap V(G_2)=A$, and $H$ is a $g$-bounded configuration hypergraph of $G_1\cup G_2$.
A \emph{perfect matching} of $T$ is an $H$-avoiding $(V(G_1)\cap V(G_2))$-perfect matching of $G_1\cup G_2$.
\end{definition}

For simplicity, we say that a graph $F$ is in a treasury $T=(G_1,G_2,H)$ if $F$ is an edge of $G_1$.
The main task in this article is to prove the existence of matchings in treasuries, via Forbidden Submatchings with Reserves methodologies~\cite{DP22}. To that end, the treasuries must exhibit regularity properties, whose exact definitions depend on the following concepts. Let $G$ be a hypergraph and let $H$ be a configuration hypergraph of $G$. We define the \emph{$i$-codegree} of $e\in V(G)$ and $F\in E(G)\subseteq V(H)$ with $e\notin F$ as the number of edges of $H$ of size $i$ who contain $F$ and a vertex $F_e$ incident with $e$ in $G$. The \emph{maximum $i$-codegree} of $G$ with $H$ is then the maximum $i$-codegree over all $e\in V(G)$ and $F\in E(G)\subseteq V(H)$ with $e\notin F$. 

We further define the \emph{common $2$-degree} of distinct vertices $F_1, F_2\in V(H)$ as $|\{F\in V(H): FF_1, FF_2\in E(H)\}|$. The \emph{maximum common $2$-degree} of $H$ with respect to $G$ is then the maximum of the common $2$-degree of $F_1$ and $F_2$ over all distinct pairs of vertex-disjoint edges $F_1,F_2$ of $G$.

\begin{definition}[Regular Treasury]\label{def:RegularTreasury}
Let $T=(G_1,G_2,H)$ be a treasury with $G_2=(A,B)$, and $D, \sigma\ge 1$ and $\beta,\alpha \in (0,1)$ be reals. We say that the treasury $T$ is \emph{$(D,\sigma,\beta,\alpha)$-regular} if all of the following hold:
\begin{enumerate}[label=(RT\arabic*),nolistsep,noitemsep, topsep=0pt]
    \item \textbf{Quasi-regularity.} Every vertex of $G_1$ has degree at most $D$ in $G_1$ and every vertex of $A$ has degree at least $D-\sigma$ in $G_1$,\label{item:RT_QuasiReg}
    
    \item \textbf{Reserve degrees.} Every vertex of $B$ has degree at most $D$ in $G_2$, and every vertex of $A$ has degree at least $D^{1-\alpha}$ in $G_2$,\label{item:RT_ReserveDeg}

    \item \textbf{Codegrees.} $G_1\cup G_2$ has codegrees at most $D^{1-\beta}$, and the maximum $2$-codegree of $G_1\cup G_2$ with $H$ and the maximum common $2$-degree of $H$ with respect to $G_1\cup G_2$ are both at most $D^{1-\beta}$,\label{item:RT_CoDeg}

    \item \textbf{Configuration (co)degrees.} For all $2\le s\le g$ we have $\Delta_1\left(H^{(s)}\right) \le \alpha \cdot D^{s-1}\log D$. For all $2\le t<s\le g$, $\Delta_{t}\left(H^{(s)}\right) \le D^{s-t-\beta}$.\label{item:RT_ConfigDeg}
 
\end{enumerate}
\end{definition}

For applications, it might be helpful to the reader to note that if $T$ is $(D,\sigma,\beta,\alpha)$-regular, then $T$ is also $(D,\sigma',\beta',\alpha')$-regular for all $\sigma'\ge \sigma$, $\beta' \le \beta$ and $\alpha'\ge \alpha$. We often cannot guarantee that the main treasuries we are studying are regular, but we prove that they contain regular ``subtreasuries'', using the following natural definition. We say that a treasury $(G_1',G_2',H')$ is a {\em subtreasury} of $(G_1,G_2,H)$ if $G_i'$ is a spanning subgraph of $G_i$ for each $i\in \{1,2\}$ and $H[E(G_1')\cup E(G_2')]$ is a subgraph of $H'$. The following result from~\cite{DPII} is then a natural consequence of this definition.

\begin{proposition}\label{prop:SubTreasury}
Let $T'$ be a subtreasury of a treasury $T$. If $M$ is a perfect matching of $T'$, then $M$ is a perfect matching of $T$.
\end{proposition}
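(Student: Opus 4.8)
The plan is to prove Proposition~\ref{prop:SubTreasury} by simply unwinding the definitions of \emph{subtreasury}, \emph{perfect matching of a treasury}, and \emph{$H$-avoiding}, and checking each required property in turn. Write $T=(G_1,G_2,H)$ with $G_2=(A,B)$, and let $T'=(G_1',G_2',H')$ be the subtreasury, with $G_2'=(A',B')$. First I would record the bookkeeping consequences of ``$G_i'$ is a spanning subgraph of $G_i$'': we have $V(G_i')=V(G_i)$ for $i\in\{1,2\}$, hence $A'=V(G_1')\cap V(G_2')=V(G_1)\cap V(G_2)=A$, so that a $(V(G_1')\cap V(G_2'))$-perfect matching is exactly a $(V(G_1)\cap V(G_2))$-perfect matching; and $E(G_1'\cup G_2')\subseteq E(G_1\cup G_2)$, so every matching of $G_1'\cup G_2'$ is in particular a matching of $G_1\cup G_2$.

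Next, let $M$ be a perfect matching of $T'$. By definition $M$ is an $H'$-avoiding, $A$-perfect matching of $G_1'\cup G_2'$. By the previous paragraph, $M$ is then an $A$-perfect matching of $G_1\cup G_2$, so it only remains to verify that $M$ is $H$-avoiding, i.e.\ that $M$ spans no edge of $H$. Suppose toward a contradiction that $M$ spans some edge $f\in E(H)$. Identifying edges of $G_1\cup G_2$ with the corresponding vertices of the configuration hypergraph, the elements of $f$ witnessing this all lie in $M\subseteq E(G_1'\cup G_2')$; consequently $f$ is an edge of the induced subhypergraph $H\big[E(G_1')\cup E(G_2')\big]$. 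By the definition of subtreasury, $H\big[E(G_1')\cup E(G_2')\big]$ is a subhypergraph of $H'$, so $f\in E(H')$ and hence $M$ spans an edge of $H'$ — contradicting that $M$ is $H'$-avoiding. Therefore $M$ spans no edge of $H$, so $M$ is an $H$-avoiding $(V(G_1)\cap V(G_2))$-perfect matching of $G_1\cup G_2$, i.e.\ a perfect matching of $T$.

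This is essentially a definition chase, and I do not expect any real difficulty. The one step I would be most careful about is the $H$-avoiding argument: one must check that a configuration edge $f\in E(H)$ that is spanned by $M$ is genuinely captured by the \emph{induced} subhypergraph on $E(G_1')\cup E(G_2')$. This uses two facts, both guaranteed by the hypotheses: that $M$ consists only of hyperedges of $G_1\cup G_2$ (so only edges whose relevant vertices all survive into $G_1'\cup G_2'$ can be spanned), and that $G_1',G_2'$ are \emph{spanning} subgraphs (so no vertex of $A$ that $M$ must cover is lost in passing from $T'$ to $T$). Once these are spelled out, the contradiction with $M$ being $H'$-avoiding closes the argument.
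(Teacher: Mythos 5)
Your proof is correct and is exactly the definition chase the paper has in mind; the paper states this proposition without proof, attributing it to~\cite{DPII} and describing it as a natural consequence of the definition of subtreasury. The only thing worth stating cleanly in your $H$-avoiding step is that ``$M$ spans $f$'' means $f\subseteq M$, so that $f\subseteq E(G_1')\cup E(G_2')$ and hence $f\in E(H[E(G_1')\cup E(G_2')])\subseteq E(H')$; your phrase ``the elements of $f$ witnessing this'' is slightly vague but the underlying logic is sound.
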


\subsection{Forbidden Submatchings with Reserves}

As a reminder, we use a Nibble-like approach to get a triangle-packing of $G\setminus A$ that covers all edges in $G\setminus (A\cup X)$. The remaining edges will be decomposed using properties of the absorber $A$. The use of the Forbidden Submatchings with Reserves methodology, instead of a simpler Nibble one, ensures that the resulting decomposition spans no edges from the configuration hypergraph (in particular it avoids low girth decompositions). We use the following theorem as established by Delcourt and Postle~\cite{DP22}.

\begin{thm}[Forbidden Submatchings with Reserves~\cite{DP22}]\label{thm:ForbiddenSubmatchingReserves}
For all integers $r,g \ge 2$ and real $\beta \in (0,1)$, there exist an integer $D_0\ge 0$ and real $\alpha_0 > 0$ such that following holds for all $D\ge D_{0}$ and all $\alpha\leq\alpha_0$. If $T$ is an $(r,g)$-treasury that is $(D,D^{1-\beta},\beta,\alpha)$-regular, then there exists a perfect matching of $T$. 
\end{thm}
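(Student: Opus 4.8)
\textit{Proof proposal.} The plan is to prove the theorem by the semi-random (``nibble'') method, in the spirit of R\"odl's nibble and the Pippenger--Spencer theorem, adapted so as to avoid the forbidden configurations recorded by $H$ and to keep the reserve edges of $G_2$ intact for a clean-up phase. Write $A := V(G_1) \cap V(G_2)$, so that a perfect matching of $T$ is an $H$-avoiding matching of $G_1 \cup G_2$ covering $A$. I would split the construction into two stages: \textbf{(I)} a semi-random phase that builds an $H$-avoiding matching $M_1 \subseteq G_1$ covering all but a polynomially small fraction of $A$, while keeping, for each still-uncovered vertex, a constant fraction of its reserve edges \emph{available} (disjoint from $V(M_1)$ and completing no edge of $H$ together with $M_1$); and \textbf{(II)} a finishing phase that covers the leftover $U \subseteq A$ using those reserve edges, producing the desired perfect matching $M_1 \cup M_2$.

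\textbf{Stage (I): the nibble.} Fix a small constant $\varepsilon_1 > 0$ and run $\Theta(\varepsilon_1^{-1}\log D)$ rounds. Throughout, maintain: the current $H$-avoiding matching $M$; the hypergraph $G_1^{\mathrm{live}}$ of edges of $G_1$ still addable to $M$ (vertex-disjoint from $V(M)$ and completing no $H$-edge together with $M$); and a growing set $P$ of \emph{protected} reserve edges of $G_2$. In each round, first add to $P$ a freshly sampled $\sim \varepsilon_1 D^{-\alpha}$-fraction of the reserve edges that are still available, and for each newly protected $f$ delete from $G_1^{\mathrm{live}}$ every edge whose addition to $M$ could ever complete an $H$-edge through $f$ (there are few such edges per $f$, by the configuration-degree bounds (RT4)); then activate each edge of $G_1^{\mathrm{live}}$ independently with probability $\sim \varepsilon_1/D$, keep those activated edges that are pairwise vertex-disjoint and create no $H$-edge with $M$ or with each other, add the kept edges to $M$, and finally delete from $G_1^{\mathrm{live}}$ all edges that now meet $V(M)$ or complete an $H$-edge with $M$. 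The analysis is the differential-equations method: one shows that with high probability, after $t$ units of nibble time the live degree of every vertex of $G_1$ is $(1 \pm o(1))\, D\, e^{-t}$, every codegree of $G_1^{\mathrm{live}}$ stays below $D^{1-\beta+o(1)}$, and the number of available reserve edges at every vertex of $A$ stays above $\frac12 D^{1-\alpha}$. Here the quasi-regularity and reserve-degree bounds (RT1)--(RT2) give the correct starting trajectories, while the codegree and configuration-degree bounds (RT3)--(RT4) are exactly what forces the ``extra'' deletions caused by the $H$-avoidance rule and the protection step to remain a lower-order fraction of every quantity being tracked; concentration comes from bounded-differences/Talagrand-type inequalities plus a union bound over the $\mathrm{poly}(D)\cdot|V(G_1)|$ relevant events. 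After $\Theta(\log D)$ time this yields an $H$-avoiding $M_1 \subseteq G_1$ whose uncovered set $U \subseteq A$ has $|U| \le D^{-c}|A|$ for some $c = c(\beta,r,g)>0$, with each $u \in U$ retaining at least $\frac12 D^{1-\alpha}$ protected reserve edges; each such edge is automatically disjoint from $V(M_1)$ (it meets $A$ only in $u$ and otherwise lies in $B$, which the nibble never touches) and, by the protection step, completes no $H$-edge with $M_1$.

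\textbf{Stage (II): finishing with reserves.} It remains to pick, for each $u \in U$, one available protected reserve edge $X_u \ni u$ so that the $X_u$ are pairwise vertex-disjoint and $M_1 \cup \{X_u : u \in U\}$ is still $H$-avoiding; then $M_2 := \{X_u : u \in U\}$ and $M := M_1 \cup M_2$ is a perfect matching of $T$. Choose the $X_u$ uniformly at random and independently among the $\ge \frac12 D^{1-\alpha}$ available reserve edges at $u$, and apply the Lov\'asz Local Lemma (exactly as in \cref{thm:AlonMatching}). The bad events are: ``$X_u$ and $X_{u'}$ intersect'', which by (RT3) has probability $O(D^{1-\beta})/(\frac12 D^{1-\alpha}) = O(D^{\alpha-\beta})$; and, for each $H$-edge $e$ whose constituent edges of $G_1 \cup G_2$ consist of reserve edges at a set $S \subseteq U$ together with edges of $M_1$ (these are the only $H$-edges that $M_1\cup\{X_u\}$ could span, and necessarily $|S| \ge 2$ since the cases $|S|\le 1$ were already excluded by $H$-avoidance of $M_1$ and the protection step), the event $\{X_u \in e \ \text{for all}\ u \in S\}$, of probability at most $(2D^{\alpha-1})^{|S|} \le 2D^{\alpha-1}$ because $e$ contains at most one reserve edge at each vertex of $A$. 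Using (RT4) and the polynomial smallness of $U$, each bad event has probability at most $D^{-c'}$ and is independent of all but $\mathrm{poly}(D)$ of the others, with the probability exponent beating the dependency exponent because $\alpha \le \alpha_0(\beta,r,g)$ is chosen small enough (in particular $\alpha < \beta$); so the Local Lemma produces a valid choice of the $X_u$.

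\textbf{Main obstacle.} Essentially all of the work is in Stage (I): showing that the regularity of the treasury survives $\Theta(\log D)$ rounds of nibbling \emph{while} respecting the $H$-avoidance rule and keeping reserves alive. Concretely, one must verify that, each round, the edges deleted from $G_1^{\mathrm{live}}$ because they would complete a forbidden configuration (or would kill a newly protected reserve edge) amount to only an $o(1)$-fraction of every vertex's current live degree, and that no codegree or configuration degree is inflated past its budgeted value; it is precisely this bookkeeping that dictates the exponents in (RT3)--(RT4), the $\log D$ factor, and the requirement that $\alpha$ be small relative to $\beta$ with $D$ large. Once that control is in hand, the remainder --- the ODE heuristics with their concentration, and the Local-Lemma clean-up --- is routine. (This is the content of Delcourt--Postle~\cite{DP22}, which we use here as a black box; the above indicates how one would reprove it.)
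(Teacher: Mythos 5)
There is nothing in this paper to compare your argument against: \cref{thm:ForbiddenSubmatchingReserves} is not proved here at all, but imported verbatim from Delcourt and Postle~\cite{DP22} and used as a black box (the paper even remarks that it deliberately avoids reworking that machinery). Since your proposal likewise ends by invoking~\cite{DP22} as a black box, your treatment of the statement coincides with the paper's; the preceding sketch is a plausible outline of the semi-random-with-reserves strategy that underlies that reference (nibble in $G_1$ respecting $H$-avoidance, reserves in $G_2$ kept alive for a final completion step in the spirit of \cref{thm:AlonMatching}).

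If, however, the sketch were meant to stand as an independent proof, it has a genuine gap exactly where you flag it: Stage~(I) is asserted, not proved. The claim that the live degrees follow $(1\pm o(1))De^{-t}$ for $\Theta(\log D)$ rounds, that codegrees stay below $D^{1-\beta+o(1)}$, and that each uncovered vertex retains $\tfrac12 D^{1-\alpha}$ available reserve edges is precisely the content of the theorem's difficulty; the conditions \ref{item:RT_CoDeg}--\ref{item:RT_ConfigDeg} do not by themselves yield this without a full trajectory-plus-concentration analysis, and the interaction between the $H$-avoidance deletions, the protection step, and the configuration (co)degree budgets is where the exponents and the smallness of $\alpha$ are actually earned. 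There is also a soft spot in Stage~(II): you assert that any $H$-edge spanned by $M_1\cup\{X_u\}$ must use reserve edges at \emph{at least two} uncovered vertices because the $|S|\le 1$ cases were ``excluded by the protection step,'' but the protection step as described only prunes $G_1$-edges added \emph{after} a reserve edge is protected; ruling out configurations consisting of one protected reserve edge together with edges placed into $M$ earlier needs an explicit availability/update rule and accompanying degree bookkeeping, which the sketch does not supply. None of this affects the paper, which (like you, ultimately) relies on~\cite{DP22} for the complete argument.
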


\subsection{High Girth Omni-Absorber}

Our main treasury of interest, whose matchings encode high girth packings of a given graph, is based on the following auxiliary hypergraphs. 

\begin{definition}[Design Hypergraph]\label{def:DesignHypergraph}
For graphs $F$ and $G$, the \emph{$F$-design hypergraph of $G$}, denoted ${\rm Design}(G,F)$ is the hypergraph $\Gamma$ with $V(\Gamma)=E(G)$ and $E(\Gamma) = \{F'\subseteq E(G): F' \text{ is isomorphic to } F\}$.
\end{definition}

\begin{definition}[Reserve Hypergraph]\label{def:ReserveHypergraph}
Let $F$ be a hypergraph. If $G$ is a hypergraph and $U,W$ are disjoint subsets of $E(G)$, then the \emph{$F$-design reserve hypergraph of $G$ from $U$ to $W$}, denoted ${\rm Reserve}(G,F,U,W)$,  is the bipartite hypergraph $\mathbf{R}=(U,W)$ with $V(\mathbf{R}):=U\cup W$ and $$E(\mathbf{R}) := \{S\subseteq U\cup W: S \text{ is isomorphic to } F,~|S\cap U|=1\}.$$
\end{definition}

Observe that, in particular, each edge of ${\rm Reserve}(G,F,U,W)$ is a copy of $F$ in $G$ with exactly one edge in $U$, and the other edges in $W$. The configuration hypergraph in our treasuries will then be used to avoid low girth packings. Recall that the \emph{girth} of a $K_q$-packing is the smallest integer $g \ge 2$ for which it contains a $(g(q - 2) + 2, g)$-configuration. For $i\geq 3$, an \emph{\Erdos{} $(i(q-2)+2,i)$-configuration} is a set of $i$ copies of $K_q$ on $i(q-2)+2$ vertices with girth exactly $i$, hence containing no $(j+2,j)$-configuration for $3\leq j<i$.

\begin{definition}[Girth Configuration Hypergraph]\label{def:GirthHypergraph}
Let $q,g\geq 3$ be integers. Let $G$ be a graph and let $\mathbf{D}:={\rm Design}(G,K_q)$. The \emph{girth-$g$ $K_q$-configuration hypergraph} of $G$, denoted ${\rm Girth}^g(G,K_q)$ is the configuration hypergraph $H$ of $\mathbf{D}$ with 
\begin{align*}
V(H)&:=E(\mathbf{D}),\\
E(H) &:= \{S\subseteq E(\mathbf{D}): S \text{ is an $(i(q-2)+2,i)$ \Erdos-configuration of } G \text{ for some $3\le i\leq g$}\}.   
\end{align*}
\end{definition}

In particular, ${\rm Girth}^g(G,K_q)$ contains all (minimal) $K_q$-packings of $G$ with girth at most $g$. Our main treasury of interest is then the following. 

\begin{definition}[Design treasury]\label{def:DesignTreasury}
Let $q,g\geq 3$ be integers. Let $G$ be a graph and let $X\subseteq G'\subseteq G$. The \emph{girth-$g$ $K_q$-design treasury of $G$ with reserve $X$}, denoted ${\rm Treasury}^g(G,G',K_q,X)$, is the treasury 
\[T:=\left({\rm Design}(G'\setminus X,K_q),~{\rm Reserve}(G,K_q,G'\setminus X, X),~{\rm Girth}^g(G,K_q)\right).\]
\end{definition}

As explained, we use a Forbidden Submatchings with Reserves methodology to find a matching in a design treasury, corresponding to a high girth packing of $G\setminus A$ whose non-covered edges are in the reserve $X$. We then use the properties of the $K_3$-omni-absorber $A$ to find a high girth packing of the ``left-overs''. However, this is not sufficient to ensure that we obtain a high girth decomposition of $G$. We need to ensure that these two packings together are still high girth, that is that we do not create a small cycle combining cliques in both packings. To that end, we further extend the forbidden configurations in our base treasury, to include any packing of $G\setminus A$ that extend to a low girth decomposition with the absorber $A$, using the following definitions.

\begin{definition}[Common projection]\label{def:CommonProjection}
Let $T=(G_1,G_2,H)$ be a treasury. Let $\mathcal{M}=\{M_1,\ldots,M_k\}$ be such that for all $i\in [k]$, we have $M_i\subseteq V(H)$ and $M_i\cap (G_1\cup G_2)$ is a matching of $G_1\cup G_2$. The \emph{common projection of $T$ by $\mathcal{M}$}, denoted $T\perp \mathcal{M}$ is the treasury  $T'=(G_1',G_2',H')$ where for all $j\in \{1,2\}$, $V(G_j'):=V(G_j)$, and
\[V(H') := V(H)\setminus \{ F\in V(H): \exists Z\in E(H),~M_i\in \mc{M} \text{ such that }~F\not\in M_i \text{ and } F\in Z \subseteq M_i\cup\{F\}\},\]

and for all $j\in \{1,2\}$, $E(G_j') = E(G_j)\cap V(H')$ and
\[E(H'):= \{ P \subseteq V(H'): |P|\geq 2,~\exists Z\in E(H),~M_i\in \mc{M} \text{ such that } P\cap M_i=\emptyset\text{ and }P\subseteq Z\subseteq P\cup M_i\}.\]
\end{definition}

\begin{definition}[Girth-$g$ Projection of Omni-Absorber]\label{def:OmniAbsorberProj}
Let $G$ be a graph, let $X$ be a subgraph of $G$, and let $A\subseteq G$ be a $K_q$-omni-absorber of $X$ with decomposition function $\mathcal{Q}_A$. We define the \emph{girth-$g$ projection treasury} of $A$ on to $G$ and $X$ as:
$${\rm Proj}_g(A,G,X):= {\rm Treasury}^g(G,~G\setminus A,~K_q,~X) \perp \mc{M}(A)$$
where $$\mathcal{M}(A) := \{ \mathcal{Q}_A(L) : L\subseteq X \text{ is $K_q$-divisible}\}.$$
\end{definition}

Observe that this notion of projection is exactly what what we mean in introduction when we state that we need to find an absorber that {\em does not shrink too many configurations}. Indeed, when projecting an absorber, edges of the configuration hypergraph are ``downsized'' to smaller configurations that could be completed back into their original (forbidden) form using a triangle-packing from the absorber. If the projection were to shrink too many configurations, it would then be impossible to avoid all of these small bad configurations when building a high girth decomposition, and our methodology would fail. The following proposition from~\cite{DPII}, naturally builds a high girth decomposition using a perfect matching in the girth projection of an omni-absorber.

\begin{proposition}\label{prop:FindSteiner}
Let $q,g\ge 3$ be integers. Let $G$ be a $K_q$-divisible graph, let $X$ be a subgraph of $G$ and let $A\subseteq G$ be a $K_q$-omni-absorber of $X$ of collective girth at least $g$. If there exists a perfect matching of ${\rm Proj}_g(A,G,X)$, then there exists a $K_q$-decomposition of $G$ of girth at least $g$.
\end{proposition}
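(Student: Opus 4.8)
The plan is to convert the hypothesized perfect matching $M$ into a high-girth $K_q$-packing of $G\setminus A$, patch the uncovered edges using the omni-absorber, and then use the definition of the common projection to rule out short configurations in the resulting decomposition.

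First I would unpack $M$. Write ${\rm Proj}_g(A,G,X)=(G_1',G_2',H')$, which by construction is a subtreasury of $T:={\rm Treasury}^g(G,G\setminus A,K_q,X)=(G_1,G_2,H)$ with $V(G_j')=V(G_j)$, $E(G_j')=E(G_j)\cap V(H')$ and $H={\rm Girth}^g(G,K_q)$; so by~\cref{prop:SubTreasury}, $M$ is also a perfect matching of $T$. Every hyperedge of $M$ is an edge of $G_1'\subseteq{\rm Design}(G\setminus(A\cup X),K_q)$ or of $G_2'\subseteq{\rm Reserve}(G,K_q,G\setminus(A\cup X),X)$, hence a copy of $K_q$ all of whose edges lie in $E(G\setminus(A\cup X))\cup E(X)=E(G\setminus A)$; let $\mathcal{P}$ denote this set of copies. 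Vertex-disjointness of $M$ makes $\mathcal{P}$ a $K_q$-packing of $G\setminus A$, and the $(V(G_1)\cap V(G_2))$-perfectness of $M$, together with $V(G_1)\cap V(G_2)=E(G\setminus(A\cup X))$, makes $\mathcal{P}$ cover every edge of $G\setminus(A\cup X)$.

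Next I would absorb the leftover. Let $L$ be the subgraph of $G$ with $E(L)=E(G\setminus A)\setminus\bigcup_{F\in\mathcal{P}}E(F)$. Since $\mathcal{P}$ covers all of $G\setminus(A\cup X)$, $L$ is a subgraph of $X$. Since $G$ is $K_q$-divisible, $A$ is $K_q$-divisible (it is decomposed by $\mathcal{Q}_A(\emptyset)$), $\bigcup_{F\in\mathcal{P}}F$ is $K_q$-divisible (it is decomposed by $\mathcal{P}$), and $E(G)=E(A)\sqcup\bigcup_{F\in\mathcal{P}}E(F)\sqcup E(L)$, subtracting the corresponding edge-count and degree congruences shows $L$ is $K_q$-divisible; hence $\mathcal{Q}_A(L)$ is a $K_q$-decomposition of $A\cup L$. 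Because $E(A\cup L)$ and $\bigcup_{F\in\mathcal{P}}E(F)$ are disjoint with union $E(G)$, and no copy of $K_q$ can lie in both $\mathcal{P}$ and $\mathcal{Q}_A(L)$ (such a copy would be contained in $L$, contradicting $E(L)\cap\bigcup_{F\in\mathcal{P}}E(F)=\emptyset$), the union $\mathcal{D}:=\mathcal{P}\cup\mathcal{Q}_A(L)$ is a $K_q$-decomposition of $G$.

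The main part, and the step I expect to need the most care, is showing $\mathcal{D}$ has girth at least $g$. I would argue by contradiction: if $\mathcal{D}$ has girth at most $g-1$, take a minimal configuration $\mathcal{C}\subseteq\mathcal{D}$ witnessing this, so $\mathcal{C}$ is an $\Erdos$ $(i(q-2)+2,i)$-configuration with $3\le i\le g-1$, and thus $\mathcal{C}\in E(H)$. Set $P:=\mathcal{C}\cap\mathcal{P}=\mathcal{C}\setminus\mathcal{Q}_A(L)$ and $M_0:=\mathcal{Q}_A(L)\in\mathcal{M}(A)$ (valid since $L\subseteq X$ is $K_q$-divisible). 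If $P=\emptyset$ then $\mathcal{C}\subseteq\mathcal{Q}_A(L)$, contradicting that $A$ has collective girth at least $g$. If $P=\{F\}$ then $F\notin M_0$ while $F\in\mathcal{C}\subseteq M_0\cup\{F\}$, so by~\cref{def:CommonProjection} $F\notin V(H')$; but $F\in\mathcal{P}$ is an edge of $G_1'$ or $G_2'$ and therefore lies in $V(H')$, a contradiction. If $|P|\ge 2$ then $P\cap M_0=\emptyset$, $P\subseteq\mathcal{C}\subseteq P\cup M_0$, and $P\subseteq\mathcal{P}\subseteq V(H')$, so $P\in E(H')$; moreover $P\subseteq\mathcal{P}=M$ with $P\subseteq E(G_1')\cup E(G_2')$, so $M$ spans the edge $P$ of $H'$, contradicting that $M$ is $H'$-avoiding. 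Every case is impossible, so $\mathcal{D}$ has girth at least $g$. The genuine (conceptual, not computational) obstacle is the $|P|=1$ case: one must recognize that the vertex deletion in the definition of the common projection is engineered precisely to discard the ``dangerous'' cliques $F$ that complete to a short $\Erdos$ configuration via a single absorber decomposition $\mathcal{Q}_A(L)$; one must also check that the leftover $L$ inherits $K_q$-divisibility so that $\mathcal{Q}_A(L)$ is defined. Everything else is routine tracking of the treasury and projection definitions.
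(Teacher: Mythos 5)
The paper does not prove \cref{prop:FindSteiner} itself --- the sentence immediately preceding it reads ``The following proposition from~\cite{DPII}, naturally builds a high girth decomposition\ldots'' --- so there is no in-paper argument to compare your proof against, but your proof is correct and is surely the intended one. The structure is right: decode $M$ into a $K_q$-packing $\mathcal{P}$ of $G\setminus A$ covering $G\setminus(A\cup X)$ via $V(G_1)\cap V(G_2)=E(G\setminus(A\cup X))$; verify that the leftover $L\subseteq X$ inherits $K_q$-divisibility (so $\mathcal{Q}_A(L)\in\mathcal{M}(A)$ is defined) and that $\mathcal{P}\cap\mathcal{Q}_A(L)=\emptyset$; and rule out a short configuration $\mathcal{C}\subseteq\mathcal{D}$ by cases on $P:=\mathcal{C}\cap\mathcal{P}$, using collective girth for $|P|=0$, the vertex-deletion clause of \cref{def:CommonProjection} for $|P|=1$, and $H'$-avoidance of $M$ for $|P|\ge 2$. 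You also correctly identify that the deletion of vertices from $V(H')$ in the common projection exists precisely to kill the $|P|=1$ case.

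One sentence you should not present as a restatement of the definitions is ``take a minimal configuration $\mathcal{C}\subseteq\mathcal{D}$ witnessing this, so $\mathcal{C}$ is an \Erdos\ $(i(q-2)+2,i)$-configuration.'' By \cref{def:GirthHypergraph}, $E(H)$ records only configurations on \emph{exactly} $i(q-2)+2$ vertices with girth exactly $i$, so you owe a short argument that a minimal witness attains that vertex count. The fact is true: writing $k_w$ for the number of cliques of $\mathcal{C}$ through a vertex $w$ and $e:=\sum_w\max(0,k_w-1)$, the span of $\mathcal{C}$ equals $iq-e$, so the hypothesis gives $e\ge 2i-2$; if $e\ge 2i-1$, then minimality of $i$ forces removing any single clique $F$ to leave at most $2i-5$ in $e$, so $F$ meets $\{w:k_w\ge 2\}$ in at least $e-2i+5\ge 4$ vertices and hence $\sum_{w:k_w\ge 2}k_w\ge i(e-2i+5)$, while also $\sum_{w:k_w\ge 2}k_w=|\{w:k_w\ge 2\}|+e\le 2e$ since each shared $w$ contributes at least $1$ to $e$; combining gives $e\le 2i-1-\tfrac{2}{i-2}<2i-1$, a contradiction, so $e=2i-2$ and the span is exactly $i(q-2)+2$. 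Either include such an argument or cite it; everything else in your write-up is complete and correct.
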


At this stage, we usually state an {\em omni-absorber theorem}, explaining how we may find a high girth omni-absorber in the host graph $G$, with the desired regular projection treasury. The standard proof then calls for the concepts of {\em boosters}, {\em omni-boosters}, and {\em quantum boosters}, with respective omni-boosters and quantum-boosters theorem. However this strategy requires the introduction of many technical tools and propositions, following very closely to proof of the high girth conjecture as laid out by Delcourt and Postle~\cite{DPII}. For simplicity and clarity, we decided to first introduce boosters in the next subsection, explain how we can embed most of them in a graph $G$ with linear minimum degree, and finally show how the proof of~\cite{DPII} is marginally impacted by these changes; additional details are provided in~\cref{sec:appBoosters}.

\subsection{Girth Boosters}

A $K_q$-booster is a graph with the desirable property that it has two disjoint $K_q$-decompositions. To obtain omni-absorbers with large collective girth, we view the omni-absorber $A$ from~\cref{thm:NWRefinedEfficientOmniAbsorber} as a template, and we embed boosters for each clique in the decomposition family $\cF$ of $A$, increasing the collective girth of the absorbers. In practice, we use boosters to boost one fixed copy $R$ in $\mc{F}$ at a time. To that extend, we define a rooted booster as follows.

\begin{definition}[Rooted Booster]\label{def:rootedbooster}
A \emph{rooted $K_q$-booster} rooted at $R\cong K_q$ is a graph $B$ that is edge-disjoint from $R$ along with two disjoint $K_q$-packings $\mathcal{B}_{{\rm off}},\mathcal{B}_{{\rm on}}$ where $\mathcal{B}_{{\rm off}}$ is a $K_q$-decomposition of $B$ and $\mathcal{B}_{{\rm on}}$ is a $K_q$-decomposition of $B\cup R$ with $R\not\in \mathcal{B}_{{\rm on}}$.
\end{definition}

It follows from this definition that  $\cBon$ and $\cBoff\cup \{R\}$ are two $K_q$-decompositions of $B\cup R$. If $P$ is a $K_q$-packing of a graph $G$ that uses a copy $R$ of $K_q$, we can replace $R$ by $\cBon$ to obtain a $K_q$-packing of $B\cup P$; alternatively, if $R$ is not in the packing $P$, we simply use $\cBoff$ to decompose the edges of $B$. Our goal is then to build boosters whose decomposition families have high girth. To that end, we define the notion of the rooted girth of a booster as follows.

\begin{definition}[Rooted Girth]\label{def:RootedGirth}
Let $\cB$ be a $K_q$-packing of $G$. For a vertex set $R\subseteq V(G)$, we define the \emph{rooted girth} of $\cB$ at $R$ as the smallest integer $g\geq 1$ such that there exists a subset $\cB'\subseteq \cB$  with $|\cB'|=g$ and $|V(\bigcup \cB')\setminus R| < g$. Similarly, if $B$ is a rooted booster rooted at $R$, then we define the \emph{rooted girth} of $B$ as the minimum of the girth of $\cBon$, the girth of $\cBoff\cup \{R\}$, and the rooted girth of $\cBon$ at $V(R)$.
\end{definition}

In order to embed sufficiently many rooted-boosters in the host graph $G$, we use the standard notion of rooted degeneracy, see~\cref{def:RootedDegeneracy}. We study the specific case of some $K_3$-boosters called $g$-spheres, with rooted degeneracy $4$, introduced by Kwan, Sah, Sawhney, and Simkin~\cite{KSSS2024STS,KSSS2023substructures}. For $q\geq 3$, $K_q$-absorbers with small rooted degeneracy have been studied by Barber, K\"uhn, Lo, and Osthus \cite{BKLO16} when proving that $K_q$-decompositions of $K_q$-divisible graphs of large minimum degree exist; they (\cite[Corollary 8.10, Lemma 8.11, Lemma 12.3]{BKLO16}) proved that $K_q$-absorbers of rooted degeneracy at most $3q-3$ exist, and Delcourt, Kelly, and Postle~\cite{DKPIII} later proved the existence of $K_q$-absorbers of rooted degeneracy at most $2q-2$.

\begin{definition}[$g$-spheres]\label{def:Sphere}
    For $g\geq 2$, and a given copy $R$ of $K_3$, we define a $g$-sphere rooted at $R$ to be the following rooted $K_3$-booster $(B,\cBon,\cBoff,R)$. Denote $V(R)$ by $\{v,b_1,b_{2g}\}$. Add $2g-1$ vertices $u,b_2,b_3,\ldots,b_{2g-1}$, and let $B$ be the graph with $V(B)=\{u,v,b_1,\ldots,b_{2g}\}$ adding the following edges,
    \[\set{vb_j\colon j\in\{2,\ldots,2g-1\}},\quad \set{ub_j\colon j\in\{1,\ldots,2g\}},\quad\set{b_jb_{j+1}\colon j\in\{1,\ldots,2g-1\}},\quad\set{b_{2g}b_ 1}.\]    
    We let $\mathcal{B}_{{\rm off}}$, respectively $\mathcal{B}_{{\rm on}}$, be the $K_3$-decomposition of $B$, respectively $B\cup R$, defined by
    \begin{align*}
        \cBon  &= \{(vb_1b_2),(ub_2b_3),(vb_3b_4),\ldots,(vb_{2g-1}b_{2g}),(ub_{2g}b_{1})\},\\
        \cBoff &= \{(ub_1b_2),(vb_2b_3),(ub_3b_4),\ldots,(ub_{2g-1}b_{2g})\}.
    \end{align*}
\end{definition}

We refer the reader to~\cref{fig:TriangleBooster} for an illustration of a $3$-sphere. 
\begin{figure}
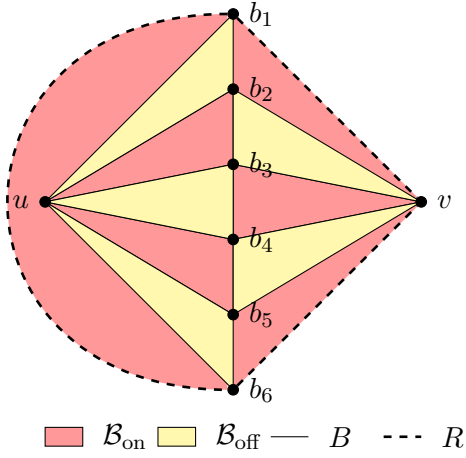

    \centering
    \TriangleBooster{.5}
    \caption{A $3$-sphere rooted at $R=(vb_1b_6)$}
    \label{fig:TriangleBooster}
\end{figure}  
We will use the following observations.

\begin{remark}\label{rem:countingSpheres}
    For every integer $g\geq 2$, there exists $C_g>0$ such that for any fixed copy $R$ of $K_3$, there are at most $C_g\cdot n^{2g-1}$ copies of a $g$-sphere rooted at $R$ in $K_n$. 
\end{remark}

\begin{remark}\label{rem:GirthSpheres}
    For every integer $g\geq 2$, it zfrom~\cite[Lemmas~4.7~and~4.8]{KSSS2024STS} that the rooted-girth of a $g$-sphere is $2g$, and from~\cite[Section~5]{BKLO16} that the rooted degeneracy of a $g$-sphere is $4$.
\end{remark}

The following proposition is then an immediate consequence of~\cref{lemma:DegeneracyEmbedding}.

\begin{proposition}\label{prop:DegeneracySphere}
    For every integer $g\geq 2$ and real $\varepsilon>0$, there exists $c>0$ such that the following holds for every integer $n$ large enough. For every graph $G\subseteq K_n$ with minimum degree at least $(3/4+\varepsilon)n$ and for every root $R\cong K_3$ in $K_n$, there are at least $c n^{2g-1}$ $g$-spheres rooted at~$R$ in~$G$.
\end{proposition}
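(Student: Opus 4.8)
The plan is to deduce this directly from the degeneracy-embedding lemma, \cref{lemma:DegeneracyEmbedding}, taking the degeneracy parameter there to be $d=4$. Since decreasing $\varepsilon$ only weakens the minimum-degree hypothesis, I may assume $\varepsilon\in(0,1/4)$, so that $\delta(G)\ge(3/4+\varepsilon)n=(1-1/4+\varepsilon)n$ is exactly the hypothesis of \cref{lemma:DegeneracyEmbedding} with $d=4$. Let $\gamma\in(0,1)$ be the constant that lemma provides for this $\varepsilon$ and $d=4$; I claim that $c:=\gamma^{2g-1}$ works.

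Fix a root $R\cong K_3$ in $K_n$, write $V(R)=\{v,b_1,b_{2g}\}$ (so that $V(R)\subseteq V(G)$, as $G$ is a spanning subgraph of $K_n$), and let $(B,\cBon,\cBoff,R)$ be a $g$-sphere rooted at $R$ as in \cref{def:Sphere}; set $W:=B$, so that $V(W)=\{u,v,b_1,\dots,b_{2g}\}$ and $|V(W)\setminus V(R)|=2g-1$. By \cref{def:rootedbooster}, $W$ is edge-disjoint from $R$, and in particular $W$ contains no edge among the three vertices of $V(R)$. Hence, letting $H$ be the edgeless graph with vertex set $V(R)$, we have that $H$ is a subgraph of $G$ and of $W$, that $W$ is a supergraph of $H$, and — by \cref{rem:GirthSpheres} — that the degeneracy of $W$ rooted at $V(H)=V(R)$ is at most $4=d$. (Using the \emph{edgeless} root $H$ instead of $R$ itself is what makes the argument go through when $R$ is not a subgraph of $G$, which is precisely why the statement only requires $R$ to be a triangle of $K_n$.) Thus \cref{lemma:DegeneracyEmbedding} yields at least $(\gamma n)^{|V(W)\setminus V(H)|}=(\gamma n)^{2g-1}$ embeddings of $W$ into $G$ that fix $V(R)$ pointwise.

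It then remains to observe that each such embedding $\phi$ determines a $g$-sphere rooted at $R$ lying inside $G$: its booster graph is $\phi(W)$, which is a subgraph of $G$ since $\phi$ preserves edges and is edge-disjoint from $R$ since $\phi$ is injective and fixes $V(R)$; and its two $K_3$-decompositions are $\phi(\cBon)$ and $\phi(\cBoff)$. Since such a $g$-sphere is determined by the images of the non-root vertices $u,b_2,\dots,b_{2g-1}$, distinct embeddings give distinct $g$-spheres. Therefore $G$ contains at least $(\gamma n)^{2g-1}\ge c\,n^{2g-1}$ $g$-spheres rooted at $R$, which is the desired bound.

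I do not expect a genuine obstacle here: as the surrounding text signals, the whole argument is bookkeeping on top of \cref{lemma:DegeneracyEmbedding}. The only two points that need a little care are (i) replacing $R$ by the edgeless root $H$ in order to accommodate roots $R$ that are not subgraphs of $G$, and (ii) invoking the rooted-degeneracy bound $4$ for $g$-spheres from \cref{rem:GirthSpheres} — which one can check directly by ordering the non-root vertices of $W$ as $u,b_2,b_3,\dots,b_{2g-1}$, each of which then has at most four earlier neighbours in $W$.
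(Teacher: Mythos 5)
Your proof is correct and is essentially the paper's proof: the paper only says the proposition is an ``immediate consequence of Lemma~\ref{lemma:DegeneracyEmbedding}'', and you spell out exactly the bookkeeping that makes that true, including the pertinent observation that one must apply the lemma with the \emph{edgeless} root $H$ on $V(R)$ rather than with $R$ itself, since $R$ is only assumed to lie in $K_n$ (this is possible precisely because the $g$-sphere, being a rooted booster, contains no edge of $R$). One small caveat for completeness: ``distinct embeddings give distinct $g$-spheres'' relies on the $g$-sphere having no nontrivial automorphism fixing $V(R)$ pointwise (which indeed holds, as $u$ is forced and then $b_2,\dots,b_{2g-1}$ are forced along the cycle), but even without that observation you could absorb a factor of $(2g-1)!$ into $c$.
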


In order to prove~\cref{thm:GenralisationMain2q}, the generalization of our main results to larger cliques, we require $K_q$-boosters with high rooted girth and low rooted degeneracy, which we extract from~\cite{DPII}. 

\begin{lem}\label{lem:DegeneracyBoosterGeneralQ}
For every integers $q\geq 3$ and $g\geq 3$ be integers, there exists a rooted $K_q$-booster with rooted girth at least $g$ and rooted degeneracy $2q-2$.     
\end{lem}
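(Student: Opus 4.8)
The plan is as follows. The existence of a rooted $K_q$-booster of rooted girth at least $g$ is already established inside the proof of the High Girth Existence Conjecture by Delcourt and Postle~\cite{DPII} (it generalizes the $g$-sphere of \cref{def:Sphere}, which handles $q=3$), so the only genuinely new content of \cref{lem:DegeneracyBoosterGeneralQ} is the bound $2q-2$ on the rooted degeneracy, and the whole proof reduces to exhibiting a good ordering of the non-root vertices of that booster (or of a mild variant of it). First I would recall, by quoting the relevant construction lemma of~\cite{DPII}, the structure of the booster $(B,\cBon,\cBoff,R)$: up to relabelling, $B\cup R$ is a union of copies of $K_q$ that splits into a constant number of \emph{hub} cliques, each meeting $R$ in $q-1$ vertices, together with a constant number of \emph{chains}, where consecutive cliques of a chain overlap in exactly $q-1$ vertices, each clique of a chain introduces exactly one new vertex, and the chains have length at least $g$ (this length is precisely what forces the rooted girth up to $g$, just as lengthening the cycle $b_1\cdots b_{2g}$ does for the $g$-sphere, in which $u,v$ play the role of the hubs and $R=(v,b_1,b_{2g})$).

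Given this data, I would order $V(B)\setminus V(R)$ as follows: (i) place first the constantly many vertices lying in hub cliques but not in $R$, so that such a vertex has at most $q-1$ back-neighbours inside its own hub clique and at most $q-1$ further back-neighbours among $V(R)$ and the earlier hub vertices, hence at most $2q-2$ in total; (ii) then traverse each chain in order, revealing the new vertex of the $i$-th clique of a chain only after all vertices it shares with the $(i-1)$-st clique and all hub vertices it lies with — so it has at most $q-1$ back-neighbours from its ``backward'' clique and at most $q-1$ from any ``forward'' clique already touched, at most $2q-2$ in total (and generically only $q-1$); (iii) finally the ``closing'' vertices, where a chain returns to $R$, are revealed last along their chain, and it is exactly at these vertices that the value $2q-2$ is attained, mirroring the fact that $b_{2g-1}$ has back-neighbours $\{u,v,b_{2g-2},b_{2g}\}$ when $q=3$. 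Running through the explicit clique list of the construction to confirm that no vertex ever exceeds $2q-2$ back-neighbours is then the routine part of the argument.

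The main obstacle is that a hub vertex is adjacent to \emph{all} the chain vertices, so it must be placed early (while it still has few back-neighbours), yet the closing vertices sit adjacent to $R$ and are simultaneously adjacent to several hub vertices, several root vertices, and a chain predecessor; one has to check that the particular construction never forces more than $q-1$ hub-or-root back-neighbours together with $q-1$ chain back-neighbours at any single vertex. If the booster of~\cite{DPII} is not already arranged this way (for instance if it uses more than $q-1$ mutually joined hubs), I would instead build the booster from scratch by gluing the rooted-degeneracy-$(2q-2)$ gadgets underlying \cref{thm:AbsorbersSmallDegeneracy} (due to Delcourt, Kelly, and Postle~\cite{DKPIII}) along cliques of $q-1$ vertices — such gluing preserves the degeneracy bound, since each new gadget is attached along a $(q-1)$-clique — and take the chains long enough (length at least $g$) to force rooted girth at least $g$; I would then remark, as noted elsewhere in the excerpt, that $2q-2$ is best possible for such boosters.
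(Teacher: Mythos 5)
Your high-level instinct is right — the two exceptional (``hub'') vertices of each copy must be placed first, after which the bound $2q-2$ should be immediate — but the structural picture you work from does not match the booster that \cite{DPII} actually builds, and this invalidates the part of your argument you call routine. The \cite{DPII} booster is \emph{not} ``hub cliques plus chains of $K_q$'s overlapping in $q-1$ vertices, each introducing one new vertex.'' It is obtained from a $(q-1)$-regular bipartite graph $H$ of girth $\ge g+1$: the non-exceptional vertices of $B$ are the \emph{edges} of $H$; each vertex $w$ of $H$ turns the $q-1$ edges incident to $w$ into a $K_{q-1}$; and two new exceptional vertices $v_1,v_2$ are joined to everything. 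Two cliques $Q_1\cup\{v_1\}$ and $Q_2\cup\{v_2\}$ of the two decompositions overlap in at most one non-exceptional vertex (corresponding to the single edge $w_1w_2$ of $H$), not in $q-1$ vertices, so there is no ``chain'' to traverse, and the careful bookkeeping of ``closing vertices hitting $q-1$ hub-or-root back-neighbours plus $q-1$ chain back-neighbours'' does not correspond to anything in the construction.

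What makes the paper's proof short is an observation that renders any such traversal unnecessary: because $H$ is $(q-1)$-regular and bipartite with girth $\ge 4$, the $\cB'_1$-clique and the $\cB'_2$-clique through a given non-exceptional vertex $u$ share only $u$, so \emph{every} non-exceptional vertex of $B$ has degree exactly $(q-2)+(q-2)+2 = 2q-2$. Hence once the (at most $2q-2$, across the $\le q-1$ glued copies) exceptional vertices are placed first — each seeing at most the $q$ vertices of the root $S'$ — any ordering of the remaining vertices automatically has back-degree $\le 2q-2$. Your fallback plan is also problematic: gluing rooted-degeneracy-$(2q-2)$ \emph{absorbers} from \cite{DKPIII} along $(q-1)$-cliques gives you neither a booster (you need two \emph{disjoint} $K_q$-decompositions, which is not what an absorber provides) nor any control on rooted girth, since those gadgets were not designed with configuration-avoidance in mind. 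So while you have identified the right ordering strategy for the exceptional vertices, the main body of your argument rests on a misreading of the construction and the escape hatch does not repair it.
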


\begin{proof}
    We briefly recall the construction of girth-boosters from~\cite{DPII} (see Lemma~3.4 and the proof of Theorem 1.16). Let $H$ be a $(q-1)$-regular bipartite graph with parts $(\cB'_1,\cB'_2)$ and girth at least $g+1$ (e.g., see~\cite{wormald1999models} showing that the bipartite regular random graph satisfies this property with positive probability). The graph $H$ induces a $K_{q-1}^1$-booster, that is, a $1$-uniform hypergraph $B'$ with $V(B')=E(H)$, together with two disjoint partitions $(\cB'_1,\cB'_2)$ of $V(B')$ into sets of $q-1$ vertices. Let $B$ be the graph defined by
    \[V(B):= V(B')\cup \{v_1,v_2\},\text{ and }\qquad E(B):= \bigg\{ \{v_i,u\}: u\in V(B'),~i\in [2]\bigg\}~\cup \bigcup_{Q\in \mathcal{B}_1'\cup\mathcal{B}_2'} \binom{Q}{2}.\]

    We refer to $\{v_1,v_2\}$ as the \emph{exceptional vertices of $B$}. Observe that $B$ is a direct generalization of $g$-spheres, obtained by taking $H$ to be a $2$-regular bipartite graph with large girth, hence a large enough cycle. Since $H$ has girth at least $4$, we have that $\bigcup_{Q\in\mathcal{B}_1'} \binom{Q}{2}$ is disjoint from $\bigcup_{Q\in \mathcal{B}_2'} \binom{Q}{2}$. Therefore every vertex $u\in V(B')$ has exactly $2q-2$ neighbors in $B$, including $\{v_1,v_2\}$. Therefore, for any copy $S$ of $K_q$ whose vertices are in $B'$, we have that $B\setminus E(S)$ has rooted degeneracy at $S$ at most $2q-2$.
       
    Delcourt and Postle~\cite[Proof~of~Theorem~1.16]{DPII} proved that there exists a copy $S'$ of $K_q$ whose vertices are in $B'$ such that there exist at most $q-1$ copies $B_i$ of $B$ with $B_1=B$, $V(B_i)\cap V(B_j)=V(S')$ and $E(B_i)\cap E(B_j)=E(S')$ for every $i\neq j$, and such that $(\bigcup_{i}B_i)\setminus E(S')$ is a rooted $K_q$-booster with girth at most $g$. Observe that $V(B_i)\cap V(B_j)=V(S')$ ensures that every vertex in $(\bigcup_{i}V(B_i))\setminus V(S')$ has $2q-2$ neighbors in $(\bigcup_{i}B_i)\setminus S'$, except for the at most $2q-2$ exceptional vertices. It follows that $(\bigcup_{i}B_i)\setminus S'$ has rooted degeneracy at $S'$ at most $2q-2$.    
\end{proof}

\subsection{Absorption Theorem}

We are now ready to prove our absorption theorem,~\cref{thm:HighGirthAbsorber}, that combines with the Forbidden Submatchings with Reserves methodology,~\cref{thm:ForbiddenSubmatchingReserves}, and the Nash-Williams Boosting Lemma,~\cref{lem:NWRegBoostTreasury}, to prove our main result,~\cref{thm:Main_ErdosNashWilliams}.

The proof of~\cref{thm:HighGirthAbsorber} follows very closely the proof of~\cite[Theorem~1.11]{DPII}, with some minor modifications. In particular, the following Lemma is a straightforward application of~\cite[Lemmas~4.12~and~4.13]{DPII}. We include the details in~\cref{sec:appBoosters} for completeness.

\begin{lem}\label{cor:ExistenceOmniAbsorber}
    For all integers $C\ge 1$ and $g\ge 3$ and reals $c\in(0,1)$ and $\alpha\in (0,1/3)$, there exist integers $a, n_0\ge 1$ such that the following holds for all $n\ge n_0$. Fix a family $\cS$ of $g$-spheres in $K_n$ such that for every $R\cong K_3$ in $K_n$, there are at least $c\cdot n^{2g-1}$ distinct $g$-spheres rooted at $R$ in $\cS$. Let $X$ be a spanning subgraph of $K_n$ with $\Delta(X)\le \frac{n}{\log^{ag} n}$, and let $A_0$ be a $C$-refined $K_3$-omni-absorber for $X$ such that $\Delta(A_0)\le C\cdot \max\left\{ \Delta(X),~\sqrt{n}\cdot \log n\right\}$, and ${\rm Treasury}^g(K_n,~K_n\setminus A_0,~K_3,~X)$ is $(n,~\alpha n,~\frac{1}{2g},~\alpha)$-regular.

    Then, there exist a $K_3$-omni-absorber $A$ for $X$ with collective girth at least $g$ and a subtreasury $T=(G_1,G_2,H)$ of ${\rm Proj}_g(A,K_n,X)$ such that 
    \begin{enumerate}
            \item $\Delta(A)\leq a\Delta\log^a\Delta$, , where $\Delta:=\max\left\{ \Delta(X),~\sqrt{n}\cdot \log n\right\}$
            \item $T$ is $(n,~3\alpha n,~\frac{1}{4g},~2\alpha)$-regular, and 
            \item $A$ lives in $A_0\cup \cS$, that is, $A\subseteq A_0\cup\bigcup_{S\in\cS}S$.
    \end{enumerate}
\end{lem}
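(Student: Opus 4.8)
The plan is to follow the proof architecture of~\cite[Theorem~1.11]{DPII}, modifying only the two places where the high-girth-existence proof uses a full complement of rooted boosters: here we are handed only a constant proportion $c\cdot n^{2g-1}$ of the $g$-spheres rooted at each $R$. The structure of~\cite{DPII} goes: (a) for each copy $R\in \cF_{A_0}$ of $K_3$ in the decomposition family, pick a random rooted $g$-sphere booster $B_R$ from the available pool; (b) form $A := A_0 \cup \bigcup_R B_R$ and equip it with the decomposition function that, on input $L\subseteq X$, uses $\mathcal{B}_{\mathrm{on}}(B_R)$ when $R\in \mathcal{Q}_{A_0}(L)$ and $\mathcal{B}_{\mathrm{off}}(B_R)$ otherwise; (c) a first-moment/union-bound argument over bad events shows that with positive probability $A$ has collective girth $\ge g$, that $A$ does not shrink too many configurations (so the projection treasury is controlled), and that the maximum-degree and codegree bounds needed for the regularity of the projection subtreasury hold. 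First I would replace the uniform random choice of booster in step (a) with a uniform random choice from the size-$\ge c\cdot n^{2g-1}$ subfamily $\cS_R$ of $g$-spheres rooted at $R$ that lie in $\cS$ — this is exactly what \cref{thm:HighGirthQuantumOmniBooster} in the appendix is extracted to permit, since all the concentration estimates in~\cite{DPII} only used that the random booster is chosen uniformly from a family of size $\Omega(n^{2g-1})$ with bounded codegrees, not from \emph{all} spheres.

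The key steps, in order: (1) Invoke \cref{thm:HighGirthQuantumOmniBooster} (the appendix extraction) with the given $C$-refined omni-absorber $A_0$, the reserve $X$, the family $\cS$, and parameter $c$, to obtain a random high-girth omni-absorber $A$ living in $A_0\cup\bigcup_{S\in\cS}S$ together with its projection treasury, and verify its hypotheses are met: $A_0$ is $C$-refined with $\Delta(A_0)\le C\Delta$ by assumption, $\Delta(X)\le n/\log^{ag}n$ by assumption, ${\rm Treasury}^g(K_n,K_n\setminus A_0,K_3,X)$ is $(n,\alpha n,\frac{1}{2g},\alpha)$-regular by assumption, and $|\cS_R|\ge c\cdot n^{2g-1}$ for each $R$ by assumption. (2) Read off the maximum-degree bound $\Delta(A)\le a\Delta\log^a\Delta$ from the conclusion of \cref{thm:HighGirthQuantumOmniBooster}; each $g$-sphere adds at most $4g$ to the degree of a bounded number of vertices and $A_0$ is $C$-refined, so the degree blow-up is only polylogarithmic, exactly as in~\cite{DPII}. (3) Read off that $A$ has collective girth $\ge g$ (each $\mathcal{B}_{\mathrm{on}}$ and $\mathcal{B}_{\mathrm{off}}\cup\{R\}$ has girth $\ge 2g\ge g$ by \cref{rem:GirthSpheres}, and the collective-girth verification is the union-bound over the random choices, unchanged). (4) Track how the regularity parameters of the projection treasury degrade: starting from $(n,\alpha n,\frac{1}{2g},\alpha)$ on ${\rm Treasury}^g(K_n,K_n\setminus A_0,K_3,X)$, the projection operation $\perp\mathcal{M}(A)$ and the passage to a subtreasury each worsen the constants by a bounded multiplicative factor; \cref{thm:HighGirthQuantumOmniBooster} is stated so as to output $(n,3\alpha n,\frac{1}{4g},2\alpha)$-regularity, which is exactly conclusion (2) here. (5) Conclusion (3), that $A\subseteq A_0\cup\bigcup_{S\in\cS}S$, is immediate from construction. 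Finally, set $a$ large and $n_0$ large enough to absorb all the constants from \cref{thm:HighGirthQuantumOmniBooster}.

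The main obstacle — and the reason this lemma is quarantined to a corollary of an appendix extraction rather than proved from scratch — is step (4): checking that the \emph{configuration (co)degree} conditions \ref{item:RT_ConfigDeg} of the projection subtreasury survive when boosters are sampled from only a $c$-fraction of the available $g$-spheres. In~\cite{DPII} the relevant codegree estimates for ${\rm Girth}^g$-type configuration hypergraphs after projection are computed using that a uniformly random sphere has codegree $O(1)$ with every fixed small set of edges; restricting to a subfamily of size $\ge c\cdot n^{2g-1}$ only changes these expectations by a factor $O(1/c)$, so the bounds $\Delta_1(H^{(s)})\le \alpha\cdot D^{s-1}\log D$ and $\Delta_t(H^{(s)})\le D^{s-t-\beta}$ still hold after enlarging $\alpha$ and shrinking $\beta$ by a bounded amount — but making this precise requires re-examining the counting arguments of~\cite[Lemmas~4.12~and~4.13]{DPII}, which is what the appendix does. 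A secondary (but genuinely routine) obstacle is bookkeeping the arithmetic of the degradation of the four regularity parameters through the two operations (projection, then subtreasury passage) so that the output is cleanly $(n,3\alpha n,\frac{1}{4g},2\alpha)$-regular and not some messier expression; I would handle this by being generous with the constants, since the downstream application (\cref{thm:HighGirthAbsorber}) only needs $(n,6\alpha n,\frac{1}{4g},4\alpha)$-regularity with further slack.
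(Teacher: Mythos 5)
Your high-level architecture matches the paper's: invoke \cref{thm:HighGirthQuantumOmniBooster}, select one actual booster from each quantum booster family, form $A = A_0 \cup B'$, and track regularity. But your detailed steps contain a genuine misattribution that hides a real step in the argument. You assert in step (4) that \cref{thm:HighGirthQuantumOmniBooster} ``is stated so as to output $(n,3\alpha n,\tfrac{1}{4g},2\alpha)$-regularity.'' It is not: its conclusion (3) gives $(n,2\alpha n,\tfrac{1}{4g},2\alpha)$-regularity for ${\rm Proj}_g(B,A_0,K_n,X)$, the projection treasury of the \emph{quantum} omni-booster. The promotion to $3\alpha n$ in the lemma's conclusion (2) comes from a further argument that you omit: after selecting the concrete boosters $B_F$ and forming $B'$, one passes to the subtreasury $T = (G_1'', G_2'', H)$ obtained by deleting all triangles meeting an edge of $B$ from $G_1$ and $G_2$; the degradation in the quasi-regularity constant is controlled by showing $d_{G_1}(v) - d_{G_1''}(v) \le \alpha n$, which in turn requires bounding the number of triangles containing a fixed edge and an edge of $B$ via $\Delta(B) \le a\Delta\log^a\Delta \le n/\log^{(g-1)a}n$. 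Without this extra argument, your step (4) does not yield the claimed conclusion.

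Relatedly, your ``key steps'' conflate the quantum omni-booster with the final omni-absorber. Step (1) says you ``obtain a random high-girth omni-absorber $A$'' directly from the black box, but \cref{thm:HighGirthQuantumOmniBooster} outputs a quantum object $B$ together with its matching-set-projected treasury. The paper explicitly selects, for each $F \in \mc{F}_0$, one $B_F \in {\rm Disjoint}(\mc{B}_F) \cap {\rm HighGirth}_g(\mc{B}_F) \cap \cS$ (using conclusion (2) of the quantum theorem to ensure this intersection is nonempty), sets $B' := \bigcup_F B_F$, and then invokes \cref{prop:CanonicalBoost} to conclude $A := A_0 \cup B'$ is an omni-absorber with the canonical decomposition function, and \cref{prop:BoosterSubTreasury} to conclude that $T$ is a subtreasury of ${\rm Proj}_g(A,K_n,X)$ via the intermediate ${\rm Proj}_g(B',A_0,K_n,X)$. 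You gesture at the selection in your opening paragraph but then suppress it in the numbered steps, and you never establish the subtreasury chain. These are precisely the parts of the proof that live outside the black box, so they need to be stated; your proposal as written makes the lemma appear to be a pure restatement of the appendix theorem, which it is not.
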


Armed with~\cref{cor:ExistenceOmniAbsorber}, we now prove our main high absorber theorem.
\begin{proof}[Proof of~\cref{thm:HighGirthAbsorber}]

By~\cref{prop:DegeneracySphere} there exists a constant $c=c(\varepsilon,g)>0$ and a family $\cS$ of $g$-spheres in $G$ such that for every $R\cong K_3$ in $K_n$, there are at least $c n^{2g-1}$ $g$-spheres rooted at~$R$ in~$G$.

By~\cref{thm:NWRefinedEfficientOmniAbsorber}, there exists an integer $C\geq 1$ and a $C$-refined $K_3$-omni-absorber $A_0$ for $X$ in $G$ with $\Delta(A_0) \leq C\cdot\Delta$. Then, with $n$ being a large enough integer, the fact that ${\rm Treasury}^g(K_n,K_n,K_3,X)$ is $(n,\alpha n,\frac{1}{2g},\alpha)$ immediately implies that ${\rm Treasury}^g(K_n,K_n\setminus A_0,K_3,X)$ is $(n,2\alpha n,\frac{1}{2g},\alpha)$-regular and therefore $(n,2\alpha n,\frac{1}{2g},2\alpha)$-regular.

By~\cref{cor:ExistenceOmniAbsorber} there exist a $K_3$-omni-absorber $A$ for $X$ in $G$ with collective girth at least $g$, and a subtreasury $T=(G_1,G_2,H)$ of ${\rm Proj}_g(A,K_n,X)$ with the desired properties.
\end{proof}

\section{Finishing - Proof of Theorem~\ref{thm:Main_ErdosNashWilliams}}\label{sec:Finishing}

We are now ready to prove our main result, stating that every $K_3$-divisible graph with sufficiently large minimum degree contains a high girth triangle-decomposition.

\begin{lateproof}{thm:Main_ErdosNashWilliams}
Fix an integer $g$ and a real $\varepsilon > 0$, let $\beta':=1/8g$ and $\beta:=2\beta'$. Let $c\in(0,1)$ such that~\cref{lem:NWRegBoostTreasury}, the Nash-Williams Restricted Boosting Lemma, holds for $q=3$. Let $\alpha'\in(0,1)$ be arbitrarily small, in particular such that~\cref{thm:ForbiddenSubmatchingReserves} holds for $\beta'$ and $\alpha'$. Let $\alpha\in(0,1)$ be small enough such that $4\alpha<c^{g}\cdot\alpha'$ and such that~\cref{lem:NWRegBoostTreasury} holds with parameters $q=3$, $6\alpha$ and $\varepsilon$. Let $n$ be large enough, and let $G$ be a $K_3$-divisible graph on $n$ vertices with minimum degree at least $(\max\{\delta^*_{K_{3}}, \frac{3}{4}\}+\varepsilon) n$.

Let $\gamma\in(0,1)$ be small enough such that $\gamma<\alpha/2$ and such that~\cref{lem:RandomXHighMinDeg} holds. 
Let $p=n^{-\gamma}$, and $X$ be a spanning subgraph of $G$ as guaranteed by~\cref{lem:RandomXHighMinDeg}, that is with $\Delta(X)\leq 2pn$ and such that every edge $e\in  K_n\setminus X$ is in at least $2^{-6}\cdot p^{2}n$ copies of $K_3$ in $X\cup \{e\}$.

We first show that $(\hat{G}_1,\hat{G}_2,\hat{H}):={\rm Treasury}^g(K_n,K_n,K_3,X)$ is $(n,\alpha n,2\beta,\alpha)$-regular. Indeed, every edge of $K_n$ is in $n-2$ copies of $K_3$, therefore $\Delta(\hat{G}_1),\Delta(\hat{G}_2)\leq n$. Recall that $\Delta(X)\leq 2pn=o(n)$, therefore for $n$ large enough every edge $e\in K_n\setminus X$ is in at least $(1-\alpha)n$ triangles and $\delta(\hat{G}_1)\geq n- \alpha n$, proving~\cref{item:RT_QuasiReg}.
For~\cref{item:RT_ReserveDeg}, note that every $e\in K_n\setminus X$ is in at least $2^{-6}\cdot n^{1-2\gamma}\geq n^{1-\alpha}$ copies of $K_3$ of $X\cup \{e\}$, hence $\delta(\hat{G}_2)\geq n^{1-\alpha}$. As $\hat{H}={\rm Girth}^g(K_n,K_3)$, we know that $\hat{H}$ contains no edges of size at most $2$, therefore it satisfies~\cref{item:RT_CoDeg}. Finally, $\hat{H}$ satisfies~\cref{item:RT_ConfigDeg} as for all $2\le i\le g$ and  all $2\le t < s \leq g$, 
\[\Delta_1\left(\hat{H}^{(i)}\right) \leq 2^{\binom{i-1}{2}} \cdot \binom{n}{i-1} \leq \alpha\cdot n^{i-1}\cdot \log n,\text{ and }\quad\Delta_{t}\left(\hat{H}^{(s)}\right) = O\left(n^{(s+2)-(t+3)}\right),\]
since every hyperedge $e$ of $\hat{H}$ of size $s$ spans at most $s+2$ vertices of $K_n$, while every $t$-subset of $e$ spans at least $t+3$ vertices. Therefore ${\rm Treasury}^g(K_n,K_n,K_3,X)$ is $(n,\alpha n,2\beta,~\alpha)$-regular.\medskip

By~\cref{thm:HighGirthAbsorber}, let $A$ be a $K_3$-omni-absorber for $X$ in $G$ with collective girth at least $g$, with  $T=(G_1,G_2,H)$ being the subtreasury of ${\rm Proj}_g(A, K_n, X)$ that is $(n,6\alpha n,\beta,4\alpha)$-regular. Let $J=G\setminus (X\cup A)$.

Let $T'=(G'_1,G'_2,H[G'_1\cup G'_2])$ be the subtreasury of $T$ where $G'_1$ (respectively $G'_2$) is obtained by removing from $G_1$ (respectively $G_2$) all triangles not in $G\setminus A$. By definition, $T'$ is a subtreasury of ${\rm Proj}_g(A, G, X)$, and we now prove that $T'$ is still regular. Recall that $G_2$ is a bipartite hypergraph with parts $K_n\setminus (X\cup A)$ and $X$. Therefore every hyperedge of $G_2$ represents a triangle of $K_n\setminus A$ containing a unique edge in $K_n\setminus (X\cup A)$. By regularity of $T$, every edge of $K_n\setminus (X\cup A)$ is in at least $n^{1-4\alpha}$ triangles of $G_2$, and therefore every edge of $J$ is in at least $n^{1-4\alpha}$ triangles of $G'_2$. 

By regularity of~$T$, every edge of $K_n\setminus (X\cup A)$ is in at least $(1-6\alpha)n$ triangles of $G_1$. As $G$ has minimum degree at least $(3/4+\varepsilon)n$, we obtain that every edge of $J$ is in at least $(1-6\alpha-1/2+\varepsilon)n$ triangles of $G'_1$. The other properties of regularity are trivially verified (as we only removed hyperedges from $G_1,G_2$) and we obtain that $T'$ is $(n,(1/2+6\alpha-\varepsilon) n,\beta,4\alpha)$-regular. Crucially, as $T$ is $(n,6\alpha n,\beta,4\alpha)$-regular, we know that every edge $e\in J$ is in at most $6\alpha n$ triangles of $J$ not in $G_1$. As we only removed triangles that are not in $J$  from $G_1$, every edge $e\in J$ is in at most $6\alpha n$ triangles in $J$ not in $G'_1$.

Recall that $\Delta= \max\left\{\Delta(X),~\sqrt{n}\cdot \log n\right\}$ and that we have $\Delta(X)\leq 2n^{1-\gamma}$ and $\Delta(A)\leq a \Delta \log^a\Delta$. Therefore, for $n$ large enough, $J$ has minimum degree at least $(\delta_{K_3}^*+\varepsilon/2)n$. Let $\cF_{orb}$ be family of all copies of $K_3$ in $J$ not in $G'_1$. As every edge $e\in J$ is in at most $6\alpha n$ triangles in $\cF_{orb}$, by~\cref{lem:NWRegBoostTreasury}, there exists a family $G''_1\subseteq G'_1$ of triangles in $J$ such that every edge of $J$ is in $(c\pm n^{-1/3})n$ triangles of $G''_1$. 

\begin{claim}
    The subtreasury $T''=(G''_1,G'_2,H[G''_1\cup G'_2])$ of $T$ is $(D,D^{1-\beta'},\beta',\alpha')$-regular with
\[D:=\left(c+n^{-1/3}\right)n.\]
\end{claim}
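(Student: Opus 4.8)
The plan is to verify the four defining conditions (RT1)--(RT4) of a $(D,D^{1-\beta'},\beta',\alpha')$-regular treasury directly for $T''=(G''_1,G'_2,H[G''_1\cup G'_2])$, which is a subtreasury of $T$ by construction, so only its regularity needs checking. Two facts are already available: the family $G''_1$ produced by the Nash-Williams Restricted Boosting Lemma satisfies that every edge $e\in J$ lies in $(c\pm n^{-1/3})n$ triangles of $G''_1$; and each part of $T''$ is a sub(hyper)graph of the corresponding part of the $(n,6\alpha n,\beta,4\alpha)$-regular treasury $T$ (and its restriction $T'$), so every (co)degree relevant to $T''$ is bounded by the corresponding quantity in $T$. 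The only numeric input needed about $D=(c+n^{-1/3})n$ is $cn\le D\le n$ together with $\log D=(1-o(1))\log n$, which lets one compare powers of $D$ with powers of $n$ up to $(1-o(1))$ factors.

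I would first handle (RT1) and the lower-degree half of (RT2), where the boosting lemma enters. The $A$-part of $G'_2$ is exactly $E(J)$ (every triangle surviving in $G'_2$ has its unique non-reserve edge in $J$), so for (RT1) one notes that each $e\in E(J)$ lies in at most $(c+n^{-1/3})n=D$ triangles of $G''_1$ and in at least $(c-n^{-1/3})n=D-2n^{2/3}\ge D-D^{1-\beta'}$ of them, using $D^{1-\beta'}\ge(cn)^{1-\beta'}\gg n^{2/3}$ since $\beta'=1/(8g)<1/3$. For the lower-degree half of (RT2) one recalls, as already shown for $T'$, that each $e\in E(J)$ lies in at least $n^{1-4\alpha}$ reserve edges of $G'_2$, and $n^{1-4\alpha}\ge n^{1-\alpha'}\ge D^{1-\alpha'}$ because $4\alpha<c^g\alpha'<\alpha'$ and $D\le n$. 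The one genuinely new point is the upper-degree half of (RT2): a reserve edge through a fixed $f=xy\in E(X)$ must use a second edge of $X$ incident to $x$ or to $y$, so $f$ has degree at most $2\Delta(X)\le 4n^{1-\gamma}\le D$ in $G'_2\subseteq G_2$ for $n$ large --- a bound that the regularity of $T$ alone, giving only $\le n$, does not supply.

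Finally, (RT3) and (RT4) follow by pure monotonicity: passing from $G_1\cup G_2$ to $G''_1\cup G'_2$ and from $H$ to $H[G''_1\cup G'_2]$ only deletes vertices and edges, so every quantity appearing in (RT3) and (RT4) --- ordinary codegrees, the $2$-codegree with $H$, the common $2$-degree, and the configuration degrees $\Delta_1(H^{(s)})$ and $\Delta_t(H^{(s)})$ --- is at most its value in $T$, a bound phrased in terms of $n$, $\beta=2\beta'$, and $4\alpha$. It then remains to see these $n$-bounds do not exceed the $D$-bounds demanded of $T''$. For (RT3) and the codegree part of (RT4) this is $n^{1-2\beta'}\le D^{1-\beta'}$ and $n^{s-t-2\beta'}\le D^{s-t-\beta'}$, immediate from $D\ge cn$ and $n^{\beta'}\to\infty$. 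For the first line of (RT4) it is $4\alpha\,n^{s-1}\log n\le\alpha'\,D^{s-1}\log D$ for all $2\le s\le g$; since $D^{s-1}\log D\ge c^{g-1}n^{s-1}(1-o(1))\log n$, this reduces to $4\alpha\le(1-o(1))\,c^{g-1}\alpha'$, which holds for $n$ large precisely because $\alpha$ was chosen with $4\alpha<c^g\alpha'=c\cdot c^{g-1}\alpha'$, the spare factor $c<1$ absorbing the $(1-o(1))$ slack. This last accounting --- tracking the constant $c^{g-1}$ lost when $D$ drops from $n$ to roughly $cn$, and recognising it as exactly what the hypothesis $4\alpha<c^g\alpha'$ is tuned to absorb --- is the main, if modest, obstacle; everything else is monotonicity of codegrees under restriction plus the crude bound $cn\le D\le n$.
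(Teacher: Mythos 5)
Your proof is correct and follows essentially the same route as the paper's proof of the Claim: (RT1) via the boosting lemma's $(c\pm n^{-1/3})n$ degree bound compared against $D^{1-\beta'}\gg n^{2/3}$, (RT2) via $2\Delta(X)\le D$ for the upper bound and $n^{1-4\alpha}\ge D^{1-\alpha'}$ for the lower, and (RT3)--(RT4) by monotonicity of (co)degrees under restriction together with comparing $n$-bounds to $D$-bounds via $cn\le D\le n$, with the hypothesis $4\alpha<c^g\alpha'$ tuned precisely to absorb the factor $c^{g-1}$ lost in the first line of (RT4). You spell out the upper-degree half of (RT2) (a reserve triangle through $f\in X$ must use a second $X$-edge at an endpoint of $f$, giving degree at most $2\Delta(X)$) slightly more explicitly than the paper does, but the underlying argument is the same.
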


\begin{proofclaim}
    Let $\sigma := 2c \cdot n^{2/3} $. Observe that for $n$ large enough, with $\beta'=1/8g$ and $g\geq 1$, we have $D^{1-\beta'}\geq \sigma$. Therefore it suffices to prove that $T''$ is $(D,\sigma,\beta',\alpha')$-regular.

    By definition of $G''_1$, every edge of $J$ is in at most $D$ and at least $D-\sigma$ triangles of~$G''_1$, hence verifying~\cref{item:RT_QuasiReg}. For~\cref{item:RT_ReserveDeg}, we obtain from $\Delta(X)\leq 2n^{1-\gamma}$ with $n$ large enough that every vertex of $G'_2$ has degree at most $D'$ in $G_2$. Then, by regularity of $T'$, every vertex of $G'_2$ has degree at least $n^{1-4\alpha}\geq D^{1-4\alpha}\geq D^{1-\alpha'}$ in $G'_2$. 

    To prove the codegree properties~\cref{item:RT_CoDeg,item:RT_ConfigDeg}, let $H'=H[G''_1\cup G'_2]$. For every $2\leq i\leq g$, using $D\geq cn$, we obtain,
    \[\Delta({H'}^{(i)})\leq \Delta(H^{(i)}) 
    \leq 2\alpha n^{i-1}\log n 
    \leq 2\alpha c^{-g} \cdot D^{i-1} \log(D)
    \leq \alpha' D^{i-1}\log D.\]
    By regularity of $T'$ we have that $G'_1\cup G'_2$ has codegrees at most $n^{1-\beta}$, that 
    $\Delta_{t}(H^{(s)}) \le n^{s-t-\beta}$ for all $2\le t< s\le g$,
    and that the maximum $2$-codegree of $G'_1\cup G'_2$ with $H$ and the maximum common $2$-degree of $H$ are both at most $n^{1-\beta}$.
    Since $G''_1\subseteq G'_1$ and $H'\subseteq H$, the same holds for $G''_1$, $G'_2$, and $H'$. Moreover since $cn\leq D$ with $n$ and therefore $D$ large enough, for every integer $m\le g$, we have
    \[n^{m-\beta} \leq (D/c)^{m-\beta} \leq D^{m-\beta'}.\]
    Hence $n^{1-\beta}\le D^{1-\beta'}$ and for all $2\le t< s\le g$, $n^{s-t-\beta}\le D^{s-t-\beta'}$, as desired.
\end{proofclaim}

By~\cref{thm:ForbiddenSubmatchingReserves}, there exists a perfect matching $M$ of $T''$. By~\cref{prop:SubTreasury}, $M$ is also a perfect matching of $T'$ and therefore of ${\rm Proj}_g(A, G, X)$. Thus by Proposition~\ref{prop:FindSteiner}, there exists a $K_3$-decomposition of $G$ of girth at least $g$ as desired.
\end{lateproof}

Regarding~\cref{thm:GenralisationMain2q}, the generalization of our main result to larger cliques, observe that most lemmas and theorems used in the proof above are written in this general setting, namely the Reserve Lemma~\ref{lem:RandomXHighMinDeg}, the Boosting Lemma~\ref{lem:NWRegBoostTreasury} and the Forbidden Submatching with Reserves Theorem~\ref{thm:ForbiddenSubmatchingReserves}.

Therefore, assume that $G$ has minimum degree at least $\big(\max\big\{\delta^*_{q},1-\frac{1}{2q-2}\big\}+\varepsilon\big)n$. By~\cref{thm:NWRefinedEfficientOmniAbsorber}, there exists a $C$-refined $K_q$-omni-absorber $A_0$ for $X$ in $G$. By~\cref{lem:DegeneracyBoosterGeneralQ}, there exists a $K_q$-booster $B_0$ with rooted girth at least $g$ and rooted degeneracy at most $2q-2$. Therefore, by~\cref{lemma:DegeneracyEmbedding}, there exist a family $\cS$ of copies of $B_0$ in $G$ such that for every $F\cong K_q$ in $K_n$, there are at least $c\cdot n^{v(B_0)-q}$ distinct copies of $B_0$ rooted at $F$ in $\cS$. By~\cref{thm:HighGirthOmniAbsorberGeneralQ}, there exists a $K_q$-omni-absorber $A$ with collective girth at least $g$, and with the required properties on its projection treasury. We omit the details as the rest of the proof of~\cref{thm:GenralisationMain2q} is then identical to the one for~\cref{thm:Main_ErdosNashWilliams}.

\section{Concluding Remarks}\label{sec:Concluding}

To summarize, we tied the minimum-degree threshold for the existence of $K_3$-decomposition of arbitrarily high girth, to the threshold for fractional $K_3$-decomposition. A proof that $\delta_{K_3}^*=\frac34$ would now imply both the Nash-Williams' Conjecture and the ``\Erdos{} meets Nash-Williams' Conjecture'' asymptotically. Many more questions remain open. 

A key step toward proving the natural generalization of ``\Erdos{} meets Nash-Williams' Conjecture'' to larger cliques, as stated in~\cref{conj:ENWLargeCliquesNoEpsilon}, would be to extend the results in this article to larger cliques as in the following conjecture.

\begin{conj}\label{conj:GenralisationMain}
    For every integers $g\geq 3$ and $q\geq 3$, and every real $\varepsilon>0$, every sufficiently large $K_q$-divisible graph~$G$ on~$n$ vertices with minimum degree at least
    $(\max\{\delta_{K_q}^*, 1-\frac{1}{q+1}\}+\varepsilon)n$
    admits a $K_q$-decomposition with girth at least $g$.
\end{conj}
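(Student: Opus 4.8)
\noindent\textbf{A proof strategy for \cref{conj:GenralisationMain}.} The plan is to re-run the four-step refined-absorption scheme used for \cref{thm:Main_ErdosNashWilliams} and \cref{thm:GenralisationMain2q} and to show that only one of the four steps loses anything at the weaker hypothesis $\delta(G)\ge(\max\{\delta^*_{K_q},1-\frac{1}{q+1}\}+\varepsilon)n$. Indeed, step~(1), the Reserves Lemma (\cref{lem:RandomXHighMinDeg}), is already stated for $\delta(G)\ge(1-\frac{1}{q+1}+\varepsilon)n$; step~(3), the Restricted Boosting Lemma (\cref{lem:NWRegBoostTreasury}) together with its fractional inputs (\cref{lem:Seeded,lem:SeededFixed,lem:BalancedDecomposition}, the Inheritance Lemma \cref{lem:LangInheritance}, and the General Boosting Lemma \cref{lem:GeneralBoostLemma}), is proved for an arbitrary clique $K_q$ at minimum degree $\max\{\delta^*_{K_q},1-\frac{1}{q+1}\}+\varepsilon$; and step~(4), the Forbidden Submatchings with Reserves Theorem (\cref{thm:ForbiddenSubmatchingReserves}) together with \cref{prop:FindSteiner}, is uniformity-agnostic. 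So \cref{conj:GenralisationMain} reduces to step~(2): proving the analogue of \cref{thm:HighGirthAbsorber} with $\frac34$ replaced by $1-\frac{1}{q+1}$ and $K_3$ by $K_q$, that is, building, \emph{inside $G$}, a $C$-refined $K_q$-omni-absorber for the reserves $X$ of collective girth at least $g$ whose girth-projection treasury has a regular subtreasury, using only $\delta(G)\ge(1-\frac{1}{q+1}+\varepsilon)n$. (For $q=3$ the quantities $1-\frac{1}{q+1}$ and $1-\frac{1}{2q-2}$ coincide, so \cref{conj:GenralisationMain} with $q=3$ is already \cref{thm:Main_ErdosNashWilliams}; everything below concerns $q\ge4$.)

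Building this omni-absorber inside $G$ splits into two embedding problems, exactly as in \cref{s:NW} and \cref{s:HighGirth}. \textbf{(a) The base omni-absorber.} Start from the refined omni-absorber $A_0$ in $K_n$ of \cref{thm:RefinedEfficientOmniAbsorber}, replace every edge by a fake edge (rooted degeneracy $q-1$, which embeds comfortably since $1-\frac{1}{q-1}<1-\frac{1}{q+1}$, by \cref{lemma:DegeneracyEmbedding}), and embed a private $K_q$-absorber for each fake-clique. The absorbers of \cref{thm:AbsorbersSmallDegeneracy} have rooted degeneracy $2q-2$ and hence embed only at $\delta(G)\ge(1-\frac{1}{2q-2}+\varepsilon)n$ --- this is the origin of the $1-\frac{1}{2q-2}$ in \cref{thm:GenralisationMain2q}. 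The plan is to replace \cref{thm:AbsorbersSmallDegeneracy} by a refined absorber extracted from the iterative-absorption construction of Glock, K\"uhn, Lo, Montgomery, and Osthus~\cite{GKLMO19}, whose transformer/reachability gadgets are tailored to minimum degree $1-\frac{1}{q+1}+\varepsilon$; concretely, one would prove a variant of the General Embedding Lemma (\cref{lem:EmbedGeneral}) in which the hypothesis ``$W_H$ has small rooted degeneracy'' is replaced by ``$W_H$ is assembled from the connector gadgets of \cite{GKLMO19}, hence admits $\Omega(n^{|V(W_H)\setminus V(H)|})$ embeddings avoiding any fixed sparse subgraph'', and feed these gadgets through the same ``avoid the bad'' argument. \textbf{(b) Girth boosting.} For every clique $R$ in the decomposition family one must embed a rooted $K_q$-booster of rooted girth at least $g$, with a positive proportion of all such boosters lying in $G$ so that the quantum/omni-booster machinery behind \cref{cor:ExistenceOmniAbsorber} still applies. \cref{lem:DegeneracyBoosterGeneralQ} supplies such a booster of rooted degeneracy $2q-2$, again embeddable only at $\delta(G)\ge(1-\frac{1}{2q-2}+\varepsilon)n$; what is needed is a girth booster embeddable at $\delta(G)\ge(1-\frac{1}{q+1}+\varepsilon)n$.

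The hard part is~(b), and it is exactly the point flagged as ``much harder'' in \cref{sec:Generalq}. The obstruction is structural: in the construction behind \cref{lem:DegeneracyBoosterGeneralQ} the booster's backbone is a $(q-1)$-regular bipartite graph of girth $>g$, so the gadget is simultaneously sparse-and-rigid (forcing the girth) and clique-heavy (each vertex of the backbone's line graph lies in two $(q-1)$-cliques), and this combination is precisely what pushes the rooted degeneracy to $2q-2$ and makes a low-degeneracy or connector-based replacement non-obvious. The most promising route seems to be to abandon a \emph{single fixed} gadget and instead build the booster inside $G$ piece by piece: prove that $\delta(G)\ge(1-\frac{1}{q+1}+\varepsilon)n$ forces, between any two prescribed copies of $K_q$, a \emph{high-girth} transformer in the sense of~\cite{GKLMO19} --- by rerunning their transformer/reachability argument while simultaneously tracking and forbidding all the short \Erdos{}-configurations that the girth bound rules out, much as the girth boosting of~\cite{DPII} does over $K_n$ --- then glue $\Theta(1)$ such transformers around each root into a booster both of whose $K_q$-decompositions, and whose rooted configuration, have girth at least $g$, and finally verify by a first-moment / Janson argument (as in \cref{prop:DegeneracySphere} and \cref{cor:ExistenceOmniAbsorber}) that a constant fraction of such assemblies live in $G$ and avoid every short configuration. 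Establishing this ``high-girth reachability at minimum degree $1-\frac{1}{q+1}+\varepsilon$'' is the genuinely difficult step; once it is available, feeding the resulting high-girth omni-absorber into the proof of \cref{thm:Main_ErdosNashWilliams} --- with $3/4$ replaced by $1-\frac{1}{q+1}$ throughout and the ``dangerous clique'' bookkeeping again absorbed by \cref{lem:NWRegBoostTreasury} --- is routine.
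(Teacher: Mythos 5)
The statement you were asked to prove is Conjecture~\ref{conj:GenralisationMain}: it is stated as an \emph{open conjecture} in Section~\ref{sec:Concluding}, and the paper neither proves it nor claims to. Your write-up is therefore not being measured against a paper proof; it is being measured against the paper's own diagnosis of the difficulty, which it matches closely. The authors (Section~\ref{sec:Generalq} and Section~\ref{sec:Concluding}) identify the same two obstructions you do --- (a) embedding $K_q$-absorbers at minimum degree $(1-\frac{1}{q+1}+\varepsilon)n$ (they point, as you do, to the gadgets of Glock, K\"uhn, Lo, Montgomery, and Osthus~\cite{GKLMO19} as the natural replacement for the rooted-degeneracy-$(2q-2)$ absorbers of \cref{thm:AbsorbersSmallDegeneracy}), and (b) embedding girth boosters at that degree, which they call the ``much harder issue.'' Your observation that steps (1), (3), (4) of the refined-absorption scheme already run at the weaker minimum degree, and that $q=3$ collapses $1-\frac{1}{2q-2}$ to $1-\frac{1}{q+1}$, are also correct.

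The gap in your proposal is the one you flag yourself, and it is genuine, not cosmetic. The route you sketch for (b) --- abandoning a fixed booster gadget in favor of ``high-girth transformers'' built from the \cite{GKLMO19} reachability machinery while forbidding short Erd\H{o}s-configurations, then gluing $\Theta(1)$ of them --- is a plausible research program, but it is left entirely unexecuted: no high-girth transformer is constructed, no verification is given that two $K_q$-decompositions plus a rooted-girth condition can be forced simultaneously on such an assembly, and no analogue of \cref{lem:DegeneracyBoosterGeneralQ} at the weaker degree is produced. Moreover, even a single such booster would not suffice. The quantum-omni-booster argument behind \cref{cor:ExistenceOmniAbsorber} and \cref{thm:HighGirthQuantumOmniBooster} (see the Janson/sparsification computation in \cref{sec:appBoosters}) requires, for every root $R\cong K_q$, that a \emph{constant proportion} of all candidate boosters rooted at $R$ lie inside $G$, so that the random $p$-sparsification can simultaneously avoid overlap, avoid forbidden configurations, and still hit the family $\cS$. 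For rooted-degeneracy-$d$ gadgets this constant-proportion claim falls out of \cref{lemma:DegeneracyEmbedding}, but for a connector-based gadget assembled inside $G$ it has to be reproved from scratch; nothing in your sketch supplies that spread statement. Until a concrete girth-booster gadget (or a gadget-free replacement for the sampling argument) is supplied at minimum degree $(1-\frac{1}{q+1}+\varepsilon)n$, the conjecture remains open --- exactly as the paper says.
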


As mentioned in~\cref{sec:Generalq}, the results in this article can be generalized from triangles to larger cliques to prove~\cref{thm:GenralisationMain2q}, a version of~\cref{conj:GenralisationMain} with minimum degree at least $(\max\{\delta_{K_q}^*,1-\frac{1}{2q-2}\}+\varepsilon)n$. As mentioned in the proof overview, the obstacles to proving~\cref{conj:GenralisationMain} lie in embedding absorbers and in embedding girth boosters.

Similar questions can be naturally asked in higher uniformity. Keevash~\cite{K14} proved the Existence Conjecture for all integers $q>r\geq 2$, while Delcourt and Postle proved the High Girth Existence Conjecture~\cite{DPII}. Meanwhile, M. K\"uhn~\cite{kuhnPhD2025} proved the existence of high girth approximate $K_q^r$-decompositions when $\delta_{r-1}(G)\ge (\delta_{K_q^r}^*+\varepsilon)n$. Glock, K\"{u}hn and Osthus conjectured that the minimum degree threshold for $K_q^r$-decompositions and fractional ones should be identical and equal to $1-\Theta_r\big(q^{-r+1}\big)$, where we regard the uniformity $r\geq 2$ as fixed and consider asymptotics in the size of the clique $q$. Glock, K\"{u}hn, Lo and Osthus~\cite{GKLO16} proved an upper bound of $1-\Theta_q\big(r^{-2q}\big)$ for $K_q^r$-decompositions, while Barber, K\"{u}hn, Lo, Montgomery, and Osthus~\cite{BKLMO17} showed $\delta^*_{K_q^r}\leq 1-\Theta_q\big(r^{-2q+1}\big)$.
It would be interesting to have a further generalization of~\cref{conj:GenralisationMain} to higher uniformity. It is not clear however if a direct similar statement should hold in this general setting.

Finally, we observe that Kwan, Sah, Sawhney, and Simkin~\cite{KSSS2024STS} included a counting argument in their work on high girth Steiner triple systems. Any progress towards a counting extension for any of the above results and conjectures would be of interest. 


\bibliographystyle{plain}
\bibliography{bibliography}

\appendix

\section{Omni-Boosters}\label{sec:appBoosters}

To prove~\cref{thm:HighGirthAbsorber}, we add a rooted $K_3$-booster for each triangle in the decomposition family of a refined omni-absorber; we do this simultaneously and randomly such that with high probability this is done in a high girth manner. To that end, we define the following omni-booster.

\begin{definition}[Omni-Booster]\label{def:OmniBooster}
Let $G$ be a graph, $X$ be a spanning subgraph of $G$, and let $A$ be a $K_q$-omni-absorber for $X$ with decomposition family $\cF$. We say that a graph $B\subseteq G\setminus(A\cup X)$ is a $K_q$-\emph{omni-booster} for $A$ and $X$ in $G$ if $B$ is the edge-disjoint union of a set of graphs $\cB:= (B_F: F\in\cF)$ (called the \emph{booster family} of $B$) where for each $F\in \cF$, $B_F$ is a $K_q$-booster rooted at $F$.
The \emph{matching set} of $\cB$ is
$$\mc{M}(\cB):= \set*{ M \in \prod_{F\in \mathcal{F}} \big\{ (\mathcal{B}_{F})_{{\rm on}},~ (\mathcal{B}_{F})_{{\rm off}} \big\} : M \text{ is a matching of } {\rm Design}(X\cup A\cup B,K_q)}.$$
We say $B$ has \emph{collective girth at least $g$} if every $M\in \mc{M}(\cB)$ has girth at least $g$.
\end{definition}

Naturally, omni-boosters yield the following canonical omni-absorbers, as computed in~\cite{DPII}.

\begin{proposition}\label{prop:CanonicalBoost}
For $q\geq 3$, let $G$ be a graph, $X$ be a spanning subgraph of $G$, and let $A$ be a $K_q$-omni-absorber for $X$ with decomposition family $\cF$ and decomposition function $\mc{Q}_A$. If $B$ is a $K_q$-omni-booster for $A$ and $X$ in $G$ with booster family $\mathcal{B}=(B_F: F\in\mc{F})$, then $A\cup B$ is a $K_q$-omni-absorber for $X$ with decomposition family $\bigcup_{F\in \mc{F}} (\mc{B}_F)_{{\rm on}}\cup (\mc{B}_F)_{{\rm off}}$ and decomposition function
$$\mathcal{Q}_{A\cup B}(L):= \bigcup_{F\in \mathcal{Q}_A(L)} (\mathcal{B}_F)_{{\rm on}}~\cup \bigcup_{F \in \mathcal{F}\setminus \mathcal{Q}_A(L)} (\mathcal{B}_F)_{{\rm off}}.$$    
We refer to $A\cup B$ with decomposition function $\mathcal{Q}_{A\cup B}$ defined above as the \emph{canonical omni-absorber} for $A$ and $B$.
\end{proposition}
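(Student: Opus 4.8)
The plan is to verify the two defining conditions of a $K_q$-omni-absorber (\cref{def:OmniAbsorbers}) directly, using only the ``switching'' property built into each rooted booster (\cref{def:rootedbooster}); no probabilistic input and no girth information is needed here. First I would record the edge-disjointness bookkeeping. Since $A$ is a $K_q$-omni-absorber for $X$, $A$ and $X$ are edge-disjoint, and since $B\subseteq G\setminus(A\cup X)$ by \cref{def:OmniBooster}, $B$ is edge-disjoint from both $A$ and $X$; hence $A\cup B$ and $X$ are edge-disjoint, and for any $K_q$-divisible $L\subseteq X$ the edge set of $(A\cup B)\cup L$ is the disjoint union of $E(A\cup L)$ and $E(B)$, with $E(B)=\bigsqcup_{F\in\cF}E(B_F)$ because $\mathcal{B}=(B_F:F\in\cF)$ is a booster family. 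I would also note that each $B_F\subseteq G\setminus(A\cup X)$ while every $F'\in\cF$, being a member of some decomposition $\mc{Q}_A(L')$ of $A\cup L'$, satisfies $F'\subseteq A\cup X$; consequently $B_F$ and $F'$ are edge-disjoint for all $F,F'\in\cF$.

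\textbf{Core verification.} Fix a $K_q$-divisible $L\subseteq X$. Since $A$ is an omni-absorber, $\mc{Q}_A(L)\subseteq\cF$ is a $K_q$-decomposition of $A\cup L$, so the copies in $\mc{Q}_A(L)$ are pairwise edge-disjoint with union $A\cup L$. I would build the decomposition of $(A\cup B)\cup L$ in two steps. Starting from $\mc{Q}_A(L)$, replace each copy $F\in\mc{Q}_A(L)$ by $(\mathcal{B}_F)_{{\rm on}}$, which is a $K_q$-decomposition of $B_F\cup F$ with $F\notin(\mathcal{B}_F)_{{\rm on}}$; removing the edges of $F$ and adding those of $B_F\cup F$ has net effect of adding $E(B_F)$, so performing this for every $F\in\mc{Q}_A(L)$ yields the collection $\bigcup_{F\in\mc{Q}_A(L)}(\mathcal{B}_F)_{{\rm on}}$, a $K_q$-decomposition of $(A\cup L)\cup\bigcup_{F\in\mc{Q}_A(L)}B_F$. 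Edge-disjointness across distinct $F,F'\in\mc{Q}_A(L)$ follows from $E(F)\cap E(F')=\emptyset$ (both in the decomposition $\mc{Q}_A(L)$), $E(B_F)\cap E(B_{F'})=\emptyset$ (booster family), and $E(B_F)\cap E(F')=\emptyset$ (from the paragraph above). Then adjoin $(\mathcal{B}_F)_{{\rm off}}$ for each $F\in\cF\setminus\mc{Q}_A(L)$; each is a $K_q$-decomposition of $B_F$, these $B_F$ are pairwise edge-disjoint, and they are disjoint from $A\cup L$ and from every $B_{F'}$ with $F'\in\mc{Q}_A(L)$. Hence $\mc{Q}_{A\cup B}(L)=\bigcup_{F\in\mc{Q}_A(L)}(\mathcal{B}_F)_{{\rm on}}\cup\bigcup_{F\in\cF\setminus\mc{Q}_A(L)}(\mathcal{B}_F)_{{\rm off}}$ is a set of pairwise edge-disjoint copies of $K_q$ whose union is $(A\cup L)\cup\bigcup_{F\in\cF}B_F=(A\cup B)\cup L$, i.e.\ a $K_q$-decomposition of $(A\cup B)\cup L$. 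Finally $\mc{Q}_{A\cup B}(L)\subseteq\bigcup_{F\in\cF}\big((\mathcal{B}_F)_{{\rm on}}\cup(\mathcal{B}_F)_{{\rm off}}\big)$ by construction, which is exactly the claimed decomposition family, completing the verification.

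\textbf{Expected difficulty.} There is no substantive obstacle: this is a bookkeeping statement, essentially a recollection from~\cite{DPII}. The only point requiring care is keeping the three sources of edges---the absorber $A$, the divisible graph $L$, and the booster $B=\bigsqcup_{F\in\cF}B_F$---pairwise disjoint, so that each rooted booster can be switched ``on'' or ``off'' independently of the others; the decomposition function simply reads off, for each $F\in\cF$, whether $F$ was used in $\mc{Q}_A(L)$ and picks $(\mathcal{B}_F)_{{\rm on}}$ or $(\mathcal{B}_F)_{{\rm off}}$ accordingly. The genuinely nontrivial content---choosing the omni-booster so that, moreover, it has collective girth at least $g$---is a separate statement and does not belong to this proposition.
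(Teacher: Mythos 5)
Your verification is correct and is the natural (and essentially only) way to prove the statement; the paper does not supply its own argument for this proposition, citing~\cite{DPII} instead. The key bookkeeping you carry out—that $B\subseteq G\setminus(A\cup X)$ together with the pairwise edge-disjointness of the $B_F$'s makes the three edge sources $A$, $L\subseteq X$, and $\bigsqcup_{F\in\cF} B_F$ mutually disjoint, so each booster's on/off switch is local and the covered edge sets tile $(A\cup B)\cup L$—is exactly what is required; the only cosmetic over-claim is asserting $F'\subseteq A\cup X$ for every $F'\in\cF$ rather than just for $F'\in\cQ_A(L)$, which is all you in fact use.
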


For simplicity, instead of showing that $\mathcal{Q}_{A\cup B}$ has high collective girth, we show that all $K_q$-packings that are the union of ``on and off'' decompositions are high girth. To that end, we make the following definition.

\begin{definition}[Girth-$g$ Projection of Omni-Booster]\label{def:OmniBoosterProj}
Let $G$ be a graph, $X$ be a spanning subgraph of $G$, and let $A$ be a $K_q$-omni-absorber for $X$. Let $B$ a $K_q$-omni-booster for $A$ with booster family $\mathcal{B}=(B_F:F\in\mathcal{F})$. We define the \emph{girth-$g$ projection treasury} of $B$ on to $G$ and $X$ as:
$${\rm Proj}_g(B,A,G,X):= {\rm Treasury}^g(G,~G\setminus (A\cup B),~K_q,~X) \perp \mathcal{M}(\mc{B}).$$
\end{definition}

The following proposition, relating the treasury of an omni-booster and its canonical omni-absorber, can be found in~\cite{DPII}.

\begin{proposition}\label{prop:BoosterSubTreasury} For $q\geq 3$, let $G$ be a graph, $X$ be a spanning subgraph of $G$, $A$ be a $K_q$-omni-absorber for $X$ in $G$, and $B$ be a $K_q$-omni-booster for $A$. Then ${\rm Proj}_g(B,A,G,X)$ is a subtreasury of ${\rm Proj}_g(A\cup B,G,X)$.
\end{proposition}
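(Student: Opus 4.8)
The plan is to unwind both sides to common projections of a single base treasury, and then reduce everything to comparing the two collections of matchings one projects by. Write $T:={\rm Treasury}^g(G,~G\setminus(A\cup B),~K_q,~X)$. By~\cref{def:OmniBoosterProj} we have ${\rm Proj}_g(B,A,G,X)=T\perp\mc{M}(\mc{B})$, and by~\cref{def:OmniAbsorberProj} applied to the canonical omni-absorber $A\cup B$ we have ${\rm Proj}_g(A\cup B,G,X)=T\perp\mc{M}(A\cup B)$, with the \emph{same} $T$ on both sides. Hence it suffices to prove that projecting $T$ by the \emph{larger} collection $\mc{M}(\mc{B})$ produces a subtreasury of the projection of $T$ by $\mc{M}(A\cup B)$, and the key point is the inclusion $\mc{M}(A\cup B)\subseteq\mc{M}(\mc{B})$. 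To see it, let $L\subseteq X$ be $K_q$-divisible. By~\cref{prop:CanonicalBoost}, $\mc{Q}_{A\cup B}(L)=\bigcup_{F\in\mc{Q}_A(L)}(\mc{B}_F)_{{\rm on}}\cup\bigcup_{F\in\cF\setminus\mc{Q}_A(L)}(\mc{B}_F)_{{\rm off}}$, which is the union associated with an element of $\prod_{F\in\cF}\{(\mc{B}_F)_{{\rm on}},(\mc{B}_F)_{{\rm off}}\}$; moreover, since $A\cup B$ is a $K_q$-omni-absorber for $X$, $\mc{Q}_{A\cup B}(L)$ is a $K_q$-decomposition of $(A\cup B)\cup L\subseteq X\cup A\cup B$, hence a set of pairwise edge-disjoint copies of $K_q$ in $X\cup A\cup B$, i.e.\ a matching of ${\rm Design}(X\cup A\cup B,K_q)$. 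This is precisely the membership condition defining $\mc{M}(\mc{B})$, so $\mc{Q}_{A\cup B}(L)\in\mc{M}(\mc{B})$, as claimed.

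Next I would push this inclusion through~\cref{def:CommonProjection}. Write $T=(G_1,G_2,H)$, $T\perp\mc{M}(\mc{B})=(G'_1,G'_2,H')$ and $T\perp\mc{M}(A\cup B)=(G''_1,G''_2,H'')$; both projections keep $V(G'_j)=V(G_j)=V(G''_j)$ for $j\in\{1,2\}$. We have $V(H')=V(H)\setminus S'$ and $V(H'')=V(H)\setminus S''$, where $S'$ (respectively $S''$) is the set of $F\in V(H)$ for which there exist $Z\in E(H)$ and $M\in\mc{M}(\mc{B})$ (respectively $M\in\mc{M}(A\cup B)$) with $F\notin M$ and $F\in Z\subseteq M\cup\{F\}$. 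From $\mc{M}(A\cup B)\subseteq\mc{M}(\mc{B})$ we get $S''\subseteq S'$, hence $V(H')\subseteq V(H'')$, and therefore $E(G'_j)=E(G_j)\cap V(H')\subseteq E(G_j)\cap V(H'')=E(G''_j)$, so each $G'_j$ is a spanning subgraph of $G''_j$. For the configuration hypergraphs, take any edge $P$ of $H''$ with $P\subseteq E(G'_1)\cup E(G'_2)$; then $|P|\ge 2$ and there are $Z\in E(H)$ and $M\in\mc{M}(A\cup B)\subseteq\mc{M}(\mc{B})$ with $P\cap M=\emptyset$ and $P\subseteq Z\subseteq P\cup M$, while $P\subseteq E(G'_1)\cup E(G'_2)\subseteq V(H')$; thus $P$ satisfies every condition in the definition of $E(H')$, so $P\in E(H')$. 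Hence $H''[E(G'_1)\cup E(G'_2)]$ is a subgraph of $H'$, and by the definition of a subtreasury, $T\perp\mc{M}(\mc{B})$ is a subtreasury of $T\perp\mc{M}(A\cup B)$, which is the assertion.

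I do not expect a genuine obstacle here: once~\cref{prop:CanonicalBoost} is available, this is pure definition-chasing. The single point worth being careful about is the monotonicity built into~\cref{def:CommonProjection} --- projecting by a \emph{larger} family of matchings deletes \emph{more} cliques from $V(H)$ and adds \emph{more} completable configurations to $E(H)$, so the larger family yields the \emph{more restricted} treasury; this is exactly why $T\perp\mc{M}(\mc{B})$ (which projects by all ``on/off'' matchings of $A\cup B$'s booster family) turns out to be a subtreasury of $T\perp\mc{M}(A\cup B)$ (which projects only by the matchings of the form $\mc{Q}_{A\cup B}(L)$). It is also worth recording that the design and reserve parts of the base treasury $T$ coincide on both sides, so no separate comparison of the ${\rm Design}$ or ${\rm Reserve}$ hypergraphs is needed beyond the vertex-deletion argument above.
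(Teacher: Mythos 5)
Your proof is correct. Note that the paper itself does not prove this proposition but defers to~\cite{DPII}, so there is no in-paper argument to compare against; your write-up serves as a clean self-contained derivation from the definitions. The three observations you isolate are exactly the right ones: (i) both projections project the \emph{same} base treasury $T={\rm Treasury}^g(G,\,G\setminus(A\cup B),\,K_q,\,X)$, since the omni-booster projection removes $A\cup B$ and so does the omni-absorber projection of the absorber $A\cup B$; (ii) $\mathcal{M}(A\cup B)\subseteq\mathcal{M}(\mathcal{B})$, which follows from Proposition~\ref{prop:CanonicalBoost} since $\mathcal{Q}_{A\cup B}(L)$ is an ``on/off'' choice function for $\mathcal{B}$ that is a $K_q$-decomposition of $(A\cup B)\cup L$ and hence a matching of ${\rm Design}(X\cup A\cup B,K_q)$; and (iii) the monotonicity of $\perp$ in the matching collection, which you check directly against Definition~\ref{def:CommonProjection} and the definition of subtreasury, correctly handling both the vertex deletion (so $V(H')\subseteq V(H'')$, hence $E(G'_j)\subseteq E(G''_j)$) and the induced configuration hypergraph inclusion $H''[E(G'_1)\cup E(G'_2)]\subseteq H'$. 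This is exactly the definition-chase one would expect for this statement, and it is complete.
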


\subsection{Quantum Omni-Boosters}

In order to find omni-boosters with the desired properties, we start with a larger object, containing many (non-edge-disjoint) rooted boosters. We show that, by taking a random sparsification, we obtain with positive probability the desired omni-booster. To that end we define the following quantum omni-boosters (as superposition of multiple omni-boosters), or quantum-boosters for short.

\begin{definition}[Quantum Omni-Booster]\label{def:QuantumOmniBooster}
For $q\geq 3$, let $G$ be a graph, $X$ be a spanning subgraph of $G$, and let $A$ be a $K_q$-omni-absorber for $X$ with decomposition family $\mc{F}$. A \emph{quantum $K_q$-omni-booster} for $A$ and $X$ in $G$ is a graph $B$ together with a family $\mc{B}=(\mathcal{B}_F: F\in \mc{F})$ (called the \emph{quantum booster collection}) where for each $F\in \mc{F}$, $\mc{B}_F$ is a family of (not necessarily-edge-disjoint) $K_q$-boosters rooted at $F$ in $G\setminus (X\cup A)$. 

The \emph{matching set} of $\mc{B}$ is 
$$\mc{M}(\mc{B}):= \bigg\{ M \in \prod_{F\in \mathcal{F}} \bigcup_{B_F\in \mathcal{B}_F} \big\{ (B_{F})_{{\rm on}},~ (B_{F})_{{\rm off}} \big\} : M \text{ is a matching of } {\rm Design}(G,K_q)\bigg\}.$$

For each $F\in \mathcal{F}$, we define 
$${\rm Disjoint}(\mathcal{B}_{F}) := \left\{B_F \in \mathcal{B}_{F}:~ B_F \text{ is (edge-)disjoint from all of }\bigcup_{F' \in \mathcal{F}\setminus \{F\}} \bigcup \mathcal{B}_{F'}\right\}.$$
For an integer $g\ge 3$, we define
\begin{align*}{\rm HighGirth}_g(\mathcal{B}_{F}) := \bigg\{B_F \in \mathcal{B}_{F}: \nexists &R\in {\rm Girth}^g(G,K_q) \text{ where } R\subseteq M \text{ for some } M \in \mc{M}(\mc{B}) \\
&\text{ and } R\cap \big( (B_F)_{\rm on}\cup (B_F)_{\rm off}\big)\ne \emptyset. \bigg\}.
\end{align*}
\end{definition}

Observe that if ${\rm Disjoint}(\mathcal{B}_{F})$ is non-empty for every $F\in\cF$, we can create a omni-booster by taking one element in each $\cB_F$. Furthermore, if these elements are also in ${\rm HighGirth}(\mathcal{B}_{F})$, then we obtain an omni-booster with collective girth at least $g$. We naturally extend the notion of projection to quantum boosters.

\begin{definition}[Girth $g$ Projection of Quantum Omni-Booster]\label{def:QuantumOmniBoosterProj}
For $q\geq 3$, let $G$ be a graph, $X$ be a spanning subgraph of $G$, and let $A$ be a $K_q$-omni-absorber for $X$ with decomposition family $\mc{F}$. Let $B$ a quantum $K_q$-omni-booster for $A$ with quantum booster collection $\mathcal{B} = (\mathcal{B}_F: F\in\mathcal{F})$. We define the \emph{girth-$g$ projection treasury} of $B$ on to $G$ and $X$ as: 
$${\rm Proj}_g(B,A,G,X):= {\rm Treasury}^g(G,~G\setminus A,~K_q,~X) \perp \mathcal{M}(\mc{B}).$$
\end{definition}

The following statement is then our main quantum booster theorem. We show that, given a refined omni-absorber $A_0$, and a family $\mc{S}$ of girth boosters containing, for every root $F$, a constant proportion of all girth boosters rooted at $F$ in $K_n$, then there exists a quantum omni-booster for $A_0$ from which we will be able to sample to yield a high girth omni-booster living inside $\mc{S}$.

\begin{thm}[\Erdos-Nash-Williams Quantum Omni-Booster Theorem]\label{thm:HighGirthQuantumOmniBooster}
For every integers $C\ge 1$ and $g\ge 3$ and reals $c\in(0,1)$ and $\alpha \in (0,1/2)$, there exist integers $a, n_0\ge 1$ such that the following holds for all $n\ge n_0$. Fix a family $\cS$ of rooted $g$-spheres in $K_n$ such that for every $F\cong K_3$ in $K_n$, there are at least $c\cdot n^{2g-1}$ distinct $g$-spheres rooted at $F$ in $\cS$. Let $X$ be a spanning subgraph of $K_n$ with $\Delta(X)\le \frac{n}{\log^{ag} n}$, and let $A_0$ be a $C$-refined $K_3$-omni-absorber for $X$ with decomposition family $\cF_0$ such that $\Delta(A_0)\le C\cdot \max\left\{ \Delta(X),~\sqrt{n}\cdot \log n\right\}$, and ${\rm Treasury}^g(K_n,K_n\setminus A_0,K_3,X)$ is $(n,\alpha n,\frac{1}{2g},\alpha)$-regular.

Then there exists a quantum $K_3$-omni-booster $B$ for $A_0$ and $X$ in $G$ with quantum booster collection $(\mathcal{B}_F: F\in \mathcal{F}_0)$ such that 
\begin{enumerate}
    \item[(1)] $\Delta(B)\le a\Delta\cdot \log^{a} \Delta$, where $\Delta:=\max\left\{ \Delta(X),~\sqrt{n}\cdot \log n\right\}$,
    \item[(2)] for each $F\in \mathcal{F}_0$, ${\rm Disjoint}(\mc{B}_F) \cap {\rm HighGirth}_g(\mc{B}_F) \cap \cS \ne \emptyset$, and
    \item[(3)] ${\rm Proj}_g(B,A_0,K_n,X)$ is $(n,2\alpha n,\frac{1}{4g},2\alpha)$-regular. 
\end{enumerate} 
\end{thm}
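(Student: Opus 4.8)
The plan is to follow the construction of the quantum omni-booster from~\cite{DPII} essentially verbatim, changing only the \emph{source} of the boosters: rather than letting $\mathcal{B}_F$ range over all $g$-spheres rooted at $F$ in $K_n$, we let it range over those that also belong to the given family $\mathcal{S}$. Concretely, for each $F\in\mathcal{F}_0$ let $\mathcal{S}_F$ denote the set of $g$-spheres rooted at $F$ that lie in $\mathcal{S}$ and are edge-disjoint from $A_0\cup X$. Since $\Delta(A_0\cup X)=O(\Delta)=O(\sqrt n\,\log^{O(1)}n)$, only $o(n^{2g-1})$ spheres of $\mathcal{S}$ rooted at $F$ are excluded, so $|\mathcal{S}_F|\ge \tfrac{c}{2}n^{2g-1}$, while $|\mathcal{S}_F|\le C_g n^{2g-1}$ by Remark~\ref{rem:countingSpheres}. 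Form $\mathcal{B}_F$ by including each sphere of $\mathcal{S}_F$ independently with probability $p:=n^{-(2g-1)}\log^{b}n$ for a constant $b=b(g)$ dictated by the analysis below, and set $B:=\bigcup_{F\in\mathcal{F}_0}\bigcup_{B_F\in\mathcal{B}_F}B_F$. Note that $\mathcal{B}_F\subseteq\mathcal{S}$ by construction, so the intersection with $\mathcal{S}$ demanded in item $(2)$ is automatic; and by Remark~\ref{rem:GirthSpheres} each sphere used has rooted girth $2g\ge g$, which is what eventually forces collective girth at least $g$.

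For item $(1)$: a fixed vertex $v$ meets at most $C\cdot d_{A_0\cup X}(v)=O(\Delta)$ triangles of $\mathcal{F}_0$ by $C$-refinedness, it is a root vertex of $O(n^{2g-1})$ spheres per incident root and a non-root vertex of $O(n^{2g-2})$ spheres per root, and $|\mathcal{F}_0|=O(n\Delta)$; hence at most $O(\Delta n^{2g-1})$ of the $(F,\text{sphere})$ pairs touch $v$, each contributing $O(g)$ to $\deg_B(v)$. Thus $\mathbb{E}[\deg_B(v)]=O(\Delta\log^{b}n)$, and a Chernoff bound together with a union bound over the $n$ vertices gives $\Delta(B)=O(\Delta\log^{b}n)$ with probability $1-o(1)$; since $\Delta\ge\sqrt n\log n$ we have $\log n\le 2\log\Delta$, so this is at most $a\Delta\log^{a}\Delta$ once $a$ is large in terms of $b$ and $g$.

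Items $(2)$ and $(3)$ are where the real work lies. For $(2)$, I would rerun the union-bound (or Lovász Local Lemma) argument of~\cite{DPII}: for a fixed $F$ and a sphere $S\in\mathcal{S}_F$, the probability that $S$ lies in $\mathcal{B}_F$ yet shares an edge with a sampled sphere of another root, or lies in a sampled low-girth configuration contributing to $\mathcal{M}(\mathcal{B})$, is controlled purely by $p$, by the fact that a fixed vertex/pair/$\dots$ lies in at most $O(n^{2g-2})$/$O(n^{2g-3})$/$\dots$ spheres of any given root, and by the regularity of the input treasury; this yields, with positive probability, that ${\rm Disjoint}(\mathcal{B}_F)\cap{\rm HighGirth}_g(\mathcal{B}_F)$ is nonempty for all $F$ simultaneously. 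For $(3)$, observe that ${\rm Proj}_g(B,A_0,K_n,X)={\rm Treasury}^g(K_n,K_n\setminus A_0,K_3,X)\perp\mathcal{M}(\mathcal{B})$; passing to the projection deletes a triangle $T$ from $G_1$ only when $T$ completes to a girth-$\le g$ configuration using at most $g-1$ triangles drawn from booster decompositions, and it inflates configuration-hypergraph codegrees by at most a $\mathrm{polylog}(n)$ factor. Since every vertex meets only $O(\Delta\log^{b}n)$ booster triangles and $\Delta\le n/\log^{ag}n$, the number of such $T$ through a fixed edge is $O\big(n\log^{b}n/\log^{ag}n\big)=o(\alpha n)$; absorbing the $\mathrm{polylog}(n)$ factors into the $D^{-\beta}$ slack by replacing $\beta=\tfrac1{2g}$ with $\tfrac1{4g}$ and doubling the additive and multiplicative $\alpha$-terms, the $(n,\alpha n,\tfrac1{2g},\alpha)$-regularity of the input treasury degrades to exactly the claimed $(n,2\alpha n,\tfrac1{4g},2\alpha)$-regularity of the projection.

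The hard part — and really the entire content of this ``extraction'' — is to certify that the probabilistic core of~\cite{DPII} underlying item $(2)$ uses the sphere family only through a \emph{lower} bound of the form ``$\ge$ (const)$\cdot n^{2g-1}$ spheres per root'' (supplied here by hypothesis) and through codegree \emph{upper} bounds that are monotone under passing to subfamilies (hence inherited by $\mathcal{S}_F$ from the family of all $g$-spheres); once this is verified, replacing ``all $g$-spheres'' by ``$\mathcal{S}_F$'' affects nothing but the implicit constants, which must then be tracked to confirm that $a$ and $n_0$ can be taken as functions of $C,g,c,\alpha$ alone. The one genuinely new estimate is the bound on the number of triangles ``shrunk'' by the projection in item $(3)$, which I expect to hinge delicately on the sparsity hypothesis $\Delta(X)\le n/\log^{ag}n$, with $a$ chosen large enough that $\log^{ag}n$ swamps the $\mathrm{polylog}$ losses coming from the degree bound on $B$.
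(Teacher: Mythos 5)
Your proposal takes a genuinely different route from the paper, and one that leaves the hard technical work undone where the paper neatly sidesteps it.

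You build the quantum booster collection $\mathcal{B}_F$ directly from the restricted family $\mathcal{S}_F\subseteq\mathcal{S}$, and then propose to ``rerun'' the probabilistic analysis of~\cite{DPII} on this restricted family, arguing that the DPII core should be stable because it only sees the sphere family through a lower bound on its size and through codegree upper bounds that are inherited by subfamilies. That is plausibly true, but you do not verify it, and you explicitly flag it as ``really the entire content of this extraction.'' As written, this leaves a real gap: property~(2) (nonemptiness of ${\rm Disjoint}\cap{\rm HighGirth}_g\cap\mathcal{S}$ for every root) is the whole theorem, and you never actually establish it.

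The paper avoids this entirely. It keeps the quantum booster \emph{full} (one $g$-sphere for every $(2g-1)$-set of vertices disjoint from $V(F)$), so that the extracted Lemma~\ref{lem:DPII} applies completely unchanged as a black box: with probability $\geq 1-\varepsilon$, each $(\mathcal{B}_F)_p$ has size at most $(1+\varepsilon)\log^a n$, at least $(1-3\varepsilon)\log^a n$ of those samples are both disjoint and high girth, and properties~(1) and~(3) hold for $\mathcal{B}_p$. The only thing the paper adds is a Chernoff bound showing that, simultaneously, the sampled spheres coming from $\mathcal{S}_F$ number at least $c'(1-\varepsilon)\log^a n$. Since $\varepsilon < c'/(4+c')$ forces $(1-3\varepsilon) + c'(1-\varepsilon) > 1+\varepsilon$, the good set and the $\mathcal{S}$-set must overlap by pigeonhole inside a sample of size at most $(1+\varepsilon)\log^a n$. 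In other words: sample from \emph{all} spheres, not only those in $\mathcal{S}$; this is precisely what lets the DPII lemma be invoked with zero modification, and the constraint that the booster you ultimately pick lies in $\mathcal{S}$ falls out of a one-line counting argument afterward. Your approach would commit you to re-deriving~\cite[Lemmas~4.12--4.13]{DPII} for a restricted, non-full family (including re-deriving their item~(5), the regularity of the projection treasury), which is exactly the kind of heavy surgery the paper's formulation is designed to avoid. If you want to pursue your route, you would need to actually carry out that re-derivation and confirm the implicit constants depend only on $C,g,c,\alpha$; if you instead adopt the pigeonhole trick, the whole proof collapses to a few lines.
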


We are now prepared to prove~\cref{cor:ExistenceOmniAbsorber} assuming~\cref{thm:HighGirthQuantumOmniBooster}.

\begin{proof}[Proof of~\cref{cor:ExistenceOmniAbsorber}]
By~\cref{thm:HighGirthQuantumOmniBooster}, there exists a quantum $K_3$-omni-booster $B$ for $A_0$ and $X$ with quantum booster collection $(\mathcal{B}_F: F\in \mc{F}_0)$ such that~\cref{thm:HighGirthQuantumOmniBooster}(1)--(3) hold. For each $F\in \mc{F}_0$, choose one $B_F\in {\rm Disjoint}(\mc{B}_F) \cap {\rm HighGirth}(\mc{B}_F)\cap\cS$, let $B':=\bigcup_{F\in \mc{F}} B_F$ and $\mc{B}':=(B_F: F\in \mc{F})$.

Since $B_F\in {\rm Disjoint}(\mc{B}_F)$ for all $F\in \mc{F}_0$, then $B'$ is  a $K_3$-omni-booster for $A_0$ and $X$ with booster family $\mc{B}'$. Furthermore since $B_F\in {\rm HighGirth}(\mc{B}_F)$ for all $F\in \mc{F}_0$, we obtain that $B'$ has collective girth at least $g$.

Let $(G_1,G_2,H):={\rm Proj}_g(B,A_0,K_3,X)$ and $(G_1',G_2',H'):={\rm Proj}_g(B',A_0,K_3,X)$. We let $G_1'':= G_1\cap {\rm Design}(K_n\setminus (X\cup A\cup B),K_3)$ , let $G_2'':= G_2[K_n\setminus (X\cup A\cup B)]$, and let $T:=(G_1'',G_2'',H)$. Note that for $i\in \{1,2\}$, $G_i''$ is a spanning subgraph of $G_i'$. Similarly since $\mc{M}(\mc{B}') \subseteq \mc{M}(\mc{B})$, we find that $H'[E(G_1'')\cup E(G_2'')]\subseteq H$. Hence $T$ is a subtreasury of ${\rm Proj}_g(B',A_0,K_3,X)$. 

We now show that $T$ is $(n,3\alpha n,\frac{1}{4g},2\alpha)$-regular.
By~\cref{thm:HighGirthQuantumOmniBooster}(3), we have that 
${\rm Proj}_g(B,A_0,K_3,X)$ is $(n,2\alpha n,\frac{1}{4g},2\alpha)$-regular. To conclude, it suffices to show that $d_{G_1}(v)-d_{G_1''}(v) \le \alpha n$ for all $v\in V(G_1'')$. However, this is true since 
$$\Delta(B)\le \Delta\cdot \log^a \Delta \le \frac{n}{\log^{ga} n}\cdot \log^a \Delta \le \frac{n}{\log^{(g-1)a}n},$$ 
and hence for all edges $e$ in $K_n\setminus B$, the number of triangles containing $e$ and edge of $B$ is at most $2 \Delta(B) \le \alpha n$, as desired with $a\ge 1$, $g\ge 2$ and $n$ is large enough. 

Let $A:= A_0\cup B'$. By~\cref{cor:ExistenceOmniAbsorber}(iii), we have $B'\subseteq \bigcup_{S\in\cS}S$ and~\cref{thm:HighGirthQuantumOmniBooster}(4) follows immediately. By Proposition~\ref{prop:CanonicalBoost}, we have that $A$ is a $K_3$-omni-absorber for $X$. By Proposition~\ref{prop:BoosterSubTreasury}, ${\rm Proj}_g(B',A_0,K_n,X)$ is a subtreasury of ${\rm Proj}_g(A,K_n,X)$. Thus it follows from the definition of subtreasury that $T$ is also a subtreasury of ${\rm Proj}_g(A,K_n,X)$, as desired.
\end{proof}

\subsection{Proof of Erd\H{o}s-Nash-Williams Booster Theorem}\label{sec:ProofBooster}

The proof of~\cref{thm:HighGirthQuantumOmniBooster} is almost immediate following the work of~\cite{DPII}. The main idea relies on taking a large quantum booster, i.e. containing a booster rooted at $F$ for every $F$ and every possible set of vertices in $K_n\setminus F$, and then taking a random subfamily. With positive probability, this quantum booster has the desired properties. 

For a real $p\in[0,1]$, the \emph{random $p$-sparsification} of a quantum booster $\mathcal{B} = (\mc{B}_F: F\in \mc{F})$ is the quantum booster $\mathcal{B}_p := ( (\mc{B}_F)_p: F\in \mc{F})$ where $(\mc{B}_F)_p$ denotes the subset of $\mc{B}_F$ obtained by choosing each element independently with probability $p$.  Given a absorber $A$ for $X$ with decomposition family $\cF$, we define a {\em full-quantum-$g$-sphere} for $A$ and $X$ to be a quantum $K_3$-omni-booster $B$ for $A$ and $X$ in $K_n$ with quantum booster collection $\mc{B}=(\mathcal{B}_F: F\in \mathcal{F})$ such that for every $F\in\cF$,
\begin{itemize}\itemsep.05in
    \item $\mathcal{B}_F$ is a family of $g$-spheres rooted at $F$,
    \item for every set of $(2g-1)$ vertices $S\subseteq V(K_n)$ disjoint from $V(F)$, there is exactly one  copy of a $g$-sphere rooted at $F$ in $\mc{B}_F$.
\end{itemize}

The following statement summarizes~\cite[Lemmas~4.12~and~4.13]{DPII}, rewriting it for our purpose, that is, for the graph and triangle case ($r=2$, $q=3$), using the specific $g$-sphere booster with a full-quantum-$g$-sphere, instead of a generic booster $B_0$ and full-$B_0$ quantum boosters. We remark that the exact statements of~\cite[Lemmas~4.12~and~4.13]{DPII} are slightly weaker, with fixed absolute constants instead of an arbitrarily small $\varepsilon$, and mentioning only the lower bound in (1), but this version can be trivially extracted from their proof. 

\begin{lem}\label{lem:DPII}
For every integers $C\ge 1$ and $g\ge 3$, reals $\alpha,\varepsilon\in(0,1)$, there exists an integer $a_0\ge 1$ such that for all $a\ge a_0$, there exists an integer $n_0\ge 1$ such that the following holds for all $n\ge n_0$. Let $X$ be a spanning subgraph of $K_n$ with $\Delta(X)\le \frac{n}{\log^{ag} n}$, and let $A$ be a $C$-refined $K_3$-omni-absorber for $X$ such that $\Delta(A)\le C\cdot \max\left\{ \Delta(X),~\sqrt{n}\cdot \log n\right\}$, and ${\rm Treasury}^g(K_n,K_n\setminus A,K_3,X)$ is $(n,\alpha n,\frac{1}{2g},\alpha)$-regular.

Let $B$ be a full-quantum-$g$-sphere for $A$ and $X$ with quantum booster collection $(\mathcal{B}_F: F\in \mathcal{F})$.
If $p :=\frac{\log^{a} n}{\binom{n}{2g-1}}$, then with probability at least $(1-\varepsilon)$, $\mc{B}_p$ satisfies all of the following properties
\begin{enumerate}\itemsep.05in
    \item[(1)] For each $F\in \mathcal{F}$, $|(\mathcal{B}_{F})_p| = (1\pm\varepsilon)\cdot \log^{a} n$. 
    \item[(2)] $\Delta(\bigcup \mc{B}_p)\le a\Delta\cdot \log^{a} \Delta$. 
    \item[(3)] For each $F\in \mathcal{F}$, $|(\mathcal{B}_{F})_p\setminus {\rm Disjoint}((\mathcal{B}_{F})_p)| \leq \varepsilon \cdot \log^{a} n$.
    \item[(4)] For each $F\in \mathcal{F}$, $|(\mathcal{B}_{F})_p\setminus {\rm HighGirth}_g((\mathcal{B}_{F})_p)| \leq \varepsilon \cdot \log^{a} n$.
    \item[(5)]${\rm Proj}_g(B,A,K_n,X)$ is $(n,2\alpha \cdot n,\frac{1}{4g},2\alpha)$-regular.
\end{enumerate}
\end{lem}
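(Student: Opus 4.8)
The plan is to obtain the lemma as the specialisation of~\cite[Lemmas~4.12 and~4.13]{DPII} to the graph/triangle case ($r=2$, $q=3$), with the base booster there taken to be a $g$-sphere (\cref{def:Sphere}), so that a full-$B_0$-quantum-booster is precisely a full-quantum-$g$-sphere. With the quantum booster collection $\mc{B}=(\cB_F:F\in\cF)$ already fixed, I would analyse its random $p$-sparsification $\mc{B}_p$ for $p=\log^a n/\binom{n}{2g-1}$. Property~(5) involves no randomness: the projection ${\rm Proj}_g(B,A,K_n,X)={\rm Treasury}^g(K_n,K_n\setminus A,K_3,X)\perp\mc{M}(\mc{B})$ is determined by the full collection $\mc{B}$, so this is a single deterministic verification. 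Properties~(1)--(4) are each shown to fail with probability $n^{-\omega(1)}$, and since the number of relevant indices is $|\cF|=O_C(n\Delta)=\mathrm{poly}(n)$ (where $\Delta:=\max\{\Delta(X),\sqrt{n}\cdot\log n\}$; here one uses that $A$ is $C$-refined and $\Delta(A)\le C\Delta$), while~(2) also ranges over the $n$ vertices, a union bound makes all of~(1)--(5) hold simultaneously with probability at least $1-\varepsilon$ once $n$ is large. The two mild strengthenings over the literal statements of~\cite[Lemmas~4.12 and~4.13]{DPII} — an arbitrary $\varepsilon$ in place of fixed absolute constants, and the two-sided bound in~(1) — come for free, since every estimate below actually yields a $(1\pm o(1))\log^a n$ count or an $o(\log^a n)$ error.

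For~(1): $\Expect{|(\cB_F)_p|}=p\binom{n-3}{2g-1}=(1-o(1))\log^a n\to\infty$, so a Chernoff bound gives $|(\cB_F)_p|=(1\pm\varepsilon)\log^a n$ with the required probability. For~(2): I would first record the deterministic bound $\Delta\big(\bigcup\mc{B}\big)=O_{C,g}(\Delta\, n^{2g-1})$, using that each $g$-sphere has at most $6g$ edges on $2g+1$ vertices, that each vertex lies in $O_C(\Delta)$ triangles of $\cF$ (by $C$-refinedness), and that $|\cF|=O_C(n\Delta)$; multiplying through by $p$ gives $\Expect{\Delta(\bigcup\mc{B}_p)}=O_{C,g}(\Delta\log^a n)$, whence a Chernoff bound plus a union bound over the $n$ vertices yields $\Delta(\bigcup\mc{B}_p)\le a\Delta\log^a\Delta$ for $a\ge a_0(C,g)$ (since $\Delta\ge\sqrt{n}\log n$ we have $\log\Delta=\Theta(\log n)$, so the polylogarithmic factor is absorbed into the choice of $a_0$).

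For~(3) and~(4) the point is that after sparsification the structure a given $B_F$ must avoid is sparse: a fixed edge $e$ lies in $O(n^{2g-2})$ $g$-spheres per root, is incident to $O_C(\Delta)$ roots, and the $O_C(n\Delta)$ roots disjoint from $e$ contribute only $O(n^{2g-3})$ spheres each, so $e$ lies in $O_{C,g}(\Delta\, n^{2g-2})$ spheres of $\bigcup\mc{B}$ in total; hence $\ProbCond{B_F\notin{\rm Disjoint}((\cB_F)_p)}{B_F\in\mc{B}_p}\le p\cdot 6g\cdot O_{C,g}(\Delta\, n^{2g-2})=O_{C,g}(\Delta\log^a n/n)=o(1)$, using $\Delta\le n/\log^{ag}n$. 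For~(4) one further uses that a $g$-sphere has rooted girth $2g\ge g$ (\cref{rem:GirthSpheres}), so any \Erdos{} $(i+2,i)$-configuration ($3\le i\le g$) that meets $(B_F)_{{\rm on}}\cup(B_F)_{{\rm off}}$ and lies inside some $M\in\mc{M}(\mc{B}_p)$ must also use a triangle from a different booster or from $A$; since it spans only $i+2\le g+2$ vertices, completing it from $B_F$ forces essentially fresh vertices, and the same counting gives $\ProbCond{B_F\notin{\rm HighGirth}_g((\cB_F)_p)}{B_F\in\mc{B}_p}=o(1)$. Thus $\Expect{|(\cB_F)_p\setminus{\rm Disjoint}|}$ and $\Expect{|(\cB_F)_p\setminus{\rm HighGirth}_g|}$ are $o(\log^a n)$, and a concentration inequality for the relevant pair-sums (Janson's inequality, or a bounded-differences martingale) together with a union bound over $F\in\cF$ gives~(3) and~(4) with the stated slack $\varepsilon\log^a n$.

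The main obstacle is~(5), that ${\rm Proj}_g(B,A,K_n,X)$ remains $(n,2\alpha n,\tfrac{1}{4g},2\alpha)$-regular. Starting from the assumed $(n,\alpha n,\tfrac{1}{2g},\alpha)$-regularity of ${\rm Treasury}^g(K_n,K_n\setminus A,K_3,X)$, the projection (\cref{def:CommonProjection}) deletes from $G_1$ (and from $G_2$) exactly the triangles $R$ that can be completed to a forbidden \Erdos{} configuration using at most $g-1$ triangles all lying in a single $M\in\mc{M}(\mc{B})$, and it replaces each configuration edge by its downsized versions; one must verify that for every edge $e$ the number of such deletable $R\ni e$ is at most $\alpha n$ (preserving \cref{item:RT_QuasiReg}), carry out the analogous count inside the reserve hypergraph (for \cref{item:RT_ReserveDeg}), and bound the codegrees of the downsized configuration hypergraph (for \cref{item:RT_CoDeg,item:RT_ConfigDeg}). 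What makes these counts land polynomially below $n$ is that a $g$-sphere is of bounded size ($O(g)$ triangles on $2g+1$ vertices, each playing a bounded number of roles, with at most $C_g n^{2g-1}$ spheres per root by \cref{rem:countingSpheres}) and that $A$ is $C$-refined, so completing $R$ to a forbidden configuration forces only a bounded number of fresh vertices and a bounded number of booster-decomposition choices. This is exactly the content of~\cite[Lemma~4.13]{DPII}, which I would import wholesale, checking only that the specialisation to $r=2$, $q=3$ with $B_0$ a $g$-sphere satisfies its hypotheses — immediate from the parameters of $g$-spheres recorded in \cref{def:Sphere,rem:GirthSpheres,rem:countingSpheres}.
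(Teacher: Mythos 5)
Your overall strategy — importing~\cite[Lemmas~4.12 and~4.13]{DPII} and specialising to $r=2$, $q=3$ with $g$-spheres as the base booster — is exactly what the paper does; the paper explicitly presents this lemma as a rewriting/summary of those two results and does not re-prove them, so your sketches of the individual Chernoff/Janson arguments add texture but do not diverge from the paper's route.

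There is, however, a genuine misreading in your treatment of property~(5). You assert that ``Property~(5) involves no randomness: the projection ${\rm Proj}_g(B,A,K_n,X)$ is determined by the full collection $\mc{B}$, so this is a single deterministic verification.'' This is not how~(5) is meant, and not how it could possibly work. The lemma's preamble is ``with probability at least $(1-\varepsilon)$, $\mc{B}_p$ satisfies all of the following properties,'' and~(5) is one of them; the notation ${\rm Proj}_g(B,A,K_n,X)$ is an abuse (the paper reuses $B$) and the projection intended is the one computed with respect to the \emph{sparsified} matching set $\mc{M}(\mc{B}_p)$. This is confirmed by the subsequent uses: in the proof of~\cref{thm:HighGirthQuantumOmniBooster} the authors write ``By~\cref{lem:DPII}, with probability at least $1-\varepsilon$, we have properties~\cref{lem:DPII}(2) and (5)'' and then \emph{fix an outcome}, and in~\cref{cor:ExistenceOmniAbsorber} conclusion~(3) (that ${\rm Proj}_g(B,A_0,K_n,X)$ is $(n,2\alpha n,\tfrac{1}{4g},2\alpha)$-regular) is applied to the sparsified booster. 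Moreover~\cite[Lemma~4.13]{DPII}, which you cite as ``exactly the content'' of~(5), is itself a high-probability statement about the sparsified projection — so your reading is internally inconsistent with your own source. Substantively, the full collection $\mc{B}$ contains $\binom{n-3}{2g-1}$ spheres per root, so $\mc{M}(\mc{B})$ is enormous and $\perp \mc{M}(\mc{B})$ would delete and downsize far too aggressively; regularity of the full projection is not even plausibly true, and even if it were, it would not transfer to $\mc{M}(\mc{B}_p)$ without a separate argument (the deletion criterion is not monotone in a way that makes this immediate). You should treat~(5) as a random event verified alongside~(1)--(4) and, since you are importing DPII's proof wholesale, simply note that~\cite[Lemma~4.13]{DPII} gives exactly this probabilistic statement.

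One smaller slip: in your sketch of~(2) you write $\Delta(\bigcup\mc{B})=O_{C,g}(\Delta n^{2g-1})$ and then ``multiply by $p$''. In fact $\Delta(\bigcup\mc{B})\approx n$ (the union of all $g$-spheres over all vertex sets nearly saturates $K_n$); the quantity you want to bound by $O_{C,g}(\Delta n^{2g-1})$ is $\sum_{e\ni v}N_e$, where $N_e$ counts the pairs $(F,S)$ with $e\in S\in\mc{B}_F$, and then $\Expect{d_{\bigcup\mc{B}_p}(v)}\le p\sum_{e\ni v}N_e$. The conclusion is the same but the stated intermediate bound is incorrect.
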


\begin{proof}[Proof of~\cref{thm:HighGirthQuantumOmniBooster}]

For each $F\in\cF$, let $\cS_F\subset \cS$, be the subset of $\cS$ containing the $g$-spheres rooted at $F$, and keeping at most one copy on each set of $2g-1$ vertices in $K_n\setminus V(F)$. Let $c'>0$ such that $|\cS_F|\geq c'\cdot\binom{n}{2g-1}$ for each $F\in\cF$, and fix $\varepsilon< c'/(4+c')$. Let $a>1$ such that~\cref{lem:DPII} holds. Let $B$ be a full-quantum-$g$-sphere for $A$ and $X$ with quantum booster collection $(\mathcal{B}_F: F\in \mathcal{F})$, such that $\cS_F\subseteq \cB_F$ for each $F\in\cF$.

Let $\mathcal{B}_p:= ( (\mc{B}_F)_p: F\in \mc{F})$ and $\mathcal{S}_p:= ( (\mc{S}_F)_p: F\in \mc{F})$ be the \emph{random $p$-sparsifications} obtained by choosing each element independently with probability $p:=\log^an/\binom{n}{2g-1}$.

Observe that by the Chernoff's bound, with probability at least $1-\varepsilon$, for every $F\in\cF$ we have $|\cS_F|\geq c'(1-\varepsilon)\cdot \log^an$. By~\cref{lem:DPII}, with probability at least $1-\varepsilon$, we have properties~\cref{lem:DPII}(2) and (5), and for every $F\in\cF$ we have $|(\mathcal{B}_{F})_p| \leq (1+\varepsilon)\cdot \log^{a} n$ and $|{\rm HighGirth}_g((\mathcal{B}_{F})_p)\cap{\rm Disjoint}_g((\mathcal{B}_{F})_p)|\geq (1-3\varepsilon)\log^a n$. Fix such an outcome. As $\varepsilon< c'/(4+c')$, we obtain that $c'(1-\varepsilon)+(1-3\varepsilon)>1+\varepsilon$, therefore ${\rm HighGirth}_g((\mathcal{B}_{F})_p)\cap{\rm Disjoint}_g((\mathcal{B}_{F})_p)\cap \cS_F\neq \emptyset$, hence $\cB_p$ is the desired quantum $K_3$-omni-booster. 
\end{proof}

\subsection{On a Generalization to \texorpdfstring{$K_q$}{Kq}-Omni-Absorbers}\label{app:Generalq}

We now briefly explain the modification required to proved a generalization of ~\cref{cor:ExistenceOmniAbsorber} to larger cliques, in order to prove~\cref{thm:GenralisationMain2q}.

We first extract the following result from~\cite{DPII}, an immediate generalization of~\cref{lem:DPII} to larger cliques. The generalization of full-quantum-$g$-sphere to larger cliques is called a {\em Full $B_0$-Type Omni-Booster}, see~\cite[Definition~4.10]{DPII}; if $B_0$ is a rooted $K_q$-booster, then a quantum $K_q$-omni-booster $B$ for $A$ and $X$ with quantum booster collection $(\mathcal{B}_F: F\in \mathcal{F})$ is \emph{$B_0$-type} if every $\mathcal{B}_F$ is a copy of $B_0$ with root $F$. We say $B$ is \emph{full} if for every set of $(v(B_0)-q)$ vertices $S\subseteq V(K_n)$ disjoint from $V(F)$, there is exactly one  copy of $B_0$ rooted at $F$ in $\mc{B}_F$.
We extract the following from~\cite[Lemmas~4.12~and~4.13]{DPII}, similarly to what was done for~\cref{lem:DPII}.

\begin{lem}\label{lem:DPIIgeneralq}
For every integers $q\geq 3$, $C\ge 1$, and $g\ge 3$, and reals $\alpha,\varepsilon\in(0,1)$, there exists an integer $a_0\ge 1$ such that for all $a\ge a_0$, there exists an integer $n_0\ge 1$ such that the following holds for all $n\ge n_0$. Let $X$ be a spanning subgraph of $K_n$ with $\Delta(X)\le \frac{n}{\log^{ag} n}$, and let $A$ be a $C$-refined $K_q$-omni-absorber for $X$ such that $\Delta(A)\le C\cdot \max\left\{ \Delta(X),~\sqrt{n}\cdot \log n\right\}$, and ${\rm Treasury}^g(K_n,K_n\setminus A,K_q,X)$ is $\big(\binom{n}{q-2},\alpha \binom{n}{q-2},\frac{1}{2g(q-2)},\alpha)$-regular.

Let $B_0$ be a rooted $K_q$-booster with rooted girth at least $g$, and $B$ be a full $B_0$-type $K_q$-omni-booster for $A$ and $X$ with quantum booster collection $(\mathcal{B}_F: F\in \mathcal{F})$.
If $p :=\frac{\log^{a} n}{\binom{n}{v(B_0)-q}}$, then with probability at least $(1-\varepsilon)$, $\mc{B}_p$ satisfies all of the following properties
\begin{enumerate}\itemsep.05in
    \item[(1)] For each $F\in \mathcal{F}$, $|(\mathcal{B}_{F})_p| = (1\pm\varepsilon)\cdot \log^{a} n$. 
    \item[(2)] $\Delta(\bigcup \mc{B}_p)\le a\Delta\cdot \log^{a} \Delta$. 
    \item[(3)] For each $F\in \mathcal{F}$, $|(\mathcal{B}_{F})_p\setminus {\rm Disjoint}((\mathcal{B}_{F})_p)| \leq \varepsilon \cdot \log^{a} n$.
    \item[(4)] For each $F\in \mathcal{F}$, $|(\mathcal{B}_{F})_p\setminus {\rm HighGirth}_g((\mathcal{B}_{F})_p)| \leq \varepsilon \cdot \log^{a} n$.
    \item[(5)]${\rm Proj}_g(B,A,K_n,X)$ is $\big(\binom{n}{q-2},2\alpha \cdot \binom{n}{q-2},\frac{1}{4g(q-2)},2\alpha)$-regular.
\end{enumerate}
\end{lem}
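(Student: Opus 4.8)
\textbf{Proof plan for Lemma~\ref{lem:DPIIgeneralq}.} The plan is to extract this statement essentially verbatim from the proof of~\cite[Lemmas~4.12~and~4.13]{DPII}, checking only that the parameters line up with the $K_q$ setting (i.e.\ design hypergraphs on $\binom{V}{2}$ whose edges are copies of $K_q$, with codegree/girth parameters scaled by the factor $q-2$ coming from the size of an $i$-configuration $i(q-2)+2$). First I would set up the full $B_0$-type quantum omni-booster $B$ with collection $(\mc{B}_F : F\in\mc{F})$, so that for each $F\in\mc{F}$ and each $(v(B_0)-q)$-set $S$ disjoint from $V(F)$ there is exactly one copy of $B_0$ rooted at $F$ supported on $V(F)\cup S$. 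Then $\abs*{\mc{B}_F} = (1\pm o(1))\cdot \binom{n}{v(B_0)-q}$ up to the bounded number of automorphisms of $B_0$, and after $p$-sparsification with $p = \log^a n / \binom{n}{v(B_0)-q}$ a standard Chernoff bound gives $\abs*{(\mc{B}_F)_p} = (1\pm\varepsilon)\log^a n$ with failure probability $\exp(-\Omega(\log^a n))$, which beats the $\abs*{\mc{F}} = \mathrm{poly}(n)$ union bound once $a\ge a_0$ is large enough; this is (1).

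For (2), the maximum degree bound $\Delta(\bigcup\mc{B}_p)\le a\Delta\log^a\Delta$ follows exactly as in~\cite{DPII}: each edge of $K_n$ lies in $O(n^{v(B_0)-q-1})$ rooted copies of $B_0$ (here the rooted degeneracy of $B_0$ enters only through $v(B_0)$), so its expected degree in $\bigcup\mc{B}_p$ is $O(p\cdot n^{v(B_0)-q-1}\cdot \Delta(A_0\cup X)) = O(\log^a n \cdot \Delta / n)$-ish, and concentration plus a union bound over $\binom{n}{2}$ edges closes it; the key point is $\Delta(X)\le n/\log^{ag}n$ so that $\Delta \le n/\log^{ag}n$ leaves enough room. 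For (3), a copy $B_F\in(\mc{B}_F)_p$ fails ${\rm Disjoint}$ only if it shares an edge with some sampled copy from another root $F'$ or another copy in $\mc{B}_F$; each such collision has probability $O(p) = O(\log^a n/\binom{n}{v(B_0)-q})$ per colliding pair, the number of colliding pairs through a fixed $B_F$ is $O(n^{v(B_0)-q-1}\cdot(\abs*{\mc{F}}+\text{stuff}))$ and using $\Delta(A_0), \Delta(X)$ small this expected count is $o(\log^a n)$; Markov then gives $\le\varepsilon\log^a n$ failures with probability $\ge 1-\varepsilon/4$ (after another union bound one instead argues the sum over $F$, as in~\cite{DPII}). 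For (4), a copy $B_F$ fails ${\rm HighGirth}_g$ only if some $R\in{\rm Girth}^g(K_n,K_q)$ meets $(B_F)_{\rm on}\cup (B_F)_{\rm off}$ and is contained in a matching $M\in\mc{M}(\mc{B}_p)$; since $B_0$ itself has rooted girth $\ge g$ such an $R$ must use a clique outside $B_F$'s own decomposition, hence comes from another sampled booster, and the same first-moment-plus-Markov estimate as in~\cite{DPII} (with the $q-2$ scaling in the configuration sizes) bounds the number of bad $B_F$ by $\varepsilon\log^a n$.

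For (5), the regularity of ${\rm Proj}_g(B,A,K_n,X)$ is inherited from the regularity of ${\rm Treasury}^g(K_n, K_n\setminus A, K_q, X)$ together with the degree bound (2): projecting by $\mc{M}(\mc{B}_p)$ can only shrink configuration edges and remove cliques that touch $\bigcup\mc{B}_p$, and since $\Delta(\bigcup\mc{B}_p) \le a\Delta\log^a\Delta \le n/\log^{(g-1)a}n = o(\binom{n}{q-2}/n^{q-3})$ the number of design-hyperedges killed at any edge is at most $\alpha\binom{n}{q-2}$, so the $(D,\sigma,\beta,\alpha)$ parameters degrade only to $(D, 2\alpha D, \frac{1}{4g(q-2)}, 2\alpha)$ with $D = \binom{n}{q-2}$; the codegree conditions~\ref{item:RT_CoDeg} and~\ref{item:RT_ConfigDeg} survive because subgraphs and projections of configuration hypergraphs only decrease codegrees, exactly as argued in~\cite{DPII}. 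Finally a single union bound over the five events (each of failure probability $\le\varepsilon/5$, after adjusting constants) gives the claimed probability $\ge 1-\varepsilon$. The main obstacle — really the only nontrivial point — is verifying that the girth-parameter bookkeeping from~\cite{DPII} goes through with $K_q$ in place of $K_3$, i.e.\ that an $(i(q-2)+2,i)$-configuration plays the role that an $(i+2,i)$-configuration did, so that all exponents in~\ref{item:RT_ConfigDeg} pick up the factor $q-2$ as written; once one trusts the combinatorial estimates of~\cite[Lemmas~4.12--4.13]{DPII} this is bookkeeping, but it must be checked carefully since the configuration hypergraph ${\rm Girth}^g(K_n,K_q)$ is exactly where $q$ enters nontrivially.
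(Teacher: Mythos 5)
Your approach matches the paper's exactly: the paper does not give a proof of this lemma at all, but states it as an extraction from~\cite[Lemmas~4.12~and~4.13]{DPII}, ``similarly to what was done for~\cref{lem:DPII}'' (where that lemma is in turn stated as a specialization of the same source to the $g$-sphere/$K_3$ case). Your sketch fills in more of what the extraction would look like — the first-moment/Chernoff estimates for (1)--(4), the degree-budget argument for (2), and the inheritance-of-regularity argument for (5) — and correctly identifies that the only genuinely $q$-dependent bookkeeping is in the configuration-hypergraph parameters where an $(i(q-2)+2,i)$-configuration replaces an $(i+2,i)$-configuration, which is precisely why the $\frac{1}{2g(q-2)}$ exponent appears. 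One small caution: your parenthetical hedge about Markov versus a union bound over $F$ in (3) and (4) is well placed, since $|\mathcal{F}|$ is polynomial in $n$ and a naive Markov bound is not enough; the paper sidesteps this by not re-deriving it and instead trusting the concentration arguments of \cite{DPII}, which is the same move you ultimately make.
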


We can then use~\cref{lem:DPIIgeneralq}, to prove the following variant of~\cref{cor:ExistenceOmniAbsorber} for larger cliques, fixing a large family $\cS$ of a $B_0$ rooted-booster with girth $g$.

\begin{thm}\label{thm:HighGirthOmniAbsorberGeneralQ}
For every integers $q,g\ge 3$, and $C\geq 1$, and every reals $c\in(0,1)$ and $\alpha\in\big(0,\frac{1}{6(q-2)}\big)$, there exist integers $a,n_0\ge 1$ such that the following holds for all $n\ge n_0$.

Let $B_0$ be a rooted $K_q$-booster with rooted girth at least $g$. Let $X$ be a spanning subgraph of $K_n$ with $\Delta(X) \leq \frac{n}{\log^{ag}n}$ and such that ${\rm Treasury}^g(K_n,K_n,K_3,X)$ is $\big(\binom{n}{q-2},\alpha \binom{n}{q-2},\frac{1}{2g(q-2)},\alpha)$-regular. Let $\Delta:= \max\left\{ \Delta(X),~\sqrt{n}\cdot \log n\right\}$. 

Assume that there exists a $C$-refined $K_q$-omni-absorber $A_0$ for $X$ in $K_n$ such that $\Delta(A_0)\le C\cdot \Delta$, and a family $\cS$ of copies of $B_0$ in $K_n$ such that for every $F\cong K_q$ in $K_n$, there are at least $c\cdot n^{v(B_0)-q}$ distinct copies of $B_0$ rooted at $F$ in $\cS$.

Then there exists a $K_q$-omni-absorber $A$ for $X$ in $K_n$ with collective girth at least $g$, such that $A\subseteq A_0\cup\bigcup_{S\in\cS}S$, with $\Delta(A)\le a\Delta\cdot \log^{a} \Delta$, together with a subtreasury $T=(G_1,G_2,H)$ of ${\rm Proj}_g(A, K_n, X)$ that is
$\big(\binom{n}{q-2},6\alpha \binom{n}{q-2},\frac{1}{4g(q-2)},4\alpha)$-regular.
\end{thm}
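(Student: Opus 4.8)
The proof mirrors the combined arguments for \cref{thm:HighGirthQuantumOmniBooster} and \cref{cor:ExistenceOmniAbsorber}: we replace $g$-spheres by copies of the fixed rooted $K_q$-booster $B_0$, the case $q=3$ by general $q$, the quantity $n$ (the degree of an edge in the design hypergraph) by $\binom{n-2}{q-2}$ so that the governing regularity parameter is $D=\binom{n}{q-2}$, and \cref{lem:DPII} by \cref{lem:DPIIgeneralq}. (We read the hypothesis with $K_q$ in place of the typographical $K_3$ in ${\rm Treasury}^g(K_n,K_n,K_3,X)$.) First I would fix parameters: given $q,g,C,c,\alpha$, for each $F\cong K_q$ in $K_n$ let $\cS_F\subseteq\cS$ be obtained by keeping, for each $(v(B_0)-q)$-subset $S$ of $V(K_n)\setminus V(F)$, at most one copy of $B_0$ rooted at $F$ on $V(F)\cup S$; since $\cS$ contains at least $c\,n^{v(B_0)-q}$ copies of $B_0$ rooted at $F$, there is $c'=c'(c,q,B_0)>0$ with $|\cS_F|\ge c'\binom{n}{v(B_0)-q}$. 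Fix $\varepsilon\in(0,1)$ small enough that $c'(1-\varepsilon)+(1-3\varepsilon)>1+\varepsilon$ (e.g.\ $\varepsilon<c'/(c'+4)$), and let $a$ be large enough that \cref{lem:DPIIgeneralq} applies with parameters $q,C,g,2\alpha,\varepsilon$.

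\textbf{Step 1 (pass to a treasury avoiding $A_0$).} Using $\Delta(A_0)\le C\Delta=o(n)$, the number of copies of $K_q$ through a fixed edge that meet an edge of $A_0$ is $o\!\left(\binom{n}{q-2}\right)$; deleting edges from the design and reserve hypergraphs leaves all codegree conditions (RT3)--(RT4) intact, and ${\rm Girth}^g(K_n,K_q)$ is unchanged, so the only quantity that degrades is the minimum $A$-degree in (RT1). Hence ${\rm Treasury}^g(K_n,K_n\setminus A_0,K_q,X)$ is $\big(\binom{n}{q-2},\,2\alpha\binom{n}{q-2},\,\tfrac{1}{2g(q-2)},\,\alpha\big)$-regular, and a fortiori $\big(\binom{n}{q-2},\,2\alpha\binom{n}{q-2},\,\tfrac{1}{2g(q-2)},\,2\alpha\big)$-regular.

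\textbf{Step 2 (sampled quantum omni-booster).} Let $B$ be a full $B_0$-type $K_q$-omni-booster for $A_0$ and $X$ in $K_n$, with quantum booster collection $(\mc B_F:F\in\cF_0)$, chosen so that $\cS_F\subseteq\mc B_F$ for every $F$; let $\mc B_p$ be its random $p$-sparsification with $p:=\log^a n/\binom{n}{v(B_0)-q}$. By a Chernoff bound, with probability $\ge1-\varepsilon$ we have $|(\cS_F)_p|\ge c'(1-\varepsilon)\log^a n$ for all $F$; by \cref{lem:DPIIgeneralq} (applied with $2\alpha$ in place of $\alpha$), with probability $\ge1-\varepsilon$ properties (1)--(5) hold, in particular $|(\mc B_F)_p|\le(1+\varepsilon)\log^a n$, $|(\mc B_F)_p\setminus{\rm Disjoint}((\mc B_F)_p)|\le\varepsilon\log^a n$ and $|(\mc B_F)_p\setminus{\rm HighGirth}_g((\mc B_F)_p)|\le\varepsilon\log^a n$ for all $F$, $\Delta(\bigcup\mc B_p)\le a\Delta\log^a\Delta$, and ${\rm Proj}_g(B,A_0,K_n,X)$ is $\big(\binom{n}{q-2},\,4\alpha\binom{n}{q-2},\,\tfrac{1}{4g(q-2)},\,4\alpha\big)$-regular. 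Fix such an outcome. Since $|{\rm Disjoint}((\mc B_F)_p)\cap{\rm HighGirth}_g((\mc B_F)_p)|\ge(1-3\varepsilon)\log^a n$ and $c'(1-\varepsilon)+(1-3\varepsilon)>1+\varepsilon\ge|(\mc B_F)_p|/\log^a n$, the sets $(\cS_F)_p$ and ${\rm Disjoint}((\mc B_F)_p)\cap{\rm HighGirth}_g((\mc B_F)_p)$ (both $\subseteq(\mc B_F)_p$) must intersect; hence ${\rm Disjoint}((\mc B_F)_p)\cap{\rm HighGirth}_g((\mc B_F)_p)\cap\cS_F\ne\emptyset$ for each $F\in\cF_0$.

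\textbf{Step 3 (assemble the absorber and subtreasury).} For each $F\in\cF_0$ pick $B_F$ in that intersection, set $\cB':=(B_F:F\in\cF_0)$, $B':=\bigcup_F B_F$, $A:=A_0\cup B'$. As $B_F\in{\rm Disjoint}((\mc B_F)_p)$ for all $F$, $B'$ is a $K_q$-omni-booster for $A_0$ with booster family $\cB'$; as $B_F\in{\rm HighGirth}_g((\mc B_F)_p)$ and $\mc M(\cB')\subseteq\mc M(\mc B_p)$, every matching in $\mc M(\cB')$ is ${\rm Girth}^g(K_n,K_q)$-avoiding, so $B'$ has collective girth at least $g$. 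By \cref{prop:CanonicalBoost}, $A$ is a $K_q$-omni-absorber for $X$ whose canonical decomposition function outputs a member of $\mc M(\cB')$ on every $K_q$-divisible $L\subseteq X$; hence $A$ has collective girth at least $g$. Also $B'\subseteq\bigcup_{S\in\cS}S$, so $A\subseteq A_0\cup\bigcup_{S\in\cS}S$, and $\Delta(A)\le\Delta(A_0)+\Delta(\bigcup\mc B_p)\le C\Delta+a\Delta\log^a\Delta\le a\Delta\log^a\Delta$ after enlarging $a$. For the subtreasury, following the proof of \cref{cor:ExistenceOmniAbsorber}, write $B:=\bigcup\mc B_p$ and $(G_1,G_2,H):={\rm Proj}_g(\mc B_p,A_0,K_n,X)$, and set $G_1'':=G_1\cap{\rm Design}(K_n\setminus(X\cup A\cup B),K_q)$, $G_2'':=G_2[K_n\setminus(X\cup A\cup B)]$, $T:=(G_1'',G_2'',H)$. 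Since $\mc M(\cB')\subseteq\mc M(\mc B_p)$, $T$ is a subtreasury of ${\rm Proj}_g(B',A_0,K_n,X)$, which by \cref{prop:CanonicalBoost} and \cref{prop:BoosterSubTreasury} is a subtreasury of ${\rm Proj}_g(A,K_n,X)$; hence so is $T$. Finally $\Delta(B)\le a\Delta\log^a\Delta=o\!\left(n/\log^{a(g-1)/2}n\right)$, so the number of copies of $K_q$ through a fixed edge meeting $B$ is $o\!\left(\binom{n}{q-2}\right)\le\alpha\binom{n}{q-2}$; restricting $G_1,G_2$ to $K_n\setminus(X\cup A\cup B)$ thus costs at most $\alpha\binom{n}{q-2}$ in degree and leaves all codegree conditions intact, so $T$ inherits the $\big(\binom{n}{q-2},4\alpha\binom{n}{q-2},\tfrac{1}{4g(q-2)},4\alpha\big)$-regularity of ${\rm Proj}_g(\mc B_p,A_0,K_n,X)$ up to this extra $\alpha\binom{n}{q-2}$, hence is $\big(\binom{n}{q-2},6\alpha\binom{n}{q-2},\tfrac{1}{4g(q-2)},4\alpha\big)$-regular, as required.

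\textbf{Main obstacle.} There is no genuinely new difficulty: the content is (i) checking that (RT2)--(RT4) are inherited along the passages ${\rm Design}(K_n,K_q)\to{\rm Design}(K_n\setminus(X\cup A_0),K_q)\to{\rm Design}(K_n\setminus(X\cup A\cup B),K_q)$ — each deletes hyperedges and so can only decrease codegrees, and the reserve-degree of a vertex $u$ depends only on whether the \emph{other} edges of a candidate clique lie in $X$, so it is unaffected by deleting $A_0$ or $B$ from the first part — so that the only bound needed is the $o\!\left(\binom{n}{q-2}\right)$ loss in (RT1), controlled by the maximum-degree bounds on $A_0$ and on $B=\bigcup\mc B_p$; and (ii) bookkeeping the $\alpha$-budget, which rises $\alpha\to2\alpha$ after deleting $A_0$, is doubled to $4\alpha$ by \cref{lem:DPIIgeneralq}(5), and rises to at most $6\alpha$ after deleting $B$ (here $\alpha<\tfrac{1}{6(q-2)}$ ensures $4\alpha<1$ so the output parameters are admissible). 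The existence of a rooted $K_q$-booster $B_0$ of rooted girth at least $g$ and the availability of a large family $\cS$ of its copies in the host graph are supplied by \cref{lem:DegeneracyBoosterGeneralQ} and \cref{lemma:DegeneracyEmbedding}, so nothing new is needed there.
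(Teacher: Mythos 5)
Your proposal is correct and takes essentially the approach the paper intends: the paper gives no written proof of this theorem beyond asserting it follows from Lemma~\ref{lem:DPIIgeneralq}, and your Steps 1--3 are precisely the general-$q$ transcription of the chain used in the $q=3$ case (pass to ${\rm Treasury}^g(K_n,K_n\setminus A_0,K_q,X)$ as in the proof of Theorem~\ref{thm:HighGirthAbsorber}; sample the full $B_0$-type quantum booster and intersect with $(\cS_F)_p$ as in the proof of Theorem~\ref{thm:HighGirthQuantumOmniBooster}; assemble $A=A_0\cup B'$ and restrict the projection treasury as in the proof of Corollary~\ref{cor:ExistenceOmniAbsorber}), with the $\alpha$-budget $\alpha\to 2\alpha\to 4\alpha\to 6\alpha$ and the cosmetic enlargement of $a$ to absorb $\Delta(A_0)\le C\Delta$ both handled. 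The only nitpicks are that your Step~3 claims a loss of only $\alpha\binom{n}{q-2}$ but then writes $4\alpha+\alpha\le 6\alpha$ (harmless slack; the matching bookkeeping from the $q=3$ case would give $2\alpha\binom{n}{q-2}$, also fine), and you correctly flag the $K_3$/$K_q$ typo in the theorem's statement of the treasury.
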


\section{General Embedding Lemma}\label{sec:ProofEmbedding}

For completeness, we include the statement and proof of an extensive version of the General Embedding Lemma,~\cref{lem:EmbedGeneral}. We first require the following definitions for supergraph systems of hypergraphs.

\begin{definition}[Edge-intersecting $C$-bounded Supergraph System]
Let $\mc{H}$ be a family of subgraphs of a hypergraph $J$. A \emph{supergraph system} $\mc{W}$ for $\mc{H}$ is a family $(W_H : H\in \mc{H})$ where for each $H\in \mc{H}$, $W_H$ is a supergraph of $H$ with $V(W_H)\cap V(J)\neq\emptyset$ and for all $H'\ne H\in \mc{H}$, we have $V(W_H)\cap V(W_{H'}) \setminus V(J)= \emptyset$. We let $\bigcup \mc{W}$ denote $\bigcup_{H\in \mc{H}} W_H$ for brevity. For a real $C \geq 1$, we say that $\mc{W}$ is \emph{$C$-bounded} if $\max\{e(W_H),~v(W_H)\}\le C$ for all $H\in \mc{H}$. We say $\mc{W}$ is \emph{edge-intersecting} if for every $e\in W_H\setminus E(J)$ there exists $f\in J$ such that $e\cap V(J)\subseteq f$ for all $H\in \mc{H}$.
\end{definition}

\begin{definition}[Embedding a Supergraph System]
Let $J$ be a hypergraph and let $\mc{H}$ be a family of subgraphs of $J$. Let $\mc{W}$ be a supergraph system for $\mc{H}$. Let $G$ be a supergraph of $J$. An \emph{embedding} of $\mc{W}$ \emph{into} $G$ is a map $\phi : V(\bigcup \mc{W}) \hookrightarrow V(G)$ preserving edges such that $\phi(v)=v$ for all $v\in V(J)$. We let $\phi(\mc{W})$ denote $\bigcup_{e\in \bigcup W} \phi(e)$ (i.e.~the subgraph of $G$ corresponding to $\bigcup \mc{W}$).
\end{definition}


\begin{lem}[General Embedding]
For every integer $r\geq 2$ and reals $C>r$ and $\gamma\in (0,1]$, there exists $C'\ge 1$ such that the following hold. Let $J\subseteq G\subseteq K_n^r$ with $\Delta_{r-1}(J)\le \frac{n}{C'}$. Suppose that $\mc{H}$ is a $C$-refined family of subgraphs of $J$ and $\mc{W}$ is a $C$-bounded edge-intersecting supergraph system of $\mc{H}$. If for each $H\in \mc{H}$ there exist at least $\gamma\cdot n^{|V(W_H)\setminus V(H)|}$ embeddings of $W_H$ into $G\setminus (E(J)\setminus E(H))$,
then there exists an embedding $\phi$ of $\mc{W}$ into $G$ such that $\Delta(\phi(\mc{W})) \le C'\cdot \Delta(J)$.
\end{lem}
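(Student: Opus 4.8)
The plan is to prove this by a ``greedy with Lov\'asz Local Lemma'' (or ``avoid the bad'') argument: we embed the gadgets $W_H$ one vertex-orbit at a time (more precisely, we choose simultaneously, for each $H\in\mc H$, a uniformly random embedding of $W_H$ from the large pool guaranteed by the hypothesis), and we show that with positive probability no two embeddings collide on a vertex outside $V(J)$ and the resulting maximum codegree increase is bounded. First I would set up the probability space: for each $H\in\mc H$ pick an embedding $\psi_H$ of $W_H$ into $G\setminus(E(J)\setminus E(H))$ uniformly at random among the $\ge \gamma n^{|V(W_H)\setminus V(H)|}$ available ones, all choices independent. Because $\mc W$ is $C$-bounded, each $W_H$ has at most $C$ vertices and $C$ edges, so the new vertices $\psi_H(V(W_H)\setminus V(H))$ number at most $C$ and lie in $G\setminus V(J)$ together with the root vertices which are pinned.

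The key steps, in order, are: (i) Identify the bad events. Type-A bad events: two embeddings $\psi_H,\psi_{H'}$ (with $H\ne H'$) send a non-root vertex of $W_H$ and a non-root vertex of $W_{H'}$ to the same vertex of $G$. Type-B bad events: for a fixed $(r-1)$-set $U$ of $V(G)$, ``too many'' of the embedded edges $\psi_H(e)$ (over all $H$ and all $e\in W_H\setminus E(J)$) contain $U$, which is what would blow up $\Delta_{r-1}(\phi(\mc W))$ beyond $C'\cdot\Delta(J)$. (ii) Bound the probability of each bad event. For Type-A: a fixed non-root vertex of $W_H$ lands on a given vertex of $G$ with probability $O(1/n)$ (this uses that each coordinate of the embedding ranges over $\Omega(n)$ choices, which follows from $\gamma n^{k}$ embeddings with $k=|V(W_H)\setminus V(H)|$ bounded — here one wants the ``projection'' argument that if there are $\ge\gamma n^k$ embeddings then each partial extension has $\Omega(n)$ completions on average; more carefully, the probability that a specific new vertex of $W_H$ maps to a specific vertex $u$ is at most $C/(\gamma n)$ by a counting bound, since fixing that coordinate leaves at most $n^{k-1}$ embeddings out of $\ge\gamma n^k$). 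For Type-B: the expected number of embedded non-$J$ edges through a fixed $(r-1)$-set $U$ is controlled because (a) $\mc H$ is $C$-refined, (b) $\mc W$ is edge-intersecting so every embedded edge $\psi_H(e)$ with $e\notin E(J)$ satisfies $e\cap V(J)\subseteq f$ for some $f\in J$ — meaning the ``rooted part'' of each new edge sits inside a genuine edge of $J$ — and this ties the count to $\Delta_{r-1}(J)$; then a Chernoff/Local-Lemma tail bound gives that exceeding, say, $C'\Delta(J)$ has exponentially small probability. (iii) Bound the dependency. Each $W_H$'s choice affects only bad events involving vertices or $(r-1)$-sets reachable from $V(W_H)$; since the $W_H$ are $C$-bounded and $\Delta_{r-1}(J)\le n/C'$, each bad event depends on at most $\mathrm{poly}(C)\cdot n/C' \le n$ others, actually one arranges that $(\text{probability})\cdot(\text{dependency})\le 1/4$ by choosing $C'$ large. (iv) Apply the (possibly weighted/asymmetric) Lov\'asz Local Lemma to conclude that with positive probability no bad event occurs; any such outcome is the desired embedding $\phi$, and the Type-B events not occurring gives $\Delta(\phi(\mc W))\le C'\cdot\Delta(J)$.

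\textbf{The main obstacle.} The hard part will be Step (ii)--(iii) done \emph{uniformly}: converting ``at least $\gamma n^k$ embeddings exist'' into the per-coordinate lower bound $\Omega(n)$ needed to bound collision probabilities, \emph{and} controlling the Type-B codegree events so that the dependency is genuinely below the Local Lemma threshold. The subtlety is that the embeddings of $W_H$ are \emph{not} products over coordinates, so one must argue via the ``slotting''/conditioning lemma: order the new vertices $v_1,\dots,v_k$ of $W_H$; since $\ge\gamma n^k$ embeddings exist, for each $i$, on average over choices of $v_1,\dots,v_{i-1}$ there are $\ge \gamma n$ completions for $v_i$, but one needs a high-probability (not just average) version to union-bound collisions, so the cleanest route is to condition on a ``good prefix'' or to simply bound $\Pr[\psi_H(v)=u]\le C/(\gamma n)$ directly from $|\{\text{embeddings with }\psi_H(v)=u\}|\le n^{k-1}$. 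The codegree control additionally requires the edge-intersecting hypothesis to be used crucially: without it an embedded edge could have its whole root-shadow escape $J$, and then there would be no bound relating its multiplicity to $\Delta_{r-1}(J)$. This is precisely the technical condition the excerpt notes is ``trivially satisfied on graphs provided $J$ has no isolated vertices,'' which is why the graph case (Lemma~\ref{lem:EmbedGeneral}) omits it; in the hypergraph case we must carry it through every codegree estimate. Once these estimates are in hand, the Local Lemma application is routine, and choosing $C'$ sufficiently large in terms of $C,\gamma,r$ closes the argument.
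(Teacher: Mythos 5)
Your proposal takes a genuinely different route from the paper's. The paper's ``avoid the bad'' proof is a \emph{deterministic, extremal} argument: among all partial embeddings of $\mc{W}'\subseteq\mc{W}$ satisfying an invariant codegree constraint $d_{T'}(S,R)\le m$, choose one maximizing $|\mc{H}'|$, then argue by contradiction that $\mc{H}'=\mc{H}$ by showing that for any $H\notin\mc{H}'$, the number of embeddings of $W_H$ that either collide with the current partial image or would violate the codegree constraint is strictly less than $\gamma n^{b}$, so some good embedding survives. There is no randomness and no Local Lemma. Your plan is instead a \emph{one-shot randomized} argument with LLL. The paper does acknowledge (in the remark preceding the lemma) that an LLL-plus-slotting argument in the style of Delcourt--Kelly--Postle would also work, so the route you chose is in-principle viable; but your sketch has a genuine gap in exactly the place where the slotting is load-bearing.

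The gap is your step (iii), the dependency bound. You claim ``each bad event depends on at most $\mathrm{poly}(C)\cdot n/C'\le n$ others,'' but this is not true for the natural random model you set up. Take your Type-A events $A_{H,H'}$. Each has probability $O(C^2/(\gamma n))$ and depends on the two independent random choices $\psi_H,\psi_{H'}$; hence $A_{H,H'}$ is dependent with every $A_{H,H''}$ and $A_{H',H''}$, i.e.\ roughly $2|\mc{H}|$ other events. Now $|\mc{H}|$ can be as large as $C\cdot|E(J)|$, and with $\Delta_{r-1}(J)\le n/C'$ this is $\Theta(n^2)$ already for $r=2$ (and larger for bigger $r$), so the LLL condition $e\cdot p\cdot(d+1)\le 1$ requires $n/(\gamma C')\lesssim 1$, which fails for $n$ large no matter how $C'$ is chosen. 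Type-B is even worse: since each $\psi_H$ is drawn from a pool of embeddings that can place new vertices almost anywhere in $V(G)$, the event $B_U$ is (in the independence sense the LLL needs) dependent on essentially \emph{every} $\psi_H$, and on essentially every other $B_{U'}$ --- there is no locality to exploit under your formulation. The slotting argument you mention in passing is not an optional refinement here; it is the mechanism that \emph{creates} the locality (by pre-assigning each $W_H$ a restricted slot/region for its free vertices, so that only $H$'s sharing a slot can collide or jointly contribute to a codegree), and without it the dependency graph is essentially complete. Carrying the slotting through --- including re-verifying that enough embeddings of each $W_H$ land inside its assigned slot, which requires re-deriving a lower bound on embeddings under the slot restriction from the global hypothesis of $\ge\gamma n^{k}$ embeddings --- is the actual work, and none of it is done in the sketch. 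By contrast, the paper's greedy argument never faces this issue: it embeds the $W_H$'s one at a time, always maintaining the codegree invariant $d_{T'}(S,R)\le m$ on the already-placed portion, so the ``dependency'' is trivially sequential and each step is a pure counting estimate $|\mc{A}|+|\mc{B}|<\gamma n^b$.

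Two smaller remarks. First, you label the paper's method ``greedy with LLL (or `avoid the bad')''; in this paper (and in~\cite{DPI}), ``avoid the bad'' refers precisely to the deterministic maximize-and-contradict argument just described, not to LLL. Second, your per-coordinate bound $\Pr[\psi_H(v)=u]\le 1/(\gamma n)$ (from $\le n^{k-1}$ embeddings fixing $\psi_H(v)=u$, divided by $\ge\gamma n^k$ total) is correct and is exactly the probabilistic counterpart of the counting step the paper performs inside its $|\mc{A}|$ bound; and your identification of where the edge-intersecting hypothesis enters (tying the root-shadow of each embedded non-$J$ edge to an edge of $J$, which is what makes $C$-refinedness and $\Delta_{r-1}(J)$ usable in the Type-B estimate) is the right intuition and matches how the paper uses it in the definition and analysis of $d_{T'}(S,R)$.
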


\begin{proof}
Let $C'$ be chosen large enough as needed throughout the proof but also such that $\frac{\sqrt{C'}}{C\cdot 2^r}$ is an integer. Note we assume that $\Delta_{r-1}(J)\ge 1$ as otherwise the outcome of the lemma holds trivially. Hence it follows that $n\ge C'$.
Choose $\mc{H}'\subseteq \mc{H}$ with an embedding $\phi' = (\phi'_H: H\in \mc{H}')$ of $\mc{W}':= (W_H:H\in \mc{H}')$ into $G$ as follows: Define $T':= \phi'(\mc{W}')$. For all $S\in \binom{V(G)}{r-1}$ and $R\subsetneq S$ define 
$$d_{T'}(S,R) := |\{ H\in \mc{H}': S\subseteq V(\phi'(W_H)), R=V(\phi'(W_H))\cap V(H),~\exists f\in \phi'(W_H\setminus H) \text{ with } S\subseteq f\}|.$$
Note that since $\mc{W}$ is edge-intersecting the existence of such an $f$ in the definition of $d_{T'}(S,R)$ implies that $\exists~e\in E(H) \text{ with } R\subseteq V(e)$. Furthermore, in the graph case of~\cref{lem:EmbedGeneral}, the existence of $e\in E(H) \text{ with } R\subseteq V(e)$ is trivially satisfied with $R=\emptyset$.
Now choose $\mc{H}'$ and $\phi'$ such that for all such $S$ and $R$,
$$d_{T'}(S,R)\le m:=\frac{\sqrt{C'}}{C\cdot 2^r}\cdot \Delta_{r-1}(J),$$
and subject to those conditions, $|\mc{H}'|$ is maximized. Note such a choice exists since $\mc{H}':=\emptyset$ satisfies the conditions trivially. Also note that $m$ is integral by assumption.

Before proceeding with the proof, we calculate that for $S\in \binom{V(G)}{r-1}$, we have
$$|T'(S)| 
\le C^2\cdot \Delta_{r-1}(J) + \sum_{R\subsetneq S} C\cdot d_{T'}(S,R) 
\le C^2\cdot \Delta_{r-1}(J) + C\cdot (2^r-1) \cdot \left(\frac{\sqrt{C'}}{C\cdot 2^r}\cdot \Delta_{r-1}(J)\right) 
\le \sqrt{C'}\cdot \Delta_{r-1}(J),$$
where we used that $\sqrt{C'}\ge C^2\cdot 2^r$ since $C'$ is large enough. Hence $\Delta(T')\le \sqrt{C'}\cdot \Delta_{r-1}(J) \le \frac{n}{\sqrt{C'}}$.

First suppose that $\mc{H}'=\mc{H}$ but then $\phi'$ is as desired. So we assume that $\mc{H}' \ne \mc{H}$. Hence there exists $H\in \mc{H}\setminus \mc{H}'$. Let $Z:= V(G)\setminus V(H)$. Let $b:= v(W_H)-v(H)$. Let $\Phi_H$ be the promised set of embeddings of $W_H$ into $G\setminus (E(J)\setminus E(H))$. Recall that $|\Phi_H|\ge \gamma\cdot n^b$ by assumption. \medskip

For $\ell \in [r]$, let 
$$A_{\ell} := \left\{U\in \binom{Z}{\ell}: \exists~e\in E(H) \text{ and } R\subseteq V(e) \text{ with } |R|=r-\ell \text{ such that } R\cup U \in T'\right\}.$$
Hence for $e\in E(H)$ and $R\subseteq V(e)$ with $|R|=r-\ell$, we find that 
\begin{align*}
\left|\left\{ U\in \binom{Z}{\ell}: R\cup U \in T'\right\}\right| &\le |T'(R)| \le \Delta(T')\cdot n^{\ell-1} \le \frac{n^{\ell}}{\sqrt{C'}}.
\end{align*}
Note there are at most $C\cdot \binom{r}{r-\ell}$ such subsets $R$ as $\mc{H}$ is $C$-refined.  Hence $|A_{\ell}| \le C\cdot \binom{r}{r-\ell} \cdot \frac{n^{\ell}}{\sqrt{C'}}$. Let $\mc{A}:= \{ \phi \in \Phi_H : \phi(W_H\setminus H)\cap T\ne \emptyset\}$. Hence
$$|\mc{A}| \le \sum_{\ell\in [r]} |A_{\ell}| \cdot n^{b-\ell} \cdot v(W_H)! \le \frac{2^r\cdot C!}{\sqrt{C'}} \cdot n^b,$$
where we used that there are most $v(W_H)!$ embeddings in $\Phi_H$ on the same vertex set (and that $v(W_H)\le C$) but also that $\sum_{\ell\in [r]} \binom{r}{r-\ell} \le 2^r$.\medskip

For $\ell \in [r-1]$, let
$$B_{\ell} := \left\{ U\in \binom{Z}{\ell}: \exists~e\in E(H) \text{ and } R \subseteq V(e) \text{ with } |R|=r-1-\ell \text{ such that } d_{T'}(R\cup U,R) =  m \right\}.$$ 
Note that for $e\in E(H)$ and $R\subseteq V(e)$ with $|R|=r-1-\ell$,
\begin{align*}
\left|\left\{ U\in \binom{Z}{\ell}: d_{T'}(R\cup U,R)= m\right\}\right| &\le \frac{\binom{C}{\ell}}{m} \cdot |\{H' \in \mc{H}': \exists~e'\in E(H') \text{ with } R\subseteq V(e')\}| \le \frac{\binom{C}{\ell}}{m}\cdot C\cdot \Delta_{r-1}(J)\cdot n^{\ell} \\
&\le \frac{C\cdot 2^r\cdot C\cdot \binom{C}{\ell}}{\sqrt{C'}} \cdot n^{\ell},
\end{align*}
where for the first inequality we counted the pairs $(U,\mc{H}')$ in two ways, while for the second inequality we used that there at most $\Delta_{r-1}(J)\cdot n^{\ell}$ choices of $e'$ and then at most $C$ choices of $H'$ as $\mc{H}$ as $C$-refined. Note there are at most $C\cdot \binom{r}{r-1-\ell}$ such subsets $R$ of $H$ as $\mc{H}$ is $C$-refined. Hence $|B_{\ell}| \le C\cdot \binom{r}{r-1-\ell} \cdot \frac{n^{\ell}}{\sqrt{C'}}$. Let $\mc{B}:= \{ \phi \in \Phi_H : \exists~U\in \bigcup_{\ell \in [r-1]} B_{\ell} \text{ with } U\subseteq V(\phi(W_H))\}$. Hence
$$|\mc{B}| \le \sum_{\ell \in [r-1]} |B_{\ell}| \cdot n^{b-\ell}\cdot v(W_H)! \le \frac{C^3\cdot C!\cdot (4C)^r}{\sqrt{C'}} \cdot n^b.$$ 
But then 
$$|\mc{A}|+|\mc{B}| < \gamma\cdot n^b \le |\Phi_H|,$$
since $C'$ is large enough. Hence there exists $\phi(W_H) \in \Phi_H \setminus (\mc{A}\cup \mc{B})$. Let $\mc{H}'':=\mc{H}'\cup \{H\}$ and $\phi'':=\phi'\cup \phi(W_H)$. Since $\phi(W_H)\not\in \mc{A}$, it follows that $\phi(W_H)$ is a subgraph of $G$ that is edge-disjoint from $T'$. Let $T'' := \phi''(\mc{W'}\cup \{W_H\})$.

We note that for all $S\in \binom{V(G)}{r-1}$ and $R\subsetneq S$, we have that $d_{T''}(R,U)=d_{T'}(R,U)$ unless $S\subseteq V(\phi(W_H))$, $R =V(\phi(W_H))\cap V(H)$ and there exists $f\in \phi(W_H)$ with $S\subseteq f$ and in that case we have $d_{T''}(S,R)= d_{T'}(S,R)+1$; in the former case, we have that $d_{T'}(S,R)\le m$ by assumption and in the latter case we find that $d_{T'}(S,R)\le m-1$ since $\phi_H \not\in \mc{B}$. It follows that in either case $d_{T''}(S,R)\le m$, contradicting the choice of $\mc{H}'$ and $\phi'$.
\end{proof}

\end{document}